\titleformat{\section}[block]
  {\sc\large\filcenter}
  {\S\thesection.}{.5em}{}
\theoremstyle{plain}
\newtheorem{theorem}{Theorem}[section]
\theoremstyle{plain}
\newtheorem{corollary}[theorem]{Corollary}
\theoremstyle{plain}
\newtheorem{prop}[theorem]{Proposition}
\theoremstyle{plain}
\newtheorem{lemma}[theorem]{Lemma}
\theoremstyle{definition}
\newtheorem{defn}[theorem]{Definition}
\theoremstyle{definition}
\newtheorem{example}[theorem]{Example}
\theoremstyle{definition}
\newtheorem{subsec}[theorem]{\S\hspace{-0.03in}}
\def\t{\textrm}
\def\ds{\displaystyle}
\def\bs{\boldsymbol}
\def\bb{\mathbb}
\def\mb{\mathbf}
\def\mc{\mathcal}
\def\mf{\mathfrak}
\def\ms{\mathscr}
\def\beq{\begin{eqnarray}}
\def\eeq{\end{eqnarray}}
\def\beqq{\begin{eqnarray*}}
\def\eeqq{\end{eqnarray*}}
\def\vs{\vspace{0.01in}}
\def\vss{\vspace{0.03in}}
\def\ph{\phantom}
\def\d{\partial}
\def\T{\mc{T}}
\def\wt{\widetilde}
\def\Tr{\mathrm{Tr}\,}
\def\tnabla{\nabla^t}
\def\CDO{\mc{D}^{\mathrm{ch}}}
\def\ass{\Gamma^{\mathrm{ch}}}
\def\Killing{\lambda_{\t{\sf ad}}}
\def\n{\mf{n}}
\def\m{\mf{m}}
\def\p{\mf{p}}
\begin{document}

\begin{center}
{\large\bf CHIRAL DIFFERENTIAL OPERATORS:} \\
\vspace{0.05in}
{\large\bf FORMAL LOOP GROUP ACTIONS \& ASSOCIATED MODULES} \\
\vspace{0.2in}
POKMAN CHEUNG \\
\end{center}
\vspace{0.05in}

\begin{abstract}
Chiral differential operators (CDOs) are closely related to
string geometry and the quantum theory of $2$-dimensional
$\sigma$-models.
This paper investigates two topics about CDOs on smooth
manifolds.
In the first half, we study how a Lie group action on a smooth
manifold can be lifted to a ``formal loop group action'' on
an algebra of CDOs;
this turns out to be a condition on the equivariant first
Pontrjagin class.
The case of a principal bundle receives particular attention
and gives rise to a type of vertex algebras of great interest.
In the second half, we introduce a construction of modules over
CDOs using the said ``formal loop group actions'' and
semi-infinite cohomology.
Intuitively, these modules should have a geometric meaning
in terms of ``formal loop spaces''.
The first example we study leads to a new conceptual
construction of an arbitrary algebra of CDOs.
The other example, called the spinor module, may be useful
for a geometric theory of the Witten genus.
\end{abstract}

\section{Introduction}

The algebra of \emph{chiral differential operators (CDOs)}
on an affine space $\bb{A}^d$ is an elementary conformal
vertex algebra, and it has a holomorphic and a smooth
version as well.
The global construction of CDOs, first given by Gorbounov,
Malikov and Schechtman, is closely related to string geometry
and the quantum theory of $\sigma$-models.
In particular, the obstructions for a complex manifold $M$ to
admit a sheaf of holomorphic CDOs $\CDO_M$ are certain
refinements of $c_1(M)$ and $c_2(M)$;
and if $M$ is compact, then the genus-$1$ partition function
of the conformal vertex superalgebra $H^*(M;\CDO_M)$ is up
to a factor the Witten genus of $M$.
\cite{GMS1}
In the algebraic and holomorphic settings, Kapranov and
Vasserot have given a geometric interpretation of CDOs using
the notion of \emph{formal loop spaces} they introduced
\cite{KV1,KV};
see also \cite{BD}.
However, in the smooth setting, such an interpretation seems
to be still missing.
From the point of view of field theories, holomorphic CDOs are
known to physicists as a description of the large-volume limit
of the half-twisted $\sigma$-models \cite{Kapustin,Witten.CDO,Tan},
and they are also closely related to Costello's holomorphic
Chern-Simons theory. \cite{Costello1,Costello2}

This paper concerns CDOs on \emph{smooth} manifolds.
In the first half, we study how a Lie group action on a manifold
can be lifted to a ``formal loop group action'' on an algebra of
CDOs.
It turns out that the existence of such actions is a condition
on the equivariant first Pontrjagin class, and they are
parametrised by certain Cartan cochains.
The case of a principal bundle receives particular attention and
gives rise to a type of vertex algebras of great interest.
The study of ``formal loop group actions'' on CDOs is not
entirely new.
For example, it is well-known that the left and right
multiplications of a simple Lie group on itself can be lifted
to a commuting pair of affine Lie algebra actions on any algebra
of CDOs.
\cite{GMS4}
Also, the equivariant chiral de Rham complex of Lian and Linshaw
may be viewed as a particular differential graded version of
our construction.
\cite{LL}

In the second half of the paper, we introduce a construction of
modules over CDOs using the above ``formal loop group actions''
and semi-infinite cohomology.
This construction is quite analogous to that of associated vector
bundles with connection, and includes as a special case
a construction of Wakimoto modules.
\cite{FF,Voronov.Wakimoto}
The first example we study leads to a new conceptual description
of an arbitrary algebra of CDOs.
For some homogeneous spaces, this description of CDOs has 
been given before.
\cite{GMS4}
Another example of our construction, called the spinor module,
may be useful for understanding the geometric meaning of the
Witten genus for smooth string manifolds.

The following is a more detailed overview of the paper.

\begin{subsec}
{\bf Overview of the paper.}
\S\ref{sec.review} recalls the definition of an algebra of
CDOs on a smooth manifold (Definition \ref{defn.CDO}),
as well as its construction and classification (Theorems
\ref{thm.globalCDO} \& \ref{thm.globalCDO.iso}).
The explicit generators-and-relations description should be
compared to the more conceptual one in \S\ref{CDOP.CDOM}.

Let $G$ be a compact connected Lie group,
$\mf{g}$ its Lie algebra and
$\lambda$ an invariant symmetric bilinear form on $\mf{g}$.
Recall that $\lambda$ determines a centrally extended loop
algebra $\hat{\mf{g}}_\lambda$ (\S\ref{sec.Gmfld}) and also
a vertex algebra $V_\lambda(\mf{g})$ (Example \ref{VAoid.Lie}).
If $\mf{g}$ is simple and $\lambda$ is $k$ times the
normalized Killing form, then we will write the vertex algebra
more traditionally as $V_k(\mf{g})$.
Let us introduce some terminology:
a \emph{formal loop group action} of level $\lambda$ on
a vector space $W$ is a $\hat{\mf{g}}_\lambda$-action on $W$
whose restriction to $\mf{g}\subset\hat{\mf{g}}_\lambda$
integrates into a $G$-action;
and if $W$ is a vertex algebra then we also ask $\mf{g}$ to
act by inner derivations.
(In the main text this is called
an \emph{inner $(\hat{\mf{g}}_\lambda,G)$-action};
see Definition \ref{formalLG.action}.)
Notice that in the latter case the action is induced by a map of
vertex algebras $V_\lambda(\mf{g})\rightarrow W$.

Suppose $P$ is a smooth manifold with a smooth $G$-action
and $\CDO(P)$ is an algebra of CDOs on $P$.
Conjecturally, there should be an interpretation of $\CDO(P)$
in terms of the ``formal loop space of $P$''.
This motivates the main result in \S\ref{sec.LGaction}:

\vspace{0.05in}
\noindent
{\bf Theorem \ref{thm.formalLG.action}.}
{\it
The $G$-action on $P$ lifts to a formal loop group action on
$\CDO(P)$ of level $\lambda$ if and only if
\begin{align*}
8\pi^2p_1(P)_G=\lambda(P)
\end{align*}
where $p_1(P)_G$ is the equivariant first Pontrjagin class
and $\lambda(P)$ is the image of $\lambda$ under the
characteristic map $H^4(BG)\rightarrow H^4_G(M)$.
Moreover, $\CDO(P)$ has a conformal vector whose induced
Virasoro algebra action intertwines with any formal loop group
action as described.
}

\vspace{0.05in}
\noindent
The key ideas behind this result are:
the use of the Cartan model for equivariant de Rham cohomology
(recalled in \S\ref{sec.Cartan});
and the observation that the $G$-action on $P$ and the vertex
algebra structure of $\CDO(P)$ together determine a Cartan
cocycle for $p_1(P)_G$ (Lemma \ref{lemma.Cartan.closed}).
In fact, there is a more refined statement describing
a bijection between formal loop group actions and certain
Cartan cochains.

In the example where $G$ is simple and $P=G$ with the
$G\times G$-action by left and right multiplications, our
construction recovers a well-known result.
Namely, for any $k\in\bb{C}$, there is an algebra of CDOs
$\CDO_k(G)$ together with an embedding of vertex algebras
$V_{-k-h^\vee}(\mf{g})\otimes V_{k-h^\vee}(\mf{g})
\hookrightarrow\CDO_k(G)$, where $h^\vee$ is the dual Coxeter
number (Example \ref{CDOG} \& Proposition
\ref{prop.CDOG.commute}).
\cite{GMS4}
Moreover, for $k\notin\bb{Q}$, we give a new proof of
a ``chiral Peter-Weyl Theorem'' concerning the structure of
$\CDO_k(G)$ as a module over
$V_{-k-h^\vee}(\mf{g})\otimes V_{k-h^\vee}(\mf{g})$
(Proposition \ref{prop.PeterWeyl}).
\cite{FS.PeterWeyl}

From now on, $P$ is the total space of a principal $G$-bundle
$\pi:P\rightarrow M$ and $\CDO(P)$ is an algebra of CDOs
equipped with a formal loop group action
$V_\lambda(\mf{g})\hookrightarrow\CDO(P)$.
\S\ref{sec.CDOP} is a more detailed study of the structure of
$\CDO(P)$.
First we consider the centralizer subalgebra
\begin{align*}
\CDO(P)^{\hat{\mf{g}}}=C\big(\CDO(P),V_\lambda(\mf{g})\big)
\end{align*}
i.e.~the subalgebra invariant under the formal loop group action.
This is different from an algebra of CDOs on $P/G=M$.
In particular, part of the structure of $\CDO(P)^{\hat{\mf{g}}}$
is the Lie algebroid $(C^\infty(M),\T(P)^G)$ (\S\ref{sec.invCDO}),
while the corresponding part of the structure of an algebra of
CDOs on $M$ is the Lie algebroid $(C^\infty(M),\T(M))$.
For this reason $\CDO(P)^{\hat{\mf{g}}}$ has more interesting
modules (also see below).

In the case $\pi:P\rightarrow M$ is a principal frame bundle of
$TM$, we find a better description of $\CDO(P)$ such that (among
other things) the formal loop group action becomes manifest.
This occupies the second half of \S\ref{sec.CDOP} and
leads to the following result:

\vspace{0.05in}
\noindent
{\bf Theorem \ref{thm.CDOP}.}
{\it
Suppose we are given a principal $G$-bundle $\pi:P\rightarrow M$,
a representation $\rho:G\rightarrow SO(\bb{R}^d)$ and
an isomorphism $P\times_\rho\bb{R}^d\cong TM$;
a connection $1$-form $\Theta$ of $\pi$;
and a basic $3$-form $H$ on $P$ satisfying
$dH=(\lambda+\Killing+\lambda_\rho)(\Omega\wedge\Omega)$, where
$\Omega=d\Theta+\frac{1}{2}[\Theta\wedge\Theta]$ is the curvature
$2$-form (see also \S\ref{conventions} for notations).
These data determine an algebra of CDOs $\CDO_{\Theta,H}(P)$
with a formal loop group action of level $\lambda$.
For more details see the main text.
}

\vspace{0.05in}
\noindent
Even though the vertex algebras defined as above are particular
examples of algebras of CDOs, they can in fact be used to
recover \emph{all} algebras of CDOs as well as construct other
interesting objects (see below).
Also, they have an arguably more appealing
generators-and-relations description than an arbitrary algebra
of CDOs.

\S\ref{sec.ass} introduces the following construction:
given a formal loop group action of level $-\lambda-\Killing$
on a vector space $W$, one can define a module over the
invariant subalgebra $\CDO(P)^{\hat{\mf{g}}}$ by
\begin{align} \label{ass.module}
\ass(\pi,W)
:=H^{\frac{\infty}{2}+0}\big(\hat{\mf{g}}_{-\Killing}\,,\,
  \CDO(P)\otimes W\big)
\end{align}
(Definition \ref{defn.ass} \& Lemma \ref{lemma.ass}).
Here, $H^{\frac{\infty}{2}+*}(\hat{\mf{g}}_{-\Killing},-)$
is the \emph{semi-infinite (or BRST) cohomology} of the loop
algebra of $\mf{g}$, which is defined only at level $-\Killing$
(recalled in \S\ref{sec.Feigin}--\S\ref{sec.semiinf}).
This construction is analogous to that of associated
vector bundles.
In fact, the author believes there is an interpretation of
(\ref{ass.module}) in terms of a vector bundle over the
``formal loops of $M$'' equipped with some ``connection''.
The example where $\pi$ is the projection from a simple Lie
group to its flag manifold has been studied before as
a construction of Wakimoto modules.
\cite{FF,Voronov.Wakimoto}
Notice that if $W$ is a vertex algebra, then so is
(\ref{ass.module}).

For our first example of (\ref{ass.module}), $\pi$ is any
principal frame bundle of $TM$ and $W=\bb{C}$ (hence
$\lambda=-\Killing$).
By Theorem \ref{thm.CDOP}, the construction of
$\ass(\pi,\bb{C})$ requires the choice of some
$H\in\pi^*\Omega^3(M)$ that satisfies
$dH=\Tr\rho(\Omega)\wedge\rho(\Omega)$.
The main result in \S\ref{CDOP.CDOM} is a new description
of CDOs as alluded to earlier:

\vspace{0.05in}
\noindent
{\bf Theorem \ref{thm.CDO.semiinf}.}
{\it
$\ass(\pi,\bb{C})$ is an algebra of CDOs on $M$.
Moreover, up to isomorphism, every algebra of CDOs on $M$
arises in this fashion.
}

\vspace{0.05in}
\noindent
The proof consists of two parts:
first we identify the two lowest weights of $\ass(\pi,\bb{C})$
(\S\ref{sec.ass.C.0} \& \S\ref{sec.ass.C.1})
and work out their structure
(Proposition \ref{prop.ass.C.VAoid});
then we use a certain property of its conformal vector
to deduce that $\ass(\pi,\bb{C})$ is completely determined
by its two lowest weights
(Proposition \ref{prop.ass.C.conformal} \&
Corollary \ref{cor.ass.C.freeVA}).
For certain homogeneous spaces, this description of the algebras
of CDOs has been given before.
\cite{GMS4}
There is also an extension of the result to supermanifolds,
including a special case that recovers the chiral de Rham complex
(\S\ref{CDO.cs}).

For our second example of (\ref{ass.module}), $G=\t{Spin}_{2d'}$,
$\pi$ is a spin frame bundle of $M$ and $W=S$ is the spinor
representation of $\widehat{\mf{so}}_{2d'}$.
By Theorem \ref{thm.CDOP}, the construction of the
\emph{spinor module} $\ass(\pi,S)$ requires the choice of some
$H\in\pi^*\Omega^3(M)$ that satisfies
$dH=\frac{1}{2}\Tr(\Omega\wedge\Omega)$ (\S\ref{sec.ass.S}).
\S\ref{sec.FL.spinor} consists mainly of an analysis of the
spinor module that parallels the one of $\ass(\pi,\bb{C})$,
resulting in a more explicit description in terms of generating
data and relations (Theorem \ref{thm.ass.S}).
In fact, we regard the work in \S\ref{sec.FL.spinor} largely as
preparation for further study.
It is hoped that the spinor module will serve as an ingredient
in a geometric theory of the Witten genus.

The appendix reviews the notion of vertex algebroids
and their relations with vertex algebras.
Even though vertex algebroids have a rather complicated
definition, they are useful for handling the vertex algebras in
this paper.
\end{subsec}

\begin{subsec} \label{conventions}
{\bf Notations and conventions.}
In this paper, every vertex algebra $V$ is graded by
nonnegative integers which we call \emph{weights};
its component of weight $k$ is denoted by $V_k$ and
its weight operator by $L_0$, i.e. $L_0|_{V_k}=k$.
For any $u\in V$, we always write the Fourier modes of
its vertex operator as $u_k$, $k\in\bb{Z}$, such that
$u_k$ has weight $-k$.
For any conformal vector $\nu\in V$ we consider, $\nu_0=L_0$.

Given a smooth manifold $M$, we write
$C^\infty(M)$, $\T(M)$, $\Omega^n(M)$ for its spaces of
smooth $\bb{C}$-valued functions, vector fields and $n$-forms.
Also, all ordinary cohomology groups and their equivariant
versions have complex coefficients.
Given a Lie algebra $\mf{g}$ and a finite-dimensional
representation $\rho:\mf{g}\rightarrow\mf{gl}(V)$, we write
$\lambda_\rho$ for the invariant symmetric bilinear form on
$\mf{g}$ given by $\lambda_\rho(A,B)=\Tr\rho(A)\rho(B)$.
In particular, $\Killing$ is the Killing form.
Square brackets $[\;]$ are used for supercommutators between
operators of any parities, while curly brackets $\{\;\}$
are reserved for a different use (see \S\ref{VAoid}).
Repeated indices are always implicitly summed over all possible
values, unless a specific range is indicated.
\end{subsec}

\begin{subsec}
{\bf Acknowledgements.}
The author is currently supported by the ESPRC grant EP/H040692/1.
He would like to thank Matthew Ando for a suggestion that
initially motivated Theorem \ref{thm.formalLG.action}, and Dennis
Gaitsgory for explaining some aspects of semi-infinite cohomology.
He would also like to thank a reviewer for pointing out some
related works in the literature.
\end{subsec}

\newpage

\section{CDOs on Smooth Manifolds}
\label{sec.review}

A sheaf of CDOs is a sheaf of vertex algebras locally modelled
on an elementary vertex algebra known as a $\beta\gamma$-system,
and it provides an approximate mathematical formulation of the
quantum theory of $2$-dimensional $\sigma$-models.
\cite{GMS1,Kapustin,Witten.CDO} 
This section reviews the construction and classification of
sheaves of CDOs on a smooth manifold.
For the definition of a vertex algebra, see \cite{Kac,FB-Z}.

\begin{subsec} \label{algCDO} 
{\bf The algebra of CDOs on $\bb{A}^d$.}
Let $d$ be a positive integer.
Define a unital associative algebra $\mc{U}$ with
the following generators and relations
\begin{align} \label{Weyl}
b^i_n,\;a_{i,n},\;n\in\bb{Z},\;i=1,\ldots,d,\qquad
[a_{i,n},b^j_m]=\delta^j_i\delta_{n,-m},\qquad
[b^i_n,b^j_m]=0=[a_{i,n},a_{j,m}]\,.
\end{align}
The commutative subalgebra $\mc{U}_+$ generated
by $\{b^i_n\}_{n>0}$ and $\{a_{i,n}\}_{n\geq 0}$
has a trivial representation $\bb{C}$.
The induced $\mc{U}$-module
\begin{align*}
\CDO(\bb{A}^d):=\mc{U}\otimes_{\mc{U}_+}\bb{C}
\end{align*}
has the structure of a vertex algebra.
The vacuum is $\mathbf 1=1\otimes 1$.
The infinitesimal translation operator $T$ and weight
operator $L_0$ are determined by
\begin{align*}
\begin{array}{lll}
T\mathbf 1=0, &
[T,b^i_n]=(1-n)b^i_{n-1}, \ph{aa} &
[T,a_{i,n}]=-na_{i,n-1} \vss \\
L_0\mathbf 1=0, \ph{aa} &
[L_0,b^i_n]=-nb^i_n, &
[L_0,a_{i,n}]=-na_{i,n}
\end{array}
\end{align*}
The fields (or vertex operators) of $b^i_0\mathbf 1\in\CDO(\bb{A}^d)_0$
and $a_{i,-1}\mathbf 1\in\CDO(\bb{A}^d)_1$ are respectively
\begin{align*}
\begin{array}{l}
\sum_n b^i_n z^{-n},\qquad \sum_n a_{i,n}z^{-n-1}
\end{array}
\end{align*}
which determine the fields of other elements by
the Reconstruction Theorem \cite{FB-Z}.
This vertex algebra has a family of conformal vectors
of central charge $2d$, namely
\footnote{
This vertex algebra also has other conformal vectors that define
the weights differently.
\cite{Kac}
}
\begin{align*}
\qquad\qquad\qquad
a_{i,-1}b^i_{-1}\mathbf 1+T^2 f,\qquad
f\in\CDO(\bb{A}^d)_0=\bb{C}[b^1_0,\ldots,b^d_0]\cdot\mb{1}.
\end{align*}

The vertex algebra $\CDO(\bb{A}^d)$ is freely generated by
its associated vertex algebroid (see \S\ref{VA.VAoid} and
\S\ref{VAoid.VA}).
To describe the latter, consider the affine space
$\bb{A}^d=\t{Spec}\,\bb{C}[b^1,\cdots,b^d]$ and identify
the functions, $1$-forms and vector fields on $\bb{A}^d$
with the following subquotients of $\CDO(\bb{A}^d)$: \vss \\
\indent $\cdot$\;
$\mc{O}(\bb{A}^d)=\CDO(\bb{A}^d)_0$ via $b^i=b^i_0\bs{1}$, 
$b^i b^j=b^i_0 b^j_0\bs{1}$, etc. \vss \\
\indent $\cdot$\;
$\Omega^1(\bb{A}^d)\subset\CDO(\bb{A}^d)_1$ via 
$db^i=b^i_{-1}\bs{1}$ \vss \\
\indent $\cdot$\;
$\T(\bb{A}^d)=\CDO(\bb{A}^d)_1/\Omega^1(\bb{A}^d)$ via
$\d_i=\d/\d b^i=$ coset of $a_{i,-1}\bs{1}$ \vss \\
Under these identifications, the vertex algebroid associated to
$\CDO(\bb{A}^d)$ is of the form 
\begin{align*}
\big(\mc{O}(\bb{A}^d),\Omega^1(\bb{A}^d),\T(\bb{A}^d),
  \bullet,\{\;\},\{\;\}_{\Omega}\big).
\end{align*}
The extended Lie algebroid structure consists of
the usual differential on functions,
Lie bracket on vector fields, 
Lie derivations by vector fields on functions and $1$-forms,
and pairing between $1$-forms and vector fields.
Using the splitting
\begin{align} \label{algCDO.splitting}
s:\T(\bb{A}^d)\rightarrow\CDO(\bb{A}^d)_1,\qquad
X=X^i\d_i\;\mapsto\;a_{i,-1}X^i
\end{align} 
the rest of the vertex algebroid structure, according to
(\ref{VA.VAoid.456}), is given by
\begin{align} \label{CDO.VAoid456} 
X\bullet f=(\d_j X^i)(\d_i f)db^j,\quad
\{X,Y\}=-(\d_j X^i)(\d_i Y^j),\quad
\{X,Y\}_\Omega=-(\d_k\d_j X^i)(\d_i Y^j)db^k 
\end{align}
These expressions do not seem to have any obvious
global meaning;
however, see Theorem \ref{thm.globalCDO}.
\end{subsec}

\begin{subsec} \label{sec.CDO.Rd}
{\bf The sheaf of CDOs on $\bb{R}^d$.}
Now regard $b^1,\ldots,b^d$ as the standard coordinates of
$\bb{R}^d$.
The smooth functions, $1$-forms and vector fields on any open
set $W\subset\bb{R}^d$ form an extended Lie algebroid just as
in \S\ref{algCDO}, and the expressions in (\ref{CDO.VAoid456})
again define a vertex algebroid
\begin{align*}
\big(C^\infty(W),\Omega^1(W),\T(W),
  \bullet,\{\;\},\{\;\}_\Omega\big).
\end{align*}
The vertex algebra it freely generates
(see \S\ref{VAoid.VA}) will be denoted by $\CDO(W)$.
This vertex algebra also has a family of conformal
vectors of central charge $2d$, namely
\begin{align} \label{localCDO.conformal}
\d_{i,-1}db^i+\frac{1}{2}\,T\omega,\qquad
\omega\in\Omega^1(W),\;d\omega=0.
\end{align}
For any inclusion of open sets $W'\subset W$, there is
an obvious restriction map $\CDO(W)\rightarrow\CDO(W')$.
This defines a sheaf of conformal vertex algebras $\CDO$ on
$\bb{R}^d$.
\end{subsec}

\begin{defn} \label{defn.CDO}
A {\bf sheaf of CDOs} on a smooth $d$-dimensional manifold $M$
is a sheaf of vertex algebras $\mc{V}$ with the following
properties:
\vspace{-0.05in}
\begin{itemize}
\item[$\cdot$]
its weight-zero component is $\mc{V}_0=C^\infty_M$, and
\vspace{-0.08in}
\item[$\cdot$]
each point of $M$ has a neighborhood $U$ such that $(U,\mc{V}|_U)$
is isomorphic to $(W,\CDO|_W)$ for some open set $W\subset\bb{R}^d$.
\vspace{-0.05in}
\end{itemize}
A {\bf conformal structure} on $\mc{V}$ is an element
$\nu\in\mc{V}(M)$ such that each of the isomorphisms postulated
above takes $\nu|_U\in\mc{V}(U)$ to one of the conformal vectors
(\ref{localCDO.conformal}) of $\CDO(W)$.
\end{defn}

In order to state the results on the construction
and classification of sheaves of CDOs, let us
introduce a notation that will also appear
often in the sequel.

\begin{defn} \label{tnabla}
Let $M$ be a smooth manifold.
Given a connection $\nabla$ on $TM$ and $X\in\T(M)$, define
an operator $\tnabla X\in \Gamma(\t{End}\,TM)$ by
\begin{align*}
(\tnabla X)(Y):=\nabla_X Y-[X,Y],\qquad Y\in\T(M).
\end{align*}
Notice that if $\nabla$ is torsion-free, then
$\tnabla X=\nabla X$.
\end{defn}

\begin{theorem} \label{thm.globalCDO}
\cite{myCDO}
Let $M$ be a smooth $d$-dimensional manifold.

(a) Given a connection $\nabla$ on $TM$ with curvature $R$ and
any $H\in\Omega^3(M)$ satisfying $dH=\Tr(R\wedge R)$, we can
define a sheaf of vertex algebroids
$\big(C^\infty_M,\Omega^1_M,\T_M,\bullet,\{\;\},\{\;\}_\Omega\big)$
on $M$ by
\begin{align}
\nonumber 
X\bullet f\;\;\; &:= (\nabla X)f \\
\label{CDO.VAoid}
\{X,Y\}\;\; &:= -\Tr(\tnabla X\cdot\tnabla Y) \\
\nonumber
\{X,Y\}_\Omega &:=
  \Tr\bigg(-\nabla(\tnabla X)\cdot\tnabla Y
    +\tnabla X\cdot\iota_Y R
    -\iota_X R\cdot\tnabla Y\bigg)
  +\frac{1}{2}\iota_X\iota_Y H
\end{align}
The sheaf of vertex algebras it freely generates (see
\S\ref{VAoid.VA}) is a sheaf of CDOs on $M$, which will be
denoted by $\CDO_{M,\nabla,H}$.
Up to isomorphism, every sheaf of CDOs on $M$ is
of this form.

(b) There is a one-to-one correspondence between
$\omega\in\Omega^1(M)$ satisfying $d\omega=\Tr R$ and the conformal
structures on $\CDO_{M,\nabla,H}$.
For any such $\omega$, the corresponding conformal structure, which
will be denoted by $\nu^\omega$, has the following local expression:
\begin{align} \label{CDO.conformal}
\nu^\omega|_U
=\sigma_{i,-1}\sigma^i
+\frac{1}{2}\Tr\big(\Gamma^\sigma_{-1}\Gamma^\sigma
  -\Gamma^\sigma_{-2}\mb{1}\big)
+\sigma^i([\sigma_j,\sigma_k])
  \sigma^k_{-1}(\Gamma^\sigma)^j_{\ph{i}i}
+\frac{1}{2}\omega_{-2}\mb{1}
\end{align}
where $U\subset M$ is any sufficiently small open set;
$\sigma=(\sigma_1,\ldots,\sigma_d)$ is any basis of $\T(U)$ over
$C^\infty(U)$;
$(\sigma^1,\ldots,\sigma^d)$ is the dual basis of $\Omega^1(U)$;
and $\Gamma^\sigma\in\Omega^1(U)\otimes\mf{gl}_d$ is the connection
$1$-form of $\nabla$ with respect to $\sigma$,
i.e.~$\nabla\sigma_i=(\Gamma^\sigma)^j_{\ph{i}i}\otimes\sigma_j$.
Also, $\nu^\omega$ has central charge $2d$ and the property that
\begin{align} \label{CDO.L1}
\qquad
\nu^\omega_1\alpha=0\;\t{ for }\alpha\in\Omega^1(M),\qquad
\nu^\omega_1 X=\Tr\tnabla X-\omega(X)\;\t{ for }X\in\T(M).
\qquad\qedsymbol
\end{align}
\end{theorem}

{\it Remarks.}
(i) By Theorem \ref{thm.globalCDO}, a smooth manifold
$M$ admits sheaves of CDOs if and only if $p_1(M)$ is
trivial in de Rham cohomology, while conformal structures
always exist.
For example, if $\nabla$ is orthogonal with respect to
a Riemannian metric, then $\Tr R=0$ and a conformal
structure can be defined using, say, $\omega=0$.
However, this result generalizes to supermanifolds, in which case
the obstruction to conformal structures may well be nontrivial.
\cite{myCDO}

(ii) In \cite{myCDO}, we only obtained a local expression of
$\nu^\omega$ in terms of local coordinate vector fields,
but that implies the more general expression in
(\ref{CDO.conformal}) by a straightforward calculation.

(iii) In the original work \cite{GMS2} as well as in
\cite{myCDO}, sheaves of CDOs and conformal structures
were constructed by gluing local data.
For smooth manifolds, the result is the above description in
terms of generators and relations (or generating fields and
OPEs), but the expressions in (\ref{CDO.VAoid}) and
(\ref{CDO.conformal}) do not look very inspiring.
In \S\ref{CDOP.CDOM}, we will obtain a more conceptual
description of CDOs using semi-infinite cohomology.

\begin{theorem} \label{thm.globalCDO.iso}
\cite{myCDO}
Let $\CDO_{M,\nabla,H}$ and $\CDO_{M,\nabla,H'}$ be sheaves of CDOs
on a smooth manifold $M$ constructed as in Theorem
\ref{thm.globalCDO}a;
denote by $\bullet$, $\{\;\}$, $\{\;\}_\Omega$
(resp.~$\{\;\}'_\Omega$) the structure maps determined by
$\nabla$ and $H$ (resp.~$H'$) as in (\ref{CDO.VAoid}).

(a) There is a one-to-one correspondence between
$\beta\in\Omega^2(M)$ satisfying $d\beta=H'-H$ and isomorphisms
$\CDO_{M,\nabla,H}\stackrel{\sim}{\rightarrow}\CDO_{M,\nabla,H'}$
that restricts to the identity on $C^\infty_M$.
For any such $\beta$, the corresponding isomorphism is induced by
an isomorphism of sheaves of vertex algebroids
(see \S\ref{VAoid.VA.mor})
\begin{align*}
(\mathrm{id},\Delta_\beta):
\big(C^\infty_M,\Omega^1_M,\T_M,\bullet,\{\;\},\{\;\}_\Omega\big)
\rightarrow
\big(C^\infty_M,\Omega^1_M,\T_M,\bullet,\{\;\},\{\;\}'_\Omega\big)
\end{align*}
where $\Delta_\beta:\T_M\rightarrow\Omega^1_M$ is given by
$\Delta_\beta(X)=\frac{1}{2}\iota_X\beta$.

(b) Every isomorphism described above respects the correspondence
in Theorem \ref{thm.globalCDO}b.
$\qedsymbol$
\end{theorem}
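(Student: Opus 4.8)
\medskip
\noindent\textbf{Proof strategy.}
The plan is to reduce everything to the associated sheaves of vertex algebroids and then track how the structure maps change under a change of splitting. By Theorem~\ref{thm.globalCDO}a, both $\CDO_{M,\nabla,H}$ and $\CDO_{M,\nabla,H'}$ are freely generated (\S\ref{VAoid.VA}) by sheaves of vertex algebroids $\mc{A}_H=\big(C^\infty_M,\Omega^1_M,\T_M,\bullet,\{\;\},\{\;\}_\Omega\big)$ and $\mc{A}_{H'}=\big(C^\infty_M,\Omega^1_M,\T_M,\bullet,\{\;\},\{\;\}'_\Omega\big)$, which by (\ref{CDO.VAoid}) share the same underlying extended Lie algebroid (the maps $\bullet$ and $\{\;\}$ depend on $\nabla$ only) and differ solely in $\{\;\}_\Omega$ versus $\{\;\}'_\Omega$. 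Combining the universal property of the free vertex algebra on a vertex algebroid with the functor recovering the vertex algebroid of a vertex algebra from its low-weight part (\S\ref{VA.VAoid}), an isomorphism $\phi\colon\CDO_{M,\nabla,H}\stackrel{\sim}{\rightarrow}\CDO_{M,\nabla,H'}$ restricting to the identity on the weight-zero component $C^\infty_M$ is the same datum as an isomorphism $\mc{A}_H\to\mc{A}_{H'}$ of sheaves of vertex algebroids restricting to the identity on $C^\infty_M$. I would first observe that such a $\phi$ is automatically the identity on $\Omega^1_M$ and descends to the identity on $\T_M$: the translation operator $T$ restricts on the weight-zero component to the de Rham differential $C^\infty_M\to\Omega^1_M$ and commutes with $\phi$, so $\phi$ fixes $dC^\infty_M$ and hence, by $C^\infty_M$-linearity of the $(-1)$st product, all of $\Omega^1_M$; and for $X\in\T_M$ with a lift $\tilde X$ to the weight-one component, $\phi(\tilde X)$ and $\tilde X$ induce the same derivation of $C^\infty_M$ via the $0$th product (as $\phi$ intertwines Fourier modes and fixes $C^\infty_M$), so $\phi(\tilde X)$ is again a lift of $X$. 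Consequently the induced isomorphism $\mc{A}_H\to\mc{A}_{H'}$ is the identity on the extended Lie algebroid and is completely encoded by the well-defined sheaf map $\gamma\colon\T_M\to\Omega^1_M$, $\gamma(X)=\phi(\tilde X)-\tilde X$, recording the change of splitting.

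\smallskip
\noindent\emph{Part (a).} I would then feed the splitting change $s\mapsto s+\gamma$ into the defining identities of a vertex algebroid, using the dependence of $(\bullet,\{\;\},\{\;\}_\Omega)$ on the chosen splitting (cf.\ (\ref{VA.VAoid.456})). Because $\mc{A}_H$ and $\mc{A}_{H'}$ have \emph{the same} $\bullet$ and $\{\;\}$, compatibility with these two maps forces $\gamma$ to be $C^\infty_M$-linear and skew-symmetric, hence $\gamma=\Delta_\beta$ with $\gamma(X)=\tfrac12\iota_X\beta$ for a unique $\beta\in\Omega^2(M)$; compatibility with the remaining map then reads $\tfrac12\iota_X\iota_Y H'=\tfrac12\iota_X\iota_Y H+\tfrac12\iota_X\iota_Y\,d\beta$ for all $X,Y$, i.e.\ $d\beta=H'-H$. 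Conversely, given any $\beta\in\Omega^2(M)$ with $d\beta=H'-H$, a direct verification shows that $(\mathrm{id},\Delta_\beta)$ obeys the axioms for a morphism of sheaves of vertex algebroids (\S\ref{VAoid.VA.mor}), hence induces an isomorphism of the freely generated sheaves of vertex algebras; and $\beta\mapsto\Delta_\beta$ is injective because a $2$-form is determined by its contractions $\iota_X\iota_Y\beta$. This yields the asserted bijection.

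\smallskip
\noindent\emph{Part (b).} Let $\phi$ be an isomorphism as in (a). Since conformal structures are, by definition, the elements matching (\ref{localCDO.conformal}) in the local charts, $\phi$ sends a conformal structure $\nu^\omega$ of $\CDO_{M,\nabla,H}$ to a conformal structure of $\CDO_{M,\nabla,H'}$, which by Theorem~\ref{thm.globalCDO}b equals $\nu^{\omega'}$ for a unique $\omega'$ with $d\omega'=\Tr R$; it remains to show $\omega'=\omega$. For $X\in\T_M$ with a lift $\tilde X$, recall from the first paragraph that $\phi(\tilde X)$ is again a lift of $X$, so applying (\ref{CDO.L1}) in $\CDO_{M,\nabla,H'}$ and in $\CDO_{M,\nabla,H}$ gives
\[
\Tr\tnabla X-\omega'(X)=\nu^{\omega'}_1\phi(\tilde X)=\phi\big(\nu^\omega_1\tilde X\big)=\phi\big(\Tr\tnabla X-\omega(X)\big)=\Tr\tnabla X-\omega(X),
\]
the middle equality using $\phi(\nu^\omega)=\nu^{\omega'}$ together with the fact that $\phi$ intertwines Fourier modes, and the last using $\phi|_{C^\infty_M}=\mathrm{id}$. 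Hence $\omega'=\omega$, which is the compatibility asserted in (b).

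\smallskip
\noindent The step I expect to be the main obstacle is the one in Part (a): pinning down exactly which change-of-splitting maps $\gamma$ are compatible with the \emph{prescribed} $\bullet$ and $\{\;\}$ (not merely with the underlying Lie algebroid) and computing the resulting modification of $\{\;\}_\Omega$ as $\tfrac12\iota_X\iota_Y\,d\beta$ — in particular verifying that the normalization $\Delta_\beta(X)=\tfrac12\iota_X\beta$ is the one matching the factor $\tfrac12$ in (\ref{CDO.VAoid}). This is the only place where the (rather intricate) vertex-algebroid identities must be wielded with care; everything else is formal or immediate from (\ref{CDO.L1}).
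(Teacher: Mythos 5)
Your argument is correct, and it follows exactly the route the paper intends: the theorem is quoted from \cite{myCDO} with no proof given here, but its statement already exhibits the isomorphism as induced by $(\mathrm{id},\Delta_\beta)$ at the level of the vertex algebroids of \S\ref{VA.VAoid}--\S\ref{VAoid.VA.mor}, which is precisely your reduction. Your key computation --- that the equations of Definition \ref{VAoid.mor} with $\bullet'=\bullet$ and $\{\;\}'=\{\;\}$ force $\Delta$ to be $C^\infty_M$-linear and skew, and that the Cartan identity $L_X\iota_Y-L_Y\iota_X+d\iota_Y\iota_X=\iota_{[X,Y]}-\iota_X\iota_Yd$ turns the third equation into $\{X,Y\}'_\Omega-\{X,Y\}_\Omega=\tfrac12\iota_X\iota_Yd\beta$ --- checks out against (\ref{CDO.VAoid}) and Lemma \ref{lemma.newVAoid}, and your part (b) is the right one-line consequence of (\ref{CDO.L1}) once one notes that $\nu^{\omega'}_1$ annihilates the $1$-form part of $\phi(\tilde X)$.
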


{\it Remarks.}
(i) By Theorems \ref{thm.globalCDO}a and \ref{thm.globalCDO.iso}a,
if $M$ admits sheaves of CDOs, their isomorphism classes form
an $H^3(M)$-torsor.
(ii) Since every sheaf of CDOs $\CDO_{M,\nabla,H}$ is fine, for
most purposes it suffices to (and we will) work with the vertex
algebra of global sections $\CDO_{\nabla,H}(M)$, which will be
called an {\bf algebra of CDOs} on $M$.
The reader should keep in mind that by construction
\begin{align} \label{CDO.wt01}
\CDO_{\nabla,H}(M)_0=C^\infty(M),\qquad
\CDO_{\nabla,H}(M)_1=\Omega^1(M)\oplus\T(M).
\end{align}
For a description of the higher weights, see
\S\ref{freeVA.PBW}.

\begin{lemma} \label{lemma.1form.0mode}
Consider an algebra of CDOs $\CDO_{\nabla,H}(M)$ on a smooth
manifold $M$.
For any $\alpha\in\Omega^1(M)$ and $X\in\T(M)$ viewed as
elements of $\CDO_{\nabla,H}(M)$, we have
$\alpha_0 X=-\iota_X d\alpha$.
Moreover, $\alpha_0\equiv 0$ on $\CDO_{\nabla,H}(M)$ if
and only if $d\alpha=0$.
\end{lemma}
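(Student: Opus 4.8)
\noindent\emph{Proof proposal.} The plan is to compute $\alpha_0 X$ directly from the skew-symmetry of vertex operators, using only that the extended Lie algebroid underlying the vertex algebroid of $\CDO_{\nabla,H}(M)$ is the standard one on $(C^\infty_M,\Omega^1_M,\T_M)$ (as recorded in \S\ref{algCDO} and preserved globally by Theorem \ref{thm.globalCDO}), and then to deduce the vanishing statement from the fact that $\CDO_{\nabla,H}(M)$ is generated in weights $\leq 1$.

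For the first assertion, apply skew-symmetry to the two weight-one elements $\alpha$ and $X$:
\[
\alpha_0 X=\sum_{k\geq 0}\frac{(-1)^{k+1}}{k!}\,T^k\big(X_k\alpha\big).
\]
The element $X_k\alpha$ has weight $1-k$ and hence vanishes for $k\geq 2$, leaving $\alpha_0 X=-X_0\alpha+T(X_1\alpha)$. Here $X_1\alpha\in\CDO_{\nabla,H}(M)_0=C^\infty(M)$ is the tautological pairing $\iota_X\alpha$, $X_0\alpha$ is the Lie derivative $\mc{L}_X\alpha\in\Omega^1(M)$, and $T$ restricted to $\CDO_{\nabla,H}(M)_0$ is the de Rham differential $d$ (recall that $\Omega^1_M\subset\CDO_{\nabla,H}(M)_1$ is by construction generated over $C^\infty_M$ by the elements $Tf=df$). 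Cartan's formula $\mc{L}_X=\iota_X\,d+d\,\iota_X$ then gives
\[
\alpha_0 X=-\mc{L}_X\alpha+d\,\iota_X\alpha=-\iota_X\,d\alpha .
\]
I would confirm the identifications $X_1\alpha=\iota_X\alpha$ and $X_0\alpha=\mc{L}_X\alpha$ by reducing, via $C^\infty$-bilinearity, to coordinate vector fields and the $1$-forms $db^i$ in the local model $\CDO(W)$ of \S\ref{sec.CDO.Rd}, where they fall out of the operator product expansion $a_i(z)\,\partial_w b^j(w)\sim\delta^j_i(z-w)^{-2}$.

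For the ``moreover'', one direction is immediate: if $\alpha_0=0$ on $\CDO_{\nabla,H}(M)$, then $\iota_X\,d\alpha=-\alpha_0 X=0$ for every $X\in\T(M)$, so $d\alpha=0$. Conversely, suppose $d\alpha=0$. Vanishing of the operator $\alpha_0$ may be checked on an open cover, and over a contractible open set $U$ the Poincar\'e lemma gives $\alpha|_U=dg=Tg$ for some $g\in C^\infty(U)$; since $(Tu)_0=0$ in any vertex algebra, $(\alpha|_U)_0=0$ on $\CDO(U)$, whence $\alpha_0=0$ on global sections. (Alternatively, avoiding the sheaf: $\alpha_0$ annihilates $C^\infty(M)=\CDO_{\nabla,H}(M)_0$ — evident in the local model — and it annihilates $\T(M)$ by the first part, hence all of $\CDO_{\nabla,H}(M)_0\oplus\CDO_{\nabla,H}(M)_1$; being a zeroth Fourier mode it is a derivation, $[\alpha_0,u_n]=(\alpha_0 u)_n$, and it kills the vacuum, so since $\CDO_{\nabla,H}(M)$ is freely generated in weights $\leq 1$ (see \S\ref{freeVA.PBW}) it vanishes identically.)

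The only step that really needs care is pinning down $X_0\alpha$ and $X_1\alpha$ as exactly the classical Lie derivative and pairing, with no ``quantum'' corrections. In this paper that is part of what the extended Lie algebroid structure of the underlying vertex algebroid asserts; a self-contained check amounts to a short (if slightly fiddly) computation with normal-ordered products of the generating fields $b^i(z),a_i(z)$ of $\CDO(W)$. Everything else is formal.
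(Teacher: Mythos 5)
Your argument is correct, and it is worth noting that the paper itself offers no proof to compare against: Lemma \ref{lemma.1form.0mode} is simply quoted from \cite{myCDO} with a qed box in the statement. Your skew-symmetry computation $\alpha_0 X=-X_0\alpha+T(X_1\alpha)$ is valid, and the identifications you worry about at the end ($X_1\alpha=\iota_X\alpha$, $X_0\alpha=L_X\alpha$, $Tf=df$) require no local computation in the $\beta\gamma$-system at all: they are literally the definitions of the extended Lie algebroid attached to a vertex algebra in \S\ref{VA.VAoid}, and Theorem \ref{thm.globalCDO}a asserts that for $\CDO_{\nabla,H}(M)$ this extended Lie algebroid is the standard one on $(C^\infty_M,\Omega^1_M,\T_M)$. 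In fact the whole first assertion drops out even more directly from the defining relations (\ref{freeVA.comm}): $\alpha_0 X=-[X_{-1},\alpha_0]\mathbf 1=-(L_X\alpha)_{-1}\mathbf 1+\alpha(X)_{-1}\mathbf 1=-L_X\alpha+d(\iota_X\alpha)$, which is your identity without invoking skew-symmetry. Both of your arguments for the converse direction are sound; in the second one you should add the (one-line) observation that $\alpha_0$ also kills $\Omega^1(M)\subset\CDO_{\nabla,H}(M)_1$, since $[\alpha_n,\beta_m]=0$ and $\alpha_0\mathbf 1=0$ — you need all of weight one, not just $\T(M)$, before the derivation property $[\alpha_0,u_n]=(\alpha_0 u)_n$ finishes the job on the freely generated algebra.
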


\begin{proof}
By definition of $\CDO_{\nabla,H}(M)$ and (\ref{freeVA.comm}),
we have $\alpha_0 f=0$ for $f\in C^\infty(M)$ and
\begin{align*}
\alpha_0 X
=-[X_{-1},\alpha_0]\mb{1}
=-L_X\alpha+d\alpha(X)
=-\iota_X d\alpha.
\end{align*}
Since $\alpha_0$ is a derivation, it is trivial on the entire
vertex algebra $\CDO_{\nabla,H}(M)$ if and only if it is trivial
on the generating subspaces $C^\infty(M)$ and $\T(M)$.
\end{proof}

\newpage
\setcounter{equation}{0}
\section{Formal Loop Group Actions on CDOs}
\label{sec.LGaction}

In this section, we study how a Lie group action on
a manifold can be lifted to a ``formal loop group action''
on an algebra of CDOs.
This turns out to be a condition on the equivariant first
Pontrjagin class of the manifold.

\begin{subsec} \label{sec.Gmfld}
{\bf Setting:~manifold with a Lie group action.}
Throughout this section, let $G$ be a compact connected
Lie group, $\mf{g}$ its Lie algebra, and $\lambda$
an invariant symmetric bilinear form on $\mf{g}$.
Recall the loop algebra
$L\mf{g}=\mf{g}\otimes\bb{C}[t,t^{-1}]$:
writing $A\otimes t^n$ as $A_n$, the Lie bracket is
given by $[A_n,B_m]=[A,B]_{n+m}$.
Also recall that $\lambda$ determines a central extension
$\hat{\mf{g}}_\lambda=L\mf{g}\oplus\bb{C}$ with
\begin{align*}
[A_n,B_m]=[A,B]_{n+m}+n\lambda(A,B)\delta_{n+m,0},\quad
A,B\in\mf{g},\;\,n,m\in\bb{Z}.
\end{align*}
\footnote{
In this paper we adopt the following habit:
the kernel of $\hat{\mf{g}}_\lambda\rightarrow L\mf{g}$
is always identified with $\bb{C}$ and, whenever
$\hat{\mf{g}}_\lambda$ acts on some vector space, $1\in\bb{C}$
always acts as identity.
}
Let $P$ be a smooth manifold with a smooth right $G$-action.
Later we will specialize to the case of a principal
bundle, but at the moment $P$ can be any right $G$-manifold.
The left $G$-action on $C^\infty(P)$ is completely determined
by the induced map of Lie algebras $\mf{g}\rightarrow\T(P)$.
The vector field generated by $A\in\mf{g}$ will be written
as $A^P\in\T(P)$.
\end{subsec}

\begin{subsec} \label{sec.Cartan}
{\bf The equivariant de Rham complex.}
Recall that $H^*_G(P)=H^*(EG\times_G P)$ can be computed
by the Cartan model $(\Omega^*_G(P),d_G)$.~\cite{GuiSte}
The graded algebra of Cartan cochains is given by
\begin{align*}
\Omega^*_G(P)=
  \bigoplus_{k\geq 0}\Omega^k_G(P),\qquad
\Omega^k_G(P)=
  \bigoplus_{2i+j=k}\big(\t{Sym}^i\mf{g}^\vee
    \otimes\Omega^j(P)\big)^G.
\end{align*}
Let us follow a usual convention:~regard any
$\xi\in\Omega^*_G(P)$ as a $G$-equivariant polynomial map
$\xi:\mf{g}\rightarrow\Omega^*(P)$ and write its value at
$A\in\mf{g}$ as $\xi_A$.
The Cartan differential then reads
\begin{align*}
(d_G\xi)_A=d\xi_A-\iota_{A^P}\xi_A.
\end{align*}
The characteristic map $H^*(BG)\rightarrow H^*_G(P)$ is
represented by the inclusion
$(\t{Sym}^*\mf{g}^\vee)^G\hookrightarrow\Omega^*_G(P)$,
and the image of any $\eta\in(\t{Sym}^*\mf{g}^\vee)^G=H^*(BG)$
will be denoted by $\eta(P)\in H^*_G(P)$.
\end{subsec}

\begin{subsec} \label{sec.inv.data}
{\bf CDOs with a Lie group action.}
Choose a $G$-invariant connection $\nabla$ on $TP$, with
curvature tensor $R$.
This means for $A\in\mf{g}$ we have $L_{A^P}\nabla=0$,
or equivalently
\begin{align} \label{conn.Ginv}
\nabla(\tnabla A^P)=-\iota_{A^P}R
\end{align}
(see Definition \ref{tnabla}).
\footnote{
It follows from a simple calculation that for any $X\in\T(P)$
we have $L_X\nabla=\nabla(\tnabla X)+\iota_X R$.
}
Assume that $p_1(P)=0$ and choose some $H\in\Omega^3(P)^G$
such that $dH=\Tr(R\wedge R)$.
By Theorem \ref{thm.globalCDO}a, $\nabla$ and $H$
determine an algebra of CDOs $\CDO_{\nabla,H}(P)$, which
is freely generated by a vertex algebroid
(see also \S\ref{VAoid.VA})
\begin{align*}
\big(C^\infty(P),\Omega^1(P),\T(P),
  \bullet,\{\;\},\{\;\}_\Omega\big);
\end{align*}
when there is no risk of confusion, we simply write
$\CDO(P)$.
Clearly, the $G$-invariance of $\nabla$ and $H$ implies the
$G$-equivariance of the structure maps $\bullet$, $\{\;\}$,
$\{\;\}_\Omega$, so that the
$G$-action on $C^\infty(P)=\CDO(P)_0$ extends to a $G$-action
on $\CDO(P)$.

Without loss of generality, we may assume that $\Tr R=0$.
(For example, this is true if $\nabla$ is orthogonal
with respect to a Riemannian metric.)
Choose some $\omega\in\Omega^1(P)^G$ such that $d\omega=0$.
By Theorem \ref{thm.globalCDO}b, $\omega$ determines
a $G$-invariant conformal vector $\nu^\omega$ in
$\CDO(P)$.
The Fourier modes of the associated Virasoro field 
will be denoted by $L_n^\omega$, $n\in\bb{Z}$.
Soon we will make a more specific choice of $\omega$.
\end{subsec}

{\it Remark.}
The $G$-invariance of $\nabla$ and $H$, as well as that
of other geometric data to appear later, can always be
achieved by averaging over $G$ with respect to the
Haar measure.

\begin{defn} \label{formalLG.action}
An {\bf inner $(\hat{\mf{g}}_\lambda,G)$-action} on a vertex
algebra $V$ is a map of vertex algebras from $V_\lambda(\mf{g})$
such that the induced $\mf{g}$-action on $V$ integrates into
a $G$-action. 
An inner $(\hat{\mf{g}}_\lambda,G)$-action on a conformal vertex
algebra $V$ is {\bf primary} if the image of any
$A\in V_\lambda(\mf{g})_1$ is primary, or equivalently, if the
induced $\hat{\mf{g}}_\lambda$-action on $V$ is intertwined by
the Virasoro algebra action.
\end{defn}

{\it Remarks.}
(i) By definition of $V_\lambda(\mf{g})$ (see Example
\ref{VAoid.Lie}), any map $V_\lambda(\mf{g})\rightarrow V$
is determined by its component of weight one, i.e.~a linear map
$\mf{g}=V_\lambda(\mf{g})_1\rightarrow V_1$.
Taking the zeroth modes (resp.~Fourier modes) then yields a map of
Lie algebras from $\mf{g}$ to the inner derivations of $V$
(resp.~from $\hat{\mf{g}}_\lambda$ to the endomorphisms of $V$).
These are the induced actions mentioned above.
(ii) Now we can state the goal of this section more precisely:
find the condition under which the given $G$-action on
$\CDO(P)_0=C^\infty(P)$ extends to an inner
$(\hat{\mf{g}}_\lambda,G)$-action on $\CDO(P)$.

\begin{subsec} \label{CDO.Cartan}
{\bf Cartan cochains associated to CDOs.}
The $G$-action on $P$ and the vertex algebroid structure associated
to $\CDO(P)$ together determine two Cartan cochains of degree $4$,
namely
\begin{align} \label{Cartan4.VAoid}
\begin{array}{ll}
\chi^{2,2}\in\big(\mf{g}^\vee\otimes\Omega^2(P)\big)^G,
&\chi^{2,2}_A:=\{A^P,-\}_\Omega=
  \Tr(\tnabla A^P\cdot R)-\frac{1}{2}\iota_{A^P}H
\vss \\
\chi^{4,0}\in\big(\t{Sym}^2\mf{g}^\vee\otimes C^\infty(P)\big)^G,
\quad
&\chi^{4,0}_A:=\{A^P,A^P\}
  =-\Tr(\tnabla A^P\cdot\tnabla A^P)
\end{array}
\end{align}
Indeed, by (\ref{CDO.VAoid}) and (\ref{conn.Ginv}),
the operator $\{A^P,-\}_\Omega:\T(P)\rightarrow\Omega^1(P)$
may be viewed as the indicated $2$-form, and
the $G$-invariance of $\nabla$, $H$ implies the $G$-equivariance
of $\chi^{2,2}$, $\chi^{4,0}$.
The $G$-action on $P$ and the conformal vector $\nu^\omega$ of
$\CDO(P)$ together determine a Cartan cochain of degree $2$,
namely
\begin{align} \label{Cartan2.conformal}
\chi^{2,0}\in\big(\mf{g}^\vee\otimes C^\infty(P)\big)^G,\qquad
\chi^{2,0}_A:=L_1^\omega A^P=\Tr\tnabla A^P-\omega(A^P).
\end{align}
Indeed, the $G$-invariance of $\nabla$, $\omega$ implies the
$G$-equivariance of $\chi^{2,0}$.
\end{subsec}

\begin{lemma} \label{lemma.Cartan.closed}
(a) The Cartan cochain $2\chi^{2,2}+\chi^{4,0}$ is closed and
represents $8\pi^2 p_1(P)_G\in H^4_G(P)$.
(b) The Cartan cochain $\chi^{2,0}$ is exact.
In fact, it is trivial with a suitable choice of conformal
structure $\nu^\omega$.
\end{lemma}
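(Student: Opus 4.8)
The two halves of the lemma are independent, so I would prove them separately, and both ultimately reduce to computations in the Cartan model $(\Omega^*_G(P), d_G)$ with the differential $(d_G\xi)_A = d\xi_A - \iota_{A^P}\xi_A$. The guiding principle is that the Chern--Weil / Cartan representative of $p_1(P)_G$ is built from the equivariant curvature, and the vertex-algebroid structure maps $\{\,,\,\}$ and $\{\,,\,\}_\Omega$ were essentially engineered (in Theorem~\ref{thm.globalCDO}a) to encode $\operatorname{Tr}(R\wedge R)$; so the cochain $2\chi^{2,2}+\chi^{4,0}$ should be nothing but the standard Cartan cocycle for $8\pi^2 p_1(P)_G$, written in terms of $\nabla$ rather than intrinsically.

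\textbf{Part (a).} First I would recall the equivariant extension of the curvature: for a $G$-invariant connection $\nabla$ with curvature $R$, the Cartan--Weil representative of $-8\pi^2 p_1(P)_G$ (up to the usual normalization constant, which I will fix to match the factor $8\pi^2$) is the degree-$4$ cochain $A \mapsto \operatorname{Tr}\big(R + \mu(A)\big)^2$, where $\mu(A) := \tnabla A^P \in \Gamma(\operatorname{End} TP)$ is the moment/connection term — this is exactly the combination appearing in $\chi^{2,2}$ and $\chi^{4,0}$. Expanding, $\operatorname{Tr}(R+\mu(A))^2 = \operatorname{Tr}(R\wedge R) + 2\operatorname{Tr}(\mu(A)\cdot R) + \operatorname{Tr}(\mu(A)\cdot\mu(A))$; the first term is the degree-$(0,4)$ piece (a closed invariant form, hence an honest de Rham representative of $p_1$, killed off as a Cartan cochain by $d_G$ only together with the cross term), the second is $2\chi^{2,2}_A$ up to the $-\tfrac12\iota_{A^P}H$ correction, and the third is exactly $\chi^{4,0}_A = -\operatorname{Tr}(\mu(A)^2)$ up to sign — so I must be careful with signs and with the fact that the genuine cocycle includes the $(0,4)$-piece $\operatorname{Tr}(R\wedge R)$ which $\chi^{2,2}$ omits. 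The resolution: $\operatorname{Tr}(R\wedge R)$ is exact, $\operatorname{Tr}(R\wedge R) = dH$, and the term $-\tfrac12\iota_{A^P}H$ in $\chi^{2,2}$ is precisely what makes $2\chi^{2,2}+\chi^{4,0}$ differ from the naive Chern--Weil cocycle by $d_G(\text{something involving }H)$, hence still closed and still representing $8\pi^2 p_1(P)_G$. So the concrete steps are: (1) write down the standard Cartan cocycle $c_A := \operatorname{Tr}(R + \mu(A))\wedge(R+\mu(A))$ and verify $d_G c = 0$ using $\nabla\mu(A) = -\iota_{A^P}R$ (equation~(\ref{conn.Ginv})) and the Bianchi identity; (2) compute $c_A - (2\chi^{2,2}_A + \chi^{4,0}_A)$ and identify it as $d_G(-\tfrac12 H)_A = -\tfrac12(dH - \iota_{A^P}dH\cdots)$ — i.e.\ the $H$-terms conspire so that $2\chi^{2,2}+\chi^{4,0} = c - d_G(\tfrac12 H)$ or similar; (3) conclude closedness and the cohomology class, fixing the constant $8\pi^2$ by the normalization of the Chern--Weil map $(\operatorname{Sym}^2\mf g^\vee)^G \cong H^4(BG)$. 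The main obstacle here is purely bookkeeping: getting every sign, the factor of $2$, the ordering of wedge products of $\operatorname{End} TP$-valued forms, and the normalization constant to line up — there is real risk of being off by a sign or a factor of $2\pi$ somewhere, and checking against Theorem~\ref{thm.globalCDO}a's relation $dH = \operatorname{Tr}(R\wedge R)$ is the anchor.

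\textbf{Part (b).} This is easier. By~(\ref{Cartan2.conformal}), $\chi^{2,0}_A = \operatorname{Tr}\tnabla A^P - \omega(A^P)$, a degree-$2$ Cartan cochain in $(\mf g^\vee\otimes C^\infty(P))^G$. I would observe that $\operatorname{Tr}\tnabla A^P = A \mapsto \operatorname{Tr}\mu(A)$ is, up to the Cartan differential, cohomologous to a multiple of the equivariant first Chern class $c_1(TP)_G$; but by hypothesis $\operatorname{Tr} R = 0$ (the assumption in~\S\ref{sec.inv.data} that $\nabla$ is, say, orthogonal), so the de Rham class $c_1(TP) = [\operatorname{Tr} R] = 0$, and moreover the equivariant extension's $(0,2)$-piece vanishes too. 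Then $\operatorname{Tr}\mu(A) = \iota_{A^P}(\text{primitive of }\operatorname{Tr} R)$ modulo $d$ of something; since $\operatorname{Tr} R = 0$ we can be even more direct. Concretely: the degree-$1$ Cartan cochain $\omega$ itself (an invariant closed $1$-form, by the choice in~\S\ref{sec.inv.data}) has $d_G\omega$ with $(d_G\omega)_A = d\omega - \iota_{A^P}\omega = -\omega(A^P)$, which is a degree-$2$ Cartan cochain; and separately $\operatorname{Tr}\tnabla A^P$ should be $d_G$-exact because it's the equivariant $c_1$ up to scale and $c_1$ vanishes. So the plan is: (1) show $A \mapsto \operatorname{Tr}\tnabla A^P$ is $d_G(\theta)$ for a suitable invariant $1$-form $\theta$ — indeed the standard equivariant Chern--Weil argument gives $\operatorname{Tr}\mu(A) = (d_G\theta)_A$ where $\theta$ is any $1$-form with $d\theta = -\operatorname{Tr} R = 0$ provided such $\theta$ also satisfies a moment condition, which under $\operatorname{Tr} R = 0$ degenerates nicely; (2) hence $\chi^{2,0} = d_G\theta - d_G(-\omega) = d_G(\theta + \omega)$ is exact, and more pointedly, choosing $\omega := \theta$ (legitimate since $d\theta = 0$) makes $\chi^{2,0}_A = \operatorname{Tr}\tnabla A^P - \theta(A^P)$ vanish identically, giving the "trivial with a suitable choice of $\nu^\omega$" claim. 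The only subtlety is justifying step~(1) cleanly — i.e.\ that $\operatorname{Tr}\tnabla A^P$, as a function of $A$, is exactly $\iota_{A^P}\theta$ plus $d(\text{invariant function})$ for an appropriate choice — which follows from the equivariant Chern--Weil homomorphism applied to the trace of the equivariant curvature together with $\operatorname{Tr} R = 0$; I expect this to go through with a short explicit formula for $\theta$ (built from the connection form, averaged over $G$). I do not anticipate a genuine obstacle in Part (b); the work is all in Part (a)'s constant-and-sign chase.
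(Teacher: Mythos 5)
Your proposal is correct and follows essentially the same route as the paper: part (a) compares $2\chi^{2,2}+\chi^{4,0}$ with the Cartan--Chern--Weil cocycle $A\mapsto\Tr(R-\tnabla A^P)^2$ representing $-8\pi^2p_1(P)_G$ (cited from Berline--Getzler--Vergne and Bott--Tu) and absorbs the discrepancy into $(d_GH)_A$ via $dH=\Tr(R\wedge R)$, while part (b) uses exactness of $A\mapsto-\Tr\tnabla A^P$ together with $(d_G\omega)_A=-\omega(A^P)$. The only correction needed is the sign you already flagged: the equivariant curvature here is $R-\tnabla A^P$, not $R+\tnabla A^P$, and with that convention the identity reads $\Tr(R-\tnabla A^P)^2=(d_GH)_A-2\chi^{2,2}_A-\chi^{4,0}_A$, so the class comes out as $+8\pi^2p_1(P)_G$ as claimed.
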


\begin{proof}
(a) According to \cite{BGV}, the Cartan cochain
$A\mapsto\Tr(R-\tnabla A^P)^2$ is closed and represents
the class $-8\pi^2 p_1(P)_G$.
Then the claim follows from the calculation
\begin{align*}
\Tr(R-\tnabla A^P)^2
=dH-2\,\Tr(\tnabla A^P\cdot R)
  +\Tr(\tnabla A^P\cdot\tnabla A^P)
=(d_G H)_A-2\chi^{2,2}_A-\chi^{4,0}_A
\end{align*}
where we have used $dH=\Tr(R\wedge R)$ and
(\ref{Cartan4.VAoid}).

(b) According to \cite{BGV} again, the Cartan cochain
$A\mapsto\Tr(R-\tnabla A^P)=-\Tr\tnabla A^P$ is exact.
Also we have $(d_G\omega)_A=d\omega-\omega(A^P)=-\omega(A^P)$.
This proves the first part of the claim.
On the other hand, there exists some $\omega'\in\Omega^1(P)^G$
such that
\begin{align*}
-\Tr\tnabla A^P
=(d_G\omega')_A
=d\omega'-\omega'(A^P)
\end{align*}
which is equivalent to two equations:
firstly $d\omega'=0$, so that $\omega'$ also determines
a conformal vector $\nu^{\omega'}$;
and secondly $L_1^{\omega'}A^P=0$.
This proves the rest of the claim.
\end{proof}

{\it Remarks.}
(i) The closedness of $2\chi^{2,2}+\chi^{4,0}$ is
equivalent to a pair of equations
\begin{align} \label{Cartan4.closed}
\phantom{WWWW}
d\chi^{2,2}_A=0,\qquad
d\chi^{4,0}_A=2\iota_{A^P}\chi^{2,2}_A,\qquad
\t{for }A\in\mf{g}
\end{align}
which can also be verified directly from our
assumptions on $\nabla$ and $H$.
(ii) From now on we assume that the conformal vector
$\nu^\omega$ has been chosen such that $L_1^\omega A^P=0$
for $A\in\mf{g}$, and write the associated Virasoro
operators simply as $L_n$ for $n\in\bb{Z}$.

\vspace{0.08in}
{\it Preparation.}
The following sequence of results all concern lifting the map
of Lie algebras $\mf{g}\rightarrow\T(P)$ to a linear map
of the form
\begin{align} \label{lift.CDO1}
\mf{g}\,\rightarrow\,
  \CDO(P)_1=\T(P)\oplus\Omega^1(P),\qquad
A\mapsto A^P+h^{2,1}_A
\end{align}
where $h^{2,1}:\mf{g}\rightarrow\Omega^1(P)$ is
assumed to be $G$-equivariant.
In other words, $h^{2,1}$ is a Cartan cochain.

\begin{prop} \label{prop.lift.Lie}
The linear map (\ref{lift.CDO1}) gives rise to a map of
Lie algebras
\begin{align} \label{lift.Lie}
\mf{g}\,\rightarrow\,\mathrm{Der}\,\CDO(P),\qquad
A\mapsto\big(A^P+h^{2,1}_A\big)_0
\end{align}
if and only if the closed $2$-form $\chi^{2,2}_A-dh^{2,1}_A$
is $G$-invariant for any $A\in\mf{g}$.
\end{prop}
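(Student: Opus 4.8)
The plan is to reduce the statement to a direct computation of the commutator $[(A^P+h^{2,1}_A)_0,(B^P+h^{2,1}_B)_0]$ inside $\mathrm{Der}\,\CDO(P)$ and compare it with $(([A,B])^P+h^{2,1}_{[A,B]})_0$. First I would recall (Remark (i) after Definition \ref{formalLG.action}) that for any $u\in\CDO(P)_1$ the zeroth mode $u_0$ is a derivation of $\CDO(P)$, and that a map $\mf{g}\to\mathrm{Der}\,\CDO(P)$ of this form is a Lie algebra map precisely when the obstruction $[(A^P+h^{2,1}_A)_0,(B^P+h^{2,1}_B)_0]-(([A,B])^P+h^{2,1}_{[A,B]})_0$ vanishes as an operator. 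Since $\CDO(P)$ is freely generated by its associated vertex algebroid, it suffices to check vanishing on the generators, i.e.\ on $\CDO(P)_0=C^\infty(P)$ and $\CDO(P)_1=\T(P)\oplus\Omega^1(P)$; and in fact a derivation of a vertex algebra that kills weights $0$ and $1$ is automatically zero, so only those two weights matter.

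Next I would unwind the bracket on generators using the vertex algebroid structure maps from (\ref{CDO.VAoid}). On $C^\infty(P)$: the zeroth mode of $X+\alpha$ with $X\in\T(P)$, $\alpha\in\Omega^1(P)$ acts on $f\in\CDO(P)_0$ as the Lie derivative $X(f)$ (the $1$-form part acts trivially by Lemma \ref{lemma.1form.0mode}-type considerations, since $\alpha_0 f$ sits in negative weight). Hence on functions the obstruction is $[A^P,B^P](f)-([A,B])^P(f)=0$ automatically, because $A\mapsto A^P$ is already a Lie algebra map $\mf{g}\to\T(P)$. So the content is entirely in weight one. There, the bracket $[(A^P+h^{2,1}_A)_0,(B^P+h^{2,1}_B)_0]$ acting on an element $Y+\beta\in\T(P)\oplus\Omega^1(P)$ expands, via the $n=0$ operations, into the extended Lie algebroid operations (Lie bracket on $\T$, Lie derivative on $\Omega^1$, pairing) \emph{plus} correction terms coming from $\{\;\}$, $\{\;\}_\Omega$ and $\bullet$. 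Using $G$-invariance of $\nabla$ and $H$ (equivalently (\ref{conn.Ginv})) together with the closedness relations (\ref{Cartan4.closed}) for $\chi^{2,2}$, most of these reorganize, and the residual obstruction reduces — after the dust settles — to the failure of the closed $2$-form $\chi^{2,2}_A - dh^{2,1}_A$ to be $G$-invariant: concretely, the $\Omega^1(P)$-valued part of the obstruction on $\T(P)$ should come out to $\iota_{(-)}\!\big(L_{B^P}(\chi^{2,2}_A-dh^{2,1}_A)-L_{A^P}(\chi^{2,2}_B-dh^{2,1}_B)\big)$ or an equivalent antisymmetrized Lie-derivative expression, which vanishes for all $A,B$ iff each $\chi^{2,2}_A-dh^{2,1}_A$ is $G$-invariant.

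The main obstacle is the bookkeeping in weight one: correctly tracking how $h^{2,1}$ enters the $\{\;\}$- and $\bullet$-correction terms (note $\{A^P,h^{2,1}_B\}$-type terms vanish since both arguments must be vector fields, but $A^P\bullet$ acting on a $1$-form and the $\{\;\}_\Omega$ correction do contribute), and verifying that the Jacobi-type rearrangement of the uncorrected Lie-algebroid part really is exactly cancelled using $\nabla$-invariance and (\ref{Cartan4.closed}), leaving only the $d_G$-type term in $\chi^{2,2}-dh^{2,1}$. A clean way to organize this is to treat $\chi^{2,2}_A$ itself as the $2$-form $\{A^P,-\}_\Omega$ (as in (\ref{Cartan4.VAoid})) and note that $dh^{2,1}_A$ is the correction produced by replacing $A^P$ with $A^P+h^{2,1}_A$; then the whole weight-one obstruction is literally the obstruction for the modified $2$-forms $\chi^{2,2}_A-dh^{2,1}_A$ to be ``$G$-equivariant'' in the Cartan sense, i.e.\ $G$-invariant given that they are already closed and annihilated by the relevant $\iota_{A^P}$ after using (\ref{Cartan4.closed}). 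I would present the computation by isolating the coefficient of each generator type and citing (\ref{conn.Ginv}), (\ref{CDO.VAoid}), (\ref{Cartan4.closed}) at the precise point each is used, keeping the explicit index-heavy manipulation to a minimum.
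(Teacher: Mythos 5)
Your overall strategy is the paper's: compute $[(A^P+h^{2,1}_A)_0,(B^P+h^{2,1}_B)_0]$ from the vertex algebroid relations, observe that the discrepancy with $([A,B]^P+h^{2,1}_{[A,B]})_0$ is the zeroth mode of a $1$-form, and invoke Lemma \ref{lemma.1form.0mode} together with (\ref{Cartan4.closed}) to convert its vanishing into the $G$-invariance of $\chi^{2,2}_A-dh^{2,1}_A$. The paper does this more directly: by (\ref{freeVA.comm}) the whole commutator is $\big([A^P,B^P]+\{A^P,B^P\}_\Omega+L_{A^P}h^{2,1}_B-L_{B^P}h^{2,1}_A\big)_0$, so there is no need to act on generators and track coefficients; in particular $\bullet$ never enters (it only appears in the normal-ordering relations, not in the zeroth-mode commutators), and the $\{\;\}$ term drops out because it carries a factor of $n=0$.

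The one place your sketch is genuinely incomplete is the final identification of the obstruction. You propose it is the \emph{antisymmetrized} expression $L_{B^P}(\chi^{2,2}_A-dh^{2,1}_A)-L_{A^P}(\chi^{2,2}_B-dh^{2,1}_B)$ and assert that its vanishing for all $A,B$ is equivalent to $G$-invariance of each $\chi^{2,2}_A-dh^{2,1}_A$. As stated that implication does not follow (for $A=B$ the antisymmetrization is vacuous). What rescues it --- and what you never invoke --- is the standing hypothesis that $h^{2,1}$ is $G$-equivariant: this gives $L_{A^P}h^{2,1}_B=h^{2,1}_{[A,B]}$, which the paper uses to absorb that term into $h^{2,1}_{[A,B]}$ and leave the \emph{unsymmetrized} obstruction $\big(\iota_{B^P}\chi^{2,2}_A-L_{B^P}h^{2,1}_A\big)_0$; its vanishing for all $B$, via Lemma \ref{lemma.1form.0mode} and $d\chi^{2,2}_A=0$, is exactly $L_{B^P}(\chi^{2,2}_A-dh^{2,1}_A)=0$, i.e.\ $G$-invariance. (Equivalently, equivariance forces $L_{A^P}h^{2,1}_B+L_{B^P}h^{2,1}_A=0$ and, with (\ref{Cartan4.closed}), $L_{B^P}\chi^{2,2}_A=-L_{A^P}\chi^{2,2}_B$, so your antisymmetrization is just twice a single term --- but this must be said for the ``only if'' direction to close.) With that point supplied, your argument is complete.
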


\begin{proof}
For $A,B\in\mf{g}$, consider the calculation
\begin{align*}
\big[\big(A^P+h^{2,1}_A\big)_0,\big(B^P+h^{2,1}_B\big)_0\big]
&=\Big([A^P,B^P]+\{A^P,B^P\}_\Omega+L_{A^P}h^{2,1}_B
  -L_{B^P}h^{2,1}_A\Big)_0 \\
&=\Big([A,B]^P+h^{2,1}_{[A,B]}\Big)_0
  +\Big(\iota_{B^P}\chi^{2,2}_A-L_{B^P}h^{2,1}_A\Big)_0 
\end{align*}
where we have used (\ref{freeVA.comm}), (\ref{Cartan4.VAoid})
and the $G$-equivariance of $h^{2,1}$.
By Lemma \ref{lemma.1form.0mode}, the second term is trivial
if and only if
\begin{align*}
0=d\big(\iota_{B^P}\chi^{2,2}_A-L_{B^P}h^{2,1}_A\big)
=L_{B^P}\big(\chi^{2,2}_A-dh^{2,1}_A\big)
\end{align*}
where we have used (\ref{Cartan4.closed}).
This proves our claim.
\end{proof}

The map of Lie algebras $\mf{g}\rightarrow\T(P)$ extends
in an obvious way to a map of extended Lie algebroids
$i:(\bb{C},0,\mf{g})\rightarrow(C^\infty(P),\Omega^1(P),\T(P))$.
Now we would like to extend it further to a map of vertex
algebroids and hence a map of vertex algebras
(see Example \ref{VAoid.Lie} and \S\ref{VAoid.VA.mor}).

\begin{prop} \label{prop.lift.VA}
The linear map (\ref{lift.CDO1}) determines a map of
vertex algebroids
\begin{align*}
(i,h^{2,1}):(\bb{C},0,\mf{g},0,\lambda,0)\rightarrow
  \big(C^\infty(P),\Omega^1(P),\T(P),
  \bullet,\{\;\},\{\;\}_\Omega\big),
\end{align*}
and hence a map of vertex algebras
$V_\lambda(\mf{g})\rightarrow\CDO(P)$, if and only if for
any $A\in\mf{g}$ the closed $2$-form
$\chi^{2,2}_A-dh^{2,1}_A$ is $G$-horizontal and
\begin{align} \label{lambda.Cartan40}
\chi^{4,0}_A+2h^{2,1}_A(A^P)
=\lambda(A,A).
\end{align}
\end{prop}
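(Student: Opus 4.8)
The plan is to check that the data $(i,h^{2,1})$ form a morphism of vertex algebroids from $(\bb{C},0,\mf{g},0,\lambda,0)$ — the vertex algebroid underlying $V_\lambda(\mf{g})$ (Example \ref{VAoid.Lie}) — to the one associated with $\CDO(P)$; such a morphism induces a morphism of the vertex algebras they freely generate (\S\ref{VAoid.VA}), namely the desired $V_\lambda(\mf{g})\rightarrow\CDO(P)$, and conversely any such map of vertex algebras restricts on weights $\leq1$ to a morphism of vertex algebroids whose correction term, given the form (\ref{lift.CDO1}) of its weight-one component, must be $h^{2,1}$. Since $V_\lambda(\mf{g})$ is generated in weight one by $\mf{g}$ subject to the level-$\lambda$ affine relations, the convenient way to organize this is inside $\CDO(P)$: writing $u_A:=A^P+h^{2,1}_A\in\CDO(P)_1$, a map of vertex algebras sending $A\mapsto u_A$ exists if and only if $(u_A)_1u_B=\lambda(A,B)\mb{1}$ and $(u_A)_0u_B=u_{[A,B]}$ for all $A,B\in\mf{g}$, i.e.\ the fields $u_A(z)$ realize $\hat{\mf{g}}_\lambda$ at level $\lambda$.

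First I would dispatch the first-mode relation. Expanding $(u_A)_1u_A$ by the vertex-algebroid product rules and the skew-symmetry relation $u_1v=v_1u$ on weight one: $(h^{2,1}_A)_1h^{2,1}_A=0$ since $\Omega^1(P)$ has trivial self-products in $\CDO(P)$; the two cross terms both equal the pairing $h^{2,1}_A(A^P)$; and $(A^P)_1A^P=\{A^P,A^P\}=\chi^{4,0}_A$ by the definition (\ref{Cartan4.VAoid}). Hence $(u_A)_1u_A=(\chi^{4,0}_A+2h^{2,1}_A(A^P))\mb{1}$, which equals $\lambda(A,A)\mb{1}$ for all $A$ precisely when (\ref{lambda.Cartan40}) holds; and since $(u_A)_1u_B$, like $\lambda(A,B)$, is symmetric bilinear in $(A,B)$, the full relation $(u_A)_1u_B=\lambda(A,B)\mb{1}$ is equivalent to this diagonal instance.

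Next I would treat the zeroth-mode relation, which runs parallel to the computation in the proof of Proposition \ref{prop.lift.Lie}, now read as an identity in $\CDO(P)_1$ rather than among operators. Expanding $(u_A)_0u_B$: one has $A^P_0B^P=[A,B]^P+\{A^P,B^P\}_\Omega=[A,B]^P+\iota_{B^P}\chi^{2,2}_A$ by (\ref{Cartan4.VAoid}); $A^P_0h^{2,1}_B=L_{A^P}h^{2,1}_B=h^{2,1}_{[A,B]}$, using the $G$-equivariance of $h^{2,1}$; $(h^{2,1}_A)_0B^P=-\iota_{B^P}dh^{2,1}_A$ by Lemma \ref{lemma.1form.0mode}; and $(h^{2,1}_A)_0h^{2,1}_B=0$. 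Summing, $(u_A)_0u_B=u_{[A,B]}+\iota_{B^P}(\chi^{2,2}_A-dh^{2,1}_A)$, so $(u_A)_0u_B=u_{[A,B]}$ for all $A,B$ if and only if $\iota_{B^P}(\chi^{2,2}_A-dh^{2,1}_A)=0$ for all $B\in\mf{g}$, i.e.\ $\chi^{2,2}_A-dh^{2,1}_A$ is $G$-horizontal. (Via $L_{B^P}=d\iota_{B^P}+\iota_{B^P}d$ and (\ref{Cartan4.closed}) this also makes that closed form $G$-invariant, so the hypothesis of Proposition \ref{prop.lift.Lie} is automatically met, as it must be.) Combining the two mode relations gives the asserted equivalence. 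Alternatively one can argue through the appendix: among the structural constraints of a vertex-algebroid morphism (\S\ref{VAoid.VA.mor}), the ones built from $\bullet$ and $\{\;\}_\Omega$ on the source are vacuous here (the source function ring is $\bb{C}$ with trivial $\mf{g}$-action, and $\bullet$ and $\{\;\}_\Omega$ on the source vanish), the $\{\;\}_\Omega$-compatibility yields $G$-horizontality, and the $\{\;\}$-compatibility yields (\ref{lambda.Cartan40}).

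All of this is direct substitution into (\ref{CDO.VAoid}), (\ref{Cartan4.VAoid}), (\ref{Cartan4.closed}), Lemma \ref{lemma.1form.0mode}, and the $G$-equivariance of $h^{2,1}$, so I do not expect a serious obstacle. The point that needs care is that the zeroth-mode relation must be read as an equality of elements of $\CDO(P)_1$, not merely of the operators $(u_A)_0$; this is why the condition it produces — $G$-horizontality of $\chi^{2,2}_A-dh^{2,1}_A$ — is genuinely stronger than the $G$-invariance needed in Proposition \ref{prop.lift.Lie}. Beyond that, the only thing to watch, as always with these vertex-algebroid computations, is keeping straight how the splitting $s:\T(P)\rightarrow\CDO(P)_1$ interacts with the bracket, the pairing, and the maps $\{\;\}$ and $\{\;\}_\Omega$, together with the skew-symmetry relations among the Fourier modes.
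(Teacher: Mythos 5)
Your proposal is correct and follows essentially the same route as the paper: your mode-by-mode computations of $(u_A)_1u_B$ and $(u_A)_0u_B$ are exactly the unpacking of the two nontrivial conditions in Definition \ref{VAoid.mor}, which is what the paper's proof invokes directly (and which you yourself note as the ``alternative'' argument at the end). The only content the paper adds beyond this is the closing observation that horizontality of $\chi^{2,2}_A-dh^{2,1}_A$ already forces the left side of (\ref{lambda.Cartan40}) to be locally constant, which is a remark rather than part of the equivalence.
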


\begin{proof}
Let $A,B\in\mf{g}$.
According to Definition \ref{VAoid.mor}, the linear map
$h^{2,1}:\mf{g}\rightarrow\Omega^1(P)$ defines a map
between the vertex algebroids in question if and only if
\begin{align*}
\{A^P,B^P\}\;\;
&=\lambda(A,B)-h^{2,1}_A(B^P)-h^{2,1}_B(A^P) \\
\t{and}\quad
\{A^P,B^P\}_\Omega
&=-L_{A^P}h^{2,1}_B+L_{B^P}h^{2,1}_A-dh^{2,1}_A(B^P)
  +h^{2,1}_{[A,B]}
\end{align*}
By (\ref{Cartan4.VAoid}) the first equation is equivalent
to (\ref{lambda.Cartan40}).
By (\ref{Cartan4.VAoid}) again and the $G$-equivariance of
$h^{2,1}$, the second equation can be rewritten as
$\iota_{B^P}(\chi^{2,2}_A-dh^{2,1}_A)=0$.
This proves our claim.
In fact, the $G$-horizontality of $\chi^{2,2}_A-dh^{2,1}_A$
always implies that the left side of (\ref{lambda.Cartan40})
is locally constant, since
\begin{align*}
d\big(\chi^{4,0}_A+2\iota_{A^P}h^{2,1}_A\big)
=2\iota_{A^P}\chi^{2,2}_A-2\iota_{A^P}dh^{2,1}_A
  +2L_{A^P}h^{2,1}_A
=2\iota_{A^P}(\chi^{2,2}_A-dh^{2,1}_A)
\end{align*}
by virtue of (\ref{Cartan4.closed}) and the $G$-equivariance
of $h^{2,1}$.
Therefore if $P$ is connected, the horizontal condition
guarantees that (\ref{lambda.Cartan40}) must hold for some
$\lambda\in(\t{Sym}^2\mf{g}^\vee)^G$.
\end{proof}

\begin{prop} \label{prop.Gaction}
The map of Lie algebras (\ref{lift.Lie}) integrates into
a homomorphism $G\rightarrow\mathrm{Aut}\,\CDO(P)$
if and only if the closed $2$-form $\chi^{2,2}_A-dh^{2,1}_A$
is trivial for any $A\in\mf{g}$.
\end{prop}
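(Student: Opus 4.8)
The plan is to leverage the Lie-algebra statement of Proposition \ref{prop.lift.Lie} together with the standard fact that a $G$-action on a manifold integrates a given $\mf{g}$-action on functions precisely when the derivations are the vector fields $A^P$, and to run the same kind of argument one level up, at the level of automorphisms of the vertex algebra $\CDO(P)$. Since $G$ is compact and connected, a homomorphism $G\to\mathrm{Aut}\,\CDO(P)$ integrating (\ref{lift.Lie}) exists if and only if the $\mf{g}$-action by the derivations $(A^P+h^{2,1}_A)_0$ can be exponentiated compatibly with the already-given $G$-action on $\CDO(P)_0=C^\infty(P)$. The key observation is that we already have a ``reference'' $G$-action on $\CDO(P)$, namely the one coming from the $G$-invariance of $\nabla$, $H$ (described in \S\ref{sec.inv.data}), whose differential sends $A\in\mf{g}$ to the derivation $(A^P)_0$. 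So the question reduces to comparing the two $\mf{g}$-actions $A\mapsto (A^P)_0$ and $A\mapsto (A^P+h^{2,1}_A)_0$, i.e.\ to understanding the derivation-valued Cartan cochain $A\mapsto (h^{2,1}_A)_0$.

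First I would record that, by Lemma \ref{lemma.1form.0mode}, the zeroth mode $(h^{2,1}_A)_0$ depends only on $dh^{2,1}_A$, and that the $G$-equivariance of $h^{2,1}$ means $[(B^P)_0,(h^{2,1}_A)_0]=(h^{2,1}_{[B,A]})_0+(L_{B^P}h^{2,1}_A)_0$; combined with the reference $G$-action this says the family $\{(h^{2,1}_A)_0\}$ transforms equivariantly under the reference action. So the perturbed $\mf{g}$-action differs from the reference one by a $\mf{g}$-equivariant family of (locally inner, by Lemma \ref{lemma.1form.0mode} applied to the closed form $dh^{2,1}_A$) derivations. Next, I would compute the curvature/obstruction to integrating this perturbed action: exactly as in the proof of Proposition \ref{prop.lift.Lie}, the bracket $[(A^P+h^{2,1}_A)_0,(B^P+h^{2,1}_B)_0]$ equals $([A,B]^P+h^{2,1}_{[A,B]})_0+(\iota_{B^P}\chi^{2,2}_A-L_{B^P}h^{2,1}_A)_0$, and under the hypothesis of Proposition \ref{prop.lift.Lie} (which is implied by the present hypothesis, since $\chi^{2,2}_A-dh^{2,1}_A=0$ is stronger than $G$-invariance) the discrepancy $(\iota_{B^P}\chi^{2,2}_A-L_{B^P}h^{2,1}_A)_0$ vanishes, so we do get a genuine $\mf{g}$-action.

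Then comes the actual integration argument. For the ``if'' direction, assuming $\chi^{2,2}_A-dh^{2,1}_A=0$ for all $A$, i.e.\ $dh^{2,1}_A=\chi^{2,2}_A=\Tr(\tnabla A^P\cdot R)-\tfrac12\iota_{A^P}H$: I would exhibit the desired $G$-action on $\CDO(P)$ by twisting the reference $G$-action by the family of automorphisms $\exp((h^{2,1}_A)_0)$ in an $\mathrm{Ad}$-coherent way — more honestly, I would argue abstractly: a connected Lie group $G$ acts on a complete topological vector space compatibly with a given $\mf{g}$-action iff the $\mf{g}$-action is ``locally integrable'' and the isotropy/monodromy obstruction over loops in $G$ vanishes. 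Here the cleanest route is probably: the weight-one component $A\mapsto A^P+h^{2,1}_A$ is already $G$-equivariant (with $G$ acting on $\CDO(P)_1$ via the reference action, on $\mf{g}$ via $\mathrm{Ad}$) exactly when $\chi^{2,2}_A-dh^{2,1}_A=0$ — this is the content of the vanishing discrepancy computation above, lifted from Lie-bracket level to a statement that the reference-$G$-orbit of the element $A^P+h^{2,1}_A$ equals $(\mathrm{Ad}_g A)^P+h^{2,1}_{\mathrm{Ad}_g A}$. Given this equivariance of the generating linear map, since $\CDO(P)$ is freely generated (as a vertex algebra) by its vertex algebroid and the reference $G$-action preserves that generation, the new map of vertex algebroids $(\mathrm{id},h^{2,1})$ from Proposition \ref{prop.lift.VA} intertwines the reference $G$-actions on source and target, and therefore the induced endomorphisms integrate. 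Conversely, for ``only if'': if the $\mf{g}$-action (\ref{lift.Lie}) integrates to $G\to\mathrm{Aut}\,\CDO(P)$, then in particular for each $A$ the one-parameter group $\exp(t(A^P+h^{2,1}_A)_0)$ preserves the subspace $\Omega^1(P)=\CDO(P)_1\cap\ker(\text{something})$ — more precisely it must agree on $\CDO(P)_0=C^\infty(P)$ with the flow of $A^P$ (since $(h^{2,1}_A)_0$ kills weight zero), hence it equals the reference automorphism $\exp(t A)$ composed with a weight-preserving automorphism fixing $C^\infty(P)$ pointwise; differentiating, $(h^{2,1}_A)_0$ must be such an infinitesimal automorphism, and by Theorem \ref{thm.globalCDO.iso} these are parametrized by closed $2$-forms $\beta$ with $(h^{2,1}_A)_0$ corresponding to $\Delta_\beta$; matching against the formula for how $(\chi^{2,2}_A-dh^{2,1}_A)_0$ acts forces $\chi^{2,2}_A-dh^{2,1}_A=0$. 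I expect the main obstacle to be precisely this last identification — pinning down that ``an automorphism of $\CDO(P)$ fixing $C^\infty(P)$ and preserving weight and the conformal structure is encoded by a closed $2$-form via Theorem \ref{thm.globalCDO.iso}'', and then differentiating the integrated $G$-action carefully to extract the pointwise vanishing of the $2$-form $\chi^{2,2}_A-dh^{2,1}_A$ rather than merely its $G$-invariance or horizontality; the algebra is routine, but keeping track of which cohomological shadow ($G$-invariant vs.\ horizontal vs.\ zero) corresponds to which level of structure (derivation, $\mf{g}$-action, $G$-action) is the delicate bookkeeping.
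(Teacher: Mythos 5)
There is a genuine gap in your ``only if'' direction. You reduce to the observation that $\exp\big(t(A^P+h^{2,1}_A)_0\big)$ differs from the reference automorphism $e^{tA}$ by a weight-preserving automorphism fixing $C^\infty(P)$, and that such automorphisms are parametrized by closed $2$-forms via Theorem \ref{thm.globalCDO.iso}. But that parametrization cannot by itself force $\chi^{2,2}_A-dh^{2,1}_A=0$: a nonzero closed $2$-form gives a perfectly good automorphism, so ``matching'' merely identifies the $2$-form as a multiple of $\chi^{2,2}_A-dh^{2,1}_A$ — it does not kill it. Tellingly, your argument nowhere uses the compactness of $G$, and the conclusion is false without it (for $G=\bb{R}$ the Lie algebra action always integrates, whatever the $2$-form). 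The missing input is the periodicity of one-parameter subgroups: the paper computes explicitly
\[
e^{t(A^P+h^{2,1}_A)_0}X
=e^{tA}\cdot X+te^{tA}\cdot\iota_X\big(\chi^{2,2}_A-dh^{2,1}_A\big),
\qquad X\in\T(P),
\]
whose second term grows linearly in $t$. If the action integrates, then for $A$ with $e^A=1$ the left side at $t=1$ must return $X$, forcing $\iota_X(\chi^{2,2}_A-dh^{2,1}_A)=0$; since $\{A\in\mf{g}:e^A=1\}$ spans $\mf{g}$ (every element of a compact connected group lies in a torus) and $A\mapsto\chi^{2,2}_A-dh^{2,1}_A$ is linear, the form vanishes identically.

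Your ``if'' direction also rests on a false equivalence: the reference-$G$-equivariance of the generating map $A\mapsto A^P+h^{2,1}_A$ (your ``orbit'' statement) is just the standing assumption that $h^{2,1}$ is a Cartan cochain and holds whether or not $\chi^{2,2}_A-dh^{2,1}_A=0$; moreover, equivariance of the generating \emph{element} is not the same as integrability of the \emph{derivation} $(A^P+h^{2,1}_A)_0$. What is actually needed is that this derivation coincides with the differential of the reference action on the generating vertex algebroid; the only nontrivial comparison is on $\T(P)\subset\CDO(P)_1$, where Lemma \ref{lemma.1form.0mode} and (\ref{Cartan4.VAoid}) give $(A^P+h^{2,1}_A)_0X=[A^P,X]+\iota_X(\chi^{2,2}_A-dh^{2,1}_A)$, and this equals $[A^P,X]$ precisely when the $2$-form vanishes. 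With that repair (and freeness of $\CDO(P)$ over its vertex algebroid) the ``if'' direction closes, and is essentially the paper's argument.
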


\begin{proof}
First we compute the one-parameter subgroup of automorphisms
generated by the inner derivation $(A^P+h^{2,1}_A)_0$.
Since $\CDO(P)$ is freely generated by a vertex
algebroid, it suffices to compute the automorphisms
at weights $0$ and $1$.
For the following computations, keep (\ref{freeVA.comm})
in mind.
For $f\in C^\infty(P)$ and $\alpha\in\Omega^1(P)$ we
simply have
\begin{align}
\label{Gaction.fcn}
\big(A^P+h^{2,1}_A\big)_0^n f=(A^P)^n f
\qquad&\Rightarrow\qquad
e^{t(A^P+h^{2,1}_A)_0}f=e^{tA}\cdot f \\
\label{Gaction.1form}
\big(A^P+h^{2,1}_A\big)_0^n\alpha=L_{A^P}^n\alpha\;\;
\qquad&\Rightarrow\qquad
e^{t(A^P+h^{2,1}_A)_0}\alpha=e^{tA}\cdot\alpha
\end{align}
where $\cdot$ refers to the given $G$-actions.
For $X\in\T(P)$ we first have
\begin{align*}
\big(A^P+h^{2,1}_A\big)_0 X
=[A^P,X]+\{A^P,X\}_\Omega-\iota_X dh^{2,1}_A
=[A^P,X]+\iota_X(\chi^{2,2}_A-dh^{2,1}_A)
\end{align*}
by Lemma \ref{lemma.1form.0mode} and
(\ref{Cartan4.VAoid}).
Then it follows by induction and the $G$-equivariance
of $\chi^{2,2}$, $h^{2,1}$ that
\begin{align}
\big(A^P+h^{2,1}_A\big)_0^n X
&=L_{A^P}^n X
  +nL_{A^P}^{n-1}\iota_X(\chi^{2,2}_A-dh^{2,1}_A),\qquad
  n\geq 1 \nonumber\\
\Rightarrow\qquad
e^{t(A^P+h^{2,1}_A)_0}X
&=e^{tA}\cdot X
  +te^{tA}\cdot\iota_X(\chi^{2,2}_A-dh^{2,1}_A)
\label{Gaction.vf}
\end{align}
This finishes the computation of the automorphisms.

If $\chi^{2,2}_A-dh^{2,1}_A$ vanishes for all $A\in\mf{g}$,
it follows from (\ref{Gaction.fcn})--(\ref{Gaction.vf})
that (\ref{lift.Lie}) integrates into the $G$-action described
in \S\ref{sec.inv.data}.
Conversely, assume that (\ref{lift.Lie}) integrates into
a $G$-action.
By (\ref{Gaction.vf}), $\chi^{2,2}_A-dh^{2,1}_A$ must vanish
whenever $e^A=1$.
Since every element of $G$ lies in a torus, the subset
$\{A\in\mf{g}\,|\,e^A=1\}$ spans $\mf{g}$, so that 
$\chi^{2,2}_A-dh^{2,1}_A$ must in fact vanish for all
$A\in\mf{g}$.
\end{proof}

{\it Remark.}
It follows from the proof that whenever (\ref{lift.Lie}) is
integrable, the resulting $G$-action on $\CDO(P)$ must be
the one described in \S\ref{sec.inv.data}.

\begin{subsec}
{\bf Comparing the three conditions.}
Recall the conditions encountered respectively in
Propositions \ref{prop.lift.Lie}, \ref{prop.lift.VA} and
\ref{prop.Gaction}:
\begin{itemize}
\item[(i)\ph{ii}]
$\chi^{2,2}_A-dh^{2,1}_A$ is $G$-invariant for $A\in\mf{g}$
\vspace{-0.1in}
\item[(ii)\ph{i}]
$\chi^{2,2}_A-dh^{2,1}_A$ is $G$-horizontal for $A\in\mf{g}$
\vspace{-0.1in}
\item[(iii)]
$\chi^{2,2}_A-dh^{2,1}_A=0$ for $A\in\mf{g}$
\end{itemize}
In general, (iii) $\Rightarrow$ (ii) $\Rightarrow$ (i);
the second implication follows from (\ref{Cartan4.closed}).
From another point of view, a map of vertex algebras as in
Proposition \ref{prop.lift.VA} always induces a map of Lie
algebras as in Proposition \ref{prop.lift.Lie},
but the other implication may seem somewhat
surprising.
In the case $\mf{g}$ is semisimple, so that
$[\mf{g},\mf{g}]=\mf{g}$, all three conditions are
equivalent.
\end{subsec}

Here is the main result of this section.

\begin{theorem} \label{thm.formalLG.action}
The $G$-action on $P$ lifts to an inner
$(\hat{\mf{g}}_\lambda,G)$-action on $\CDO(P)$
if and only if
\begin{align*}
8\pi^2 p_1(P)_G=\lambda(P).
\end{align*}
Moreover, this action is primary with respect to the chosen
conformal vector $\nu^\omega$.
\end{theorem}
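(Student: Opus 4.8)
The plan is to reduce the ``if and only if'' to the chain of propositions already assembled, and then settle the word ``primary'' separately. First I would unwind the definitions: by Definition \ref{formalLG.action} and the remarks following it, lifting the $G$-action on $P=\CDO(P)_0$ to an inner $(\hat{\mf{g}}_\lambda,G)$-action means producing a map of vertex algebras $V_\lambda(\mf{g})\to\CDO(P)$ whose weight-one part $\mf{g}\to\CDO(P)_1$ has the form $A\mapsto A^P+h^{2,1}_A$ for some (necessarily $G$-equivariant, by averaging) Cartan cochain $h^{2,1}$, and such that the induced $\mf{g}$-action integrates to the ambient $G$-action. By Proposition \ref{prop.lift.VA} the map of vertex algebras exists exactly when $\chi^{2,2}_A-dh^{2,1}_A$ is $G$-horizontal and $\chi^{4,0}_A+2h^{2,1}_A(A^P)=\lambda(A,A)$; by the Remark after Propositions \ref{prop.lift.Lie}--\ref{prop.Gaction}, when $\mf{g}$ is semisimple horizontality is automatically upgraded to $\chi^{2,2}_A-dh^{2,1}_A=0$, which is precisely the condition in Proposition \ref{prop.Gaction} guaranteeing that the $\mf{g}$-action integrates to the correct $G$-action. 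So for $G$ compact (hence $\mf{g}$ reductive) the integrability is free once we have the vertex-algebra map, modulo handling the center of $\mf{g}$; there I would note that on the abelian summand the conditions (i)--(iii) all collapse and the corresponding piece of $h^{2,1}$ can be chosen by hand.

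The core step is then to recast the two surviving conditions cohomologically. Write $2\chi^{2,2}+\chi^{4,0}$, which by Lemma \ref{lemma.Cartan.closed}(a) is a closed Cartan $4$-cochain representing $8\pi^2p_1(P)_G$. Modifying the lift by $h^{2,1}$ changes this cocycle by $d_G$ of the Cartan $3$-cochain $2h^{2,1}$ (here I would check: $(d_G(2h^{2,1}))_A = 2dh^{2,1}_A - 2\iota_{A^P}h^{2,1}_A$, matching the correction to $2\chi^{2,2}_A$ and to $\chi^{4,0}_A=\{A^P,A^P\}$ respectively). Thus the existence of $h^{2,1}$ making $2\chi^{2,2}-2dh^{2,1}=0$ in the $2$-form slot and $\chi^{4,0}+2\iota_{A^P}h^{2,1}_A=\lambda(A,A)$ in the function slot is exactly the statement that the class $8\pi^2p_1(P)_G$ is represented by the Cartan cocycle $A\mapsto\lambda(A,A)$ coming from $(\mathrm{Sym}^2\mf{g}^\vee)^G$, i.e.\ that $8\pi^2p_1(P)_G=\lambda(P)$. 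One direction (condition $\Rightarrow$ equation) is immediate since equal cochains represent equal classes; the converse requires producing the cochain $h^{2,1}$ from a primitive of the difference of the two cocycles, and here I would invoke the structure of the Cartan complex together with $G$-equivariance (averaging the chosen primitive) to keep $h^{2,1}$ in $(\mf{g}^\vee\otimes\Omega^1(P))^G$.

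For the ``primary'' claim I would use Lemma \ref{lemma.Cartan.closed}(b): choose the conformal structure $\nu^\omega$ with $L_1^\omega A^P=0$ for all $A\in\mf{g}$, as arranged in Remark (ii) after that lemma. Primarity of the formal loop group action means each generator $A^P+h^{2,1}_A\in\CDO(P)_1$ is a primary field of weight $1$, i.e.\ $L_0$ acts by $1$ (automatic, it sits in weight one) and $L_n$ annihilates it for $n\ge1$; by weight reasons only $n=1$ is in question, so I must show $L_1(A^P+h^{2,1}_A)=0$. For the $\T(P)$-part this is $L_1^\omega A^P=0$ by the choice of $\omega$; for the $\Omega^1(P)$-part I would invoke (\ref{CDO.L1}), which gives $\nu^\omega_1\alpha=0$ for every $\alpha\in\Omega^1(P)$, so $L_1 h^{2,1}_A=0$ as well. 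Hence $L_1(A^P+h^{2,1}_A)=0$ and the map $V_\lambda(\mf{g})\to\CDO(P)$ sends the weight-one generators of $V_\lambda(\mf{g})$ to primary vectors; since a vertex-algebra map intertwines the respective $L_1$-actions and $V_\lambda(\mf{g})$ is generated in weight one, the whole action is primary.

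The main obstacle I anticipate is not conceptual but bookkeeping: matching the $d_G$-coboundary of $2h^{2,1}$ against the simultaneous corrections to the degree-$(2,2)$ and degree-$(4,0)$ pieces requires care with the identification $\{A^P,-\}_\Omega\leftrightarrow\chi^{2,2}_A$ and with the sign conventions in (\ref{Cartan4.VAoid}) and (\ref{Cartan4.closed}); and the reductive-but-not-semisimple case means I cannot simply quote ``(i)$\Leftrightarrow$(ii)$\Leftrightarrow$(iii)'' but must argue separately on the center, where the relevant characteristic classes and the form $\lambda$ restrict to an essentially linear-algebra problem over the abelian factor.
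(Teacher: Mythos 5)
Your cohomological core and your treatment of primarity are exactly the paper's: the conditions coming from Propositions \ref{prop.lift.VA} and \ref{prop.Gaction} get packaged into the single Cartan identity $2\chi^{2,2}+\chi^{4,0}=\lambda+2d_G h^{2,1}$, Lemma \ref{lemma.Cartan.closed}a converts solvability of this for $h^{2,1}$ into $8\pi^2p_1(P)_G=\lambda(P)$, and Lemma \ref{lemma.Cartan.closed}b together with $\nu^\omega_1\alpha=0$ for $1$-forms gives $L_1(A^P+h^{2,1}_A)=0$, hence primarity. That part is fine.

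The genuine problem is your first paragraph. You try to \emph{derive} the integrability condition (iii) from the vertex-algebra-map condition (ii) --- semisimplicity on $[\mf{g},\mf{g}]$, then a separate argument ``by hand'' on the center --- and you assert that on the abelian summand ``the conditions (i)--(iii) all collapse.'' This is false: for $A$ central, $G$-horizontality of the closed $2$-form $\chi^{2,2}_A-dh^{2,1}_A$ is strictly weaker than its vanishing (a nonzero closed invariant basic $2$-form satisfies (ii) but not (iii)), so ``integrability is free once we have the vertex-algebra map'' fails for non-semisimple $\mf{g}$, e.g.\ for a torus. More to the point, the whole derivation is unnecessary: Definition \ref{formalLG.action} already \emph{demands} that the induced $\mf{g}$-action integrate, so the conditions on $h^{2,1}$ are simply the conjunction of those in Propositions \ref{prop.lift.VA} and \ref{prop.Gaction}; since (iii) implies (ii), that conjunction is exactly $\chi^{2,2}_A=dh^{2,1}_A$ together with the $\lambda$-equation --- which is the system you in fact solve in your second paragraph. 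Delete the semisimplicity/center discussion and replace it with ``impose both propositions' conditions,'' and your proof becomes the paper's. (A smaller point, which the paper also leaves implicit: in the converse direction a general $d_G$-primitive of $2\chi^{2,2}+\chi^{4,0}-\lambda$ has a component in $\Omega^3(P)^G$ as well as in $(\mf{g}^\vee\otimes\Omega^1(P))^G$; averaging over $G$ handles equivariance but not this bidegree issue, so a word is needed on why the primitive can be taken purely of type $(2,1)$.)
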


\begin{proof}
Recall Definition \ref{formalLG.action} and Example
\ref{VAoid.Lie}.
Any inner $(\hat{\mf{g}}_\lambda,G)$-action as described is
determined by a linear map $\mf{g}\rightarrow\CDO(P)_1$ of
the form $A\mapsto A^P+h^{2,1}_A$ for some
$h^{2,1}\in(\mf{g}^\vee\otimes\Omega^1(P))^G$.
By Propositions \ref{prop.lift.VA} and
\ref{prop.Gaction}, the precise conditions on
$h^{2,1}$ are
\begin{align} \label{formalLG.h}
\left\{\begin{array}{l}
  \chi^{2,2}_A=dh^{2,1}_A \vss \\
  \chi^{4,0}_A=\lambda(A,A)-2h^{2,1}_A(A^P)
\end{array}\right.
\quad\iff\quad
2\chi^{2,2}+\chi^{4,0}=\lambda+2d_G h^{2,1}.
\end{align}
By Lemma \ref{lemma.Cartan.closed}a, this proves the
first claim.
The second claim is simply a restatement of Lemma
\ref{lemma.Cartan.closed}b and the subsequent remark.
\end{proof}

{\it Remark.}
In the rest of the paper, $h^{2,1}$ will be referred to as
the {\bf associated Cartan cochain} of the inner
$(\hat{\mf{g}}_\lambda,G)$-action.

\begin{example} \label{CDOG}
{\bf CDOs on a Lie group.}
Let $G$ be a simple compact Lie group.
In this discussion, $A,B$ will always mean general elements
of $\mf{g}=T_e G$ and we will use the following notations:
\vspace{-0.1in}
\begin{itemize}
\item[$\cdot$]
$A^\ell$ (resp.~$A^r$) is the left-invariant
(resp.~right-invariant) vector field on $G$ that extends $A$;
\vspace{-0.1in}
\item[$\cdot$]
$\theta^\ell$ (resp.~$\theta^r$) is the left-invariant
(resp.~right-invariant) Maurer-Cartan form on $G$, i.e.~the
$\mf{g}$-valued $1$-form with $\theta^\ell(A^\ell)=A$
(resp.~$\theta^r(A^r)=A$);
\vspace{-0.1in}
\item[$\cdot$]
$\Killing$ and $\lambda_0=(2h^\vee)^{-1}\cdot\Killing$ are
respectively the Killing form and normalized Killing
form on $\mf{g}$, where $h^\vee$ is the dual Coxeter number.
\vspace{-0.1in}
\end{itemize}
The transgression isomorphism $H^4(BG)\cong H^3(G)$, where
both spaces are one-dimensional, is represented by the
correspondence
\begin{align*}
\textstyle
\lambda\in(\t{Sym}^2\mf{g}^\vee)^G\quad
\longmapsto\quad
-\frac{1}{6}\,\lambda(\theta^\ell\wedge[\theta^\ell\wedge\theta^\ell])
=-\frac{1}{6}\,\lambda(\theta^r\wedge[\theta^r\wedge\theta^r])
  \in\Omega^3(G)\,.
\end{align*}
In particular, the closed $3$-form corresponding to
$\lambda_0$ will be denoted by $H_0$.

Let us construct an algebra of CDOs together with a conformal
vector.
Let $\nabla$ be the connection on $TG$ defined by
$\nabla_{A^\ell}B^\ell=\frac{1}{2}[A,B]^\ell$, or equivalently,
by $\nabla_{A^r}B^r=-\frac{1}{2}[A,B]^r$.
\footnote{
Indeed, both define the \emph{unique} torsion-free connection
that is compatible with any bi-invariant symmetric bilinear form.
}
Notice that $p_1(\nabla)=0$.
By Theorems \ref{thm.globalCDO}a and \ref{thm.globalCDO.iso}a,
$\nabla$ and $kH_0$ for any $k\in\bb{C}$ together determine
an algebra of CDOs $\CDO_k(G)=\CDO_{\nabla,kH_0}(G)$, and
every algebra of CDOs on $G$ is up to isomorphism of this form.
In the vertex algebroid structure of $\CDO_k(G)$, we find that
\begin{align}
\label{CDOG.VAoidl}
&\textstyle
\{A^\ell,B^\ell\}=-\frac{1}{2}h^\vee\lambda_0(A,B)
&&\textstyle
\{A^\ell,B^\ell\}_\Omega
=\big(\frac{1}{2}k+\frac{1}{4}h^\vee\big)
  \lambda_0([A,B],\theta^\ell) \\
\label{CDOG.VAoidr}
&\textstyle
\{A^r,B^r\}=-\frac{1}{2}h^\vee\lambda_0(A,B)
&&\textstyle
\{A^r,B^r\}_\Omega
=\big(\frac{1}{2}k-\frac{1}{4}h^\vee\big)
  \lambda_0([A,B],\theta^r) \\
\label{CDOG.VAoidlr}
&\textstyle
\{A^\ell,B^r\}=\frac{1}{2}h^\vee\lambda_0(A,\theta^\ell(B^r))
&&\textstyle
\{A^\ell,B^r\}_\Omega
=\big(\frac{1}{2}k+\frac{1}{4}h^\vee\big)
  \lambda_0([A,\theta^\ell(B^r)],\theta^\ell)
\end{align}
By Theorem \ref{thm.globalCDO}b, the trivial $1$-form determines
a conformal vector $\nu$ of central charge $2\dim\mf{g}$.
If we choose a basis $t_1,t_2,\ldots$ of $\mf{g}$ and denote
the corresponding components of $\theta^\ell$ (resp.~$\theta^r$)
by $\theta^{1,\ell},\theta^{2,\ell},\ldots$
(resp.~$\theta^{1,r},\theta^{2,r},\ldots$), then we can write
\begin{align} \label{CDOG.conformal}
\textstyle
\nu
=t^\ell_{a,-1}\theta^{a,\ell}
  -\frac{3}{4}h^\vee\lambda_0(\theta^\ell_{-1}\theta^\ell)
=t^r_{a,-1}\theta^{a,r}
  -\frac{3}{4}h^\vee\lambda_0(\theta^r_{-1}\theta^r)\,.
\end{align}
Its Virasoro field has the property that
$L_1 A^\ell=L_1 A^r=0$.
\footnote{
The expressions in (\ref{CDOG.VAoidl})--(\ref{CDOG.conformal})
are obtained by some computations that we have omitted.
}

Consider the action of $G$ on itself by right multiplication.
The induced map of Lie algebras $\mf{g}\rightarrow\T(G)$
takes $A$ to $A^\ell$.
Both $\nabla$ and $H_0$ are invariant under this action.
In view of (\ref{CDOG.VAoidl}), the equations in
(\ref{formalLG.h}) are satisfied by
\begin{align} \label{hl}
\textstyle
h^\ell\in(\mf{g}^\vee\otimes\Omega^1(G))^G,\qquad
h^\ell_A:=-\big(\frac{1}{2}k+\frac{1}{4}h^\vee\big)
  \lambda_0(A,\theta^\ell)
\end{align}
and $\lambda=(-k-h^\vee)\lambda_0$.
Therefore we have an inner $(\hat{\mf{g}},G)$-action
\begin{align} \label{CDOG.l}
V_{-k-h^\vee}(\mf{g})\hookrightarrow\CDO_k(G)
\end{align}
with associated Cartan cochain $h^\ell$;
its image will be denoted by $V_{-k-h^\vee}(\mf{g})^\ell$.
This action is primary with respect to $\nu$.

Consider also the action of $G$ on itself by inverse
left multiplication.
The induced map of Lie algebras $\mf{g}\rightarrow\T(G)$
takes $A$ to $-A^r$.
Both $\nabla$ and $H_0$ are invariant under this action
as well.
In view of (\ref{CDOG.VAoidr}), the equations in
(\ref{formalLG.h}) are now satisfied by
\begin{align} \label{hr}
\textstyle
h^r\in(\mf{g}^\vee\otimes\Omega^1(G))^G,\qquad
h^r_A:=\big(-\frac{1}{2}k+\frac{1}{4}h^\vee\big)
  \lambda_0(A,\theta^r)
\end{align}
and $\lambda=(k-h^\vee)\lambda_0$.
Therefore we have another inner $(\hat{\mf{g}},G)$-action
\begin{align} \label{CDOG.r}
V_{k-h^\vee}(\mf{g})\hookrightarrow\CDO_k(G)
\end{align}
with associated Cartan cochain $h^r$;
its image will be denoted by $V_{k-h^\vee}(\mf{g})^r$.
This action is also primary with respect to $\nu$.
\end{example}

The next few propositions provide more details on
these two $(\hat{\mf{g}},G)$-actions on $\CDO_k(G)$.

\begin{prop} \label{prop.CDOG.commute}
The vertex subalgebras $V_{-k-h^\vee}(\mf{g})^\ell$ and
$V_{k-h^\vee}(\mf{g})^r$ commute with each other.
\end{prop}

\begin{proof}
This amounts to showing that
$(A^\ell+h^\ell_A)_i(-B^r+h^r_B)=0$ for $A,B\in\mf{g}$
and $i=0,1$.
For $i=1$, here is the calculation
\begin{align*}
(A^\ell+\,&h^\ell_A)_1(-B^r+h^r_B) \\
&=-\{A^\ell,B^r\}-h^\ell_A(B^r)+h^r_B(A^\ell) \\
&\textstyle
=\frac{1}{2}h^\vee\lambda_0(A,\theta^\ell(B^r))
+\big(\frac{1}{2}k+\frac{1}{4}h^\vee\big)\lambda_0(A,\theta^\ell(B^r))
+\big(-\frac{1}{2}k+\frac{1}{4}h^\vee\big)\lambda_0(\theta^\ell(B^r),A)
=0
\end{align*}
which follows from (\ref{freeVA.comm}),
(\ref{CDOG.VAoidlr}), (\ref{hl}) and (\ref{hr}).
For $i=0$, we have a similar calculation
\begin{align*}
(A^\ell+\,&h^\ell_A)_0(-B^r+h^r_B) \\
&=-[A^\ell,B^r]-\{A^\ell,B^r\}_\Omega
  +\iota_{B^r}dh^\ell_A+L_{A^\ell}h^r_B \\
&\textstyle
=0-\big(\frac{1}{2}k+\frac{1}{4}h^\vee\big)
  \lambda_0([A,\theta^\ell(B^r)],\theta^\ell)
+\big(\frac{1}{2}k+\frac{1}{4}h^\vee\big)
  \lambda_0(A,[\theta^\ell(B^r),\theta^\ell])
+0=0\qquad
\end{align*}
which also utilizes Lemma \ref{lemma.1form.0mode}
and the commutativity between left and right
multiplications.
\end{proof}

{\it Remark.}
This recovers the well-known fact that every algebra of CDOs
on $G$ provides a realization of a commuting pair of affine
Lie algebras of dual levels.
\cite{GMS4}

\begin{prop} \label{prop.CDOG.conformal}
For any $k\neq 0$, the sum of the Sugawara vectors of
$V_{-k-h^\vee}(\mf{g})^\ell$ and $V_{k-h^\vee}(\mf{g})^r$
equals the conformal vector (\ref{CDOG.conformal}) of
$\CDO_k(G)$.
\end{prop}

\begin{proof}
Let us outline the calculation without going through all
the details.
The assumption $k\neq 0$ is precisely the condition for either
affine vertex algebra to admit a Sugawara vector.~\cite{Kac,FB-Z}
In addition to the notations introduced for the expression in
(\ref{CDOG.conformal}), also let $t^1,t^2,\ldots$ denote the
dual basis of $\mf{g}$ with respect to $\lambda_0$ and
$\theta_1^\ell,\theta_2^\ell,\ldots$
(resp.~$\theta_1^r,\theta_2^r,\ldots$) the corresponding
components of $\theta^\ell$ (resp.~$\theta^r$).
By definition, the Sugawara vectors of
$V_{-k-h^\vee}(\mf{g})^\ell$ and $V_{k-h^\vee}(\mf{g})^r$
are respectively
\begin{align*}
\nu^\ell
&\textstyle
=-\frac{1}{2k}\big(t_a^\ell+h^\ell_{t_a}\big)_{-1}
  \big(t^{a,\ell}+h^\ell_{t^a}\big)
=-\frac{1}{2k}t_{a,-1}^\ell t^{a,\ell}
+\frac{2k+h^\vee}{4k}\,t_{a,-1}^\ell\theta^{a,\ell}
-\frac{(2k+h^\vee)^2}{32k}\,\theta^\ell_{a,-1}\theta^{a,\ell} \\
\nu^r
&\textstyle
=\frac{1}{2k}\big(-t_a^r+h^r_{t_a}\big)_{-1}
  \big(-t^{a,r}+h^r_{t^a}\big)
=\frac{1}{2k}t_{a,-1}^r t^{a,r}
+\frac{2k-h^\vee}{4k}\,t_{a,-1}^r\theta^{a,r}
+\frac{(2k-h^\vee)^2}{32k}\,\theta^r_{a,-1}\theta^{a,r}
\end{align*}
In order to express the quantities with superscript $r$ in
terms of those with superscript $\ell$, let
$\rho_a^b=\theta^{b,\ell}(t_a^r)$.
By (\ref{freeVA.NOP}) and the definition of $\nabla$,
\begin{align*}
t_a^r
=\rho_a^b\,t_b^\ell
=t^\ell_{b,-1}\rho_a^b-t_b^\ell\bullet\rho_a^b
=t^\ell_{b,-1}\rho_a^b-(\nabla t_b^\ell)\rho_a^b
=t^\ell_{b,-1}\rho_a^b-h^\vee\theta_a^r\,.
\end{align*}
Then we have the following calculations:
\begin{align*}
\theta^r_{a,-1}\theta^{a,r}
&=\big(\rho_a^b\,\theta^\ell_b\big)_{-1}
  \big((\rho^{-1})^a_c\,\theta^{c,\ell}\big)
=\theta^\ell_{b,-1}\theta^{b,\ell} \\
t^r_{a,-1}\theta^{a,r}
&=\big(t^\ell_{b,-1}\rho_a^b-h^\vee\theta_a^r\big)_{-1}
  \theta^{a,r} \\
&=\big(t^\ell_{b,-1}\rho_{a,0}^b
  +\rho_{a,-1}^b\,t^\ell_{b,0}
  +\rho_{a,-2}^b\,t^\ell_{b,1}\big)\theta^{a,r}
-h^\vee\theta_{a,-1}^\ell\theta^{a,\ell} \\
&=t^\ell_{b,-1}\theta^{b,\ell}+0
+h^\vee\theta_{c,-1}^\ell\theta^{c,\ell}
-h^\vee\theta_{a,-1}^\ell\theta^{a,\ell} \\
&=t^\ell_{b,-1}\theta^{b,\ell} \\
t^r_{a,-1}t^{a,r}
&=\big(t^\ell_{b,-1}\rho_a^b-h^\vee\theta_a^r\big)_{-1}
  t^{a,r} \\
&=\big(t^\ell_{b,-2}\rho_{a,1}^b
  +t^\ell_{b,-1}\rho_{a,0}^b
  +\rho_{a,-1}^b\,t^\ell_{b,0}
  +\rho_{a,-2}^b\,t^\ell_{b,1}\big)\,t^{a,r}
-h^\vee t^r_{a,-1}\theta^{a,r} \\
&\textstyle
=0+t_{b,-1}^\ell(t^{b,\ell}+h^\vee\theta^{b,\ell})
-\big(k+\frac{1}{2}h^\vee\big)h^\vee
  \theta_{c,-1}^\ell\theta^{c,\ell}
+\frac{1}{2}(h^\vee)^2
  \theta_{c,-1}^\ell\theta^{c,\ell}
-h^\vee t^\ell_{a,-1}\theta^{a,\ell} \\
&=t_{b,-1}^\ell t^{b,\ell}
-kh^\vee\theta_{c,-1}^\ell\theta^{c,\ell}
\end{align*}
It is now easy to see that $\nu^\ell+\nu^r$ coincides
with either expression in (\ref{CDOG.conformal}).
\end{proof}

{\it Preparation.}
(i) For $k'\in\bb{C}$, we will write $\hat{\mf{g}}_{k'}$ for
$\hat{\mf{g}}_{k'\lambda_0}$ (see \S\ref{sec.Gmfld}) and
$U(\hat{\mf{g}})_{k'}$ for its universal enveloping algebra.
Let $\hat{\mf{g}}_+=\mf{g}[t]$ and
$\hat{\mf{g}}_{++}=t\cdot\mf{g}[t]$.
(ii) Let $\mb{P}$ denote the set of dominant weights of $G$
(with respect to a choice of maximal torus and Weyl chamber).
For $\mu\in\mb{P}$ and $k'\in\bb{C}$, let us write $\mu^*$
for $-w_{\t{max}}(\mu)$, where $w_{\t{max}}$ is the longest
element of the Weyl group;
$\bb{M}_\mu$ for the irreducible $G$-module of highest
weight $\mu$;
and $\bb{M}_{k',\mu}$ for the irreducible positive-energy
$(\hat{\mf{g}}_{k'},G)$-module of highest weight $\mu$.
(iii) To avoid confusion, eigenvalues of $L_0$ will be
referred to as \emph{conformal} weights here.

\begin{prop} \label{prop.PeterWeyl}
For any $k\notin\bb{Q}$, there is a $(\hat{\mf{g}}_{-k-h^\vee}
\oplus\hat{\mf{g}}_{k-h^\vee},G\times G)$-equivariant embedding
\begin{align} \label{PeterWeyl}
\bigoplus_{\mu\in\mb{P}}
  \bb{M}_{-k-h^\vee,\mu}\otimes
  \bb{M}_{k-h^\vee,\mu^*}
\;\hookrightarrow\;
\CDO_k(G)
\end{align}
and its image is ``dense'' in the sense described below.
Notice that the summand with $\mu=0$ is the tensor product of
(\ref{CDOG.l}) and (\ref{CDOG.r}).
\end{prop}

\begin{proof} 
By Example \ref{CDOG} and Proposition
\ref{prop.CDOG.commute}, the $G\times G$-action on $C^\infty(G)$
coming from left and right multiplications extends to
a $(\hat{\mf{g}}_{-k-h^\vee}\oplus\hat{\mf{g}}_{k-h^\vee},
G\times G)$-action on $\CDO_k(G)$.
According to the Peter-Weyl theorem, there is a canonical
$G\times G$-equivariant embedding
\begin{align*}
\bigoplus_{\mu\in\mb{P}}\bb{M}_\mu\otimes\bb{M}_{\mu^*}
\cong
\bigoplus_{\mu\in\mb{P}}\bb{M}_\mu\otimes\bb{M}_{\mu}^\vee
\;\hookrightarrow\;
C^\infty(G)
\end{align*}
whose image is dense in the $L^\infty$-topology.
This induces
a $(\hat{\mf{g}}_{-k-h^\vee}\oplus\hat{\mf{g}}_{k-h^\vee},
G\times G)$-equivariant map
\begin{align*}
\bigg(U(\hat{\mf{g}})_{-k-h^\vee}\otimes
  U(\hat{\mf{g}})_{k-h^\vee}\bigg)
\otimes_{U(\hat{\mf{g}}_+)\otimes
  U(\hat{\mf{g}}_+)}
\bigg(
  \bigoplus_{\mu\in\mb{P}}\bb{M}_\mu\otimes\bb{M}_{\mu^*}
\bigg)
\;\rightarrow\;
\CDO_k(G)\,.
\end{align*}
If $k'\notin\bb{Q}$, then it follows from a consideration of
the Casimir operator that any $\hat{\mf{g}}_{k'}$-module of the
form $U(\hat{\mf{g}})_{k'}\otimes_{U(\hat{\mf{g}}_+)}\bb{M}_\mu$
contains no proper submodule.
Therefore the above induced map is equivalent to a map of the
form (\ref{PeterWeyl}).

For $\CDO_k(G)$, the filtration described in \S\ref{freeVA.PBW}
splits $G\times G$-equivariantly with the use of (say)
left-invariant vector fields and $1$-forms.
This results in a $G\times G$-equivariant isomorphism
\begin{align*}
\bigoplus_{w\geq 0}q^i\CDO_k(G)_i
\cong
C^\infty(G)\otimes
\bigg(
  \bigotimes_{r\geq 1}
  \t{Sym}_{q^r}(\mf{g}\oplus\mf{g}^\vee)
\bigg)
\end{align*}
where the first $G$ acts on the symmetric powers in
the obvious way and the second $G$ acts trivially there.
Using this isomorphism, the $L^\infty$-topology on
$C^\infty(G)$ induces a topology on $\CDO_k(G)$ that is
compatible with the vertex algebra structure.
If the image of (\ref{PeterWeyl}) is not dense, then we
can find some nontrivial $G\times G$-submodule
$\bb{M}\subset\CDO_k(G)$ with no intersection with the said
image.
Also, we may assume that $\bb{M}$ is irreducible and, among
all such submodules, has the lowest conformal weight,
which is necessarily positive.
Notice that $\hat{\mf{g}}_{++}$ acts trivially on $\bb{M}$.
By Proposition \ref{prop.CDOG.conformal},
\begin{align*}
\textstyle
L_0
=\nu^\ell_0+\nu^r_0
=\frac{1}{2k}(-\Omega^\ell_0+\Omega^r_0)
\quad\t{on}\;\;\bb{M}
\end{align*}
where $\Omega^\ell_0$, $\Omega^r_0$ are the Casimir numbers
of the two $\mf{g}$-actions.
Since $\Omega^\ell_0,\Omega^r_0\in\bb{Q}$ but $k\notin\bb{Q}$,
the conformal weight of $\bb{M}$ must be zero, giving
a contradiction.
This proves that (\ref{PeterWeyl}) has a dense image.
\end{proof}

{\it Remark.}
This is a reproduction of one of the
``chiral Peter-Weyl theorems'' in \cite{FS.PeterWeyl}.
It will be very interesting to understand the representation-theoretic
meaning of the vertex algebra structure on (\ref{PeterWeyl}),
as well as to find an appropriate extension of the result for
$k\in\bb{Q}$;
see \emph{op.~cit.} for some conjectures.

\newpage
\setcounter{equation}{0}
\section{CDOs on Principal Bundles}
\label{sec.CDOP}

In this section, our object of study is an algebra of CDOs
$\CDO(P)$ on the total space of a principal bundle
$P\rightarrow M$ when it is equipped with
a ``formal loop group action'' in the sense of
\S\ref{sec.LGaction}.
The first goal is to understand the subalgebra of $\CDO(P)$
invariant under the action.
In subsequent sections we will construct and study modules
over this invariant subalgebra.
The second (and more technical) goal is to give, when
$P\rightarrow M$ is a principal frame bundle, an alternative
description of $\CDO(P)$ that makes the
``formal loop group action'' manifest.
This type of vertex algebras will play a central role in
the rest of the paper.

\begin{subsec} \label{sec.prin.bdl}
{\bf Setting:~principal bundle.}
Let $G$ be again a compact connected Lie group and
$\pi:P\rightarrow M$ a smooth principal $G$-bundle.
Identify $H^*_G(P)$ with $H^*(M)$ via $\pi^*$.
Choose a connection $\Theta$ on $\pi$,
i.e. some $\Theta\in(\Omega^1(P)\otimes\mf{g})^G$
such that $\Theta(A^P)=A$ for $A\in\mf{g}$;
its curvature is
$\Omega=d\Theta+\frac{1}{2}[\Theta\wedge\Theta]$.
Keep in mind that $\Theta$ is equivalent to
a $G$-equivariant vector bundle decomposition
$TP=T_h P\oplus T_v P$, where $T_h P=\ker\Theta$ and
$T_v P=\ker\pi_*$;
and $\Omega$ measures the non-integrability of
the subbundle $T_h P$.
Given $X\in\T(M)$, denote its horizontal lift by
$\wt X\in\T_h(P)^G$.
Let us extend the notation $(-)^P$ (see \S\ref{sec.Gmfld})
$C^\infty(P)$-linearly to denote the isomorphism
$C^\infty(P)\otimes\mf{g}\cong\T_v(P)$.
For $X,Y\in\T(M)$ notice that
\begin{align} \label{horlift.nonint}
[\wt X,\wt Y]-\wt{[X,Y]}=-\Omega(\wt X,\wt Y)^P.
\end{align}
Also recall the notations introduced in \S\ref{sec.Cartan}.

Consider an algebra of CDOs $\CDO(P)=\CDO_{\nabla,H}(P)$
defined as in \S\ref{sec.inv.data}.
Let $\lambda\in(\t{Sym}^2\mf{g}^\vee)^G$.
Recall from Definition \ref{formalLG.action} and the proof
of Theorem \ref{thm.formalLG.action} the meaning of an inner
$(\hat{\mf{g}}_\lambda,G)$-action on $\CDO(P)$ and its
associated Cartan cochain.
It will be understood without further comment that any inner
$(\hat{\mf{g}}_\lambda,G)$-action on $\CDO(P)$ considered
below extends the given $G$-action on $\CDO(P)_0=C^\infty(P)$.
\end{subsec}

\begin{lemma} \label{lemma.Gp1.p1}
$8\pi^2 p_1(P)_G=8\pi^2 p_1(M)-\Killing(P)$.
\end{lemma}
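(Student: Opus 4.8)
The plan is to use the connection $\Theta$ from \S\ref{sec.prin.bdl} to split $TP$ into horizontal and vertical parts and then invoke additivity of the first Pontrjagin class under Whitney sum. The starting point is that, since $G$ acts freely on $P$, the projection $EG\times P\to P$ is a $G$-equivariant homotopy equivalence, so $EG\times_G P\simeq M$ and the isomorphism $H^*_G(P)\cong H^*(M)$ used throughout is induced by $\pi:P\to M$ (with $G$ acting trivially on $M$). The connection gives a $G$-equivariant decomposition $TP=T_hP\oplus T_vP$ with $T_hP=\ker\Theta$ and $T_vP=\ker\pi_*$. The horizontal bundle is $G$-equivariantly isomorphic to the pullback $\pi^*TM$ via $X\mapsto\pi_*X$, and the vertical bundle is $G$-equivariantly isomorphic to $P\times\mf{g}$ via $A\mapsto A^P$, with $G$ acting diagonally (right multiplication on $P$, adjoint action on $\mf{g}$). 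Since for real (equivalently, complexified) bundles $p_1$ is additive over direct sums, this yields
\[
8\pi^2 p_1(P)_G=8\pi^2 p_1(T_hP)_G+8\pi^2 p_1(T_vP)_G\qquad\text{in }H^4_G(P)\cong H^4(M).
\]

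It then remains to identify the two summands. For the horizontal part, $T_hP\cong\pi^*TM$ is pulled back from $M$ (where $G$ acts trivially), so under $H^*_G(P)\cong H^*(M)$ its equivariant $p_1$ is just $p_1(TM)=p_1(M)$ by pullback functoriality; hence the first summand equals $8\pi^2 p_1(M)$. For the vertical part, applying $EG\times_G(-)$ to $T_vP\cong P\times\mf{g}$ and using again that $EG\times P\to P$ is a $G$-homotopy equivalence shows $EG\times_G T_vP$ is homotopy equivalent over $M$ to the adjoint bundle $\mathrm{ad}(P)=P\times_G\mf{g}$; thus $p_1(T_vP)_G$ corresponds to $p_1(\mathrm{ad}(P))\in H^4(M)$. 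Finally, $\mathrm{ad}(P)$ is the pullback of the universal $\mf{g}$-bundle along the classifying map $M\to BG$, and by the normalization recorded just before the statement (applied to $\rho=\mathrm{ad}$, for which $\lambda_{\mathrm{ad}}=\Killing$) the $p_1$ of that universal bundle is $-\frac{1}{8\pi^2}\Killing\in H^4(BG)$; pulling back gives $8\pi^2 p_1(\mathrm{ad}(P))=-\Killing(P)$. Assembling the three contributions yields $8\pi^2 p_1(P)_G=8\pi^2 p_1(M)-\Killing(P)$.

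The main obstacle is the equivariant bookkeeping: one must check carefully that $T_hP$ and $T_vP$ are precisely the pullback bundle and the adjoint bundle as \emph{$G$-equivariant} objects, since an unnoticed twist would alter the answer. These checks are routine (equivariance of $X\mapsto\pi_*X$, and of $A\mapsto A^P$ via $dR_g(A^P)=(\mathrm{Ad}_{g^{-1}}A)^P$), but that is where the actual content lies. As an alternative that sidesteps homotopy-theoretic arguments, one can run everything inside the Cartan model: choose the $G$-invariant connection $\nabla=\pi^*\nabla^M\oplus\nabla^{\mathrm{ad}}$ on $TP=T_hP\oplus T_vP$, write the Cartan representative $A\mapsto-\Tr(R-\tnabla A^P)^2$ of $8\pi^2 p_1(P)_G$ as in the proof of Lemma \ref{lemma.Cartan.closed}a, and compare it block by block with the corresponding representatives of $8\pi^2 p_1(M)$ and of $-\Killing(P)$ (the latter being $A\mapsto-\Killing(A,A)$, i.e.\ $-\Killing(\Omega\wedge\Omega)$ after the Chern--Weil substitution $A\mapsto\Omega$); the cross terms coming from the off-diagonal blocks of the curvature of $\nabla$ are exact and drop out, giving the same identity.
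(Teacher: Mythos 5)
Your proposal is correct and follows essentially the same route as the paper: split $TP=T_hP\oplus T_vP$ via the connection, use additivity of $p_1$ (valid with complex coefficients), identify $T_hP\cong\pi^*TM$ equivariantly and $T_vP\cong P\times\mf{g}$ with the adjoint action, and apply the normalization $-8\pi^2p_1(V)_G=\lambda_\rho$ with $\rho=\mathrm{ad}$. The paper's proof is just a terser version of exactly this argument; your extra equivariance checks and the alternative Cartan-model verification are sound but not needed beyond what the paper records.
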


\begin{proof}
Recall the $G$-equivariant decomposition
$TP=T_h P\oplus T_v P$.
Since $T_h P\cong\pi^*TM$ and $\pi^*$ identifies $H^*_G(P)$
with $H^*(M)$, we have $p_1(T_h P)_G=p_1(TM)$.
Since $T_v P\cong P\times\mf{g}$ with $G$ acting on $\mf{g}$
in the adjoint representation, we also have
$-8\pi^2 p_1(T_v P)_G=\Killing(P)$ (see \S\ref{conventions}).
This proves the lemma.
\end{proof}

By this lemma, Theorem \ref{thm.formalLG.action} specializes
to our current setting as follows.

\begin{corollary} \label{cor.formalLG.P}
The $G$-action on $P$ lifts to an inner
$(\hat{\mf{g}}_\lambda,G)$-action on $\CDO(P)$ if
and only if
\begin{align} \label{formalLG.condition2}
\qquad
8\pi^2 p_1(M)=(\lambda+\Killing)(P).
\qquad\qedsymbol
\end{align}
\end{corollary}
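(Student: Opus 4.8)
The statement to prove is Corollary~\ref{cor.formalLG.P}, which specializes Theorem~\ref{thm.formalLG.action} to the case of a principal bundle $\pi:P\rightarrow M$ with $H^*_G(P)$ identified with $H^*(M)$ via $\pi^*$.

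=== PROOF PROPOSAL ===

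\begin{proof}[Proof proposal]
The plan is to simply combine Theorem~\ref{thm.formalLG.action} with Lemma~\ref{lemma.Gp1.p1}. By Theorem~\ref{thm.formalLG.action}, the $G$-action on $P$ lifts to an inner $(\hat{\mf{g}}_\lambda,G)$-action on $\CDO(P)$ if and only if $8\pi^2 p_1(P)_G=\lambda(P)$ as elements of $H^4_G(P)$. So the only thing to do is rewrite this identity using the splitting $TP=T_hP\oplus T_vP$ of the previous lemma.

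First I would invoke Lemma~\ref{lemma.Gp1.p1}, which gives $8\pi^2 p_1(P)_G=8\pi^2 p_1(M)-\Killing(P)$ under the identification $H^*_G(P)\cong H^*(M)$. Substituting this into the criterion of Theorem~\ref{thm.formalLG.action} turns $8\pi^2 p_1(P)_G=\lambda(P)$ into $8\pi^2 p_1(M)-\Killing(P)=\lambda(P)$, and then I would move the $\Killing(P)$ term to the right-hand side. Since the characteristic map $(\t{Sym}^2\mf{g}^\vee)^G\rightarrow H^4_G(P)$ is linear, $\lambda(P)+\Killing(P)=(\lambda+\Killing)(P)$, yielding exactly the stated condition $8\pi^2 p_1(M)=(\lambda+\Killing)(P)$ in $H^4(M)\cong H^4_G(P)$. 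The ``moreover'' clause about primality with respect to $\nu^\omega$ is inherited verbatim from Theorem~\ref{thm.formalLG.action}, since we have not changed the conformal structure.

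There is essentially no obstacle here—this is a bookkeeping corollary. The one point worth a sentence of care is that the identification $H^*_G(P)\cong H^*(M)$ via $\pi^*$ (fixed in \S\ref{sec.prin.bdl}) is what lets us read both sides of the equation in $H^4(M)$; without it the statement would just be the unmodified Theorem~\ref{thm.formalLG.action}. Everything else is linearity of the characteristic map and a sign rearrangement.
\end{proof}
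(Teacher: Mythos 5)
Your proposal is correct and is exactly the paper's argument: the corollary is stated immediately after Lemma \ref{lemma.Gp1.p1} precisely so that substituting $8\pi^2 p_1(P)_G=8\pi^2 p_1(M)-\Killing(P)$ into the criterion of Theorem \ref{thm.formalLG.action} and using linearity of the characteristic map gives the stated condition. (The only cosmetic difference is that you carry along a ``moreover'' clause about primality that the corollary as printed does not restate, but that is harmless.)
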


{\it Remark.}
For convenience, when $\CDO(P)$ is equipped with an inner
$(\hat{\mf{g}}_\lambda,G)$-action, we will refer to it as
a {\bf principal $(\hat{\mf{g}}_\lambda,G)$-algebra}.

\begin{subsec} \label{sec.invCDO}
{\bf The invariant subalgebra.}
Suppose $\CDO(P)$ is a principal
$(\hat{\mf{g}}_\lambda,G)$-algebra, i.e.~it is given
an inner $(\hat{\mf{g}}_\lambda,G)$-action 
$V_\lambda(\mf{g})\hookrightarrow\CDO(P)$, defined by some
associated Cartan cochain
$h\in(\mf{g}^\vee\otimes\Omega^1(P))^G$.
Consider the centralizer subalgebra
\begin{align*}
\CDO(P)^{\hat{\mf{g}}}
:=C\big(\CDO(P),V_\lambda(\mf{g})\big)
\subset\CDO(P).
\end{align*}
\cite{Kac,FB-Z}
This is the subalgebra whose fields are
$\hat{\mf{g}}_\lambda$-invariant.
The weight-zero component consists of $f\in C^\infty(P)$
satisfying $(A^P+h_A)_0 f=A^P f=0$ for $A\in\mf{g}$, i.e.
\begin{align} \label{invCDO.0}
\CDO(P)^{\hat{\mf{g}}}_0
=C^\infty(P)^G
=\pi^*C^\infty(M).
\end{align}
The weight-one component consists of
$\alpha+X\in\Omega^1(P)\oplus\T(P)$ satisfying
\begin{align*}
\left\{\begin{array}{l}
(A^P+h_A)_0(\alpha+X)=L_{A^P}\alpha+[A^P,X]=0 \vss \\
(A^P+h_A)_1(\alpha+X)=\alpha(A^P)+\{A^P,X\}+h_A(X)=0
\end{array}\right.\qquad
\t{for }A\in\mf{g},
\end{align*}
where we have used (\ref{freeVA.comm}), Lemma
\ref{lemma.1form.0mode} and (\ref{formalLG.h}).
It follows that there is a pullback square
\begin{align*}
\xymatrix{
  \CDO(P)^{\hat{\mf{g}}}_1\ar[r]\ar[d] &
  \T(P)^G\ar[d]^q \\
  \Omega^1(P)^G\ar[r]^{p\ph{WW}} &
  (\mf{g}^\vee\otimes C^\infty(P))^G
}
\end{align*}
where $p$, $q$ are suitable adjoints to
$(\alpha,A)\mapsto-\alpha(A^P)$ and
$(X,A)\mapsto\{A^P,X\}+h_A(X)$ respectively.
Observe that
(i) $p$ is surjective, as contraction with $\Theta$ provides
a right inverse, and
(ii) $\ker p$ is the space of basic $1$-forms on $P$, 
i.e.~$\pi^*\Omega^1(M)$.
This implies the short exact sequence
\begin{align}\label{invCDO.1}
\xymatrix{
  0\ar[r] &
  \pi^*\Omega^1(M)\ar[r] &
  \CDO(P)^{\hat{\mf{g}}}_1\ar[r] &
  \T(P)^G\ar[r] & 0
}
\end{align}
In view of (\ref{invCDO.0}) and (\ref{invCDO.1}),
the vertex algebroid associated to
$\CDO(P)^{\hat{\mf{g}}}$ (see \S\ref{VA.VAoid}) is
of the form
\begin{align} \label{invCDO.VAoid}
\big(C^\infty(M),\Omega^1(M),\T(P)^G,\cdots\big).
\end{align}
Since $\CDO(P)$ is freely generated by its associated
vertex algebroid (see \S\ref{VAoid.VA}),
$\CDO(P)^{\hat{\mf{g}}}$ must contain at least
a subalgebra that is freely generated by
(\ref{invCDO.VAoid}).
In general, it is not clear whether
$\CDO(P)^{\hat{\mf{g}}}$ may contain other elements
or not.
(However, see Example \ref{invCDOG}.)
\end{subsec}

{\it Remarks.}
(i) By the above discussion, any module over
$\CDO(P)^{\hat{\mf{g}}}$ contains in its lowest weight a module
over the Atiyah algebroid $(C^\infty(M),\T(P)^G)$, such as the
space of sections of an associated vector bundle.
In subsequent sections we will construct and study such modules
over $\CDO(P)^{\hat{\mf{g}}}$.
(ii) It is noteworthy that $\CDO(P)^{\hat{\mf{g}}}$ is different
from an algebra of CDOs on the base manifold $M$ --- compare
(\ref{invCDO.1}) and (\ref{CDO.wt01}) --- and admits more
interesting modules.

\begin{example} \label{invCDOG}
{\bf CDOs on a Lie group.}
This is a continuation of Example \ref{CDOG} and
the same notations will be used.
Consider the centralizers of the inner
$(\hat{\mf{g}},G)$-actions (\ref{CDOG.l}) and
(\ref{CDOG.r}):
\begin{align*}
\CDO_k(G)^{\hat{\mf{g}},\ell}
=C\big(\CDO_k(G),V_{-k-h^\vee}(\mf{g})^\ell\big),\qquad
\CDO_k(G)^{\hat{\mf{g}},r}
=C\big(\CDO_k(G),V_{k-h^\vee}(\mf{g})^r\big).
\end{align*}
By Proposition \ref{prop.CDOG.commute}, they contain the
following subalgebras
\begin{align} \label{aff.invCDOG}
V_{k-h^\vee}(\mf{g})^r\subset
  \CDO_k(G)^{\hat{\mf{g}},\ell},\qquad
V_{-k-h^\vee}(\mf{g})^\ell\subset
  \CDO_k(G)^{\hat{\mf{g}},r}\,.
\end{align}
For $k\neq 0$, both affine vertex algebras admit Sugawara
vectors.
By the coset construction and Proposition
\ref{prop.CDOG.conformal}, the Sugawara vectors are also
conformal for the respective centralizers.
\cite{Kac,FB-Z}
In fact, for $k\notin\bb{Q}$, it follows from Proposition
\ref{prop.PeterWeyl} that
\begin{align*}
V_{k-h^\vee}(\mf{g})^r=\CDO_k(G)^{\hat{\mf{g}},\ell},\qquad
V_{-k-h^\vee}(\mf{g})^\ell=\CDO_k(G)^{\hat{\mf{g}},r}\,.
\end{align*}
\footnote{
The author is grateful to a reviewer for pointing this out to
him.
}
These may be viewed as examples of a version of the Borel-Weil
construction.
\end{example}

\begin{subsec} \label{sec.prin.bdl2}
{\bf Setting refined:~principal frame bundle.}
In addition to the data described in \S\ref{sec.prin.bdl},
suppose we also have a representation
$\rho:G\rightarrow SO(\bb{R}^d)$ together with an isomorphism
$P\times_\rho\bb{R}^d\cong TM$.
This induces a Riemannian metric on $M$ and an orthogonal
connection on $TM$, which (for simplicity) is assumed to be
torsion-free.
Let $[p,v]\in TM$ denote the coset of
$(p,v)\in P\times\bb{R}^d$.
For $i=1,\ldots,d$, let $\mb{e}_i\in\bb{R}^d$ be the standard
basis vectors and $\tau_i\in\T_h(P)$ the tautological vector
fields defined by
\begin{align*}
\tau_i|_p:=\t{horizontal lift of }[p,\mb{e}_i],
\end{align*}
which constitute a framing of $T_h P$.
For $A\in\mf{g}$ and $i,j=1,\ldots,d$, we have the Lie brackets
\begin{align} \label{tau.brackets}
[A^P,\tau_i]=\rho(A)_{ji}\tau_j,
\qquad
[\tau_i,\tau_j]=-\Omega(\tau_i,\tau_j)^P
\end{align}
where we use $\rho$ to also denote the induced map of Lie
algebras $\mf{g}\rightarrow\mf{so}_d$.
\footnote{
The absence of a horizontal component in $[\tau_i,\tau_j]$
is equivalent to the torsion-free assumption.
Moreover, the Jacobi identity for any
$\tau_i,\tau_j,\tau_k$ is equivalent to the two Bianchi
identities.
}

Let us define an algebra of CDOs on $P$ as in
\S\ref{sec.inv.data} using some more specific data (and
slightly different notations).
Let $\nabla'$ be the $G$-invariant flat connection on $TP$ with
respect to which all $A^P$ and $\tau_i$ are parallel, and $H'$
be a $G$-invariant closed $3$-form on $P$.
By Theorem \ref{thm.globalCDO}a, $\nabla'$ and $H'$ determine
an algebra of CDOs $\CDO(P)'=\CDO_{\nabla',H'}(P)$, which is
freely generated by a vertex algebroid
\begin{align} \label{CDOP.VAoid}
\big(C^\infty(P),\Omega^1(P),\T(P),
  \bullet',\{\;\}',\{\;\}'_\Omega\big)
\end{align}
(see \S\ref{VAoid.VA}).
The structure of this vertex algebroid is, in view of the
axioms (see Definition \ref{VAoid}), entirely determined by the
portion displayed in the ensuing lemma.
By Theorem \ref{thm.globalCDO}b, the trivial $1$-form determines
a conformal vector in $\CDO(P)'$, namely
\begin{align} \label{CDOP.conformal}
\nu'=t^P_{a,-1}\Theta^a+\tau_{i,-1}\tau^i
\end{align}
where $t_1,t_2,\cdots$ are a basis of $\mf{g}$;
$\Theta^1,\Theta^2,\cdots$ the corresponding components of
$\Theta$, so that $\Theta=\Theta^a\otimes t_a$;
$\tau_1,\cdots,\tau_d$ the horizontal vector fields defined above;
and $\tau^1,\cdots,\tau^d$ the dual horizontal $1$-forms.
In the rest of this section, we will provide a more detailed
description of this conformal vertex algebra in the presence of
an inner $(\hat{\mf{g}}_\lambda,G)$-action.
\end{subsec}

\begin{lemma} \label{lemma.CDOP.VAoid}
For $f\in C^\infty(P)$, $A,B\in\mf{g}$ and
$i,j=1,\ldots,d$, we have \\
\begin{tabular}{cl}
$\cdot$ &
$A^P\bullet'f=\tau_i\bullet'f=0$ \\
$\cdot$ &
$\{A^P,B^P\}'=-(\Killing+\lambda_\rho)(A,B)$;\;
$\{A^P,\tau_i\}'=0$;\;
$\{\tau_i,\tau_j\}'=2\mathrm{Ric}_{ij}$  \\
$\cdot$ &
$\{A^P,B^P\}'_\Omega
=\frac{1}{2}\iota_{A^P}\iota_{B^P}H'$;\;
$\{A^P,\tau_i\}'_\Omega
=\frac{1}{2}\iota_{A^P}\iota_{\tau_i}H'$;\;
$\{\tau_i,\tau_j\}'_\Omega=d\,\mathrm{Ric}_{ij}
+\frac{1}{2}\iota_{\tau_i}\iota_{\tau_j}H'$
\end{tabular} \\
where $\mathrm{Ric}_{ij}=\rho(\Omega(\tau_i,\tau_k))_{jk}$.
Moreover, we also have $\nu'_1 A^P=\nu'_1\tau_i=0$.
\end{lemma}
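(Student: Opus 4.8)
The plan is to compute each of the listed structure maps directly from the defining formulas in Theorem \ref{thm.globalCDO}a, specialized to the connection $\nabla'$ and closed $3$-form $H'$ of \S\ref{sec.prin.bdl2}. The key simplification is that $\nabla'$ is flat and makes all $A^P$ and $\tau_i$ parallel, so for any $X$ among the generators $\{A^P,\tau_i\}$ one has $\nabla' X = 0$ and hence $\tnabla' X = -[X,-]$ (by Definition \ref{tnabla}, $(\tnabla' X)(Y) = \nabla'_X Y - [X,Y]$, and here $\nabla'_X Y$ may be nonzero only through the $Y$-dependence, but evaluated against another parallel generator it again drops out — more precisely $\tnabla' X$ as an endomorphism is $Y \mapsto \nabla'_X Y - [X,Y]$, which on the parallel frame reduces to minus the bracket). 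Also the curvature $R' = 0$, so every term in (\ref{CDO.VAoid}) involving $R'$ vanishes, leaving $X \bullet' f = (\nabla' X)f = 0$, $\{X,Y\}' = -\Tr(\tnabla' X \cdot \tnabla' Y)$, and $\{X,Y\}'_\Omega = -\Tr(\nabla'(\tnabla' X)\cdot \tnabla' Y) + \tfrac12 \iota_X\iota_Y H'$.

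First I would record, using (\ref{tau.brackets}), the endomorphisms $\tnabla' X$ on the frame $\{A^P,\tau_i\}$: $\tnabla' A^P$ sends $B^P \mapsto -[A,B]^P$ and $\tau_i \mapsto -\rho(A)_{ji}\tau_j$, while $\tnabla' \tau_i$ sends $A^P \mapsto \rho(A)_{ji}\tau_j$ (from $-[A^P,\tau_i] = -\rho(A)_{ji}\tau_j$, but with the sign from $\tnabla' \tau_i(A^P) = \nabla'_{\tau_i}A^P - [\tau_i,A^P] = -[\tau_i,A^P] = [A^P,\tau_i]$) and $\tau_j \mapsto \Omega(\tau_i,\tau_j)^P$. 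Then the three brackets $\{\cdot,\cdot\}'$ follow by taking traces of composites over the full frame: $\Tr(\tnabla' A^P \cdot \tnabla' B^P)$ splits into a contribution from the $\mf{g}$-directions giving $\Tr_{\mf{g}}(\t{ad}_A \t{ad}_B) = \Killing(A,B)$ and from the $\tau$-directions giving $\Tr_{\mf{so}_d}(\rho(A)\rho(B)) = \lambda_\rho(A,B)$, hence $\{A^P,B^P\}' = -(\Killing+\lambda_\rho)(A,B)$; the mixed composite $\tnabla' A^P \cdot \tnabla' \tau_i$ has zero diagonal (it maps horizontal to vertical and vertical to horizontal), so $\{A^P,\tau_i\}' = 0$; and $\Tr(\tnabla' \tau_i \cdot \tnabla' \tau_j)$ gets its only diagonal contribution from $A^P \mapsto \rho(A)_{ki}\tau_k \mapsto \rho(A)_{ki}\Omega(\tau_j,\tau_k)^P$, and tracing over $A$ ranging over a basis $t_a$ of $\mf{g}$ yields $\rho(\Omega(\tau_j,\tau_k))_{ki}$ summed appropriately, which I would identify with $-2\mathrm{Ric}_{ij}$ up to the sign conventions so that $\{\tau_i,\tau_j\}' = 2\mathrm{Ric}_{ij}$ with $\mathrm{Ric}_{ij} = \rho(\Omega(\tau_i,\tau_k))_{jk}$.

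Next I would handle the $\{\cdot,\cdot\}'_\Omega$ terms. Since $H'$ is closed, the $\tfrac12 \iota_X\iota_Y H'$ pieces are exactly the stated $\tfrac12\iota_{A^P}\iota_{B^P}H'$ etc., so the task is the $-\Tr(\nabla'(\tnabla' X)\cdot \tnabla' Y)$ term (the other two $R'$-terms in (\ref{CDO.VAoid}) vanish since $R'=0$). For $X = Y = A^P$, $\nabla'(\tnabla' A^P)$ must be computed: $\tnabla' A^P$ is the endomorphism above, whose components in the parallel frame are constants in the $\mf{g}$-block ($-[A,\cdot]$, structure constants) and the functions $\rho(A)_{ji}$ in the $\tau$-block — but $\rho(A)_{ji}$ are constants too, so $\nabla'(\tnabla' A^P) = 0$, giving $\{A^P,B^P\}'_\Omega = \tfrac12\iota_{A^P}\iota_{B^P}H'$. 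Similarly $\nabla'(\tnabla' A^P)\cdot \tnabla' \tau_i$: the relevant factor $\nabla'(\tnabla' A^P)$ vanishes, so $\{A^P,\tau_i\}'_\Omega = \tfrac12\iota_{A^P}\iota_{\tau_i}H'$. The substantive case is $\{\tau_i,\tau_j\}'_\Omega$: here $\tnabla' \tau_i$ has the $\tau \to \mf g$ block with entries $\Omega(\tau_i,\tau_j)^P$, genuinely non-constant functions, so $\nabla'(\tnabla'\tau_i)$ is nonzero and, paired with $\tnabla'\tau_j$ and traced, produces the term $d\,\mathrm{Ric}_{ij}$ — i.e.\ the exterior derivative of the Ricci function, coming from $\nabla'$ differentiating $\Omega(\tau_i,\tau_k)$ in directions $\tau_\ell$ and contracting with the $\rho(\cdot)$ block of $\tnabla'\tau_j$. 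This is the one computation requiring care with signs and the placement of $d$, and it is the main obstacle: verifying that $-\Tr(\nabla'(\tnabla'\tau_i)\cdot\tnabla'\tau_j) = d\,\mathrm{Ric}_{ij}$ exactly, including checking that the would-be extra terms cancel by the Bianchi identities mentioned in the footnote to (\ref{tau.brackets}). Finally, for $\nu'_1 A^P = \nu'_1 \tau_i = 0$ I would invoke (\ref{CDO.L1}) from Theorem \ref{thm.globalCDO}b with $\omega = 0$ (the trivial $1$-form defining $\nu'$): it reads $\nu'_1 X = \Tr\tnabla' X$, and $\Tr\tnabla' A^P = -\Tr_{\mf g}\t{ad}_A - \Tr_{\mf{so}_d}\rho(A) = 0$ since $\t{ad}_A$ is traceless (any derivation of a Lie algebra, or use compactness) and $\rho(A) \in \mf{so}_d$ is skew, while $\Tr\tnabla'\tau_i = \Tr\rho(\Omega(\tau_i,\cdot)) = 0$ again by skew-symmetry of $\rho$-values; hence both vanish.
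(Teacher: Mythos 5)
Your proposal follows the same route as the paper's proof: specialize (\ref{CDO.VAoid}) and (\ref{CDO.L1}) to the flat connection $\nabla'$ and the closed form $H'$, use (\ref{tau.brackets}) to record $\nabla^{\prime\,t}A^P$ and $\nabla^{\prime\,t}\tau_i$ on the frame $\{B^P,\tau_j\}$, observe that $\nabla'(\nabla^{\prime\,t}A^P)=0$ while $\nabla'(\nabla^{\prime\,t}\tau_i)$ kills $A^P$ and sends $\tau_j\mapsto(d\Omega(\tau_i,\tau_j))^P$, and read off the structure maps by taking traces. This is exactly what the paper does, and the paper likewise leaves the final trace computations to the reader, so your level of detail is comparable.

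There is one concrete slip worth correcting. You assert that $\Tr(\nabla^{\prime\,t}\tau_i\cdot\nabla^{\prime\,t}\tau_j)$ gets its \emph{only} diagonal contribution from the vertical directions. But each of $\nabla^{\prime\,t}\tau_i$ and $\nabla^{\prime\,t}\tau_j$ interchanges $T_vP$ and $T_hP$, so their composite preserves \emph{both} summands of $TP=T_hP\oplus T_vP$ and both blocks contribute to the trace: the vertical block gives $\rho(\Omega(\tau_i,\tau_k))_{kj}=-\mathrm{Ric}_{ij}$ (using skew-symmetry of $\rho(\Omega)$) and the horizontal block gives $\rho(\Omega(\tau_j,\tau_k))_{ki}=-\mathrm{Ric}_{ji}$. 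The factor $2$ in $\{\tau_i,\tau_j\}'=2\mathrm{Ric}_{ij}$ comes from adding these two and invoking the symmetry $\mathrm{Ric}_{ij}=\mathrm{Ric}_{ji}$ of the Ricci tensor --- this is precisely the appeal to ``the symmetries of the Riemannian curvature tensor'' in the paper --- and not from a sign convention; as literally written, your accounting would yield $\mathrm{Ric}_{ij}$ instead of $2\mathrm{Ric}_{ij}$. A similar remark applies to $\nu'_1\tau_i$: the vanishing of $\Tr\nabla^{\prime\,t}\tau_i$ is not a skew-symmetry statement but simply the fact that $\nabla^{\prime\,t}\tau_i$ is off-diagonal with respect to the vertical/horizontal decomposition. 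Neither point changes the method or the conclusions.
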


\begin{proof}
The structure maps are defined by (\ref{CDO.VAoid}) and
(\ref{CDO.L1}).
Then the calculations follow easily from our definition
of $\nabla'$, the Lie brackets (\ref{tau.brackets}) and
the symmetries of the Riemannian curvature tensor
$\rho(\Omega)$.
\end{proof}

Theorem \ref{thm.formalLG.action} specializes
to our current setting as follows.

\begin{corollary} \label{cor.formalLG.P2}
The $G$-action on $P$ lifts to an inner
$(\hat{\mf{g}}_\lambda,G)$-action on $\CDO(P)'$
if and only if
\begin{align*}
(\lambda+\Killing+\lambda_\rho)(P)=0.
\end{align*}
Moreover, this action is primary with respect to
the conformal vector $\nu'$.
\end{corollary}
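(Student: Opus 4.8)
The plan is to deduce Corollary~\ref{cor.formalLG.P2} from Corollary~\ref{cor.formalLG.P} by identifying the equivariant first Pontrjagin class $p_1(M)$ and the class $\Killing(P)$ explicitly in terms of the curvature data that define $\CDO(P)$ in the present setting. Recall that Corollary~\ref{cor.formalLG.P} asserts the $G$-action lifts precisely when $8\pi^2 p_1(M)=(\lambda+\Killing)(P)$ in $H^4_G(P)\cong H^4(M)$. So the task reduces to showing that $8\pi^2 p_1(M)=-\lambda_\rho(P)$ under the identification $H^4_G(P)\cong H^4(M)$; combining these two equations gives $(\lambda+\Killing+\lambda_\rho)(P)=0$, which is the stated condition.

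The key computation is that $TM\cong P\times_\rho\bb{R}^d$, together with the fact that the induced connection on $TM$ has curvature represented in the Cartan/Chern--Weil picture by $\rho(\Omega)$, where $\Omega$ is the curvature of $\Theta$. By the standard Chern--Weil formula (as already invoked in the proof of Lemma~\ref{lemma.Cartan.closed}a and Lemma~\ref{lemma.Gp1.p1}), $8\pi^2 p_1(M)$ is represented by $-\Tr\big(\rho(\Omega)\wedge\rho(\Omega)\big)=-\lambda_\rho(\Omega\wedge\Omega)$, which is exactly the image of $-\lambda_\rho\in(\t{Sym}^2\mf{g}^\vee)^G$ under the characteristic map, i.e. $-\lambda_\rho(P)$. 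This step is essentially the observation recorded in the ``Preparation'' paragraph before Lemma~\ref{lemma.Gp1.p1}: regarding $\bb{R}^d$ as a $G$-equivariant bundle over a point, $-8\pi^2 p_1(\bb{R}^d)_G=\lambda_\rho$, and pulling this back along $P\to M$ (which realizes $TM$ as the associated bundle) gives $8\pi^2 p_1(TM)=-\lambda_\rho(P)$. Substituting into $8\pi^2 p_1(M)=(\lambda+\Killing)(P)$ and rearranging yields $(\lambda+\Killing+\lambda_\rho)(P)=0$.

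For the second sentence of the corollary --- primacy with respect to $\nu'$ --- I would note that $\nu'$ as defined in (\ref{CDOP.conformal}) is exactly the conformal vector $\nu^\omega$ of Theorem~\ref{thm.globalCDO}b associated to $\omega=0$ (legitimate here since the connection $\nabla'$ is flat, so $\Tr R'=0$). By (\ref{CDO.L1}) one has $\nu'_1 X=\Tr\nabla^{\prime\,t}X$ for $X\in\T(P)$, and from the operators computed in the proof of Lemma~\ref{lemma.CDOP.VAoid} we read off $\Tr\nabla^{\prime\,t}A^P=0$ and $\Tr\nabla^{\prime\,t}\tau_i=0$ (indeed $\nu'_1 A^P=\nu'_1\tau_i=0$ is already recorded there). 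In particular $L_1 A^P=0$ for all $A\in\mf{g}$, so the hypothesis ``$L_1^\omega A^P=0$'' feeding into Theorem~\ref{thm.formalLG.action} is met with this choice of conformal structure, and that theorem's conclusion that the inner $(\hat{\mf{g}}_\lambda,G)$-action is primary with respect to $\nu^\omega=\nu'$ applies verbatim.

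The only point requiring a little care --- and the one I would expect to be the main obstacle to write cleanly rather than conceptually --- is making the Chern--Weil identification of $8\pi^2 p_1(M)$ with $-\lambda_\rho(P)$ precise at the level of equivariant cohomology and matching normalization constants (the $8\pi^2$, the sign, and the factor conventions in $\lambda_\rho(A,B)=\Tr\rho(A)\rho(B)$ versus $\Tr\rho(\Omega)\wedge\rho(\Omega)$). This is entirely parallel to --- and in fact subsumed by --- the argument already given for Lemma~\ref{lemma.Gp1.p1}: there $T_hP\cong\pi^*TM$ gave $p_1(T_hP)_G=p_1(M)$ and $T_vP\cong P\times\mf{g}$ (adjoint action) gave $-8\pi^2 p_1(T_vP)=\Killing(P)$; the present corollary just additionally uses $TM\cong P\times_\rho\bb{R}^d$ to rewrite $8\pi^2 p_1(M)$ itself as $-\lambda_\rho(P)$. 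So in the write-up I would simply cite Lemma~\ref{lemma.Gp1.p1} (or re-derive the analogous fact for the $\rho$-associated bundle) and combine it with Corollary~\ref{cor.formalLG.P}, keeping the normalization bookkeeping consistent with the conventions fixed in \S\ref{conventions}.
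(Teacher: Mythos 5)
Your proposal is correct and follows essentially the same route as the paper: reduce to Corollary \ref{cor.formalLG.P} via the identity $8\pi^2 p_1(M)=-\lambda_\rho(P)$ coming from $TM\cong P\times_\rho\bb{R}^d$, and get primacy from $\nu'_1 A^P=0$ as recorded in Lemma \ref{lemma.CDOP.VAoid}. The only (cosmetic) difference is that the paper establishes $-8\pi^2 p_1(M)=\lambda_\rho(P)$ by naturality of $p_1$ under $\rho^*:H^4(BSO_d)\rightarrow H^4(BG)$ in a classifying-space diagram, whereas you do it at the level of Chern--Weil representatives.
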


\begin{proof}
By assumption, $\pi:P\rightarrow M$ is the lifting of the
special orthogonal frame bundle $F_{SO}(TM)\rightarrow M$
along $\rho:G\rightarrow SO_d$.
This gives us a commutative diagram
\begin{align*}
\xymatrix{
  &
  H^4(BSO_d)\ar[r]^{\;\;\rho^*}\ar[d] &
  H^4(BG)\ar[d] \\
  H^4(M)\ar[r]^{=\phantom{WWJ}} &
  H^4_{SO_d}(F_{SO}(TM))\ar[r]^{\phantom{WWI}=} &
  H^4_G(P)
}
\end{align*}
By definition, $\lambda_\rho\in H^4(BG)$ is the image of
$-8\pi^2 p_1\in H^4(BSO_d)$ under $\rho^*$ (see
\ref{conventions}), which implies that
$-8\pi^2p_1(M)=\lambda_\rho(P)$.
Now the first claim becomes a special case of Corollary
\ref{cor.formalLG.P}.
The other claim is true by Lemma \ref{lemma.CDOP.VAoid}.
\end{proof}

{\it Remark.}
In the sequel we will write
$\lambda^*=\lambda+\Killing+\lambda_\rho$.

\begin{subsec} \label{CDOP.FLGaction}
{\bf Formal loop group action.}
Suppose $\CDO(P)'$ is a principal
$(\hat{\mf{g}}_\lambda,G)$-algebra, i.e.~it is given an inner
$(\hat{\mf{g}}_\lambda,G)$-action
$V_\lambda(\mf{g})\hookrightarrow\CDO(P)'$, defined by some
associated Cartan cochain
$h\in(\mf{g}^\vee\otimes\Omega^1(P))^G$.
Notice that the Cartan cochains in 
(\ref{Cartan4.VAoid}) may now be written as
$\chi^{2,2}=\frac{1}{2}d_G H'$ and
$\chi^{4,0}=-\Killing-\lambda_\rho$ by Lemma
\ref{lemma.CDOP.VAoid}.
Hence the condition (\ref{formalLG.h}) on $h$
is now equivalent to the equation
\begin{align} \label{CDOP.h}
d_G(H'-2h)=\lambda^*.
\end{align}
This of course agrees with Corollary
\ref{cor.formalLG.P2}.
The purpose of the next two lemmas is to
modify the description of the vertex algebra
$\CDO(P)'$ in such a way that the inner
$(\hat{\mf{g}}_\lambda,G)$-action becomes
manifest and the data $(H',h)$ involved are
replaced by a basic $3$-form that trivializes
$\lambda^*(\Omega\wedge\Omega)$.
\end{subsec}

\begin{lemma} \label{lemma.basic.3form}
There is a natural bijection between the
following two sets of data:

(i) $(H',h)\in\Omega^3(P)^G\oplus
(\mf{g}^\vee\otimes\Omega^1(P))^G$ such
that $d_G(H'-2h)=\lambda^*$, and

(ii) $(H,\beta)\in\pi^*\Omega^3(M)\oplus\Omega^2(P)^G$
such that $dH=\lambda^*(\Omega\wedge\Omega)$ and
$\beta|_{T_h P}=0$.
\end{lemma}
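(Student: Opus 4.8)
The plan is to construct explicit maps in both directions and check they are mutually inverse. Starting from data (i), the equation $d_G(H'-2h)=\lambda^*$ unpacks (as in the proof of Theorem~\ref{thm.formalLG.action}, cf.~(\ref{formalLG.h})) into the pair $\chi^{2,2}_A=dh_A$ and $\chi^{4,0}_A=\lambda(A,A)-2h_A(A^P)$; since $\chi^{2,2}=\frac12 d_G H'$ and $\chi^{4,0}=-\Killing-\lambda_\rho$ by Lemma~\ref{lemma.CDOP.VAoid}, the first of these reads $dh_A=\frac12(dH'-\iota_{A^P}H')$, i.e. $dh_A-\frac12 dH'=-\frac12\iota_{A^P}H'$. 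The idea is to use $h$ to correct $H'$ into something horizontal. Concretely, I would try $\beta$ defined via contraction with the connection: set $\beta := 2\,\Theta^a\wedge h_{t_a} - (\text{correction involving }\Theta\text{ and }H')$, chosen so that $\beta|_{T_h P}=0$ automatically (any term of the form $\Theta^a\wedge(\cdots)$ vanishes on horizontal vectors since $\Theta|_{T_h P}=0$), and then define $H := H' - d\beta - (\text{more }\Theta\text{-terms})$, engineered so that $\iota_{\wt X} H$ has vanishing vertical-contraction and $\mathcal{L}_{A^P}H=0$, forcing $H$ to be the pullback of a form on $M$. The cleanest bookkeeping is probably to work fiberwise: decompose every form on $P$ according to the bigrading from $TP = T_hP\oplus T_vP$ induced by $\Theta$, write $H'$ and $h$ in components, and let $H$ be the purely-horizontal component of $H'$ after subtracting off the $d_G$-coboundary of the appropriate connection-dependent primitive built from $h$. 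In that language $dH=\lambda^*(\Omega\wedge\Omega)$ should fall out of $d_GH'=2\chi^{2,2}$ together with the structure equation $\Omega=d\Theta+\frac12[\Theta\wedge\Theta]$ and the identity $d_G\big(\text{Chern--Simons}(\Theta)\big)=\lambda^*(\Omega\wedge\Omega)$ in the Cartan model (a standard fact, and essentially the content of Lemma~\ref{lemma.Cartan.closed}(a) specialized to the classifying map).

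For the reverse direction, given $(H,\beta)$ with $dH=\lambda^*(\Omega\wedge\Omega)$ and $\beta|_{T_hP}=0$, I would set $H' := H + d\beta + \Psi(\Theta)$ where $\Psi(\Theta)$ is the Chern--Simons-type $3$-form on $P$ with $d\Psi(\Theta)=\lambda^*(\Omega\wedge\Omega) - (\text{pullback piece})$, so that $dH'$ picks up exactly the extra term needed, and then read off $h$ from the equation $d_G(H'-2h)=\lambda^*$: since this forces $2h = H' - (\text{a }d_G\text{-primitive of }\lambda^*)$ up to a $d_G$-closed ambiguity, the horizontality constraint $\beta|_{T_hP}=0$ together with $G$-invariance should pin $h$ down uniquely. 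The key point is that the $d_G$-primitive of the invariant polynomial $\lambda^*\in(\operatorname{Sym}^2\mf{g}^\vee)^G$ in the Cartan complex of $P$ is, by Chern--Weil--Cartan theory, canonically given once a connection $\Theta$ is fixed — this is precisely what makes the bijection ``natural'' in the sense claimed, and what links the two trivializations (the equivariant one by $H'-2h$ and the ordinary one by $H$ on the base).

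The two constructions are then checked to be inverse by direct substitution: plugging the formula for $(H,\beta)$ back into the formula for $(H',h)$ should return the original pair modulo the deliberate choices, and one verifies there is no residual ambiguity because (a) $\beta$ is determined by $H'$ and $h$ via its non-horizontal components, its horizontal component being zero by fiat, and (b) $H$ is determined as the horizontal part of $H'$ minus explicit $\Theta$-dependent terms. I expect the main obstacle to be purely organizational rather than conceptual: getting the Chern--Simons correction term $\Psi(\Theta)$ and its various $\Theta$- and $\Omega$-dependent pieces exactly right so that all the cross-terms cancel, and confirming that the $G$-invariance plus horizontality conditions leave genuinely no freedom (so the map is a bijection and not merely a surjection). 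A secondary subtlety is checking that $H$ really descends to $M$ — i.e. that it is both $G$-invariant and horizontal (``basic'') — which amounts to verifying $\iota_{A^P}H=0$, and this is exactly where the first component $dh_A=\frac12(dH'-\iota_{A^P}H')$ of the hypothesis gets used, after integrating/untwisting by the chosen primitive.
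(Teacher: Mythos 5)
Your plan follows the paper's proof essentially verbatim: the paper likewise constructs both maps explicitly via the Chern--Simons form $\mathrm{CS}_{\lambda^*}(\Theta)=\lambda^*(\Theta\wedge\Omega)-\frac{1}{6}\lambda^*(\Theta\wedge[\Theta\wedge\Theta])$ and the Cartan cochain $\lambda^*(-,\Theta)$, whose $d_G$-differentials interpolate between $\lambda^*$ and $\lambda^*(\Omega\wedge\Omega)$, defines $\beta$ by contracting $h$ against $\Theta$ (so $\beta|_{T_hP}=0$ holds by construction), sets $H=H'+\mathrm{CS}_{\lambda^*}(\Theta)-d\beta$, and reads off basicness of $H$ from the single Cartan equation $d_GH=\lambda^*(\Omega\wedge\Omega)$. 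The only loose end is the bookkeeping you yourself anticipated: the inverse must be extracted by Cartan bidegree --- the paper writes $H'-2h=H-\mathrm{CS}_{\lambda^*}(\Theta)-\lambda^*(-,\Theta)+d_G\beta$ and identifies $H'$ and $h$ as the $(0,3)$ and $(2,1)$ components of the right-hand side, rather than solving for $2h$ as a single form.
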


\begin{proof}
Once the maps in both directions are described below,
it will be clear that they are inverse to each other.
Let $\mathrm{CS}_{\lambda^*}(\Theta)
=\lambda^*(\Theta\wedge\Omega)
-\frac{1}{6}\lambda^*(\Theta\wedge[\Theta\wedge\Theta])$.
Notice that as Cartan cochains
$\mathrm{CS}_{\lambda^*}(\Theta)\in\Omega^3(P)^G$ and
$\lambda^*(-,\Theta)\in(\mf{g}^\vee\otimes\Omega^1(P))^G$
have the following differentials
\begin{align} \label{dCS}
d_G\,\mathrm{CS}_{\lambda^*}(\Theta)
=\lambda^*(\Omega\wedge\Omega)
  -\lambda^*(-,d\Theta),\qquad
d_G\,\lambda^*(-,\Theta)
=\lambda^*(-,d\Theta)-\lambda^*.
\end{align}

{\it From (i) to (ii).}
Let $(H',h)$ be a pair as in (i).
First define a $G$-invariant $2$-form on $P$ by
\begin{align} \label{beta}
\beta(A^P,B^P)=h_A(B^P)-h_B(A^P),\qquad
\beta(A^P,\tau_i)=2h_A(\tau_i),\qquad
\beta(\tau_i,\tau_j)=0
\end{align}
for $A,B\in\mf{g}$ and $i,j=1,\ldots,d$.
Indeed, the $G$-equivariance of $h$ and (\ref{tau.brackets})
imply the $G$-invariance of $\beta$.
It follows from $d_G(H'-2h)=\lambda^*$ that
\begin{align} \label{iA.beta}
\iota_{A^P}\beta=2h_A-\lambda^*(A,\Theta)
\qquad\Rightarrow\qquad
d_G\beta=d\beta-2h+\lambda^*(-,\Theta).
\end{align}
Then define a $G$-invariant $3$-form on $P$ by
\begin{align} 
\label{basic.3form}
H=H'-d\beta+\mathrm{CS}_{\lambda^*}(\Theta).
\end{align}
It follows from $d_G(H'-2h)=\lambda^*$, (\ref{iA.beta}) and
(\ref{dCS}) that $d_G H=\lambda^*(\Omega\wedge\Omega)$.
Equivalently, $H$ is horizontal (hence basic) and satisfies
$dH=\lambda^*(\Omega\wedge\Omega)$.

{\it From (ii) to (i).}
Given $(H,\beta)$ as in (ii), define $h$ by (\ref{iA.beta})
and $H'$ by (\ref{basic.3form}).
\end{proof}

{\it Preparation.}
(i) Let $\beta\in\Omega^2(P)^G$ and $H\in\pi^*\Omega^3(M)$
be given by applying the construction in Lemma
\ref{lemma.basic.3form} to the data $(H',h)$ described in
\S\ref{sec.prin.bdl2} and \S\ref{CDOP.FLGaction}.
(ii) Define a $C^\infty(P)$-linear map 
$\Delta:\T(P)\rightarrow\Omega^1(P)$ as follows:
$\Delta(A^P)=-h_A$ for $A\in\mf{g}$ and
$\Delta(\tau_i)=-\frac{1}{2}\iota_{\tau_i}\beta$ for
$i=1,\ldots,d$.
By Lemma \ref{lemma.newVAoid}, $\Delta$ determines an isomorphism
$(\t{id},\Delta)$ from the vertex algebroid (\ref{CDOP.VAoid}) to
a new vertex algebroid
\begin{align} \label{CDOP.VAoidnew}
\big(C^\infty(P),\Omega^1(P),\T(P),
  \bullet,\{\;\},\{\;\}_\Omega\big).
\end{align}
This in turn induces an isomorphism from $\CDO(P)'$ to the vertex
algebra freely generated by (\ref{CDOP.VAoidnew}), which will be
(temporarily) denoted by $\CDO(P)$.
By composition, it has an inner $(\hat{\mf{g}}_\lambda,G)$-action
$V_\lambda(\mf{g})\hookrightarrow\CDO(P)$ whose associated Cartan
cochain is trivial, i.e.~it simply takes any
$A\in\mf{g}=V_\lambda(\mf{g})_1$ to $A^P$.

\begin{lemma} \label{lemma.CDOP.VAoidnew}
For $f\in C^\infty(P)$, $A,B\in\mf{g}$ and
$i,j=1,\ldots,d$, we have \\
\begin{tabular}{cl}
$\cdot$ &
$A^P\bullet f=\tau_i\bullet f=0$ \\
$\cdot$ &
$\{A^P,B^P\}=\lambda(A,B)$;\;
$\{A^P,\tau_i\}=0$;\;
$\{\tau_i,\tau_j\}=2\mathrm{Ric}_{ij}$ \\
$\cdot$ &
$\{A^P,B^P\}_\Omega=\{A^P,\tau_i\}_\Omega=0$;\;
$\{\tau_i,\tau_j\}_\Omega=d\,\mathrm{Ric}_{ij}
+\frac{1}{2}\iota_{\tau_i}\iota_{\tau_j}H
+\lambda^*(\Omega(\tau_i,\tau_j),\Theta)$
\end{tabular}
\end{lemma}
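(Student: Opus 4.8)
The plan is to compute the new structure maps $\bullet$, $\{\;\}$, $\{\;\}_\Omega$ directly from the isomorphism $(\t{id},\Delta)$ of vertex algebroids described just before the statement, using the transformation formulas for $\Delta$ recorded in Lemma \ref{lemma.newVAoid}, together with the ``old'' values of $\bullet'$, $\{\;\}'$, $\{\;\}'_\Omega$ supplied by Lemma \ref{lemma.CDOP.VAoid}. Concretely, $\Delta:\T(P)\rightarrow\Omega^1(P)$ is $C^\infty(P)$-linear with $\Delta(A^P)=-h_A$ and $\Delta(\tau_i)=-\frac{1}{2}\iota_{\tau_i}\beta$, and since $\bullet'$ vanishes on the generators (Lemma \ref{lemma.CDOP.VAoid}), the formula for $\bullet$ from Lemma \ref{lemma.newVAoid} immediately gives $A^P\bullet f=\tau_i\bullet f=0$. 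For $\{\;\}$, the correction term is of the shape $-\langle\Delta(X),Y\rangle-\langle\Delta(Y),X\rangle$, i.e. $h_A(B^P)+h_B(A^P)$ (for the pair $A^P,B^P$), $\frac{1}{2}\iota_{\tau_i}\beta(A^P)+h_A(\tau_i)$ (for $A^P,\tau_i$) and $\frac{1}{2}\iota_{\tau_i}\beta(\tau_j)+\frac{1}{2}\iota_{\tau_j}\beta(\tau_i)$ (for $\tau_i,\tau_j$). Using the explicit formula (\ref{beta}) for $\beta$ in terms of $h$, the $\tau_i,\tau_j$-correction vanishes because $\beta(\tau_i,\tau_j)=0$, giving $\{\tau_i,\tau_j\}=2\mathrm{Ric}_{ij}$; the $A^P,\tau_i$-correction is $\frac{1}{2}\cdot 2h_A(\tau_i)$ minus... wait, signs must be tracked, but the point is that $\iota_{\tau_i}\beta(A^P)=-\beta(A^P,\tau_i)=-2h_A(\tau_i)$ exactly cancels, yielding $\{A^P,\tau_i\}=0$; and the $A^P,B^P$-correction turns $-(\Killing+\lambda_\rho)(A,B)$ into $-(\Killing+\lambda_\rho)(A,B)+\chi^{4,0}$-type bookkeeping, which by (\ref{formalLG.h})/(\ref{CDOP.h}) collapses to $\lambda(A,B)$.

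Next I would compute $\{\;\}_\Omega$, which is the most delicate piece since the $\Delta$-correction for $\{\;\}_\Omega$ in Lemma \ref{lemma.newVAoid} involves $d\Delta$, Lie derivatives $L_X\Delta(Y)$, and contractions — precisely the terms that produced $d_G\beta$ in the proof of Lemma \ref{lemma.basic.3form}. For the pair $A^P,B^P$: starting from $\{A^P,B^P\}'_\Omega=\frac{1}{2}\iota_{A^P}\iota_{B^P}H'$, the correction assembles into $\frac{1}{2}\iota_{A^P}\iota_{B^P}(H'-d\beta-2dh\text{-stuff})$; but $\chi^{2,2}_A=dh_A$ (first line of (\ref{formalLG.h})) forces the curvature/Chern--Simons terms to cancel, and one is left with $\frac{1}{2}\iota_{A^P}\iota_{B^P}H$ where $H$ is basic; since $\iota_{A^P}H=0$ for basic $H$, this gives $\{A^P,B^P\}_\Omega=0$. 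The same mechanism — $\iota_{A^P}H=0$ plus $\chi^{2,2}=\frac{1}{2}d_GH'$ — kills $\{A^P,\tau_i\}_\Omega$. For $\{\tau_i,\tau_j\}_\Omega$, one has to carefully carry the $d\,\mathrm{Ric}_{ij}+\frac{1}{2}\iota_{\tau_i}\iota_{\tau_j}H'$ starting value through the $\Delta$-correction built from $\beta$ (with $\beta(\tau_i,\tau_j)=0$ but $\iota_{\tau_i}\beta\neq 0$ in the $\mf{g}$-directions), invoke (\ref{basic.3form}) relating $H$, $H'$, $\mathrm{CS}_{\lambda^*}(\Theta)$ and $d\beta$, and watch the residual Chern--Simons contraction collapse to the extra term $\lambda^*(\Omega(\tau_i,\tau_j),\Theta)$. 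I expect exactly this last identity — tracking the $\mathrm{CS}_{\lambda^*}(\Theta)$ contribution and its interaction with $d\beta$ restricted to mixed horizontal/vertical arguments — to be the main obstacle, since it is where the ``+$\lambda^*(\Omega(\tau_i,\tau_j),\Theta)$'' anomaly term is born and where sign conventions for $\iota$, $\Delta$, and the vertex-algebroid axioms must all be consistent.

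Finally, I would remark that every step is forced: the structure of (\ref{CDOP.VAoidnew}) is completely determined by these generator values because the underlying extended Lie algebroid structure (brackets (\ref{tau.brackets}), pairings $\tau^i(\tau_j)=\delta^i_j$, $\Theta^a(t_b^P)=\delta^a_b$) is unchanged under $(\t{id},\Delta)$ and the vertex-algebroid axioms of Definition \ref{VAoid} propagate the values of $\bullet$, $\{\;\}$, $\{\;\}_\Omega$ from any generating set. Thus it suffices to verify the displayed formulas, and the computation above does that, modulo the bookkeeping which I would relegate to a short verification using Lemma \ref{lemma.newVAoid}, (\ref{beta}), (\ref{iA.beta}), (\ref{basic.3form}) and the symmetries of the Riemann tensor already exploited in the proof of Lemma \ref{lemma.CDOP.VAoid}.
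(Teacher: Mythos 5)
Your proposal is correct and follows essentially the same route as the paper: transport the structure maps of Lemma \ref{lemma.CDOP.VAoid} through the isomorphism $(\mathrm{id},\Delta)$ using the formulas of Definition \ref{VAoid.mor}, with (\ref{beta}), (\ref{iA.beta}), (\ref{basic.3form}) and the component of (\ref{CDOP.h}) supplying the cancellations, including the birth of the $\lambda^*(\Omega(\tau_i,\tau_j),\Theta)$ term from the Chern--Simons/$d\beta$ bookkeeping exactly as you predict. The only difference is cosmetic: the paper obtains $\{A^P,B^P\}=\lambda(A,B)$ and $\{A^P,B^P\}_\Omega=0$ at no cost from the composition $(\mathrm{id},\Delta)\circ(i,h)=(i,0)$ rather than by the direct polarization argument you use.
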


\begin{proof}
The structure of (\ref{CDOP.VAoidnew}) is determined by
the structure of (\ref{CDOP.VAoid}) (see Lemma
\ref{lemma.CDOP.VAoid}) together with the isomorphism
$(\t{id},\Delta):(\ref{CDOP.VAoid})\rightarrow
(\ref{CDOP.VAoidnew})$ (see Definition \ref{VAoid.mor}
and the definition of $\Delta$).
The claim for $\bullet$ is easy.
Recall the map $(i,h)$ in Proposition \ref{prop.lift.VA}.
Since $(\t{id},\Delta)\circ(i,h)=(i,0)$, we have
$\{A^P,B^P\}=\lambda(A,B)$ and $\{A^P,B^P\}_\Omega=0$.
The other values of $\{\;\}$ are computed as follows
\begin{align*}
\{A^P,\tau_i\}
&\textstyle
=\{A^P,\tau_i\}'+h_A(\tau_i)+\frac{1}{2}\beta(\tau_i,A^P)=0 \\
\{\tau_i,\tau_j\}\;
&\textstyle
=\{\tau_i,\tau_j\}'+\frac{1}{2}\beta(\tau_i,\tau_j)
  +\frac{1}{2}\beta(\tau_j,\tau_i)
=2\t{Ric}_{ij}
\end{align*}
using the definition of $\beta$ (\ref{beta}).
The other values of $\{\;\}_\Omega$ are computed as
follows
\begin{align*}
\{A^P,\tau_i\}_\Omega
&\textstyle
=\{A^P,\tau_i\}'_\Omega
+\frac{1}{2}L_{A^P}\iota_{\tau_i}\beta
-L_{\tau_i}h_A
+dh_A(\tau_i)
-\frac{1}{2}\iota_{[A^P,\tau_i]}\beta \\
&\textstyle
=\frac{1}{2}\iota_{A^P}\iota_{\tau_i}H'
-\iota_{\tau_i}dh_A=0 \\
\{\tau_i,\tau_j\}_\Omega\;
&\textstyle
=\{\tau_i,\tau_j\}'_\Omega
+\frac{1}{2}L_{\tau_i}\iota_{\tau_j}\beta
-\frac{1}{2}L_{\tau_j}\iota_{\tau_i}\beta
+\frac{1}{2}d\beta(\tau_i,\tau_j)
+h_{\Omega(\tau_i,\tau_j)}\\
&\textstyle
=d\,\t{Ric}_{ij}
+\frac{1}{2}\iota_{\tau_i}\iota_{\tau_j}H'
-\frac{1}{2}\iota_{\Omega(\tau_i,\tau_j)}\beta
-\frac{1}{2}\iota_{\tau_i}\iota_{\tau_j}d\beta
+\frac{1}{2}\iota_{\Omega(\tau_i,\tau_j)}\beta
+\frac{1}{2}\lambda^*(\Omega(\tau_i,\tau_j),\Theta) \\
&\textstyle
=d\,\t{Ric}_{ij}
+\frac{1}{2}\iota_{\tau_i}\iota_{\tau_j}H
+\lambda^*(\Omega(\tau_i,\tau_j),\Theta)
\end{align*}
using, in order:
the $G$-invariance of $\beta$;
the component of (\ref{CDOP.h}) in
$(\mf{g}^\vee\otimes\Omega^2(P))^G$;
the identity
\begin{align*}
L_{\tau_i}\iota_{\tau_j}
  -L_{\tau_j}\iota_{\tau_i}
  +d\iota_{\tau_j}\iota_{\tau_i}
=L_{\tau_i}\iota_{\tau_j}
  -\iota_{\tau_j}d\iota_{\tau_i}
=L_{\tau_i}\iota_{\tau_j}
  -\iota_{\tau_j}L_{\tau_i}
  +\iota_{\tau_j}\iota_{\tau_i}d
=\iota_{[\tau_i,\tau_j]}
  -\iota_{\tau_i}\iota_{\tau_j}d;
\end{align*}
the Lie brackets (\ref{tau.brackets});
equation (\ref{iA.beta});
and finally the definition of $H$ (\ref{basic.3form}).
\end{proof} 

{\it Remark.}
The bijection in Lemma \ref{lemma.basic.3form} also means
that \emph{any} $H\in\pi^*\Omega^3(M)$ satisfying
$dH=\lambda^*(\Omega\wedge\Omega)$ determines
a principal $(\hat{\mf{g}}_\lambda,G)$-algebra whose
associated vertex algebroid structure is as in Lemma
\ref{lemma.CDOP.VAoidnew}.

\begin{corollary} \label{cor.fund.normal.comm}
In the vertex algebra $\CDO(P)$ we have the following
normal-ordered expansions and commutation relations
\begin{align*}
&(fA^P)_n
=\sum_{k\geq 0}f_{n-k}A^P_k
+\sum_{k<0}A^P_k f_{n-k},\qquad
(f\tau_i)_n
=\sum_{k\geq 0}f_{n-k}\tau_{i,k}
+\sum_{k<0}\tau_{i,k}f_{n-k} \\
&[A^P_n,B^P_m]=[A,B]^P_{n+m}+n\lambda(A,B)\delta_{n+m,0},
\qquad
[A^P_n,\tau_{i,m}]=\rho(A)_{ji}\tau_{j,n+m}
  \ph{\rule[-0.08in]{.1pt}{.1pt}} \\
&\textstyle
[\tau_{i,n},\tau_{j,m}]
=-\Omega(\tau_i,\tau_j)^P_{n+m}
+(n-m)(\mathrm{Ric}_{ij})_{n+m}
+\Big(\frac{1}{2}\iota_{\tau_i}\iota_{\tau_j}H
  +\lambda^*(\Omega(\tau_i,\tau_j),\Theta)\Big)_{n+m}
\end{align*}
for $f\in C^\infty(P)$; $A,B\in\mf{g}$; $i,j=1,\ldots,d$;
and $n,m\in\bb{Z}$.
\end{corollary}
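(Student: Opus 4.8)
The plan is to deduce all four displayed formulas by specializing, to the vertex algebroid (\ref{CDOP.VAoidnew}), the universal description of a vertex algebra freely generated by a vertex algebroid (\S\ref{VA.VAoid}, \S\ref{VAoid.VA}, in particular (\ref{freeVA.comm})), and then plugging in the explicit structure maps of Lemma \ref{lemma.CDOP.VAoidnew} together with the Lie brackets $[A^P,B^P]=[A,B]^P$, $[A^P,\tau_i]=\rho(A)_{ji}\tau_j$ and $[\tau_i,\tau_j]=-\Omega(\tau_i,\tau_j)^P$ from (\ref{tau.brackets}). Because $\CDO_{\Theta,H}(P)$ is by construction freely generated by (\ref{CDOP.VAoidnew}), with $A^P$ and $\tau_i$ lying in the $\T(P)$-summand of $\CDO_{\Theta,H}(P)_1$, nothing beyond these inputs is needed; the corollary is in essence a repackaging of these ingredients.

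For the two normal-ordered expansions, the universal formula for the modes of a module product $fX$ (for $f$ in the base ring and $X$ in the tangent sheaf) carries a single quantum correction built from $X\bullet f$; since $A^P\bullet f=\tau_i\bullet f=0$ by Lemma \ref{lemma.CDOP.VAoidnew}, that correction drops out and one is left with exactly $(fA^P)_n=\sum_{k\geq0}f_{n-k}A^P_k+\sum_{k<0}A^P_kf_{n-k}$ and its analogue for $f\tau_i$, with the creation modes of $A^P$, resp.\ $\tau_i$, placed to the left. For the commutators I would use that, for two weight-one elements $X,Y$, the only products $X_kY$ surviving in the OPE are $X_0Y\in\CDO_{\Theta,H}(P)_1$ and $X_1Y\in\CDO_{\Theta,H}(P)_0$ (all $X_kY$ with $k\geq2$ vanishing for weight reasons), whence $[X_n,Y_m]=(X_0Y)_{n+m}+n(X_1Y)_{n+m}$, where under the vertex algebroid identification $X_1Y=\{X,Y\}$ while $X_0Y$ is the sum of the Lie bracket $[X,Y]\in\T(P)$ and the $1$-form $\{X,Y\}_\Omega\in\Omega^1(P)$. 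Substituting the values from Lemma \ref{lemma.CDOP.VAoidnew}, the cases $[A^P_n,B^P_m]$ and $[A^P_n,\tau_{i,m}]$ come out at once, using that $\{A^P,B^P\}=\lambda(A,B)$ is constant (so its only nonzero mode is $\lambda(A,B)\delta_{n+m,0}$) and $\{A^P,B^P\}_\Omega=\{A^P,\tau_i\}=\{A^P,\tau_i\}_\Omega=0$. In the case $[\tau_{i,n},\tau_{j,m}]$ the only bookkeeping point is that $n(\{\tau_i,\tau_j\})_{n+m}=2n(\mathrm{Ric}_{ij})_{n+m}$ must be combined with the summand $(d\,\mathrm{Ric}_{ij})_{n+m}$ contained in $(\{\tau_i,\tau_j\}_\Omega)_{n+m}$: since $T$ acts on $C^\infty(P)=\CDO_{\Theta,H}(P)_0$ by $Tg=dg$ and therefore $(dg)_k=-k\,g_k$, one has $(d\,\mathrm{Ric}_{ij})_{n+m}=-(n+m)(\mathrm{Ric}_{ij})_{n+m}$, and $2n-(n+m)=n-m$ gives the stated coefficient; the remaining part $\frac{1}{2}\iota_{\tau_i}\iota_{\tau_j}H+\lambda^*(\Omega(\tau_i,\tau_j),\Theta)$ of $\{\tau_i,\tau_j\}_\Omega$ transfers unchanged.

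I do not expect a genuine obstacle. The one thing truly worth checking — and a good consistency test — is antisymmetry of the bracket under $(i,n)\leftrightarrow(j,m)$, which forces $\{\tau_i,\tau_j\}_\Omega+\{\tau_j,\tau_i\}_\Omega=d(\{\tau_i,\tau_j\})=2\,d\,\mathrm{Ric}_{ij}$; this holds because $\mathrm{Ric}_{ij}=\mathrm{Ric}_{ji}$ while $\iota_{\tau_i}\iota_{\tau_j}H$ and $\lambda^*(\Omega(\tau_i,\tau_j),\Theta)$ are antisymmetric in $i,j$. The footnote's remark that all the Jacobi identities check out is the associativity counterpart of this same computation; through (\ref{tau.brackets}) and its footnote it reduces to the two Bianchi identities together with $dH=\lambda^*(\Omega\wedge\Omega)$.
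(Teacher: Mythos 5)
Your proposal is correct and follows exactly the paper's route: the paper's entire proof is the one-line observation that the corollary follows from Lemma \ref{lemma.CDOP.VAoidnew} together with the defining relations (\ref{freeVA.comm})--(\ref{freeVA.NOP}) of the freely generated vertex algebra, which is precisely the substitution you carry out (including the correct bookkeeping $(d\,\mathrm{Ric}_{ij})_{n+m}=-(n+m)(\mathrm{Ric}_{ij})_{n+m}$ yielding the coefficient $n-m$). Your added antisymmetry check is a valid consistency test but not needed for the argument.
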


\begin{proof}
This follows immediately from Lemma \ref{lemma.CDOP.VAoidnew}
and Definition \ref{VAoid.VA}.
\end{proof}

\begin{lemma} \label{lemma.CDOP.conformal}
The vertex algebra $\CDO(P)$ has a conformal vector
of central charge $2\dim P$:
\begin{align*}
\textstyle
\nu=t^P_{a,-1}\Theta^a+\tau_{i,-1}\tau^i
  -\frac{1}{2}\lambda^*(\Theta_{-1}\Theta).
\end{align*}
Moreover, we have $\nu_1 A^P=\nu_1 \tau_i=0$ for
$A\in\mf{g}$ and $i=1,\ldots,d$.
\end{lemma}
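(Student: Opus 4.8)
The plan is to verify directly that the proposed $\nu$ satisfies the axioms of a conformal vector of central charge $2\dim P = 2(\dim\mf{g}+d)$, namely that its modes $L_n=\nu_{n+1}$ close into a Virasoro algebra with the correct central charge, that $L_{-1}=T$, and that $L_0$ is the weight operator; and then to check the primarity relations $\nu_1 A^P=\nu_1\tau_i=0$. I would first recall that $\CDO_{\Theta,H}(P)$ is freely generated by the vertex algebroid (\ref{CDOP.VAoidnew}), so that a conformal vector can be built out of generators of weight $1$ and their products, and that Theorem \ref{thm.globalCDO}b already supplies the general form of a conformal vector of an algebra of CDOs. The key observation is that $\CDO_{\Theta,H}(P)$ is, by construction, isomorphic to $\CDO_{\nabla',H'}(P)$ via $(\t{id},\Delta)$, and the latter has the conformal vector $\nu'$ of (\ref{CDOP.conformal}) with central charge $2\dim P$ by Theorem \ref{thm.globalCDO}b (applied to the flat connection $\nabla'$, for which $\Tr R'=0$ so the trivial $1$-form $\omega=0$ is admissible). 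So the cleanest route is to \emph{transport} $\nu'$ through the isomorphism $(\t{id},\Delta)$ and identify the image.

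Concretely, I would compute the image of $\nu' = t^P_{a,-1}\Theta^a + \tau_{i,-1}\tau^i$ under the vertex algebra automorphism induced by $(\t{id},\Delta)$. Since $\Delta$ shifts a vector field $X\in\T(P)$ to $X + \Delta(X)\in\CDO(P)_1$ (in the new description), and since $\Delta(t_a^P)=-h_{t_a}$, $\Delta(\tau_i)=-\tfrac12\iota_{\tau_i}\beta$, the term $t^P_{a,-1}\Theta^a$ becomes $t^P_{a,-1}\Theta^a$ plus correction terms involving $(h_{t_a})_{-1}\Theta^a$ and similar contractions, and likewise for $\tau_{i,-1}\tau^i$. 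Using (\ref{iA.beta}) to rewrite $h_{t_a}$ and the contractions of $\beta$ in terms of $\lambda^*(-,\Theta)$, the correction terms should collect into exactly $-\tfrac12\lambda^*(\Theta_{-1}\Theta)$ (up to a possible exact/$T$-type term which, if it appears, can be absorbed into the freedom $T^2 f$ in the family of conformal vectors and then shown to vanish by a weight/degree count, since $\lambda^*(\Theta\wedge\Theta)$-type data produce only the displayed expression). This bookkeeping is the bulk of the work but is mechanical once one has the explicit form of the isomorphism from \S\ref{VAoid.VA.mor}.

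For the primarity statement $\nu_1 A^P = \nu_1\tau_i=0$: I would note that under the isomorphism this is equivalent to $\nu'_1 A^P = \nu'_1\tau_i = 0$ in $\CDO_{\nabla',H'}(P)$, which is exactly the last assertion of Lemma \ref{lemma.CDOP.VAoid} — so once the identification $\nu = (\t{id},\Delta)_*\nu'$ is established, primarity is free. (Alternatively, one can verify $\nu_1 A^P = \Tr\tnabla A^P - 0 = 0$ and $\nu_1\tau_i = \Tr\tnabla\tau_i = 0$ directly from (\ref{CDO.L1}) and the fact that in the new vertex algebroid the relevant traces vanish, using Lemma \ref{lemma.CDOP.VAoidnew} and the Ricci-symmetry computations from the proof of Lemma \ref{lemma.CDOP.VAoid}.) The main obstacle I anticipate is \emph{not} the Virasoro relations or the central charge — those come for free from the transport-of-structure argument — but rather the careful tracking of the splitting-dependent correction terms when pushing $\nu'$ through $(\t{id},\Delta)$, in particular making sure the $\lambda^*(-,\Theta)$ contributions from the two summands combine with the right coefficient $-\tfrac12$ and that no leftover $T$-exact term survives; this requires being scrupulous about the formula (\ref{VA.VAoid.456})-style corrections and the precise action of a vertex algebroid morphism on the Sugawara-type element $t^P_{a,-1}\Theta^a$.
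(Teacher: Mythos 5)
Your proposal matches the paper's proof: the paper likewise transports $\nu'=t^P_{a,-1}\Theta^a+\tau_{i,-1}\tau^i$ through the isomorphism $(\t{id},\Delta)$, expands $(t^P_a-h_{t_a})_{-1}\Theta^a+(\tau_i-\tfrac12\iota_{\tau_i}\beta)_{-1}\tau^i$ using the weight-zero component of (\ref{CDOP.h}), the symmetry $\alpha_{-1}\alpha'=\alpha'_{-1}\alpha$ and the definition of $\beta$ to collect the corrections into $-\tfrac12\lambda^*(\Theta_{-1}\Theta)$ (the cross terms cancel exactly, so no residual $T$-exact term appears), and deduces $\nu_1A^P=\nu_1\tau_i=0$ from Lemma \ref{lemma.CDOP.VAoid} via the isomorphism. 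The bookkeeping you flag as the main obstacle is indeed the whole content of the paper's computation, and it works out as you anticipate.
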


\begin{proof}
By construction, the isomorphism from $\CDO(P)'$ to $\CDO(P)$
(see the paragraph preceding Lemma \ref{lemma.CDOP.VAoidnew})
takes the conformal vector $\nu'$ in (\ref{CDOP.conformal}) to
\begin{align*}
\nu
&\textstyle
=(t^P_a-h_{t_a})_{-1}\Theta^a
  +(\tau_i-\frac{1}{2}\iota_{\tau_i}\beta)_{-1}\tau^i \\
&\textstyle
=t^P_{a,-1}\Theta^a+\tau_{i,-1}\tau^i
  -\Big(h_{t_a}(t_b^P)\Theta^b
  +h_{t_a}(\tau_i)\tau^i\Big)_{-1}\Theta^a
  -\frac{1}{2}\Big(\beta(\tau_i,t^P_a)\Theta^a
  +\beta(\tau_i,\tau_j)\tau^j\Big)_{-1}\tau^i \\
&\textstyle
=t^P_{a,-1}\Theta^a+\tau_{i,-1}\tau^i
  -\frac{1}{2}\lambda^*(t_a,t_b)\Theta^b_{-1}\Theta^a
  -h_{t_a}(\tau_i)\tau^i_{-1}\Theta^a
  +h_{t_a}(\tau_i)\tau^i_{-1}\Theta^a+0 \\
&\textstyle
=t^P_{a,-1}\Theta^a+\tau_{i,-1}\tau^i
  -\frac{1}{2}\lambda^*(\Theta_{-1}\Theta)
\end{align*}
where we have used, in order:
the component of (\ref{CDOP.h}) in
$(\t{Sym}^2\mf{g}^\vee\otimes C^\infty(P))^G$;
the symmetry $\alpha_{-1}\alpha'=\alpha'_{-1}\alpha$
for $1$-forms $\alpha,\alpha'$;
and the definition of $\beta$ (\ref{beta}).
The said isomorphism also takes $\nu'_1(A^P+h_A)$ to
$\nu_1 A^P$ and $\nu'_1(\tau_i+\iota_{\tau_i}\beta)$ to
$\nu_1\tau_i$.
This proves our second claim in view of Lemma
\ref{lemma.CDOP.VAoid} and (\ref{CDO.L1}).
\end{proof}

The following summarizes all the discussions since
\S\ref{sec.prin.bdl2}.

\begin{theorem} \label{thm.CDOP} 
Suppose we have:
a principal $G$-bundle $\pi:P\rightarrow M$,
a representation $\rho:G\rightarrow SO(\bb{R}^d)$
and an isomorphism $P\times_\rho\bb{R}^d\cong TM$;
a connection $\Theta$ on $\pi$ that (for simplicity) induces
a torsion-free connection on $TM$;
an invariant symmetric bilinear form $\lambda$ on $\mf{g}$;
and a basic $3$-form $H$ on $P$ satisfying
$dH=\lambda^*(\Omega\wedge\Omega)$, where $\Omega$ is the
curvature of $\Theta$ and
$\lambda^*=\lambda+\Killing+\lambda_\rho$ (see
\S\ref{conventions}).

These data determine a vertex algebra $\CDO_{\Theta,H}(P)$
as follows.
There are generating fields
\begin{align*}
\Phi_f(z)\t{ for }f\in C^\infty(P),\qquad
\Phi_{A^P}(z)\t{ for }A\in\mf{g},\qquad
\Phi_{\tau_i}(z)\t{ for }i=1,\ldots,d
\end{align*}
(see \S\ref{sec.Gmfld} and \S\ref{sec.prin.bdl2})
whose weights are $0$, $1$ and $1$ respectively.
The vacuum is $1\in C^\infty(P)$.
The OPEs with $\Phi_f(z)$ have the following leading terms
\begin{align*}
\Phi_f(z)\Phi_g(w)\;\,
&=\Phi_{fg}(w)+\Phi_{fdg}(w)(z-w)+O((z-w)^2),\quad
g\in C^\infty(P)\ph{\Big(} \\
\Phi_{A^P}(z)\Phi_f(w)
&=\frac{\Phi_{A^P f}(w)}{z-w}+\Phi_{fA^P}(w)+O(z-w) \\
\Phi_{\tau_i}(z)\Phi_f(w)\;
&=\frac{\Phi_{\tau_i f}(w)}{z-w}+\Phi_{f\tau_i}(w)+O(z-w)
\end{align*}
where the three second terms (plus linearity) are meant to be
the definition of fields associated to arbitrary $1$-forms
and vector fields.
The OPEs of the other generating fields have the
following singular parts
\begin{align*}
\Phi_{A^P}(z)\Phi_{B^P}(w)
&\sim
  \frac{\lambda(A,B)}{(z-w)^2}
  +\frac{\Phi_{[A,B]^P}(w)}{z-w} \\
\Phi_{A^P}(z)\Phi_{\tau_i}(w)\;
&\sim
  \frac{\rho(A)_{ji}\Phi_{\tau_j}(w)}{z-w} \\
\Phi_{\tau_i}(z)\Phi_{\tau_j}(w)\;\,
&\sim
  \frac{2\Phi_{\mathrm{Ric}_{ij}}(\frac{z+w}{2})}{(z-w)^2}
  -\frac{\Phi_{\Omega(\tau_i,\tau_j)^P}(w)}{z-w}
  +\frac{\frac{1}{2}\Phi_{\iota_{\tau_i}\iota_{\tau_j}H}(w)
    +\Phi_{\lambda^*(\Omega(\tau_i,\tau_j),\Theta)}(w)}{z-w}
\end{align*}
where $\mathrm{Ric}_{ij}=\rho(\Omega(\tau_i,\tau_k))_{jk}$.
In particular, the fields $\Phi_{A^P}(z)$ for $A\in\mf{g}$ represent
an inner $(\hat{\mf{g}}_\lambda,G)$-action (see Definition
\ref{formalLG.action}).
Moreover, a Virasoro field of central charge $2\dim P$ is given by
\begin{align*}
\textstyle
:\Phi_{t^P_a}(z)\Phi_{\Theta^a}(z):
+:\Phi_{\tau_i}(z)\Phi_{\tau^i}(z):
-\frac{1}{2}\,\lambda^*(\Phi_{\Theta}(z),\Phi_{\Theta}(z))
\end{align*}
(see \S\ref{sec.prin.bdl2}) with respect to which all the
generating fields are primary.
$\qedsymbol$
\end{theorem}

{\it Remark.}
For convenience (and lack of imagination), we will
refer to $\CDO_{\Theta,H}(P)$ as
a {\bf principal frame $(\hat{\mf{g}}_\lambda,G)$-algebra}.
While this type of vertex algebras are particular examples of
algebras of CDOs, in \S\ref{CDOP.CDOM} we will see that all
algebras of CDOs can be recovered in a natural way from
principal frame $(\hat{\mf{g}}_\lambda,G)$-algebras with
$\lambda=-\Killing$.
Moreover, that is only a special case of a construction
associated to general principal
$(\hat{\mf{g}}_\lambda,G)$-algebras (to be introduced in
\S\ref{sec.ass}).
\footnote{
The author also thinks that, compared to an arbitrary algebra
of CDOs, a principal frame $(\hat{\mf{g}},G)$-algebra has
a more appealing description in terms of generating fields
and OPEs (compare Theorem \ref{thm.globalCDO} with the above
theorem).
}

\begin{subsec} \label{sec.invCDO.can}
{\bf The invariant subalgebra.}
This discussion is a more detailed version of
\S\ref{sec.invCDO} specifically for a principal frame
$(\hat{\mf{g}}_\lambda,G)$-algebra $\CDO_{\Theta,H}(P)$.
Consider the centralizer subalgebra 
\begin{align*}
\CDO_{\Theta,H}(P)^{\hat{\mf{g}}}
:=C\big(\CDO_{\Theta,H}(P),V_\lambda(\mf{g})\big).
\end{align*}
The weight-zero component is again $C^\infty(P)^G$.
Let us examine the weight-one component below.

By definition, $\CDO_{\Theta,H}(P)^{\hat{\mf{g}}}_1$ consists
of those elements
$\alpha+\ms{X}+\ms{Y}\in\Omega^1(P)\oplus\T_h(P)\oplus\T_v(P)$
that are annihilated by $A^P_0$ and $A^P_1$ for $A\in\mf{g}$.
Let $t_1,t_2,\cdots$ be a basis of $\mf{g}$ and
$t^1,t^2,\cdots$ the dual basis of $\mf{g}^\vee$.
Let us write $\ms{X}=\ms{X}^i\tau_i$ and
$\ms{Y}=\ms{Y}^a t^P_a$.
By Lemma \ref{lemma.CDOP.VAoidnew} and Definition
\ref{VAoid}, both $\{A^P,\ms{X}^i\tau_i\}_\Omega$ and
$\{A^P,\ms{Y}^a t^P_a\}_\Omega$ vanish.
Then by (\ref{freeVA.comm}) the first condition on
$\alpha+\ms{X}+\ms{Y}$ reads
\begin{align*}
A^P_0(\alpha+\ms{X}+\ms{Y})
=L_{A^P}\alpha+[A^P,\ms{X}]+[A^P,\ms{Y}]=0
\quad\;\;\t{for }A\in\mf{g}
\end{align*}
i.e.~$\alpha$, $\ms{X}$, $\ms{Y}$ are all $G$-invariant.
By Lemma \ref{lemma.CDOP.VAoidnew} and Definition
\ref{VAoid} again, we find that
\begin{align*}
\qquad\qquad
\{A^P,\ms{X}^i\tau_i\}\,
&=\rho(A)_{ji}\tau_j\ms{X}^i \\
\{A^P,\ms{Y}^a t^P_a\}
&=\lambda(A,t_a)\ms{Y}^a+[A,t_a]^P\ms{Y}^a \\
&=\lambda(A,t_a)\ms{Y}^a-t^a([[A,t_a],t_b])\ms{Y}^b
\qquad \because [A^P,\ms{Y}]=0 \\
&=(\lambda+\Killing)(A,\ms{Y})
\end{align*}
Then by (\ref{freeVA.comm}) the second condition on
$\alpha+\ms{X}+\ms{Y}$ reads
\begin{align*}
A^P_1(\alpha+\ms{X}+\ms{Y})
=\alpha(A^P)+\rho(A)_{ij}\tau_i\ms{X}^j
  +(\lambda+\Killing)(A,\ms{Y})
=0
\quad\;\;\t{for }A\in\mf{g}.
\end{align*}
The two conditions together imply that
$\alpha+(\tau_i\ms{X}^j)\rho(\Theta)_{ij}
+(\lambda+\Killing)(\Theta,\ms{Y})$ is $G$-invariant
and horizontal (i.e.~basic).
Therefore $\CDO_{\Theta,H}(P)^{\hat{\mf{g}}}_1$ is
the direct sum of the following subspaces of
$\Omega^1(P)\oplus\T(P)$:
\begin{align} \label{invCDOP.1}
\pi^*\Omega^1(M),\;\;
\big\{\ms{X}-(\tau_i\ms{X}^j)\rho(\Theta)_{ij}:
  \ms{X}\in\T_h(P)^G\big\},\;\;
\big\{\ms{Y}-(\lambda+\Killing)(\Theta,\ms{Y}):
  \ms{Y}\in\T_v(P)^G\big\}.
\end{align}
This provides an explicit description of the short exact
sequence (\ref{invCDO.1}) together with a splitting.
\end{subsec}

\newpage
\setcounter{equation}{0}
\section{Associated Modules and Algebras over CDOs}
\label{sec.ass}

Given a principal algebra $\CDO(P)$ in the sense of
\S\ref{sec.CDOP}, this section introduces a construction
of modules over the invariant subalgebra of $\CDO(P)$ using
semi-infinite cohomology.
This construction is analogous to that of associated vector
bundles.
Examples will be studied in \S\ref{CDOP.CDOM} and
\S\ref{sec.FL.spinor}.

The semi-infinite cohomology of a centrally extended loop
algebra can be defined using the Feigin complex, which
is a vertex algebraic analogue of the Chevalley-Eilenberg
complex, but with some important differences.
For more about semi-infinite cohomology, see
e.g.~\cite{Feigin,Voronov,BD}.

\begin{subsec} \label{sec.CE} 
{\bf The Chevalley-Eilenberg complex.}
Consider a finite-dimensional Lie algebra $\mf{g}$.
Let $t_1,t_2,\ldots$ be a basis of $\mf{g}$;
$t^1,t^2,\ldots$ the dual basis of $\mf{g}^\vee$; and
$\phi^1,\phi^2,\ldots$ the corresponding coordinates of
the supermanifold $\Pi\mf{g}$.
By definition $\mc{O}(\Pi\mf{g})=\wedge^*\mf{g}^\vee$ and
$\T(\Pi\mf{g})$ consists of the derivations on
$\wedge^*\mf{g}^\vee$.
Suppose
\begin{align*}
J,\,q\in\T(\Pi\mf{g}),\qquad
\theta\in\mc{O}(\Pi\mf{g})\otimes\mf{g}
\end{align*}
are the elements corresponding respectively to the exterior
degree on $\wedge^*\mf{g}^\vee$, the Chevalley-Eilenberg
differential on $\wedge^*\mf{g}^\vee$, and the Maurer-Cartan
form on $\mf{g}$;
or in coordinates,
\begin{align} \label{JQ.coor}
J=\phi^a\frac{\d}{\d\phi^a}\,,\qquad
q=-\frac{1}{2}\,t^a([t_b,t_c])\phi^b\phi^c\frac{\d}{\d\phi^a}\,,
  \qquad
\theta=\phi^a\otimes t_a.
\end{align}
Notice that $J=J\otimes 1$,\; $q=q\otimes 1$ and $\theta$
satisfy
\begin{align} \label{JQ.relations}
[J,q]=q,\qquad
J\theta=\theta,\qquad
[q,q]=0,\qquad
q\theta+\frac{1}{2}\,[\theta,\theta]=0.
\end{align}

Let $W$ be a $\mf{g}$-module.
If $J,q,\theta$ are regarded as operators on
$\mc{O}(\Pi\mf{g})\otimes W$ and $Q=q+\theta$, then by
(\ref{JQ.relations}) they satisfy $[J,Q]=Q$ and $[Q,Q]=0$.
The Chevalley-Eilenberg complex of $\mf{g}$ with
coefficients in $W$ can be written as
\begin{align*}
\Big(\mc{O}(\Pi\mf{g})\otimes W,\,J,\,Q\Big)
\end{align*}
where $J$ is the grading operator and $Q$ is
the differential.
\end{subsec}

\begin{subsec} \label{sec.Feigin}
{\bf The Feigin complex.}
Given any invariant symmetric bilinear form $\lambda$ on
$\mf{g}$, recall the centrally extended loop algebra
$\hat{\mf{g}}_\lambda$ (see \S\ref{sec.Gmfld}) as well as
the vertex algebra $V_\lambda(\mf{g})$ (see Example
\ref{VAoid.Lie}).
Also consider the algebra of CDOs $\CDO(\Pi\mf{g})$, which
is a fermionic version of \S\ref{algCDO}.
\cite{myCDO}.
Now regard
\begin{align*}
J,q,\theta\t{ as elements of }
\CDO(\Pi\mf{g})\otimes V_\lambda(\mf{g})
\t{ of weight }1.
\end{align*}
Here are some computations with these elements:
\begin{align*}
&J_0 q
=[J,q]+\{J,q\}_\Omega=q+0=q \\
&J_0\theta
=(J_0\otimes 1)(\phi^a\otimes t_a)
=J\phi^a\otimes t_a=\phi^a\otimes t_a=\theta \\
&q_0 q
=[q,q]+\{q,q\}_\Omega
=0-t^a([t_b,t_c])\,t^b([t_a,t_d])\,\phi^d d\phi^c
=-\Killing(t_c,t_d)\,\phi^d d\phi^c
\quad(\t{see }\S\ref{conventions}) \\
&\hspace{-0.1in}
\left.\begin{array}{l}
q_0\theta=(q_0\otimes 1)(\phi^a\otimes t_a)
  \rule[-0.1in]{0in}{0in} \\
\theta_0 q=(\phi^a_1\otimes t_{a,-1})(q\otimes 1)
\end{array}\right\}
\textstyle
=q\phi^a\otimes t_a
=-\frac{1}{2}\phi^b\phi^c\otimes[t_b,t_c] \\
&\theta_0\theta
=(\phi^a_0\otimes t_{a,0}+\phi^a_{-1}\otimes t_{a,1})
  (\phi^b\otimes t_b)
=\phi^a\phi^b\otimes[t_a,t_b]
-\lambda(t_a,t_b)\,\phi^b d\phi^a
\end{align*}
using (\ref{freeVA.comm}), (\ref{JQ.coor}) and the super
version of (\ref{CDO.VAoid456}).
It follows that $J$ and $Q=q+\theta$ satisfy
\begin{align*}
&
&&J_0 Q=Q,
&&Q_0 Q=-(\Killing+\lambda)(t_a,t_b)\,\phi^a d\phi^b & \\
&\Rightarrow
&&[J_0,Q_0]=Q_0,
&&[Q_0,Q_0]=(\Killing+\lambda)(t_a,t_b)\cdot
  \sum_{n\in\bb{Z}}n\phi^a_{-n}\phi^b_n &
\end{align*}
In particular, $Q_0^2=0$ if and only if $\lambda=-\Killing$.

Let $W$ be a $\hat{\mf{g}}_{-\Killing}$-module.
Regard $J$ and $Q$ as elements of
$\CDO(\Pi\mf{g})\otimes V_{-\Killing}(\mf{g})$.
The Feigin complex of $\hat{\mf{g}}_{-\Killing}$ with
coefficients in $W$ is
\begin{align*}
\Big(\CDO(\Pi\mf{g})\otimes W,\,J_0,\,Q_0\Big)
\end{align*}
where $J_0$ is the grading operator and
$Q_0$ is the differential.
\end{subsec}

\begin{subsec} \label{sec.semiinf}
{\bf Semi-infinite cohomology.}
For any $\hat{\mf{g}}_{-\Killing}$-module $W$ as above, let
\begin{align} \label{semiinf}
H^{\frac{\infty}{2}+*}(\hat{\mf{g}}_{-\Killing},W)
:=H^*\Big(\CDO(\Pi\mf{g})\otimes W,\,Q_0\Big).
\end{align}
Notice that if $W$ is a vertex algebra and the
$\hat{\mf{g}}_{-\Killing}$-action is inner, i.e.~induced by
a map of vertex algebras $V_{-\Killing}(\mf{g})\rightarrow W$,
then (\ref{semiinf}) has the structure of a $\bb{Z}$-graded
vertex algebra.
(In this case, $J$ and $Q$ will also denote their images in
$\CDO(\Pi\mf{g})\otimes W$.)
\end{subsec}

{\it Remarks.}
(i) Unlike Lie algebra cohomology, the grading on
semi-infinite cohomology is neither bounded above nor below.
More precisely, the restriction of $J_0$ to
$\CDO(\Pi\mf{g})_k\otimes W$ takes values between $-k$
and $\dim\mf{g}+k$.
(ii) This is only a special case of semi-infinite cohomology.
For expositions in more general settings, see the references
mentioned above.

\begin{lemma}  \label{lemma.semiinf.ops}
Let $W$ be a $\hat{\mf{g}}_{-\Killing}$-module.

(a) The $\hat{\mf{g}}_{-\Killing}$-invariant operators on
$W$ induce grading-preserving operators on (\ref{semiinf}).
Moreover, if $W$ is a vertex algebra and its
$\hat{\mf{g}}_{-\Killing}$-action is inner, then there
is a map of vertex algebras from the centralizer subalgebra
$W^{\hat{\mf{g}}}=C(W,V_{-\Killing}(\mf{g}))$
\cite{Kac,FB-Z} to the zeroth gradation of  (\ref{semiinf}).

(b) If there is a Virasoro action on $W$ of central charge $c$
such that the $\hat{\mf{g}}_{-\Killing}$-action is primary,
then it induces a grading-preserving Virasoro action on
(\ref{semiinf}) of central charge $c-2\dim\mf{g}$.
Moreover, if $W$ is a conformal vertex algebra and its
$\hat{\mf{g}}_{-\Killing}$-action is inner and primary, then
(\ref{semiinf}) is also a conformal vertex
algebra with a conformal vector in the zeroth gradation.
\end{lemma}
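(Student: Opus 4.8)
The plan is to carry out everything on the Feigin complex $\big(\CDO(\Pi\mf{g})\otimes W,\,J_0,\,Q_0\big)$ of \S\ref{sec.Feigin} and then descend to $Q_0$-cohomology. Recall that $Q_0=(q+\theta)_0$ has $J_0$-degree $+1$ and that $\theta_0=\sum_{j\in\bb{Z}}\phi^a_{-j}\otimes t_{a,j}$, where $t_{a,j}$ is the $j$-th mode of the current $t_a$ in the $\hat{\mf{g}}_{-\Killing}$-action on $W$. The underlying principle is simple: an operator on $\CDO(\Pi\mf{g})\otimes W$ that commutes with both $Q_0$ and $J_0$ induces a grading-preserving operator on (\ref{semiinf}); and when $W$ is a vertex algebra with inner $\hat{\mf{g}}_{-\Killing}$-action, $Q_0$ is an odd derivation of all $n$-th products, so the $Q_0$-cocycles of a fixed $J_0$-degree form a vertex subalgebra whose quotient by the coboundaries maps into (\ref{semiinf}).

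For (a), given a $\hat{\mf{g}}_{-\Killing}$-invariant operator $\mc{O}$ on $W$, I would use $1\otimes\mc{O}$: it commutes with $J_0$ (disjoint tensor factors) and with $Q_0$, since $q$ involves only the first factor and $\mc{O}$ commutes with every $t_{a,j}$; hence it descends, and $\mc{O}\mapsto 1\otimes\mc{O}$ respects composition. When $W$ is a vertex algebra with inner action, $w\mapsto\mathbf{1}\otimes w$ is a morphism of vertex algebras $W\hookrightarrow\CDO(\Pi\mf{g})\otimes W$; for $w\in W^{\hat{\mf{g}}}=C\big(W,V_{-\Killing}(\mf{g})\big)$ one has $Q_0(\mathbf{1}\otimes w)=0$, because $q_0\mathbf{1}=0$ and in $\theta_0(\mathbf{1}\otimes w)=\sum_j(\phi^a_{-j}\mathbf{1})\otimes(t_{a,j}w)$ the terms with $j\ge 0$ vanish ($w$ is killed by the nonnegative modes) and those with $j<0$ vanish ($\phi^a$ has weight $0$, so $\phi^a_{-j}\mathbf{1}=0$). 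Thus $\mathbf{1}\otimes W^{\hat{\mf{g}}}$ is a vertex subalgebra of $J_0$-degree $0$ consisting of cocycles, and composing with the projection to cohomology gives the morphism $W^{\hat{\mf{g}}}\to H^{\frac{\infty}{2}+0}$.

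For (b), fix on $\CDO(\Pi\mf{g})$ the conformal vector $\nu_{\CDO}$ of central charge $-2\dim\mf{g}$ for which each $\phi^a$ is primary of weight $0$ and each $\partial/\partial\phi^a$ primary of weight $1$; one also uses that $q$ is then primary of weight $1$ (cf.\ \S\ref{sec.Feigin}, \cite{myCDO}) and that $[L^{\CDO}_n,J_0]=0$ (a standard consequence of the $bc$-OPE, despite the anomaly in the transformation of $J$). Given a Virasoro action $\{L^W_n\}$ on $W$ of central charge $c$ for which the $\hat{\mf{g}}_{-\Killing}$-action is primary, put $\mc{L}_n=L^{\CDO}_n\otimes 1+1\otimes L^W_n$; these commute with $J_0$, and $[\mc{L}_n,Q_0]=0$: the $q_0$-contribution vanishes ($q$ primary of weight $1$), and for $\theta_0$ the identities $[L^{\CDO}_n,\phi^a_{-j}]=(j-n)\phi^a_{n-j}$ and $[L^W_n,t_{a,j}]=-j\,t_{a,n+j}$ (weight-$0$, resp.\ weight-$1$, primariness) make the two families of terms cancel after reindexing. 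Hence $\{\mc{L}_n\}$ descends to a grading-preserving Virasoro action on (\ref{semiinf}) of central charge $(-2\dim\mf{g})+c$. If moreover $W$ is a conformal vertex algebra with inner and primary $\hat{\mf{g}}_{-\Killing}$-action and conformal vector $\nu_W$, set $\nu:=\nu_{\CDO}\otimes\mathbf{1}+\mathbf{1}\otimes\nu_W$, whose Fourier modes are the $\mc{L}_n$ and which has $J_0$-degree $0$; the crucial step is $Q_0\nu=0$. A short computation gives $Q_0(\nu_{\CDO}\otimes\mathbf{1})=-(d\phi^a)\otimes t_a$ and $Q_0(\mathbf{1}\otimes\nu_W)=(d\phi^a)\otimes t_a$ --- using $q_0\nu_{\CDO}=0$; that $\phi^a_j\nu_{\CDO}$ vanishes for $j\ge 2$ and equals $-d\phi^a$ for $j=1$; and that $t_{a,j}\nu_W$ vanishes for $j\ge 2$ and for $j=0$ (since the $\mf{g}$-action commutes with the $L^W_n$ and annihilates the vacuum) and equals $t_a$ for $j=1$ --- so the two cancel. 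Therefore $[\nu]\in H^{\frac{\infty}{2}+0}$, and since the projection to cohomology is a morphism of vertex algebras it carries over the Virasoro OPE, so $[\nu]$ is a conformal vector for (\ref{semiinf}) of central charge $c-2\dim\mf{g}$ lying in the zeroth gradation.

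The manipulations with Fourier modes and Koszul signs are routine; the one genuinely delicate point is the identity $Q_0\nu=0$, i.e.\ that the \emph{sum} $\nu_{\CDO}\otimes\mathbf{1}+\mathbf{1}\otimes\nu_W$ --- the combination whose central charge is $c-2\dim\mf{g}$, not the difference --- is already BRST-closed, with no ghost-current improvement term required. Establishing it relies on having pinned down the conformal vector of $\CDO(\Pi\mf{g})$ precisely (in particular that $q$ is primary of weight $1$) and on the fact that $\lambda=-\Killing$ is exactly the level at which $Q_0^2=0$, together with the primariness of the $\hat{\mf{g}}_{-\Killing}$-action on $W$.
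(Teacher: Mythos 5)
Your proposal is correct and follows essentially the same route as the paper: part (a) via the operators $1\otimes F$ and the cocycles $\mathbf 1\otimes u$ for $u\in W^{\hat{\mf{g}}}$, and part (b) via the diagonal Virasoro operators $L^{\Pi\mf{g}}_n\otimes 1+1\otimes L^W_n$ and the commutator computation with $Q_0$ using primariness of $q$, $\phi^a$ and the currents. The only difference is at the final step: you verify $Q_0(\nu^{\Pi\mf{g}}\otimes\mathbf 1+\mathbf 1\otimes\nu^W)=0$ by a direct mode computation (which is correct), whereas the paper gets it for free by writing this element as $(L^{\Pi\mf{g}}_{-2}\otimes 1+1\otimes L^W_{-2})(\mathbf 1\otimes\mathbf 1)$ and invoking the already-established vanishing of $[Q_0,\,L^{\Pi\mf{g}}_{-2}\otimes 1+1\otimes L^W_{-2}]$.
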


\begin{proof}
(a) For the first claim, suppose $U\in\t{End}\,W$ is
$\hat{\mf{g}}_{-\Killing}$-invariant.
Since on $\CDO(\Pi\mf{g})\otimes W$ we have
\begin{align*}
[Q_0,1\otimes U]
=[q_0\otimes 1+\phi^a_{-n}\otimes t_{a,n},\,1\otimes U]
=\phi^a_{-n}\otimes[t_{a,n},U]
=0,
\end{align*}
the operator $1\otimes U$ is well-defined on (\ref{semiinf}).
Clearly it preserves the grading.
For the second claim, let $u\in W^{\hat{\mf{g}}}$,
i.e.~$u\in W$ such that $A_n u=0$ for $A\in\mf{g}$ and $n\geq 0$.
Since in $\CDO(\Pi\mf{g})\otimes W$ we have
\begin{align*}
Q_0(\mb{1}\otimes u)
=(q_0\otimes 1+\phi^a_{-n}\otimes t_{a,n})(\mb{1}\otimes u)
=\sum_{n\geq 0}\phi^a_{-n}\mb{1}\otimes t_{a,n}u
=0,
\end{align*}
the element $\mb{1}\otimes u$ represents a class
$[\mb{1}\otimes u]$ in (\ref{semiinf}).
Clearly $u\mapsto[\mb{1}\otimes u]$ is a map of
vertex algebras and the image is contained in the zeroth
gradation.

(b) The graded vertex algebra $\CDO(\Pi\mf{g})$ has a conformal
vector $\nu^{\Pi\mf{g}}$ of central charge $-2\dim\mf{g}$
(see \S\ref{sec.ass.C});
denote its Virasoro operators by $L^{\Pi\mf{g}}_n$, $n\in\bb{Z}$.
Since $q$ and $\phi^a$ are primary, we have
\begin{align*}
[L^{\Pi\mf{g}}_n,q_0]=0,\qquad
[L^{\Pi\mf{g}}_n,\phi^a_m]=-(n+m)\phi^a_{n+m},\qquad
n,m\in\bb{Z}.
\end{align*}
Since $\nu^{\Pi\mf{g}}$ belongs to the zeroth gradation,
every $L^{\Pi\mf{g}}_n$ preserves the grading.

For the first claim, suppose $L^W_n\in\t{End}\,W$ for
$n\in\bb{Z}$ define a Virasoro action of central charge $c$
and satisfy $[L^W_n,A_m]=-mA_{n+m}$ for $A\in\mf{g}$ and
$n,m\in\bb{Z}$.
Then $L^{\Pi\mf{g}}_n\otimes 1+1\otimes L^W_n$ for
$n\in\bb{Z}$ define a grading-preserving Virasoro action on
$\CDO(\Pi\mf{g})\otimes W$ of central charge $c-2\dim\mf{g}$.
Also, since
\begin{align*}
[Q_0,\,L^{\Pi\mf{g}}_n\otimes 1+1\otimes L^W_n]
&=[q_0,L^{\Pi\mf{g}}_n]\otimes 1
  +[\phi^a_{-m},L^{\Pi\mf{g}}_n]\otimes t_{a,m}
  +\phi^a_{-m}\otimes[t_{a,m},L^W_n] \\
&=0+(n-m)\phi^a_{n-m}\otimes t_{a,m}
  +m\phi^a_{-m}\otimes t_{a,n+m} \\
&=0
\end{align*}
the said Virasoro action is well-defined on
(\ref{semiinf}) as well.

For the second claim, suppose $\nu^W\in W$ is
a conformal vector whose Virasoro operators $L^W_n$,
$n\in\bb{Z}$, satisfy $[L^W_n,A_m]=-mA_{n+m}$ for
$A\in\mf{g}$ and $n,m\in\bb{Z}$.
Then $\nu^{\Pi\mf{g}}\otimes\mb{1}+\mb{1}\otimes\nu^W$
is a conformal vector of $\CDO(\Pi\mf{g})\otimes W$
belonging to the zeroth gradation.
Also, since we have
\begin{align*}
Q_0(\nu^{\Pi\mf{g}}\otimes\mb{1}
  +\mb{1}\otimes\nu^W)
&=Q_0(L^{\Pi\mf{g}}_{-2}\otimes 1
  +1\otimes L^W_{-2})(\mb{1}\otimes\mb{1}) \\
&=\big[Q_0,\,L^{\Pi\mf{g}}_{-2}\otimes 1
  +1\otimes L^W_{-2}\big](\mb{1}\otimes\mb{1}) \\
&=0
\end{align*}
by the previous computation,
$\nu^{\Pi\mf{g}}\otimes\mb{1}+\mb{1}\otimes\nu^W$
represents a conformal vector of (\ref{semiinf}) as well.
\end{proof}

The main purpose of this section is to introduce
the following construction.
As before, $G$ is a compact connected Lie group and
$\mf{g}$ its Lie algebra.
Recall Definition \ref{formalLG.action} and
Corollary \ref{cor.formalLG.P}.

\begin{defn} \label{defn.ass}
Let $\pi:P\rightarrow M$ be a smooth principal $G$-bundle
and $\lambda,\lambda'$ invariant symmetric bilinear forms
on $\mf{g}$, such that $\lambda+\lambda'=-\Killing$ and
$\lambda'(P)=-8\pi^2 p_1(M)$.
For any principal $(\hat{\mf{g}}_\lambda,G)$-algebra $\CDO(P)$
and positive-energy $(\hat{\mf{g}}_{\lambda'},G)$-module $W$,
we define
\begin{align*}
\ass(\pi,W)
:=H^{\frac{\infty}{2}+0}\big(\hat{\mf{g}}_{-\Killing},
  \CDO(P)\otimes W\big).
\end{align*}
Notice that if $W$ is a vertex algebra and its
$(\hat{\mf{g}}_{\lambda'},G)$-action is inner, then $\ass(\pi,W)$
also has the structure of a vertex algebra (see
\S\ref{sec.semiinf}).
\end{defn}

{\it Remarks.}
(i) The positive-energy condition means that there is
a diagonalizable operator $L^W_0$ on $W$ such that
$[L^W_0,A_n]=-nA_n$ for $A\in\mf{g}$, $n\in\bb{Z}$, and the
eigenvalues of $L^W_0$ are bounded below.
Whenever $W$ admits a Virasoro action, it will be understood that
$L^W_0$ coincides with the zeroth Virasoro operator.
For consistency, the eigenvalues of $L^W_0$ will also be called
weights.
Let $L^{\Pi\mf{g}}_0$ (resp.~$L^P_0$) be the weight
operator on $\CDO(\Pi\mf{g})$ (resp.~$\CDO(P)$).
Notice that $L^{\Pi\mf{g}}_0+L^P_0+L^W_0$ commutes with $Q_0$
(see the proof of Lemma \ref{lemma.semiinf.ops}b), so that
$\ass(\pi,W)$ inherits the notion of weights.

(ii) The above definition still makes sense without
the integrability and/or the positive-energy conditions.
On the other hand, under these two conditions, we can
say that the lowest-weight component $W_0$ of $W$ is
a $G$-representation and then the lowest-weight component of
$\ass(\pi,W)$ is $(C^\infty(P)\otimes W_0)^G$,
i.e. the space of sections of the associated vector
bundle $P\times_G W_0\rightarrow M$.

\begin{lemma} \label{lemma.ass}
Consider again the data in Definition \ref{defn.ass}.

(a) For any $W$ as described, $\ass(\pi,W)$ is a module
over $\CDO(P)^{\hat{\mf{g}}}$ (see \S\ref{sec.invCDO}).
Moreover, if $W$ is a vertex algebra and its
$(\hat{\mf{g}}_{\lambda'},G)$-action is inner, then there is
a map of vertex algebras
$\CDO(P)^{\hat{\mf{g}}}\rightarrow\ass(\pi,W)$.

(b) If there is a Virasoro action on $W$ of central charge
$c$ such that the $(\hat{\mf{g}}_{\lambda'},G)$-action
is primary, then it induces a Virasoro action on $\ass(\pi,W)$
of central charge $c+2\dim M$.
Moreover, if $W$ is in fact a conformal vertex algebra such
that its $(\hat{\mf{g}}_{\lambda'},G)$-action is inner and
primary, then $\ass(\pi,W)$ is also a conformal vertex
algebra.
\end{lemma}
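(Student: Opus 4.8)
The plan is to deduce everything from Lemma \ref{lemma.semiinf.ops} applied to $W' := \CDO(P) \otimes W$, viewed as a module over $\hat{\mf{g}}_{-\Killing}$ via the \emph{diagonal} action. The first point to verify is that this diagonal action is well-defined and makes $W'$ a genuine $\hat{\mf{g}}_{-\Killing}$-module: the principal $(\hat{\mf{g}}_\lambda,G)$-action on $\CDO(P)$ is inner, given by a map $V_\lambda(\mf{g}) \to \CDO(P)$, and $W$ carries a $(\hat{\mf{g}}_{\lambda'},G)$-action; since $\lambda + \lambda' = -\Killing$, the tensor-product construction for loop algebras (the level adds) shows the diagonal modes $A_n^{\CDO(P)} \otimes 1 + 1 \otimes A_n^W$ satisfy the $\hat{\mf{g}}_{-\Killing}$ relations. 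The condition $(\lambda+\Killing)(P) = 8\pi^2 p_1(M)$ is exactly what Corollary \ref{cor.formalLG.P} requires for the principal $(\hat{\mf{g}}_\lambda,G)$-action on $\CDO(P)$ to exist, so this is consistent. With $W'$ in hand, $\ass(\pi,W)$ is by definition the zeroth gradation $H^{\frac{\infty}{2}+0}(\hat{\mf{g}}_{-\Killing}, W')$.

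For part (a), I would identify $\CDO(P)^{\hat{\mf{g}}}$-invariant operators on $W'$: every $u \in \CDO(P)^{\hat{\mf{g}}}$ acts on $\CDO(P)$ (hence on $W'$ by $u \otimes 1$ on the first factor, extended to all Fourier modes $u_k$), and because $u$ lies in the centralizer $C(\CDO(P), V_\lambda(\mf{g}))$, its modes commute with $A_n^{\CDO(P)}$; since they act trivially on the $W$-factor, they commute with the diagonal $\hat{\mf{g}}_{-\Killing}$-action on $W'$. Lemma \ref{lemma.semiinf.ops}(a) then gives grading-preserving operators on $H^{\frac{\infty}{2}+*}(\hat{\mf{g}}_{-\Killing}, W')$, in particular on the zeroth gradation $\ass(\pi,W)$; one checks these assemble into an action of the vertex algebra $\CDO(P)^{\hat{\mf{g}}}$ (the mode relations are inherited). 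When $W$ is a vertex algebra with inner $(\hat{\mf{g}}_{\lambda'},G)$-action, $W'$ is a vertex algebra with inner $\hat{\mf{g}}_{-\Killing}$-action, and $\CDO(P)^{\hat{\mf{g}}}$ maps into the centralizer $(W')^{\hat{\mf{g}}} = C(W', V_{-\Killing}(\mf{g}))$ via $u \mapsto u \otimes \mb{1}$; composing with the vertex algebra map $(W')^{\hat{\mf{g}}} \to H^{\frac{\infty}{2}+0}(\hat{\mf{g}}_{-\Killing}, W')$ of Lemma \ref{lemma.semiinf.ops}(a) yields the desired map $\CDO(P)^{\hat{\mf{g}}} \to \ass(\pi,W)$.

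For part (b), I would build a Virasoro action on $W'$ of central charge $2\dim P + c$ by taking $L_n^{\CDO(P)} \otimes 1 + 1 \otimes L_n^W$, where $L_n^{\CDO(P)}$ comes from the conformal vector of $\CDO_{\Theta,H}(P)$ in Lemma \ref{lemma.CDOP.conformal} (central charge $2\dim P$), which has the crucial property $\nu_1 A^P = 0$, i.e. the $\hat{\mf{g}}_\lambda$-action on $\CDO(P)$ is primary, so $[L_n^{\CDO(P)}, A_m^{\CDO(P)}] = -m A_{n+m}^{\CDO(P)}$; combined with the primality hypothesis on $W$, the diagonal $\hat{\mf{g}}_{-\Killing}$-action on $W'$ is primary for this Virasoro. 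Lemma \ref{lemma.semiinf.ops}(b) then produces a Virasoro action on $H^{\frac{\infty}{2}+*}$ of central charge $(2\dim P + c) - 2\dim\mf{g} = c + 2(\dim P - \dim G) = c + 2\dim M$, using $\dim P = \dim M + \dim G$; restricting to the zeroth gradation gives the Virasoro action on $\ass(\pi,W)$. The conformal case follows the same way from the second half of Lemma \ref{lemma.semiinf.ops}(b), with $\nu^{W'} = \nu^{\CDO(P)} \otimes \mb{1} + \mb{1} \otimes \nu^W$.

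The main obstacle is bookkeeping rather than conceptual: one must be careful that the diagonal $\hat{\mf{g}}_{-\Killing}$-module structure on $\CDO(P) \otimes W$ genuinely has level $-\Killing$ (so that $Q_0^2 = 0$ and the Feigin complex of \S\ref{sec.Feigin} applies), and that the central-charge arithmetic uses $\dim P = \dim M + \dim G$ correctly. One should also note that the positive-energy hypothesis on $W$, together with the weight operator $L_0^{\Pi\mf{g}} + L_0^P + L_0^W$ commuting with $Q_0$ (Remark (i) after Definition \ref{defn.ass}), ensures $\ass(\pi,W)$ is a well-defined weight-graded object on which these operators act; but this is already granted by the surrounding discussion. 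No genuinely hard estimate or new idea is needed beyond invoking Lemma \ref{lemma.semiinf.ops}.
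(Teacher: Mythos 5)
Your proposal is correct and follows essentially the same route as the paper: the paper's proof is a one-line reduction to Lemma \ref{lemma.semiinf.ops} applied to $\CDO(P)\otimes W$ with the diagonal (level-additive) $\hat{\mf{g}}_{-\Killing}$-action, using that $\CDO(P)$ is conformal of central charge $2\dim P$ with a primary inner $(\hat{\mf{g}}_\lambda,G)$-action. The only cosmetic difference is that in the generality of Definition \ref{defn.ass} one should cite the conformal structure of Theorem \ref{thm.formalLG.action} (with $\nu^\omega_1 A^P=0$) rather than Lemma \ref{lemma.CDOP.conformal}, which is specific to the principal frame case; the argument is unaffected.
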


\begin{proof}
This follows from Lemma \ref{lemma.semiinf.ops}, together
with the fact that $\CDO(P)$ is a conformal vertex algebra
with central charge $2\dim P$ and a primary inner
$(\hat{\mf{g}}_\lambda,G)$-action (see Theorem
\ref{thm.formalLG.action}).
\end{proof}

\newpage
\setcounter{equation}{0}
\section{Example: Recovering Algebras of CDOs}
\label{CDOP.CDOM}

In this section, we analyze the zeroth semi-infinite
cohomology of a particular type of principal algebra (in the
sense of \S\ref{sec.CDOP}) and as a result identify it with
an algebra of CDOs.
This provides a more conceptual description of a general
algebra of CDOs.

\begin{subsec} \label{sec.ass.C}
{\bf Goal:~a new description of algebras of CDOs.}
Consider the special case of Definition \ref{defn.ass}
associated to a principal frame
$(\hat{\mf{g}}_{-\Killing},G)$-algebra
$\CDO_{\Theta,H}(P)$ and the trivial
$(\hat{\mf{g}}_0,G)$-module $\bb{C}$:
\begin{align} \label{ass.C}
\ass(\pi,\bb{C})
=H^{\frac{\infty}{2}+0}\big(\hat{\mf{g}}_{-\Killing},
  \CDO_{\Theta,H}(P)\big)
=H^0\Big(\CDO(\Pi\mf{g})\otimes\CDO_{\Theta,H}(P),
  Q_0\Big).
\end{align}
By Lemma \ref{lemma.ass}, $\ass(\pi,\bb{C})$ is a conformal
vertex algebra of central charge $2\dim M$.
For convenience, let us recall some of the data involved.
\begin{itemize}
\item[$\centerdot$]
Let $(t_1,t_2,\cdots)$ be a basis of $\mf{g}$;
$(t^1,t^2,\cdots)$ the dual basis of $\mf{g}^\vee$;
$(\phi^1,\phi^2,\cdots)$ the corresponding coordinates
of the supermanifold $\Pi\mf{g}$; and
$(\d_1,\d_2,\cdots)$ their coordinate vector fields.
\vspace{-0.05in}
\item[$\centerdot$]
For the detailed definition of the vertex superalgebra
$\CDO(\Pi\mf{g})$, see e.g.~\cite{myCDO}.
Let us mention that, as a fermionic analogue of \S\ref{algCDO},
it is generated by such elements as $\phi^1,\phi^2,\cdots$ and
$\d_1,\d_2,\cdots$, and a conformal vector of central charge
$-2\dim\mf{g}$ is given by
\begin{align} \label{oddg.conformal}
\nu^{\Pi\mf{g}}=-\d_{a,-1}d\phi^a.
\end{align}
\vspace{-0.2in}
\item[$\centerdot$]
For the detailed definition of the vertex algebra
$\CDO_{\Theta,H}(P)$, see Theorem \ref{thm.CDOP} with
$\lambda=-\Killing$ in mind.
Let us mention that it is defined using:
a smooth principal $G$-bundle $\pi:P\rightarrow M$ with
an isomorphism $P\times_\rho\bb{R}^d\cong TM$ for some
representation $\rho:G\rightarrow SO(\bb{R}^d)$;
a connection $\Theta$ on $\pi$ that induces the Levi-Civita
connection on $TM$;
and a basic $3$-form $H$ on $P$ that satisfies
$dH=\lambda_\rho(\Omega\wedge\Omega)$, where $\Omega$ is the
curvature of $\Theta$.
Also, it is generated by such elements as $f\in C^\infty(P)$,
$A^P\in\T_v(P)$ for $A\in\mf{g}$ and $\tau_i\in\T_h(P)$ for
$i=1,\ldots,d$, and a conformal vector of central charge
$2\dim P$ is given by
\begin{align*}
\textstyle
\nu^P=t^P_{a,-1}\Theta^a+\tau_{i,-1}\tau^i
  -\frac{1}{2}\lambda_\rho(\Theta_{-1}\Theta).
\end{align*}
For the meaning of various notations, see \S\ref{conventions},
\S\ref{sec.Gmfld} and \S\ref{sec.prin.bdl2}.
\vspace{-0.05in}
\item[$\centerdot$]
For details of the Feigin complex, see \S\ref{sec.Feigin}.
Let us mention that the grading operator $J_0$ is
determined by $J_0(\phi^a\otimes u)=\phi^a\otimes u$ and
$J_0(\d_a\otimes u)=-\d_a\otimes u$ for any $u$;
and the differential $Q_0=(q+\theta)_0$ is induced by
the elements
\begin{align} \label{Q.theta}
\textstyle
q=q\otimes\mb{1}
=-\frac{1}{2}\,t^a([t_b,t_c])\phi^b\phi^c\d_a\otimes\mb{1},
\qquad
\theta=\phi^a\otimes t_a^P.
\end{align}
The conformal vector
$\nu^{\Pi\mf{g}}\otimes\mb{1}+\mb{1}\otimes\nu^P$
belongs to the zeroth gradation and is
$Q_0$-closed.
\vspace{-0.05in}
\end{itemize}
By the assumption on $\pi$, $M$ is an oriented
Riemannian manifold.
Let $\nabla$ be the Levi-Civita connection and $R$
the Riemannian curvature of $M$.
The assumption on $H$ can then be written as
$dH=\Tr(R\wedge R)$.
By Theorem \ref{thm.globalCDO}, the data $(\nabla,H)$
determine an algebra of CDOs $\CDO_{\nabla,H}(M)$
with a conformal vector $\nu=\nu^0$ of central
charge $2d=2\dim M$.
The goal of this section is to prove that
\begin{align*}
\ass(\pi,\bb{C})
\cong\CDO_{\nabla,H}(M)
\end{align*}
as conformal vertex algebras.
In fact, we will analyze $\ass(\pi,\bb{C})$ without using any
prior knowledge of $\CDO_{\nabla,H}(M)$, and effectively
rediscover the latter in the end.
\end{subsec}

Throughout this section we identify $\Omega^*(M)$ with
the basic subspace of $\Omega^*(P)$.

\begin{subsec} \label{sec.ass.C.0}
{\bf The component of weight zero.}
Consider the Feigin complex that defines (\ref{ass.C}):
\begin{align} \label{Feigin.C}
\Big(\CDO(\Pi\mf{g})\otimes\CDO_{\Theta,H}(P),\,
Q_0\Big).
\end{align}
Since its weight-zero component is simply the
Chevalley-Eilenberg complex
\begin{align*}
\Big(\mc{O}(\Pi\mf{g})\otimes C^\infty(P),\,
  Q_0\Big)
\end{align*}
(see \S\ref{sec.CE}), the weight-zero component of (\ref{ass.C}) is
\begin{align} \label{ass.C.0}
\ass(\pi,\bb{C})_0
=H^0\big(\mf{g},C^\infty(P)\big)
=C^\infty(P)^G
=C^\infty(M).
\end{align}
Understanding the rest of $\ass(\pi,\bb{C})$ requires more work.
\end{subsec}

\begin{lemma} \label{lemma.G}
The element $\gamma=\d_a\otimes\Theta^a$ in
$\CDO(\Pi\mf{g})_1\otimes\CDO_{\Theta,H}(P)_1$ satisfies
\begin{align*}
Q_0\gamma
&=\nu^{\Pi\mf{g}}\otimes\mb{1}
  +\mb{1}\otimes t^P_{a,-1}\Theta^a,\hspace{-0.8in}
&&\;\;\gamma_0\gamma=0\\
\Rightarrow\qquad
[Q_0,\gamma_0]
&=L^{\Pi\mf{g}}_0\otimes 1
  +1\otimes(t^P_{a,-1}\Theta^a)_0,\hspace{-0.8in}
&&[\gamma_0,\gamma_0]=0 \qquad\qquad
\end{align*}
\end{lemma}

\begin{proof}
Recall the element $Q=q+\theta$ from (\ref{Q.theta}).
Let us first write
\begin{align*}
Q_0\gamma
=\big(q_0\otimes 1+\phi^a_{-1}\otimes t^P_{a,1}
  +\phi^a_0\otimes t^P_{a,0}
  +\phi^a_1\otimes t^P_{a,-1}\big)(\d_b\otimes\Theta^b).
\end{align*}
Then each of these terms is computed as follows:
\begin{align*}
&q_0\d_b\otimes\Theta^b
=\big([q,\d_b]+\{q,\d_b\}_\Omega\big)\otimes\Theta^b
=-t^c([t_b,t_d])\phi^d\d_c\otimes\Theta^b \\
&\phi^a_{-1}\d_b\otimes t^P_{a,1}\Theta^b
=-\d_{b,-1}d\phi^a\otimes\Theta^b(t^P_a)
=-\d_{a,-1}d\phi^a\otimes\mb{1} \\
&\phi^a_0\d_b\otimes t^P_{a,0}\Theta^b
=\phi^a\d_b\otimes L_{t^P_a}\Theta^b
=-\phi^a\d_b\otimes t^b([t_a,t_c])\Theta^c \\
&\phi^a_1\d_b\otimes t^P_{a,-1}\Theta^b
=\mb{1}\otimes t^P_{a,-1}\Theta^a
\end{align*}
using (\ref{freeVA.comm}), the super version of
(\ref{Weyl})--(\ref{CDO.VAoid456}) and the $G$-invariance of
$\Theta=\Theta^a\otimes t_a$.
In view of (\ref{oddg.conformal}), this proves
the claimed expression for $Q_0\gamma$.
On the other hand, we have
\begin{align*}
\gamma_0\gamma
=(\d_{a,1}\otimes\Theta^a_{-1})(\d_b\otimes\Theta^b)
=\d_{a,1}\d_b\otimes\Theta^a_{-1}\Theta^b
=0
\end{align*}
by the super version of (\ref{Weyl}).
\end{proof}

\begin{lemma} \label{lemma.Feigin.C.1}
The weight-one component of the Feigin complex (\ref{Feigin.C})
is quasi-isomorphic to a subcomplex
\begin{align*}
\Big(\mc{O}(\Pi\mf{g})\otimes\CDO_h(P)_1,
  \,Q_0\Big),
\end{align*}
where $\CDO_h(P)_1$ is the direct sum of the following subspaces
of $\Omega^1(P)\oplus\T(P)$:
\begin{align*}
\Omega^1_h(P),\qquad
\big\{\ms{X}-(\tau_i\ms{X}^j)\rho(\Theta)_{ij}:
  \ms{X}=\ms{X}^i\tau_i\in\T_h(P)\big\}.
\end{align*}
\end{lemma}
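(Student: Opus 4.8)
The plan is to show that $\mc{O}(\Pi\mf{g})\otimes\CDO_h(P)_1$ is genuinely a $Q_0$-subcomplex of the weight-one Feigin complex, with $Q_0$ restricting to an ordinary Chevalley--Eilenberg differential, and then to prove the quotient complex is acyclic by two successive Gaussian eliminations; the inclusion is then the asserted quasi-isomorphism.

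\emph{Step 1 (the subcomplex).} First I would write out the weight-one component of (\ref{Feigin.C}) explicitly. Using the bigrading by $\Pi\mf{g}$- and $P$-weights it is
\[
\mc{O}(\Pi\mf{g})\otimes\big(\Omega^1(P)\oplus\T(P)\big)\ \oplus\ \big(\mc{O}(\Pi\mf{g})\,d\phi^a\big)\otimes C^\infty(P)\ \oplus\ \big(\mc{O}(\Pi\mf{g})\,\d_a\big)\otimes C^\infty(P),
\]
and $Q_0=q_0+\theta_0$ acts by the Chevalley--Eilenberg differential $q_0$ on the $\Pi\mf{g}$-factor together with $\theta_0=\sum_n\phi^c_{-n}\otimes t^P_{c,n}$, the modes $t^P_{c,n}$ being read off from Corollary \ref{cor.fund.normal.comm}, Lemma \ref{lemma.CDOP.VAoidnew}, Lemma \ref{lemma.1form.0mode} and the fermionic analogues of (\ref{Weyl})--(\ref{CDO.VAoid456}). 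The crucial point is that $t^P_{c,1}$ annihilates every element of $\CDO_h(P)_1$: on a horizontal $1$-form $\alpha$ this is immediate from $t^P_{c,1}\alpha=\alpha(t^P_c)=0$, and on $\ms{X}-(\tau_i\ms{X}^j)\rho(\Theta)_{ij}$ it holds because the vertical $1$-form correction is tailored exactly so that its $t^P_{c,1}$ cancels $\rho(t_c)_{ji}(\tau_j\ms{X}^i)$, using $\rho(\Theta)_{ij}(t^P_c)=\rho(t_c)_{ij}$. (This is the structural reason the twisted representatives of (\ref{invCDOP.1}) appear here.) Since also $t^P_{c,n}=0$ on weight-one elements for $n\ge 2$ while $t^P_{c,0}$ preserves the $\mf{g}$-submodule $\CDO_h(P)_1$, the only surviving part of $\theta_0$ on $\mc{O}(\Pi\mf{g})\otimes\CDO_h(P)_1$ is the $n=0$ term, which together with $q_0$ is precisely a Chevalley--Eilenberg differential for the $\mf{g}$-module $\CDO_h(P)_1$; so this really is a subcomplex of the stated (Chevalley--Eilenberg) form.

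\emph{Step 2 (acyclicity of the quotient).} Write the quotient as $V_1\oplus V_2\oplus V_3\oplus V_4$ with $V_1=\mc{O}(\Pi\mf{g})\otimes\Omega^1_v(P)$, $V_2=\mc{O}(\Pi\mf{g})\otimes\T_v(P)$, $V_3=\big(\mc{O}(\Pi\mf{g})\,d\phi^a\big)\otimes C^\infty(P)$, $V_4=\big(\mc{O}(\Pi\mf{g})\,\d_a\big)\otimes C^\infty(P)$, where $\Omega^1_v(P)$ denotes the $C^\infty(P)$-span of the connection components $\Theta^a$ and the decomposition $\CDO_{\Theta,H}(P)_1=\CDO_h(P)_1\oplus\Omega^1_v(P)\oplus\T_v(P)$ is clear (the twist being vertical). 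Tracking the modes as above, the induced differential $\bar Q_0$ is block-triangular: $\bar Q_0(V_3)\subseteq V_3$, $\bar Q_0(V_2)\subseteq V_2\oplus V_3$, $\bar Q_0(V_1)\subseteq V_1\oplus V_3$, $\bar Q_0(V_4)\subseteq V_4\oplus V_2$; in particular nothing in the quotient maps into $V_4$. The component $V_4\to V_2$ comes from the $n=-1$ mode of $\theta_0$ and the pairing $\{\phi^c_1,\d_{a,-1}\}=\delta^c_a$, explicitly $\phi^I\d_a\otimes f\mapsto\pm\,\phi^I\otimes ft^P_a$, an isomorphism $\d_a\leftrightarrow t^P_a$; the component $V_1\to V_3$ comes from the $n=1$ mode of $\theta_0$ and $t^P_{c,1}\Theta^a=\delta^a_c$, explicitly $\phi^I\otimes g_b\Theta^b\mapsto\pm\,\phi^I\,d\phi^b\otimes g_b$, an isomorphism $\Theta^a\leftrightarrow d\phi^a$ (here one checks the two graded pieces have equal rank, and there is no $V_3\to V_2$ term because $\{\phi^c_1,\phi^a_{-1}\}=0$). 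Now Gaussian elimination cancels the pair $(V_4,V_2)$ along the first isomorphism, with vanishing correction since nothing maps from $V_1\oplus V_3$ into $V_2$ and nothing maps $V_4$ back into $V_1\oplus V_3$; what remains is the two-term complex $V_1\xrightarrow{\ \cong\ }V_3$ decorated by self-maps on $V_1$ and $V_3$, and a second cancellation along this isomorphism removes everything, leaving the zero complex. Hence the quotient is acyclic.

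The main obstacle is the mode bookkeeping in Steps 1--2: one must evaluate $t^P_{c,n}$ carefully on $1$-forms and on horizontal and vertical vector fields (and on elements of $\CDO_h(P)_1$), keeping precise track of the Grassmann signs in the fermionic vertex algebra $\CDO(\Pi\mf{g})$ — for instance the sign $(-1)^{|I|}$ picked up when $\phi^c_1$ is commuted through $\phi^I$ — and then confirm that $\bar Q_0$ is block-triangular with the two ``corner'' components isomorphisms; once this is in hand the two cancellations are formal. An alternative to Step 2 is available via Lemma \ref{lemma.G}: since $[Q_0,\gamma_0]=L_0^{\Pi\mf{g}}\otimes 1+1\otimes(t^P_{a,-1}\Theta^a)_0$ commutes with $Q_0$, one can split the weight-one complex into the generalized zero-eigenspace of this operator and its complement, contract the latter by $\gamma_0$ composed with the inverse of the operator, and identify the former with $\mc{O}(\Pi\mf{g})\otimes\CDO_h(P)_1$ — the twisted representatives again emerging as the zero-eigenvectors.
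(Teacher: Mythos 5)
Your proof is correct but takes a genuinely different route from the paper's. The paper does not separately verify the subcomplex property and the acyclicity of the complement: it introduces $\gamma=\d_a\otimes\Theta^a$ (Lemma \ref{lemma.G}) and the null-homotopic operator $e=[Q_0,\gamma_0 Q_0\gamma_0]=\big(L_0^{\Pi\mf{g}}\otimes 1+1\otimes(t^P_{a,-1}\Theta^a)_0\big)^2$, computes $(t^P_{a,-1}\Theta^a)_0$ on weights $0$ and $1$ (zero on functions, $\alpha\mapsto\alpha_v$ on $1$-forms, $\ms{X}\mapsto(\tau_i\ms{X}^j)\rho(\Theta)_{ij}$ plus a horizontal form on horizontal fields, the identity on vertical fields), observes that the weight-one restriction $e_1$ is an idempotent commuting with $Q_0$, and concludes that $\mathrm{im}(1-e_1)=\mc{O}(\Pi\mf{g})\otimes\CDO_h(P)_1$ is a quasi-isomorphic subcomplex, since the null-homotopic $e$ acts as the identity on the complementary summand $\mathrm{im}(e_1)$, forcing the latter to be acyclic. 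This is essentially the ``alternative'' you sketch in your last sentence. Your primary route — a direct check that $t^P_{c,1}$ kills $\CDO_h(P)_1$ followed by two Gaussian eliminations on the quotient — makes the acyclic complement completely explicit and avoids introducing $\gamma$, at the cost of the block-by-block mode bookkeeping; the paper's route buys both the $Q_0$-stability of the subspace and the quasi-isomorphism in one stroke from $[Q_0,e]=0$. One small correction to your Step 2: $\bar Q_0(V_4)$ is \emph{not} contained in $V_4\oplus V_2$, because $q_0$ applied to $u_0\d_a$ contains the term $\{q,u_0\d_a\}_\Omega\in\Omega^1(\Pi\mf{g})\otimes C^\infty(P)=V_3$, which is nonzero as soon as $u_0$ has positive exterior degree and $\mf{g}$ is non-abelian. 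This does no harm — the Gaussian-elimination correction is $\delta\circ\beta^{-1}\circ\gamma$ with $\gamma=\bar Q_0|_{(V_1\oplus V_3)\to V_2}=0$, so your first stated reason already suffices — but the clause ``nothing maps $V_4$ back into $V_1\oplus V_3$'' should be deleted.
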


\begin{proof}
First we compute the operator $(t^P_{a,-1}\Theta^a)_0$ on
$\CDO_{\Theta,H}(P)_k$, $k=0,1$.
Notice that (\ref{freeVA.comm})--(\ref{freeVA.NOP})
will be used repeatedly.
For $f\in C^\infty(P)$ and $\alpha\in\Omega^1(P)$, we have
\begin{align*}
&(t^P_{a,-1}\Theta^a)_0 f
=\Theta^a_0\,t^P_{a,0}f
=0 \\
&(t^P_{a,-1}\Theta^a)_0\alpha
=\big(\Theta^a_0\,t^P_{a,0}+\Theta^a_{-1}t^P_{a,1}\big)\alpha
=0+\alpha(t^P_a)\Theta^a
=\alpha_v
\end{align*}
where $\alpha_v$ means the vertical part of $\alpha$.
For $\ms{X}=\ms{X}^i\tau_i\in\T_h(P)$, we have
\begin{align*}
(t^P_{a,-1}\Theta^a)_0\ms{X}
&=\big(t^P_{a,-1}\Theta^a_1+\Theta^a_0\,t^P_{a,0}
  +\Theta^a_{-1}t^P_{a,1}\big)\ms{X} \\
&=0-\iota_{[t^P_a,\ms{X}]}d\Theta^a
  +\rho(t_a)_{ij}(\tau_i\ms{X}^j)\Theta^a \\
&=-\iota_{[t^P_a,\ms{X}]}\Omega^a
  +(\tau_i\ms{X}^j)\rho(\Theta)_{ij}
\end{align*}
using Lemma \ref{lemma.1form.0mode} and the computation
of $\{A^P,\ms{X}\}$ in \S\ref{sec.invCDO.can}.
For $\ms{Y}=\ms{Y}^a t^P_a\in\T_v(P)$, we have
\begin{align*}
(t^P_{a,-1}\Theta^a)_0\ms{Y}
&=\big(t^P_{a,-1}\Theta^a_1
  +\Theta^a_0\,t^P_{a,0}
  +\Theta^a_{-1}t^P_{a,1}\big)\ms{Y} \\
&=\Theta^a(\ms{Y})t^P_a
  -\iota_{[t^P_a,\ms{Y}]}d\Theta^a
  +\big([t_a,t_b]^P\ms{Y}^b-\Killing(t_a,\ms{Y})\big)
    \Theta^a \\
&=\ms{Y}
  -\Theta^b([t^P_a,\ms{Y}])\iota_{t^P_b}d\Theta^a
  +\Theta^b\big([[t_a,t_b]^P,\ms{Y}]\big)\Theta^a \\
&=\ms{Y}
  +\Theta^b([t^P_a,\ms{Y}])t^a([t_b,t_c])\Theta^c
  +t^c([t_a,t_b])\Theta^b([t^P_c,\ms{Y}])\Theta^a \\
&=\ms{Y}
\end{align*}
using Lemma \ref{lemma.CDOP.VAoidnew},
Lemma \ref{lemma.1form.0mode} and
the computation of $\{A^P,\ms{Y}\}$ in
\S\ref{sec.invCDO.can} (with $\lambda=-\Killing$ but
without the assumption that $\ms{Y}$ is $G$-invariant).

Now consider the operator on the Feigin complex
(\ref{Feigin.C}) given by either of the following
expressions, which are all equal by Lemma \ref{lemma.G}:
\begin{align*}
\big[Q_0,\,\gamma_0 Q_0 \gamma_0\big]
=\big[Q_0,\gamma_0\big]^2
=\Big(L_0^{\Pi\mf{g}}\otimes 1
  +1\otimes(t^P_{a,-1}\Theta^a)_0\Big)^2.
\end{align*}
Let $e$ denote its restriction to the weight-one component.
By the calculations above, we have
\begin{align*}
\begin{array}{ll}
e(u_1\otimes f)=u_1\otimes f, &
u_1\in\CDO(\Pi\mf{g})_1,\; f\in C^\infty(P) \\
e(u_0\otimes\alpha)=u_0\otimes\alpha_v, &
u_0\in\mc{O}(\Pi\mf{g}),\;\alpha\in\Omega^1(P) \\
e(u_0\otimes\ms{X})=u_0\otimes(\tau_i\ms{X}^j)
  \rho(\Theta)_{ij},\quad &
u_0\in\mc{O}(\Pi\mf{g}),\;\ms{X}\in\T_h(P) \\
e(u_0\otimes\ms{Y})=u_0\otimes\ms{Y}, &
u_0\in\mc{O}(\Pi\mf{g}),\;\ms{Y}\in\T_v(P)
\end{array}
\end{align*}
Notice that $e$ is a null-homotopic idempotent.
Therefore the image of $1-e_1$ is a quasi-isomorphic subcomplex.
This is the subcomplex stated in the lemma.
\end{proof}

\begin{subsec} \label{sec.ass.C.1}
{\bf The component of weight one.}
By Lemma \ref{lemma.Feigin.C.1}, the weight-one component
of (\ref{ass.C}) is
\begin{align}
\ass(\pi,\bb{C})_1
&\cong H^0\big(\mf{g},\CDO_h(P)_1\big)
=\CDO_h(P)_1^G \nonumber \\
&=\Omega^1_h(P)^G\oplus
  \big\{\ms{X}-(\tau_i\ms{X}^j)\rho(\Theta)_{ij}:
  \ms{X}\in\T_h(P)^G\big\} \nonumber \\
&=\Omega^1(M)\oplus
  \big\{\wt X-(\tau_i\wt X^j)\rho(\Theta)_{ij}:
  X\in\T(M)\big\} \label{ass.C.1}
\end{align}
where $\wt X\in\T_h(P)^G$ denotes the horizontal lift of
any $X\in\T(M)$.
\footnote{
By Lemma \ref{lemma.ass}, there is a map of vertex algebras
$\CDO_{\Theta,H}(P)^{\hat{\mf{g}}}\rightarrow\ass(\pi,\bb{C})$.
Comparing (\ref{invCDOP.1}) and (\ref{ass.C.1}), we see
that the weight-one component of the said map is surjective
and its kernel is $\T_v(P)^G$ (since $\lambda=-\Killing$).
}
Notice that by (\ref{tau.brackets}) the $G$-invariance
of $\wt X=\wt X^i\tau_i$ means that
\begin{align} \label{lift.inv}
A^P\wt X^i=-\rho(A)_{ij}\wt X^j\quad\t{for }A\in\mf{g}.
\end{align}
Also notice that the operator
$\nabla X\in\Gamma(\t{End}\,TM)$ lifts horizontally to
\begin{align} \label{DX.lift}
\wt{\nabla X}
=\big(d\wt X^j+\rho(\Theta)_{jk}\wt X^k\big)\otimes\tau_j
=(\tau_i\wt X^j)\,\tau^i\otimes\tau_j
\end{align}
where the first equality simply expresses the relation
between $\nabla$ and $\Theta$, and the second equality
follows from (\ref{lift.inv}).
The odd-looking term in (\ref{ass.C.1}) can be given
a global expression using (\ref{DX.lift}).
\end{subsec}

\begin{subsec} \label{sec.ass.C.VAoid}
{\bf The associated vertex algebroid.}
In view of (\ref{ass.C.0}) and (\ref{ass.C.1}),
the extended Lie algebroid associated to $\ass(\pi,\bb{C})$
is $(C^\infty(M),\Omega^1(M),\T(M))$ with the
usual structure maps (see \S\ref{VA.VAoid}).
More precisely, we are identifying each $X\in\T(M)$ with
\begin{align*}
\ell X:=\,\t{class of }\wt X-(\tau_i\wt X^j)\rho(\Theta)_{ij}
\;\in\ass(\pi,\bb{C})_1.
\end{align*}
\footnote{
For example, it follows from (\ref{horlift.nonint}) that
$[\ell X,\ell Y]=\ell[X,Y]$, i.e.~the Lie bracket in the
said extended Lie algebroid indeed agrees with the usual
Lie bracket on $\T(M)$.
}
Consider the vertex algebroid associated to
$\ass(\pi,\bb{C})$ (see \S\ref{VA.VAoid} again):
\begin{align*}
\big(
  C^\infty(M),\Omega^1(M),\T(M),
  \bullet,\{\;\},\{\;\}_\Omega
\big).
\end{align*}
Explicit expressions of $\bullet,\{\;\},\{\;\}_\Omega$
are given in the lemma below.
Let $V^f$ be the vertex algebra freely generated by
this vertex algebroid, and
\begin{align*}
\Psi:V^f\rightarrow\ass(\pi,\bb{C})
\end{align*}
the resulting universal map (see \S\ref{VAoid.VA}).
By construction, $\Psi$ is an isomorphism in the two
lowest weights.
In fact, we will see that $\Psi$ is an isomorphism in all
weights.
\end{subsec}

\begin{prop} \label{prop.ass.C.VAoid}
For $f\in C^\infty(M)$ and $X,Y\in\T(M)$, we have
\begin{align*}
\ell X\bullet f \quad
&=(\nabla X)f \rule[-0.07in]{0in}{0in} \\
\{\ell X,\ell Y\}\;\;
&=-\Tr(\nabla X\cdot\nabla Y) \\
\{\ell X,\ell Y\}_\Omega
&\textstyle
=\Tr\Big(-\nabla(\nabla X)\cdot\nabla Y
  +\nabla X\cdot\iota_Y R
  -\iota_X R\cdot\nabla Y\Big)
  +\frac{1}{2}\iota_X\iota_Y H
\end{align*}
\end{prop}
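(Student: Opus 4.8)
The plan is to pin down the three structure maps by representing the elements involved by explicit $Q_0$-cocycles in the Feigin complex $\CDO(\Pi\mf g)\otimes\CDO_{\Theta,H}(P)$ and tracking the vertex-algebra products down to cohomology. By \S\ref{sec.ass.C.0}--\S\ref{sec.ass.C.VAoid} (in particular Lemma \ref{lemma.Feigin.C.1}), the function $f\in\ass(\pi,\bb C)_0=C^\infty(M)$ is represented by $\mathbf 1\otimes f$ and $\ell X$ by $\mathbf 1\otimes\hat X$, where $\hat X:=\wt X-(\tau_i\wt X^j)\rho(\Theta)_{ij}\in\CDO_{\Theta,H}(P)_1$ is the element appearing in (\ref{invCDOP.1}) with $\ms X=\wt X\in\T_h(P)^G$; by \S\ref{sec.invCDO.can} this $\hat X$ already lies in the centralizer $\CDO_{\Theta,H}(P)^{\hat{\mf g}}$, so $\mathbf 1\otimes\hat X$ is $Q_0$-closed (Lemma \ref{lemma.semiinf.ops}a). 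Since the $\CDO(\Pi\mf g)$-factor of all these cocycles is the vacuum, every vertex-algebra product of them again has vacuum in that factor; equivalently, $u\mapsto[\mathbf 1\otimes u]$ is the vertex-algebra homomorphism $\CDO_{\Theta,H}(P)^{\hat{\mf g}}\to\ass(\pi,\bb C)$ of Lemma \ref{lemma.ass}a, which on weights $0$ and $1$ (by (\ref{invCDO.0}), (\ref{invCDOP.1}) and the footnote in \S\ref{sec.ass.C.1}) is the identity on $C^\infty(M)$ and on $\Omega^1(M)$ (realized as basic forms on either side), sends $\hat X\mapsto\ell X$, and kills $\T_v(P)^G$.

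Hence the computation reduces to evaluating $\hat X\bullet f$, $\{\hat X,\hat Y\}$ and $\{\hat X,\hat Y\}_\Omega$ \emph{inside} $\CDO_{\Theta,H}(P)$---in fact inside its associated vertex algebroid, since all three outputs have weight $\le 1$---and then applying that homomorphism. First I would expand $\hat X=\wt X^i\tau_i-(\tau_i\wt X^j)\rho(\Theta)_{ij}$ in the frame $\{\tau_i\}$ and reduce each operation, via the function-coefficient (Leibniz-type) axioms of Definition \ref{VAoid}, the brackets (\ref{tau.brackets}) and Lemma \ref{lemma.1form.0mode}, to the structure constants of Lemma \ref{lemma.CDOP.VAoidnew}: $\bullet$ vanishes on the frame vector fields, $\{\tau_i,\tau_j\}=2\mathrm{Ric}_{ij}$, $\{\tau_i,\tau_j\}_\Omega=d\,\mathrm{Ric}_{ij}+\tfrac12\iota_{\tau_i}\iota_{\tau_j}H+\lambda_\rho(\Omega(\tau_i,\tau_j),\Theta)$, and all other values vanish. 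Then I would translate the outputs back to $M$ using: the $G$-invariance relation (\ref{lift.inv}) for the frame components $\wt X^i$; the lift formula (\ref{DX.lift}), $\wt{\nabla X}=(\tau_i\wt X^j)\,\tau^i\otimes\tau_j$, which turns the functions $\tau_i\wt X^j$ into the entries of $\nabla X$; and the fact that, $\Theta$ inducing the Levi-Civita connection, $\rho(\Omega)$ restricts on horizontal vectors to the pullback of the Riemann curvature $R$ (so, e.g., $\iota_{\tau_i}\iota_{\tau_j}H$ becomes $\iota_X\iota_Y H$ with $H$ basic). Along the way one must check that the non-basic pieces---notably the $\lambda_\rho(\Omega(\tau_i,\tau_j),\Theta)$ terms and the contributions of the one-form corrections $-(\tau_i\wt X^j)\rho(\Theta)_{ij}$---either cancel identically (they pair to zero against horizontal vectors) or lie in $\T_v(P)^G$ and so die under the homomorphism to $\ass(\pi,\bb C)$, leaving exactly the basic data of Theorem \ref{thm.globalCDO}a (with $\tnabla=\nabla$, as $\nabla$ is torsion-free).

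I expect the computations of $\bullet$ and $\{\;\}$ to be short and only moderately involved; the main obstacle is $\{\hat X,\hat Y\}_\Omega$. This is the one operation that brings in the three-form $H$ together with all the curvature and Ricci terms and all the function-coefficient corrections simultaneously, and producing the invariant combination $\Tr\big(-\nabla(\nabla X)\cdot\nabla Y+\nabla X\cdot\iota_Y R-\iota_X R\cdot\nabla Y\big)+\tfrac12\iota_X\iota_Y H$ requires a careful reorganization---using the second Bianchi identity and the relations (of the kind in Definition \ref{tnabla} and the footnote to \S\ref{sec.inv.data}) between $\nabla(\nabla X)$, the derivatives of the frame, and curvature---to see that the $d\,\mathrm{Ric}$, $\lambda_\rho(\Omega,\Theta)$ and cross-terms conspire to leave precisely that basic one-form. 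A secondary, purely bookkeeping hurdle is keeping straight which Fourier-mode combination computes $\{\;\}_\Omega$ once the lifts $\hat X,\hat Y$ carry nonzero one-form parts, so that $L_{\wt X}\alpha_Y$- and $\iota_{\wt Y}d\alpha_X$-type terms (which belong to the extended Lie algebroid structure, not to $\{\;\}_\Omega$) are correctly separated out.
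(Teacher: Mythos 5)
Your proposal follows essentially the same route as the paper's proof: represent $\ell X$ by the explicit centralizer element $\wt X-(\tau_i\wt X^j)\rho(\Theta)_{ij}$, compute $\bullet$, $\{\;\}$ and $\{\;\}_\Omega$ directly in $\CDO_{\Theta,H}(P)$ via Corollary \ref{cor.fund.normal.comm} and Lemma \ref{lemma.CDOP.VAoidnew}, translate back with (\ref{DX.lift}) and (\ref{lift.inv}), and discard the vertical term $-\Omega(\wt X,\wt Y)^P$ (the paper justifies this last step by the proof of Lemma \ref{lemma.Feigin.C.1} rather than by your homomorphism argument, but both are valid). One small detail: the reorganization of $\{\ell X,\ell Y\}_\Omega$ in the paper uses the \emph{first} Bianchi identity, not the second.
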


\begin{proof}
For the calculations below, keep in mind
(\ref{freeVA.comm})--(\ref{freeVA.NOP}) and Lemma
\ref{lemma.CDOP.VAoidnew}.
By definition (\ref{VA.VAoid.456}), $\ell X\bullet f$ is
represented in $\CDO_{\Theta,H}(P)$ by
\begin{align*}
\big(\wt X-\tau_i\wt X^j\,&\rho(\Theta)_{ij}\big)_{-1}f
-\big(\wt{fX}-\tau_i(\wt{fX})^j\rho(\Theta)_{ij}\big) \\
&=\big(\tau_{i,-1}\wt X^i_0+\wt X^i_{-1}\tau_{i,0}\big)f
  -f\wt X
  +\big(\tau_i(f\wt X^j)-f\tau_i\wt X^j\big)\rho(\Theta)_{ij}
\qquad\qquad \\
&=f\wt X+(\tau_i f)d\wt X^i
  -f\wt X
  +(\tau_i f)\wt X^j\rho(\Theta)_{ij} \\
&=(\tau_i f)\big(d\wt X^i+\rho(\Theta)_{ij}\wt X^j\big)
\end{align*}
This proves the first claim according to (\ref{DX.lift}).
By definition (\ref{VA.VAoid.456}), $\{\ell X,\ell Y\}$ is
represented by
\begin{align*}
\big(\wt X-\tau_i\wt X^j\,&\rho(\Theta)_{ij}\big)_1
\big(\wt Y-\tau_k\wt Y^\ell\,\rho(\Theta)_{k\ell}\big) \\
&=\big(\wt X^i_1\tau_{i,0}+\wt X^i_0\tau_{i,1}\big)
  \tau_{k,-1}\wt Y^k \\
&=-[\tau_{k,-1},\wt X^i_1]\,\tau_{i,0}\wt Y^k
  -[[\tau_{i,0},\tau_{k,-1}],\wt X^i_1]\,\wt Y^k
  +\wt X^i_0\,[\tau_{i,1},\tau_{k,-1}]\,\wt Y^k \\
&=-(\tau_k\wt X^i)(\tau_i\wt Y^k)
  +\wt Y^k\Omega(\tau_i,\tau_k)^P\wt X^i
  -\wt X^i\Omega(\tau_i,\tau_k)^P\wt Y^k
  +2\t{Ric}_{ik}\wt X^i\wt Y^k \\
&=-(\tau_k\wt X^i)(\tau_i\wt Y^k)
\end{align*}
where we have used (\ref{lift.inv}).
This proves the second claim again thanks to (\ref{DX.lift}).
Let us only sketch the calculation of $\{\ell X,\ell Y\}_\Omega$.
First of all, by definition (\ref{VA.VAoid.456}) it is
represented by
\begin{align*}
\big(\wt X-\tau_i\wt X^j\,&\rho(\Theta)_{ij}\big)_0
\big(\wt Y-\tau_k\wt Y^\ell\,\rho(\Theta)_{k\ell}\big)
-\big(\wt{[X,Y]}-\tau_i\wt{[X,Y]}{}^j\rho(\Theta)_{ij}\big) \\
&=\big(\tau_{i,-1}\wt X^i_1
  +\wt X^i_0\tau_{i,0}
  +\wt X^i_{-1}\tau_{i,1}\big)\tau_{k,-1}\wt Y^k
  -\wt{[X,Y]} \\
&\qquad
  +\iota_{\wt Y}d\big(\tau_i\wt X^j\,\rho(\Theta)_{ij}\big)
  -L_{\wt X}\big(\tau_i\wt Y^j\,\rho(\Theta)_{ij}\big)
  +\tau_i\wt{[X,Y]}{}^j\rho(\Theta)_{ij} \quad\qquad
\end{align*}
where we have used Lemma \ref{lemma.1form.0mode}.
With some work, we can rewrite the first line as
\begin{align*}
\textstyle
-\Omega(\wt X,\wt Y)^P
-\tau_i\wt Y^k\,d(\tau_k\wt X^i)
+\frac{1}{2}\iota_{\wt X}\iota_{\wt Y}H
+\lambda^*(\Omega(\wt X,\wt Y),\Theta)
\end{align*}
and the second line, thanks to the first Bianchi identity, as
\begin{align*}
\big((\tau_i\wt X^k)(\tau_k\wt Y^j)
  -(\tau_i\wt Y^k)(\tau_k\wt X^j)\big)\rho(\Theta)_{ij}
+\tau_i\wt X^j\,\iota_{\wt Y}\rho(\Omega)_{ij}
-\tau_i\wt Y^j\,\iota_{\wt X}\rho(\Omega)_{ij}
-\lambda_\rho(\Omega(\wt X,\wt Y),\Theta).
\end{align*}
Notice that $\lambda^*=\lambda_\rho$ in our current
setting (see \S\ref{sec.ass.C}) and by the proof of
Lemma \ref{lemma.Feigin.C.1} we may ignore the term
$-\Omega(\wt X,\wt Y)^P$.
Then it follows from (\ref{tau.brackets}) and (\ref{lift.inv})
that $\{\ell X,\ell Y\}_\Omega$ is also represented by
\begin{align*}
\textstyle
-(\tau_\ell\tau_k\wt X^i)(\tau_i\wt Y^k)\tau^\ell
+\tau_i\wt X^j\,\iota_{\wt Y}\rho(\Omega)_{ij}
-\tau_i\wt Y^j\,\iota_{\wt X}\rho(\Omega)_{ij}
+\frac{1}{2}\iota_{\wt X}\iota_{\wt Y}H.
\end{align*}
In view of (\ref{DX.lift}), this proves the last claim.
\end{proof}

{\it Remark.}
This result recovers the vertex algebroid described in
Theorem \ref{thm.globalCDO}a (as $\nabla$ is now
torsion-free).
In other words, we have $V^f=\CDO_{\nabla,H}(M)$.

\begin{subsec} \label{sec.ass.C.conformal}
{\bf The conformal vector.}
According to \S\ref{sec.ass.C}, the vertex algebra
$\ass(\pi,\bb{C})$ has a conformal vector $\nu^M$ of central
charge $2d=2\dim M$, represented by the element
$\nu^{\Pi\mf{g}}\otimes\mb{1}+\mb{1}\otimes\nu^P$ of
(\ref{Feigin.C}).
By Lemma \ref{lemma.G}, $\nu^M$ can also be represented by 
\begin{align} \label{ass.C.conformal}
\textstyle
\nu^{\Pi\mf{g}}\otimes\mb{1}+\mb{1}\otimes\nu^P
  -Q_0\gamma
=\mb{1}\otimes\Big(\tau_{i,-1}\tau^i
  -\frac{1}{2}\lambda_\rho(\Theta_{-1}\Theta)\Big).
\end{align}
The key to understanding the entire structure of
$\ass(\pi,\bb{C})$ is the observation that $\nu^M$ is generated
by the associated vertex algebroid (in a certain way).
\end{subsec}

\begin{prop} \label{prop.ass.C.conformal}
The vertex algebra $V^f$ has a conformal vector $\nu^f$ with
$\Psi(\nu^f)=\nu^M$ (see \S\ref{sec.ass.C.VAoid}).
Given an open subset $U\subset M$ and a smooth
section $\sigma:U\rightarrow\pi^{-1}(U)\subset P$ of
$\pi$, there is a local expression
\begin{align*}
\textstyle
\nu^f|_U
=(\ell\sigma_i)_{-1}\sigma^i
+\frac{1}{2}\Tr\big(\Gamma^\sigma_{-1}\Gamma^\sigma)
+\sigma^i([\sigma_j,\sigma_k])\,
  \sigma^k_{-1}\Gamma^\sigma_{ji}
\end{align*}
\footnote{ \label{underlying.sheaf}
Recall from \S\ref{sec.review} that, like many other constructions
in this paper, $V^f$ is the space of global sections of some
underlying sheaf (on $M$).
}
where $(\sigma_1,\ldots,\sigma_d)$ is the
$C^\infty(U)$-basis of $\T(U)$ induced by $\sigma$;
$(\sigma^1,\ldots,\sigma^d)$ the dual basis of
$\Omega^1(U)$;
and $\Gamma^\sigma=\rho(\sigma^*\Theta)$.
In particular, $\nu^f$ belongs to
$\mc{F}_{\preceq(-1;-1)}V^f$ (see \S\ref{freeVA.PBW}).
\end{prop}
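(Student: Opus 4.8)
The plan is to take for $\nu^f$ the conformal vector that Theorem \ref{thm.globalCDO}b already provides. By the Remark after Proposition \ref{prop.ass.C.VAoid} we have $V^f=\CDO_{\nabla,H}(M)$ with $\nabla$ the torsion-free \emph{orthogonal} Levi-Civita connection, so $\Tr R=0$ and the closed $1$-form $\omega=0$ determines a conformal vector $\nu^0\in V^f$ of central charge $2d$; set $\nu^f:=\nu^0$. Its local expression is (\ref{CDO.conformal}) with $\omega=0$, and since $\rho$ takes values in $\mf{so}_d$ the connection form $\Gamma^\sigma=\rho(\sigma^*\Theta)$ is trace-free, so the summand $-\tfrac12\Tr(\Gamma^\sigma_{-2}\mb{1})$ vanishes and one is left with exactly the stated formula for $\nu^f|_U$. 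Each of its three summands is a product $u_{-1}v$ of weight-one generators of $V^f$ (an $\ell\sigma_i$ or a basic $1$-form, paired with a basic $1$-form), hence $\nu^f\in\mc{F}_{\preceq(-1;-1)}V^f$ by the definition of the PBW-type filtration in \S\ref{freeVA.PBW}. So the whole content is the identity $\Phi(\nu^f)=\nu^M$.

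To prove that identity I would compare representatives in the Feigin complex (\ref{Feigin.C}). By (\ref{ass.C.conformal}), $\nu^M$ is represented by $r:=\mb{1}\otimes\big(\tau_{i,-1}\tau^i-\tfrac12\lambda_\rho(\Theta_{-1}\Theta)\big)$, while $\Phi(\nu^f)$ is represented by the image of $\nu^f|_U$ under $\ell\sigma_i\mapsto\mb{1}\otimes\big(\wt{\sigma_i}-(\tau_k\wt{\sigma_i}{}^{\ell})\rho(\Theta)_{k\ell}\big)$ and $\sigma^i\mapsto\mb{1}\otimes\sigma^i$, with $\sigma^i,\Gamma^\sigma$ pulled back to $\pi^{-1}(U)$ and the three normal-ordered products expanded inside $\CDO_{\Theta,H}(P)$. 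On $\pi^{-1}(U)$ one writes $\wt{\sigma_i}=v_i^{\,j}\tau_j$ for the $SO_d$-valued function $v$ encoding the $G$-coordinate, together with the gauge-transformation relation $\rho(\Theta)=v\,\Gamma^\sigma v^{-1}+v\,d(v^{-1})$ on $T_h P$ (and $\rho(\Theta)|_{T_v P}$ read off from $\Theta(A^P)=A$); then one expands $(\ell\sigma_i)_{-1}\sigma^i$, $\tfrac12\Tr(\Gamma^\sigma_{-1}\Gamma^\sigma)$ and $\sigma^i([\sigma_j,\sigma_k])\,\sigma^k_{-1}\Gamma^\sigma_{ji}$ using the normal-ordered expansions and commutation relations of Corollary \ref{cor.fund.normal.comm} (with $\lambda=-\Killing$, so $\lambda^*=\lambda_\rho$). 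The reordering corrections coming from the $[\tau_{i,n},\tau_{j,m}]$ relation — the $(n-m)\mathrm{Ric}_{ij}$ term and the $\lambda^*(\Omega(\tau_i,\tau_j),\Theta)$ term — should be cancelled precisely by the second and third summands of $\nu^f|_U$, leaving $\tau_{i,-1}\tau^i-\tfrac12\lambda_\rho(\Theta_{-1}\Theta)$ up to terms in the image of the idempotent $e$ from the proof of Lemma \ref{lemma.Feigin.C.1} (restricted to weight $2$), which are $Q_0$-contractible and hence vanish in cohomology. Since $U$ is arbitrary and both $\nu^f$ and $\nu^M$ are global sections of the relevant sheaves, this gives $\Phi(\nu^f)=\nu^M$ globally.

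The main obstacle is the bookkeeping in this last computation. First, $\tau_i$ is a framing of $T_h P$ that is \emph{not} itself the horizontal lift of a basic frame, so one must carefully carry the gauge transformation $v$ between the $\tau_i$-frame and the $\wt{\sigma_i}$-frame and track its derivatives — which is exactly where the $\Tr(\Gamma^\sigma_{-1}\Gamma^\sigma)$ and $\sigma^k_{-1}\Gamma^\sigma_{ji}$ pieces of the classical conformal vector originate. Second, the central/derivative terms must be controlled to get the coefficient $-\tfrac12$ of $\lambda_\rho(\Theta_{-1}\Theta)$ right out of the combination of the $\lambda^*(\Omega,\Theta)$ contribution in the $\tau$-$\tau$ OPE with the ordering of the vertical vector fields, and one must check the error terms genuinely lie in the $Q_0$-contractible part. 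A possible shortcut, worth trying if the direct expansion becomes unwieldy, is a rigidity argument: both $\Phi(\nu^f)$ and $\nu^M$ are conformal vectors of $\ass(\pi,\bb{C})$ of central charge $2d$ inducing the same weight grading, and a short computation from Proposition \ref{prop.ass.C.VAoid} pins down their action on $\ass(\pi,\bb{C})_1$ (namely $\ell X\mapsto \Tr\nabla X$ on vector fields and $0$ on basic $1$-forms), which by the parametrization of conformal structures on $V^f=\CDO_{\nabla,H}(M)$ in Theorem \ref{thm.globalCDO}b would force the choice $\omega=0$; however, making this precise needs injectivity of $\Phi$ in weight $2$, which is only established afterwards (Corollary \ref{cor.ass.C.freeVA}), so the direct computation is the safer route here.
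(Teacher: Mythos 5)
Your proposal is correct, and its computational core is the same as the paper's: everything hinges on the gauge-change calculation on $\pi^{-1}(U)$ relating the $\tau$-frame to the $\wt\sigma$-frame through the $SO_d$-valued function $g=\rho(g)$ (your $v$) and the relation $dg+\rho(\Theta)\cdot g=g\cdot\Gamma^\sigma$, expanded with the normal-ordered products and commutators of Corollary \ref{cor.fund.normal.comm}. The paper runs this computation in the opposite direction: it rewrites the global representative $\tau_{r,-1}\tau^r-\frac{1}{2}\lambda_\rho(\Theta_{-1}\Theta)$ of $\nu^M$ in the $\sigma$-frame and lands on the exact identity (\ref{ass.C.conformal.local}) in $\CDO_{\Theta,H}(\pi^{-1}(U))$ --- so there are in fact no leftover $Q_0$-exact terms to dispose of --- and that single identity simultaneously produces $\nu^f$, its local formula, and $\Phi(\nu^f)=\nu^M$. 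Where you genuinely differ is in how conformality of $\nu^f$ inside $V^f$ is obtained: you import $\nu^f:=\nu^0$ and its local expression from Theorem \ref{thm.globalCDO}b (legitimate, since the Remark after Proposition \ref{prop.ass.C.VAoid} has already identified $V^f=\CDO_{\nabla,H}(M)$), which makes conformality automatic; the paper instead constructs $\nu^f$ out of the identity and must verify conformality afterwards, reducing it --- because $\Phi$ is an isomorphism only in weights $0$ and $1$ at this stage --- to the one weight-two equation $\nu^f_{-1}\ell\sigma_i=T(\ell\sigma_i)$, checked directly. Your route buys a cleaner endgame at the cost of leaning on prior knowledge of $\CDO_{\nabla,H}(M)$, which the paper deliberately avoids for expository reasons; and your decision to reject the rigidity shortcut via Theorem \ref{thm.globalCDO}b because injectivity of $\Phi$ in weight $2$ is only available after Corollary \ref{cor.ass.C.freeVA} is exactly the right call.
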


\begin{proof}
By assumption, $\pi:P\rightarrow M$ is a lifting of the usual
frame bundle of $TM$ (see \S\ref{sec.ass.C}), so that any local
section of $\pi$ indeed induces a local framing of $TM$.
Consider the smooth map $g:\pi^{-1}(U)\rightarrow G$ defined by
$\sigma(\pi(p))=p\cdot g(p)$ for $p\in\pi^{-1}(U)$.
To ease notations, let us also write
$\rho(g):\pi^{-1}(U)\rightarrow SO(\bb{R}^d)$ simply as $g$.
Then we have
\begin{align} \label{frame.lift}
\wt\sigma_i=g_{ri}\tau_r,\qquad
\sigma^i=g_{ri}\tau^r,\qquad
\tau_r=g_{ri}\wt\sigma_i,\qquad
\tau^r=g_{ri}\sigma^i.
\end{align}
Since $\nabla\sigma_i=\Gamma^\sigma_{ji}\otimes\sigma_j$,
it follows from (\ref{DX.lift}) and (\ref{frame.lift}) that
\begin{align} \label{dg}
dg+\rho(\Theta)\cdot g
=(\tau_k g)\tau^k
=g\cdot\Gamma^\sigma
\end{align}
where $\cdot$ denotes matrix multiplication.

Consider the representative of $\nu^M$ in (\ref{ass.C.conformal}).
Our main task is to express that element entirely in terms of
$\sigma$.
First we can write
\begin{align*}
\tau_{r,-1}\tau^r
&=\tau_{r,-1}g_{ri,0}\sigma^i \\
&=(\tau_{r,-1}g_{ri})_{-1}\sigma^i
-\big(g_{ri,-1}\tau_{r,0}+g_{ri,-2}\tau_{r,1}\big)
  g_{si}\tau^s \\
&\textstyle
=\wt\sigma_{i,-1}\sigma^i
-(\tau_r g_{si})\tau^s_{-1}dg_{ri}
-\Tr\big(g^{-1}\cdot\rho(\Theta)_{-1}dg\big)
-\frac{1}{2}\Tr\big(g^{-1}\cdot Tdg\big)
\end{align*}
using (\ref{frame.lift}), Corollary \ref{cor.fund.normal.comm}
and the Lie derivative $L_{\tau_r}\tau^s=\rho(\Theta)_{sr}$
implied by (\ref{tau.brackets}).
Then we work on each term separately, with repeated use of
(\ref{frame.lift}) and (\ref{dg}):
\begin{align*}
\t{2nd term}
&=(\tau_r g_{si})\tau^s_{-1}
  \big(\rho(\Theta)_{rt}g_{ti}
  -g_{rj}\Gamma^\sigma_{ji}\big) \\
&=-g_{si}(\tau_r g_{ti})\tau^s_{-1}\rho(\Theta)_{rt}
  +g_{si} g_{rj}(\tau_r g_{sk})\sigma^k_{-1}
    \Gamma^\sigma_{ji} \\
&=-(\tau_r\wt\sigma_i^t)\sigma^i_{-1}\rho(\Theta)_{rt}
  +g_{si} g_{rk}(\tau_r g_{sj})\sigma^k_{-1}
    \Gamma^\sigma_{ji}
  +\sigma^i([\wt\sigma_j,\wt\sigma_k])\sigma^k_{-1}
    \Gamma^\sigma_{ji} \\
&=-(\tau_r\wt\sigma_i^t)\sigma^i_{-1}\rho(\Theta)_{rt}
  +\Tr\big(\Gamma^\sigma_{-1}\Gamma^\sigma\big)
  +\sigma^i([\sigma_j,\sigma_k])\sigma^k_{-1}
    \Gamma^\sigma_{ji}  \\
\t{3rd term}
&=\lambda_\rho(\Theta_{-1}\Theta)
-\Tr\big(g^{-1}\cdot\rho(\Theta)_{-1}\cdot
  g\cdot\Gamma^\sigma\big) \\
\t{4th term}
&\textstyle
=-\frac{1}{2}\Tr\big(g^{-1}\cdot dg_{-1}
  \cdot g^{-1}\cdot dg\big) \\
&\textstyle
=-\frac{1}{2}\lambda_\rho(\Theta_{-1}\Theta)
+\Tr\big(g^{-1}\cdot\rho(\Theta)_{-1}\cdot
  g\cdot\Gamma^\sigma\big)
-\frac{1}{2}\Tr(\Gamma^\sigma_{-1}\Gamma^\sigma)
\end{align*}
These calculations together yield an identity
in $\CDO_{\Theta,H}(\pi^{-1}(U))$:
\begin{align} \label{ass.C.conformal.local}
\textstyle
\tau_{r,-1}\tau^r
-\frac{1}{2}\lambda_\rho(\Theta_{-1}\Theta)
=\big(\wt\sigma_i
  -\tau_r\wt\sigma^t_i\,\rho(\Theta)_{rt}\big)_{-1}
  \sigma^i
+\frac{1}{2}\Tr(\Gamma^\sigma_{-1}\Gamma^\sigma)
  +\sigma^i([\sigma_j,\sigma_k])\sigma^k_{-1}
    \Gamma^\sigma_{ji}.
\end{align}
The left hand side is defined globally on $P$ and represents
$\nu^M\in\ass(\pi,\bb{C})$.
Since the right hand side is manifestly generated by the subspace
(\ref{ass.C.1}), it defines an element $\nu^f\in V^f$ such that
$\Psi(\nu^f)=\nu^M$.

It remains to show that $\nu^f$ is a conformal
vector of $V^f$.
According to \cite{Kac,FB-Z}, this amounts to
checking:
(i) $\nu^f_{-1}=T$,
(ii) $\nu^f_0=L_0$, and
(iii) $\nu^f_2\nu^f\in\bb{C}$.
For (i) and (ii) it suffices to check them on
\begin{align*}
C^\infty(U)\cup\{\ell\sigma_1,\ldots,\ell\sigma_d\}
\end{align*} 
because $V^f$ is locally generated by these elements.
In fact, since $\Psi(\nu^f)=\nu^M$ is known to be conformal
and $\Psi$ is an isomorphism in weights $0$ and $1$, everything
we need to check automatically holds except for the equation
$\nu^f_{-1}\ell\sigma_i=T(\ell\sigma_i)$ in weight $2$.
This equation can be verified by a straightforward calculation,
which we omit.
\end{proof}

{\it Remark.}
Since we have already observed that $V^f=\CDO_{\nabla,H}(M)$,
this result recovers the conformal vector $\nu^0$ described
in Theorem \ref{thm.globalCDO}b (as $\Gamma^\sigma$ is
now traceless).

\begin{corollary} \label{cor.ass.C.freeVA}
The map of vertex algebras $\Psi:V^f\rightarrow\ass(\pi,\bb{C})$ 
(see \S\ref{sec.ass.C.VAoid}) is an isomorphism.
\end{corollary}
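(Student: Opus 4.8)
The plan is to deduce this corollary from the general criterion of Lemma \ref{lemma.conf.gen}, which guarantees that a conformal vertex algebra is freely generated by its associated vertex algebroid as soon as its conformal vector sits low enough in the PBW-type filtration $\mc{F}_{\preceq(\,\cdot\,;\,\cdot\,)}$ of \S\ref{freeVA.PBW}. Recall first, from \S\ref{sec.ass.C.VAoid}, that $V^f$ is \emph{by definition} the vertex algebra freely generated by the vertex algebroid associated to $\ass(\pi,\bb{C})$, and that $\Phi$ is the resulting universal map, which is an isomorphism on the components of weights $0$ and $1$ by construction. Hence the assertion ``$\Phi$ is an isomorphism'' is equivalent to the assertion ``$\ass(\pi,\bb{C})$ is freely generated by its own associated vertex algebroid'', and it is the latter I would establish.

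To invoke Lemma \ref{lemma.conf.gen} for $\ass(\pi,\bb{C})$, one must check its hypotheses. The structural ones are immediate: by \S\ref{sec.ass.C} and Lemma \ref{lemma.ass}, $\ass(\pi,\bb{C})$ is a conformal vertex algebra with $\ass(\pi,\bb{C})_0=C^\infty(M)$ and conformal vector $\nu^M$ of central charge $2d=2\dim M$, i.e.\ exactly the numerical data of an algebra of CDOs on $M$. The decisive hypothesis concerns the position of $\nu^M$ in the filtration, and this is supplied by Proposition \ref{prop.ass.C.conformal}: there $V^f$ is equipped with a conformal vector $\nu^f$ satisfying $\Phi(\nu^f)=\nu^M$ and $\nu^f\in\mc{F}_{\preceq(-1;-1)}V^f$, via the explicit local identity (\ref{ass.C.conformal.local}) expressing $\nu^M$ through the weight-$0$ and weight-$1$ generators $\ell\sigma_i$ alone. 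Since $\Phi$ is compatible with the filtrations (the filtration on $\ass(\pi,\bb{C})$ being the image under $\Phi$ of the one on $V^f$), this gives $\nu^M\in\mc{F}_{\preceq(-1;-1)}\ass(\pi,\bb{C})$, which is precisely what the criterion requires.

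With the hypotheses in place, Lemma \ref{lemma.conf.gen} yields that the natural map from the vertex algebra freely generated by the vertex algebroid associated to $\ass(\pi,\bb{C})$ is an isomorphism; but that natural map is exactly $\Phi$. Equivalently, both $V^f$ and $\ass(\pi,\bb{C})$ are then freely generated by the same vertex algebroid (cf.\ Proposition \ref{prop.ass.C.VAoid}, which also identifies it with the one defining $\CDO_{\nabla,H}(M)$), and $\Phi$ is the induced comparison map, hence an isomorphism.

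I do not expect the corollary itself to be the hard part: the genuine obstacle lies upstream, in Proposition \ref{prop.ass.C.conformal}, namely the calculation that rewrites $\nu^M$ purely in terms of the generators $\ell\sigma_i$ so as to place it in $\mc{F}_{\preceq(-1;-1)}$. Once that is available, the present statement is a formal consequence; the only care needed is to confirm that the hypotheses of Lemma \ref{lemma.conf.gen} match what Propositions \ref{prop.ass.C.VAoid}--\ref{prop.ass.C.conformal} have verified --- in particular that no additional constraint on the weight-one component or on the structure maps $\bullet,\{\;\},\{\;\}_\Omega$ is required beyond those already exhibited.
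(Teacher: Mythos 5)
There is a genuine gap: you have proved surjectivity of $\Phi$ but not injectivity. Lemma \ref{lemma.conf.gen} concludes only that a vertex algebra with a conformal vector in $\mc{F}_{\preceq(-1;-1)}$ is \emph{generated} by its associated vertex algebroid, which by the convention set out in the remark after \S\ref{VAoid.VA} means precisely that the canonical map from the freely generated vertex algebra is \emph{surjective} --- not bijective. Your sentence ``Lemma \ref{lemma.conf.gen} yields that the natural map \ldots is an isomorphism'' therefore reads more into the lemma than it provides, and the subsequent claim that $\ass(\pi,\bb{C})$ is \emph{freely} generated by its vertex algebroid is exactly the assertion still to be proved at that point. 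One can see the issue is not merely cosmetic: a quotient of $V^f$ by a nonzero ideal concentrated in weights $\geq 2$ would have the same associated vertex algebroid and a conformal vector still lying in $\mc{F}_{\preceq(-1;-1)}$, so surjectivity alone cannot distinguish $\ass(\pi,\bb{C})$ from such a quotient.

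The missing half is supplied by Lemma \ref{lemma.conf.ideal}, which is what the paper uses: $\ker\Phi$ is an ideal of $V^f$ that vanishes in weights $0$ and $1$ by construction, hence consists only of positive weights; since Proposition \ref{prop.ass.C.conformal} places the conformal vector $\nu^f$ of $V^f$ in $\mc{F}_{\preceq(-1;-1)}V^f$, Lemma \ref{lemma.conf.ideal} forces $\ker\Phi=0$. Note that this second step applies the filtration condition to $\nu^f$ inside $V^f$, whereas the surjectivity step applies it to $\nu^M=\Phi(\nu^f)$ inside $\ass(\pi,\bb{C})$; both are furnished by Proposition \ref{prop.ass.C.conformal}, so the repair costs nothing beyond invoking the second lemma. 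With that addition your argument coincides with the paper's proof.
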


\begin{proof}
By Proposition \ref{prop.ass.C.conformal},
the conformal vector $\nu^M\in\ass(\pi,\bb{C})$ belongs to
$\mc{F}_{\preceq(-1;-1)}$.
Then Lemma \ref{lemma.conf.gen} applies so that
$\Psi$ is surjective.
By construction, the ideal $\ker\Psi\subset V^f$ is trivial in
weights $0$ and $1$.
Then by Proposition \ref{prop.ass.C.conformal} again and Lemma
\ref{lemma.conf.ideal}, $\ker\Psi$ is in fact trivial in all weights.
\end{proof}

Propositions \ref{prop.ass.C.VAoid},
\ref{prop.ass.C.conformal} and Corollary \ref{cor.ass.C.freeVA}
together show that $\ass(\pi,\bb{C})\cong\CDO_{\nabla,H}(M)$
as conformal vertex algebras, thus fulfilling the goal of this
section.
Let us summarize our work as follows.

\begin{theorem} \label{thm.CDO.semiinf}
Suppose $\pi:P\rightarrow M$ is a smooth prinicpal
$G$-bundle and $\rho:G\rightarrow SO(\bb{R}^d)$ is
a representation such that there is an isomorphism
$P\times_\rho\bb{R}^d\cong TM$.
Given a principal frame $(\hat{\mf{g}}_{-\Killing},G)$-algebra
$\CDO_{\Theta,H}(P)$ (see Theorem \ref{thm.CDOP}), the zeroth
semi-infinite cohomology
\begin{align*}
\ass(\pi,\bb{C})
=H^{\frac{\infty}{2}+0}\big(\hat{\mf{g}}_{-\Killing},
  \CDO_{\Theta,H}(P)\big)
\end{align*}
is an algebra of CDOs on $M$.
Up to isomorphism, every algebra of CDOs on $M$ arises
this way.
This vertex algebra is freely generated by its weight-zero and
weight-one components, which are represented bijectively by the
following subspaces of $\CDO_{\Theta,H}(P)$:
\begin{align*}
\pi^*C^\infty(M),\qquad
\pi^*\Omega^1(M)\oplus
  \big\{\wt X-\tau_i\wt X^j\,\rho(\Theta)_{ij}:
  X\in\T(M)\big\}.
\end{align*}
(For details on the weight-one component, see Lemma
\ref{lemma.Feigin.C.1}.)
Moreover, $\ass(\pi,\bb{C})$ has a conformal vector
of central charge $2d=2\dim M$, represented in
$\CDO_{\Theta,H}(P)$ by
\begin{align*}
\textstyle
\qquad
\tau_{i,-1}\tau^i
-\frac{1}{2}\lambda_\rho(\Theta_{-1}\Theta).
\qquad\qedsymbol
\end{align*}
\end{theorem}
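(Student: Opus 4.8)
The plan is to assemble the analysis of \S\ref{sec.ass.C.0}--\S\ref{sec.ass.C.conformal} into the claimed statement, so most of the work is citation and bookkeeping on central charges and on the exhaustion clause. First I would fix the set-up: the data of Theorem \ref{thm.CDOP} with $\lambda=-\Killing$, so that $\lambda^*=\lambda_\rho$, and by Corollary \ref{cor.formalLG.P2} the inner $(\hat{\mf{g}}_{-\Killing},G)$-action exists iff $\lambda_\rho(P)=0$; since $\lambda_\rho(P)=-8\pi^2p_1(M)$, this is exactly the condition $p_1(M)=0$ under which $M$ carries CDOs at all. Writing $\nabla$ for the Levi-Civita connection of the metric that $P\times_\rho\bb{R}^d\cong TM$ induces and noting that the basic $3$-form $H$ descends to $M$ with $dH=\Tr(R\wedge R)$, Theorem \ref{thm.globalCDO} supplies the candidate target $\CDO_{\nabla,H}(M)$.

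Next I would compute the two lowest weights of $\ass(\pi,\bb{C})$. In weight zero the Feigin complex is just the Chevalley--Eilenberg complex of $\mf{g}$ with coefficients in $C^\infty(P)$, so $\ass(\pi,\bb{C})_0=C^\infty(P)^G=C^\infty(M)$ as in (\ref{ass.C.0}). Weight one is the crux and is where the argument really happens: introduce $\gamma=\d_a\otimes\Theta^a$ and invoke Lemma \ref{lemma.G}, so that $e=[Q_0,\gamma_0 Q_0\gamma_0]=[Q_0,\gamma_0]^2$ is null-homotopic; its restriction $e_1$ to weight one is an idempotent, and replacing the weight-one Feigin complex by the image of $1-e_1$ yields a quasi-isomorphic Chevalley--Eilenberg-type subcomplex (Lemma \ref{lemma.Feigin.C.1}) whose degree-zero cohomology computes to $\pi^*\Omega^1(M)\oplus\{\wt X-(\tau_i\wt X^j)\rho(\Theta)_{ij}:X\in\T(M)\}$ as in (\ref{ass.C.1}). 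Reading off the associated vertex algebroid on these two weights (Proposition \ref{prop.ass.C.VAoid}) and using that $\nabla$ is torsion-free, so $\tnabla=\nabla$, one finds it is exactly the vertex algebroid that generates $\CDO_{\nabla,H}(M)$ in Theorem \ref{thm.globalCDO}a; hence there is a universal map $\Phi:\CDO_{\nabla,H}(M)\to\ass(\pi,\bb{C})$, an isomorphism in weights $0$ and $1$.

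To upgrade $\Phi$ to an isomorphism in all weights I would use the conformal vector. By Lemma \ref{lemma.G} the class of $\nu^{\Pi\mf{g}}\otimes\mb{1}+\mb{1}\otimes\nu^P-Q_0\gamma=\mb{1}\otimes\big(\tau_{i,-1}\tau^i-\frac{1}{2}\lambda_\rho(\Theta_{-1}\Theta)\big)$ is a conformal vector $\nu^M$ of $\ass(\pi,\bb{C})$, of central charge $(2\dim P)-2\dim\mf{g}=2\dim M$ by Lemma \ref{lemma.semiinf.ops}b (equivalently Lemma \ref{lemma.ass}b with $c=0$). The decisive point, Proposition \ref{prop.ass.C.conformal}, is the local identity (\ref{ass.C.conformal.local}), which rewrites this representative purely in terms of the weight-one generators; thus $\nu^M\in\mc{F}_{\preceq(-1;-1)}$, Lemma \ref{lemma.conf.gen} forces $\Phi$ surjective, and Lemma \ref{lemma.conf.ideal} together with the triviality of $\ker\Phi$ in weights $0$ and $1$ forces $\ker\Phi=0$ (Corollary \ref{cor.ass.C.freeVA}). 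Hence $\ass(\pi,\bb{C})\cong\CDO_{\nabla,H}(M)$ as conformal vertex algebras, with the stated generating subspaces and conformal vector, and in particular it is an algebra of CDOs. For the exhaustion clause I would specialize to $P=F_{SO}(TM)$, $G=SO_d$, $\rho=\mathrm{id}$: by Theorems \ref{thm.globalCDO} and \ref{thm.globalCDO.iso}, as $H$ varies the vertex algebras $\CDO_{\nabla,H}(M)$ already fill out the entire $H^3(M)$-torsor of isomorphism classes of algebras of CDOs on $M$, so every such algebra is of the form $\ass(\pi,\bb{C})$.

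I expect the main obstacle to be the weight-one step. Because the Feigin complex in positive weight is $\bb{Z}$-graded without bound in either direction, one cannot read off $H^0$ as a naive space of $\mf{g}$-invariants; it is precisely the null-homotopic idempotent $e_1$ of Lemma \ref{lemma.Feigin.C.1} that collapses the computation to a finite-dimensional Chevalley--Eilenberg problem. The second most delicate point is verifying $\nu^M\in\mc{F}_{\preceq(-1;-1)}$, which rests on the somewhat intricate local gauge computation behind (\ref{ass.C.conformal.local}); by comparison the long $\{\ell X,\ell Y\}_\Omega$ calculation in Proposition \ref{prop.ass.C.VAoid} is merely laborious given the commutation relations of Corollary \ref{cor.fund.normal.comm}.
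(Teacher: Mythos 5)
Your proposal is correct and follows the paper's own proof essentially step for step: weight zero via the Chevalley--Eilenberg complex, weight one via the null-homotopic idempotent $e_1=[Q_0,\gamma_0]^2$ of Lemma \ref{lemma.Feigin.C.1}, identification of the vertex algebroid with that of $\CDO_{\nabla,H}(M)$ (Proposition \ref{prop.ass.C.VAoid}), and then free generation from $\nu^M\in\mc{F}_{\preceq(-1;-1)}$ via Lemmas \ref{lemma.conf.gen} and \ref{lemma.conf.ideal}. Your explicit appeal to the $H^3(M)$-torsor for the exhaustion clause is slightly more spelled out than the paper's, but is exactly what the paper relies on.
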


To conclude this section, let us describe an extension of
Theorem \ref{thm.CDO.semiinf}.

\begin{subsec} \label{CDO.cs}
{\bf Generalization to supermanifolds.}
Suppose $\pi:P\rightarrow M$ and
$\rho:G\rightarrow SO(\bb{R}^d)$ are the same as above;
also let $\rho':G\rightarrow U(\bb{C}^r)$ be another
representation, $E=P\times_{\rho'}\bb{C}^r$ the associated
vector bundle and $\Pi E$ the corresponding cs-manifold.
\cite{QFS.susy}
The $G$-action on
$\mc{O}(\Pi\bb{R}^r)\otimes\bb{C}=\wedge^*(\bb{C}^r)^\vee$
induced by $\rho'$ lifts to an inner
$(\hat{\mf{g}}_{\lambda_{\rho'}},G)$-action on
$\CDO(\Pi\bb{R}^r)$, a fermionic analogue of
\S\ref{algCDO} (see Definition \ref{formalLG.action}).
By Theorem \ref{thm.CDOP}, there exists a principal frame
$(\hat{\mf{g}}_\lambda,G)$-algebra $\CDO_{\Theta,H}(P)$ with
$\lambda+\lambda_{\rho'}=-\Killing$ if and only if
\begin{align*}
\lambda^*(P)=(\lambda_\rho-\lambda_{\rho'})(P)=0
\qquad\iff\qquad
p_1(M)-ch_2(E)=0.
\end{align*}
In this case, we can apply Definition \ref{defn.ass}
to construct a vertex superalgebra
\begin{align*}
\ass(\pi,\CDO(\Pi\bb{R}^r))
=H^{\frac{\infty}{2}+0}\Big(\hat{\mf{g}}_{-\Killing}\,,\,
  \CDO_{\Theta,H}(P)\otimes\CDO(\Pi\bb{R}^r)\Big).
\end{align*}
Moreover, the $(\hat{\mf{g}}_{\lambda_{\rho'}},G)$-action
on $\CDO(\Pi\bb{R}^r)$ is primary if and only if
\begin{align*}
\rho'(\mf{g})\subset\mf{su}_r
\qquad\iff\qquad
c_1(E)=0.
\end{align*}
In this case, $\ass(\pi,\CDO(\Pi\bb{R}^r))$
has a conformal vector of central charge $2(d-r)$ by Lemma
\ref{lemma.ass}b.
It follows from a similar analysis that $\ass(\pi,\CDO(\Pi\bb{R}^r))$
is an algebra of CDOs on $\Pi E$ in the sense of \cite{myCDO}.
In particular, in the case $\rho'=\rho_{\bb{C}}$
(i.e. $E=TM_{\bb{C}}$), both obstructions are trivial and
$\ass(\pi,\CDO(\Pi\bb{R}^d))$ is the chiral de Rham algebra of $M$.
\end{subsec}

\newpage
\setcounter{equation}{0}
\section{Example: Spinor Module over CDOs}
\label{sec.FL.spinor}

In this section, our object of study is another example of
the construction in \S\ref{sec.ass} called the
\emph{spinor module}.
Following a similar strategy as in \S\ref{CDOP.CDOM}, we
analyze its structure in order to give an explicit description
of generating data and relations.
It is hoped that this spinor module has an interpretation
in terms of a ``spinor bundle with connection on the formal
loops of a string manifold'', and a deeper understanding of it,
including the identification of an appropriate Dirac operator,
will lead to a useful geometric theory of the Witten genus.
(This was in fact the original motivation of the paper.)

Recall that the normalized Killing form on $\mf{so}_d$ is given by
$\lambda_0(A,B)=\frac{1}{2}\Tr AB$.
Notice that $\lambda_\rho=2\lambda_0$ for the standard
representation $\rho$ and $\Killing=(2d-4)\lambda_0$
(see \S\ref{conventions}).
For $k\in\bb{C}$, we will write
$(\widehat{\mf{so}}_d)_{k\lambda_0}$ more simply as
$(\widehat{\mf{so}}_d)_k$ (see \S\ref{sec.Gmfld}).

\begin{subsec} \label{sec.Cl}
{\bf The Ramond Clifford algebra.}
Let $C\ell$ be the unital $\bb{Z}/2\bb{Z}$-graded
associative $\bb{C}$-algebra with the following generators
and relations
\begin{align} \label{Cl}
e_{i,n}\t{ odd},\;\;i=1,\ldots,d,\;\;n\in\bb{Z},
\qquad
[e_{i,n},e_{j,m}]
=e_{i,n}e_{j,m}+e_{j,m}e_{i,n}
=-2\delta_{ij}\delta_{n+m,0}.
\end{align}
\footnote{
The notation and the normalization of the generators
are chosen to resemble those in e.g.~\cite{spin}.
}
Suppose $W$ is a $C\ell$-module with the property that
any $w\in W$ is annihilated by $e_{i,n}$ for sufficiently
large $n$.
The operators given by
\begin{align} \label{Cl.so}
A^{C\ell}_n
=\frac{1}{4}A_{ji}\,e_{i,n-r}e_{j,r}
\quad\t{for }A\in\mf{so}_d,\;n\in\bb{Z}
\end{align}
define an $(\widehat{\mf{so}}_d)_1$-action on $W$.
On the other hand, the operators given by
\begin{align} \label{Cl.Virasoro}
L^{C\ell}_n
=-\frac{1}{8}\sum_{r\geq 0}(2r-n)e_{i,n-r}e_{i,r}
  +\frac{1}{8}\sum_{r<0}(2r-n)e_{i,r}e_{i,n-r}
  +\frac{d}{16}\delta_{n,0}\quad
\t{for }n\in\bb{Z}
\end{align}
define a Virasoro action on $W$ of central charge
$d/2$.
This is in fact the Sugawara construction associated to
the above $(\widehat{\mf{so}}_d)_1$-action, and
accordingly satisfies
\begin{align*}
[L^{C\ell}_n,A^{C\ell}_m]=-mA^{C\ell}_{n+m}\quad
\t{for }n,m\in\bb{Z}.
\end{align*}
The eigenvalues of $L^{C\ell}_0$ are called weights (as usual)
and $e_{i,n}$ changes weights by $-n$.
For more detailed explanations, see e.g.~\cite{Fuchs}
\end{subsec}

For the rest of the section, $d=2d'$ is even.

\begin{subsec} \label{sec.S}
{\bf The spinor representation of $\widehat{\mf{so}}_{2d'}$.}
Let $C\ell_0$ (resp.~$C\ell_+$) be the subalgebra of $C\ell$
generated by those $e_{i,n}$ with $n=0$ (resp.~$n\geq 0$).
The finite-dimensional Clifford algebra $C\ell_0$ has a unique
irreducible $\bb{Z}/2\bb{Z}$-graded representation $S_0$.
Regarding $S_0$ as a $C\ell_+$-module on which $\{e_{i,n}\}_{n>0}$
act trivially, we define a $C\ell$-module by
\begin{align*}
S=C\ell\otimes_{C\ell_+}S_0.
\end{align*}
By (\ref{Cl}), $S$ as a vector space is spanned by elements of
the form
\begin{align*}
e_{i_p,n_p}\cdots e_{i_1,n_1}s,\qquad
n_1<0,\;\;
(n_p,i_p)<\cdots<(n_1,i_1),\;\;
s\in S_0
\end{align*}
where the indicated pairs are ordered lexicographically.
According to \S\ref{sec.Cl}, $S$ admits
a $((\widehat{\mf{so}}_{2d'})_1,\t{Spin}_{2d'})$-action
together with an intertwining Virasoro action of central
charge $d'$.
Notice that the element displayed above has weight
\begin{align*}
\frac{d'}{8}+|n_1|+\ldots+|n_p|.
\end{align*}
For convenience, we will write $S_k\subset S$ for
the component of weight $(d'/8)+k$.
\end{subsec}

The following is our main object of study in this section.

\begin{subsec} \label{sec.ass.S}
{\bf The spinor module over CDOs.}
Consider the special case of Definition \ref{defn.ass}
associated to a principal frame
$((\hat{\mf{so}}_{2d'})_{3-4d'},\t{Spin}_{2d'})$-algebra
$\CDO_{\Theta,H}(P)$ and the
$((\hat{\mf{so}}_{2d'})_1,\t{Spin}_{2d'})$-module $S$:
\begin{align}
\ass(\pi,S)
&=H^{\frac{\infty}{2}+0}\Big(
  (\widehat{\mf{so}}_{2d'})_{4-4d'}\,,
  \,\CDO_{\Theta,H}(P)\otimes S\Big) \nonumber \\
&=H^0\Big(
  \CDO(\Pi\mf{so}_{2d'})\otimes\CDO_{\Theta,H}(P)
  \otimes S\,,\,Q_0
\Big) \label{ass.S}
\end{align}
By Lemma \ref{lemma.ass} and \S\ref{sec.S},
$\ass(\pi,S)$ is a module over the vertex algebra
$\CDO_{\Theta,H}(P)^{\widehat{\mf{so}}_{2d'}}$ and
admits a Virasoro action of central charge
$5d'=\frac{5}{2}\dim M$.
For convenience, let us recall some of the data involved.
\begin{itemize}
\item[$\centerdot$]
Let $(t_1,t_2,\ldots)$ be a basis of $\mf{so}_{2d'}$;
$(t^1,t^2,\ldots)$ the dual basis of $(\mf{so}_{2d'})^\vee$;
$(\phi^1,\phi^2,\ldots)$ the corresponding coordinates of
the supermanifold $\Pi\mf{so}_{2d'}$;
and $(\d_1,\d_2,\ldots)$ their coordinate vector fields.
\vspace{-0.05in}
\item[$\centerdot$]
For comments on the conformal vertex superalgebra
$\CDO(\Pi\mf{so}_{2d'})$, see \S\ref{sec.ass.C}.
\vspace{-0.05in}
\item[$\centerdot$]
For the detailed definition of the vertex algebra
$\CDO_{\Theta,H}(P)$, see Theorem \ref{thm.CDOP} with
$\lambda=(3-4d')\lambda_0$ in mind.
Let us mention that it is defined using:
a principal $\t{Spin}_{2d'}$-frame bundle $\pi:P\rightarrow M$;
the Levi-Civita connection $\Theta$ on $\pi$;
and a basic $3$-form $H$ on $P$ that satisfies
$dH=\lambda_0(\Omega\wedge\Omega)$, where $\Omega$ is the
curvature of $\Theta$.
Also, it is generated by such elements as $f\in C^\infty(P)$,
$A^P\in\T_v(P)$ for $A\in\mf{so}_{2d'}$ and $\tau_i\in\T_h(P)$
for $i=1,\ldots,2d'$, and a conformal vector of central charge
$2\dim P$ is given by
\begin{align*}
\textstyle
\nu^P=t^P_{a,-1}\Theta^a+\tau_{i,-1}\tau^i
  -\frac{1}{2}\lambda_0(\Theta_{-1}\Theta).
\end{align*}
For the meaning of various notations, see \S\ref{sec.Gmfld} and
\S\ref{sec.prin.bdl2}.
\vspace{-0.05in}
\item[$\centerdot$]
For details on the Feigin complex, see \S\ref{sec.Feigin}.
Let us mention that its differential is given by
\begin{align} \label{ass.S.Q.theta}
Q_0
=q_0\otimes 1\otimes 1
+\phi^a_{-r}\otimes t^P_{a,r}\otimes 1
+\phi^a_{-r}\otimes 1\otimes t^{C\ell}_{a,r}
\end{align}
where $q$ is the odd vector field in (\ref{JQ.coor}).
Also, the Virasoro operators
\begin{align} \label{ass.S.Vir}
\qquad\qquad
L^{\Pi{\mf{so}}}_n\otimes 1\otimes 1
+1\otimes L^P_n\otimes 1
+1\otimes 1\otimes L^{C\ell}_n,
\qquad n\in\bb{Z}
\end{align}
preserve the gradation and commute with $Q_0$.
\vspace{-0.05in}
\end{itemize}
Let us slightly rephrase our geometric ingredients:
$M$ is a Riemannian manifold with a spin structure and a $3$-form
$H$ that satisfies $dH=\frac{1}{2}\Tr(R\wedge R)$, where $R$ is
the Riemannian curvature.
This can be viewed as the de Rham version of
a \emph{string structure}.
In the sequel, we will describe $\ass(\pi,S)$ more
explicitly in terms of generating data (i.e. a subspace and
some fields) and their relations.
\end{subsec}

Throughout this section we identify $\Omega^*(M)$ with
the basic subspace of $\Omega^*(P)$.

\begin{subsec} \label{sec.ass.S.0}
{\bf The component of the lowest weight.}
Consider the Feigin complex that defines (\ref{ass.S}):
\begin{align} \label{Feigin.S}
\Big(
  \CDO(\Pi\mf{so}_{2d'})\otimes\CDO_{\Theta,H}(P)
  \otimes S\,,\,Q_0
\Big).
\end{align}
Since its component of weight $d'/8$ is simply
the Chevalley-Eilenberg complex
\begin{align*}
\Big(\mc{O}(\Pi\mf{so}_{2d'})\otimes C^\infty(P)
  \otimes S_0\,,\,Q_0\Big)
\end{align*}
(see \S\ref{sec.CE}), the component of (\ref{ass.S}) of weight
$d'/8$ is
\begin{align} \label{ass.S.0}
\ass(\pi,S)_0
=H^0\big(\mf{so}_{2d'},C^\infty(P)\otimes S_0\big)
=(C^\infty(P)\otimes S_0)^{\mf{so}_{2d'}}
=S(M)
\end{align}
i.e.~the space of smooth sections of the spinor bundle on $M$.
Understanding the rest of $\ass(\pi,S)$ requires more work.
\end{subsec}

\begin{lemma} \label{lemma.G.S}
Let $\gamma$ be the same element as in Lemma \ref{lemma.G}
(with $\mf{g}=\mf{so}_{2d'}$) and
$\ms{Y}=\ms{Y}^a t^P_a\in\T_v(P)^{\mf{so}_{2d'}}$.
For $n\in\bb{Z}$ we have the following equations of operators
on the Feigin complex (\ref{Feigin.S}):
\begin{align*}
&[Q_0,\,\gamma_n\otimes 1]
=L^{\Pi\mf{so}}_n\otimes 1\otimes 1
+1\otimes(t^P_{a,-1}\Theta^a)_n\otimes 1
+1\otimes\Theta^a_{n-r}\otimes t^{C\ell}_{a,r} \\
&[Q_0,\,
  (\gamma_0(1\otimes\ms{Y}))_n\otimes 1]
=1\otimes\ms{Y}_n\otimes 1
+1\otimes\ms{Y}^a_{n-r}\otimes t^{C\ell}_{a,r}
\end{align*}
\end{lemma}

\begin{proof}
Recall the differential $Q_0$ from (\ref{ass.S.Q.theta}).
The first anticommutator is computed as follows
\begin{align*}
[Q_0,\,\gamma_n\otimes 1]
&=[(q\otimes 1+\phi^a\otimes t^P_a)_0,\,\gamma_n]\otimes 1
+[\phi^a_{-r},\d_{b,n-s}]\otimes\Theta^b_s
  \otimes t^{C\ell}_{a,r} \\
&=L^{\Pi\mf{so}}_n\otimes 1\otimes 1
+1\otimes(t^P_{a,-1}\Theta^a)_n\otimes 1
+1\otimes\Theta^a_{n-r}\otimes t^{C\ell}_{a,r}
\end{align*}
using Lemma \ref{lemma.G} and the fermionic analogue of
(\ref{Weyl}).
On the other hand, since
$\gamma_0(1\otimes\ms{Y})=\d_b\otimes\ms{Y}^b$,
the second anticommutator can be written as
\begin{align*}
[Q_0,(\d_b\otimes\ms{Y}^b)_n\otimes 1]
&=[(q\otimes 1+\phi^a\otimes t^P_a)_0,
  (\d_b\otimes\ms{Y}^b)_n]\otimes 1
+[\phi^a_{-r},\d_{b,n-s}]\otimes\ms{Y}^b_s
  \otimes t^{C\ell}_{a,r} \\
&=\big((q\otimes 1+\phi^a\otimes t^P_a)_0
  (\d_b\otimes\ms{Y}^b)\big)_n\otimes 1
+1\otimes\ms{Y}^a_{n-r}\otimes t^{C\ell}_{a,r}
\end{align*}
using again the analogue of (\ref{Weyl}).
It remains to compute the element
$(q\otimes 1+\phi^a\otimes t^P_a)_0
(\d_b\otimes\ms{Y}^b)$, which is the sum of the
following:
\begin{align*}
&(q_0\d_b)\otimes\ms{Y}^b
=[q,\d_b]\otimes\ms{Y}^b
=-t^c([t_b,t_d])\phi^d\d_c\otimes\ms{Y}^b \\
&(\phi^a_0\otimes t^P_{a,0})(\d_b\otimes\ms{Y}^b)
=\phi^a\d_b\otimes t^P_a\ms{Y}^b
=-t^b([t_a,t_c])\phi^a\d_b\otimes\ms{Y}^c \\
&(\phi^a_1\otimes t^P_{a,-1})(\d_b\otimes\ms{Y}^b)
=\delta^a_b\otimes t^P_{a,-1}\ms{Y}^b
=1\otimes\ms{Y}
\end{align*}
Indeed, the first line is similar to a computation
in the proof of Lemma \ref{lemma.G};
the second follows from the
$\mf{so}_{2d'}$-invariance of $\ms{Y}$;
and the last follows from the analogue of
(\ref{Weyl}) and Corollary
\ref{cor.fund.normal.comm}.
\end{proof}

{\it Preparation.}
Let $C\ell(M)=(C^\infty(P)\otimes C\ell_0)^{\mf{so}_{2d'}}$,
which can be viewed as the algebra of $C^\infty(M)$-linear
endomorphisms of $S(M)$.
For $X\in\T(M)$, we denote its horizontal lift by
$\wt X=\wt X^i\tau_i\in\T_h(P)^{\mf{so}_{2d'}}$ and
its Clifford action by $cX=\wt X^i\otimes e_{i,0}\in C\ell(M)$.
Also, since each
$\ms{Y}=\ms{Y}^a t^P_a\in\T_v(P)^{\mf{so}_{2d'}}$ satisfies
\begin{align*}
\ms{Y}\otimes 1+\ms{Y}^a\otimes t_a=0
\quad\t{on}\quad
(C^\infty(P)\otimes S_0)^{\mf{so}_{2d'}}=S(M),
\end{align*}
it represents an endomorphism
$c\ms{Y}=-\ms{Y}^a\otimes t_a\in C\ell(M)$.

\begin{subsec} \label{sec.ass.S.fields}
{\bf Fields of low weights.}
Recall Lemma \ref{lemma.semiinf.ops}a for the special case
(\ref{ass.S}):
the $\widehat{\mf{so}}_{2d'}$-invariant fields
on $\CDO_{\Theta,H}(P)\otimes S$, including
the vertex operators of 
$\CDO_{\Theta,H}(P)^{\widehat{\mf{so}}_{2d'}}$,
induce fields on $\ass(\pi,S)$.
Let us now describe a collection of such fields (or rather their
Fourier modes);
we will later see that their actions on the subspace
$\ass(\pi,S)_0=S(M)$ generate the entire space $\ass(\pi,S)$.

For $f\in C^\infty(M)$, $X\in\T(M)$ and $n\in\bb{Z}$,
define the following operators on $\ass(\pi,S)$:
\begin{align}
f_n
&:=\t{the operator induced by }f_n\otimes 1
\nonumber \\
\ell X_n
&:=\t{the operator induced by }
  (\wt X-\tau_i\wt X^j\,\Theta_{ij})_n\otimes 1
\label{ass.S.gen.fields} \\
c X_n
&:=\t{the operator induced by }
  \wt X^i_{n-r}\otimes e_{i,r}
\nonumber
\end{align}
Indeed, the first two are well-defined because both $f$ and
$\wt X-\tau_i\wt X^j\,\Theta_{ij}$ belong to
$\CDO_{\Theta,H}(P)^{\widehat{\mf{so}}_{2d'}}$
according to \S\ref{sec.invCDO.can};
and so is the last one because for $A\in\mf{so}_{2d'}$ and
$m\in\bb{Z}$ we have
\begin{align*}
\big[A^P_m\otimes 1+1\otimes A^{C\ell}_m,\,
  \wt X^i_{n-r}\otimes e_{i,r}\big]
&=(A^P\wt X^i)_{m+n-r}\otimes e_{i,r}
  +\wt X^i_{n-r}\otimes(A_{ji}e_{j,m+r}) \\
&=-A_{ij}\wt X^j_{m+n-r}\otimes e_{i,r}
  +A_{ji}\wt X^i_{n-r}\otimes e_{j,m+r}\;=0
\end{align*}
thanks to (\ref{freeVA.comm}), the $\mf{so}_{2d'}$-invariance
of $\wt X$ and (\ref{Cl})--(\ref{Cl.so}).
These three sequences of operators should be regarded as
the Fourier modes of certain fields of weights $0$, $1$ and
$\frac{1}{2}$ respectively.

It will be convenient to also consider some other operators
that are generated by those in (\ref{ass.S.gen.fields}).
For $\alpha\in\Omega^1(M)$, $\ms{Y}\in\T_v(P)^{\mf{so}_{2d'}}$
and $n\in\bb{Z}$, define the following operators on
$\ass(\pi,S)$:
\begin{align} \label{ass.S.other.fields}
\begin{array}{rl}
\alpha_n
&:=\t{the operator induced by }\alpha_n\otimes 1 \\
c\ms{Y}_n
&:=\t{the operator induced by }
  (\ms{Y}+\lambda_0(\Theta,\ms{Y}))_n\otimes 1
\ph{\Big)}
\end{array}
\end{align}
These are well-defined because both $\alpha$ and
$\ms{Y}+\lambda_0(\Theta,\ms{Y})$ belong to
$\CDO_{\Theta,H}(P)^{\widehat{\mf{so}}_{2d'}}$ according to
\S\ref{sec.invCDO.can}.
To see that $\alpha_n$ can be expressed in terms of
(\ref{ass.S.gen.fields}), it suffices to assume $\alpha=fdg$
and notice that
\begin{align} \label{ass.S.1formfield}
(fdg)_n=-\sum_{r\in\bb{Z}}r f_{n-r}g_r
\end{align}
by (\ref{freeVA.comm})--(\ref{freeVA.NOP}).
The next lemma describes how $c\ms{Y}_n$ can be expressed in
terms of (\ref{ass.S.gen.fields}).
\end{subsec}

\begin{lemma} \label{lemma.verfield}
The operator $c\ms{Y}_n$ defined above equals a sum of operators
of the form
\begin{align} \label{cX.cY.normal}
\sum_{r\geq 0}cX_{n-r}\,cY_r
-\sum_{r<0}cY_r\,cX_{n-r}
-2\langle X,\nabla Y\rangle_n
+\langle X,Y\rangle_n,\quad
X,Y\in\T(M)
\end{align}
where $\langle\;\rangle$ and $\nabla$ denote the Riemannian
metric and Levi-Civita connection on $TM$.
\end{lemma}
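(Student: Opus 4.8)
The plan is to first reduce the statement to the case in which $c\ms{Y}$ is a single antisymmetrized Clifford product of two $cX$'s, and then to carry out one mode computation on the Feigin complex. For the reduction, recall from the Preparation preceding the lemma that $c\ms{Y}\in C\ell(M)$, that $C\ell(M)$ is generated as a $C^\infty(M)$-algebra by $\{cX:X\in\T(M)\}$, and that $cX\,cY+cY\,cX=-2\langle X,Y\rangle$ by (\ref{Cl}), so that $\frac{1}{2}[cX,cY]=cX\,cY+\langle X,Y\rangle$ lies in the image of $\T_v(P)^{\mf{so}_{2d'}}$ under $\ms{Y}\mapsto c\ms{Y}$ for all $X,Y\in\T(M)$. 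Since $\T(M)$ is a finitely generated $C^\infty(M)$-module, that image is in fact spanned over $C^\infty(M)$ by these elements, and since $c\ms{Y}$ lies in it, one can write, absorbing function coefficients into one factor,
\begin{align*}
c\ms{Y}=\sum_{\alpha}\frac{1}{2}\,[cX_\alpha,cY_\alpha],\qquad X_\alpha,Y_\alpha\in\T(M),
\end{align*}
as a finite sum. The map $\ms{Y}\mapsto c\ms{Y}$ is $C^\infty(P)$-linear and injective (the spinor representation of $\mf{so}_{2d'}$ is faithful), so there are unique $\ms{Y}^\alpha\in\T_v(P)^{\mf{so}_{2d'}}$ with $c\ms{Y}^\alpha=\frac{1}{2}[cX_\alpha,cY_\alpha]$ and $\ms{Y}=\sum_\alpha\ms{Y}^\alpha$. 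As $\ms{Y}\mapsto c\ms{Y}_n$ is $\bb{C}$-linear, it therefore suffices to establish, for a single $X,Y\in\T(M)$ and the element $\ms{Y}^{X,Y}\in\T_v(P)^{\mf{so}_{2d'}}$ with $c\ms{Y}^{X,Y}=\frac{1}{2}[cX,cY]$, the operator identity on $\ass(\pi,S)$
\begin{align*}
c\ms{Y}^{X,Y}_n=\sum_{r\ge0}cX_{n-r}\,cY_r-\sum_{r<0}cY_r\,cX_{n-r}-2\langle X,\nabla Y\rangle_n+\langle X,Y\rangle_n,
\end{align*}
and then to sum over $\alpha$.

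The heart of the proof is this identity, which I would verify at the level of the Feigin complex $\CDO(\Pi\mf{so}_{2d'})\otimes\CDO_{\Theta,H}(P)\otimes S$, on which every operator above is defined and supercommutes with $Q_0$. Substituting the definition $cX_m=\wt X^i_{\,m-r}\otimes e_{i,r}$ (summed over $i$ and over $r\in\bb{Z}$, acting on the last two factors) and using that the function modes on $\CDO_{\Theta,H}(P)$ commute with the Clifford generators, one expands $\sum_{r\ge0}cX_{n-r}cY_r-\sum_{r<0}cY_rcX_{n-r}$ and reorganizes the resulting double mode sum. Two kinds of contributions arise. First, contraction terms $e_{i,r}e_{j,s}\mapsto-2\delta_{ij}\delta_{r+s,0}$ coming from (\ref{Cl}); after resumming the function modes with Corollary \ref{cor.fund.normal.comm} and the identity (\ref{ass.S.1formfield}), these yield $\langle X,Y\rangle_n$ together with a $d$-exact piece. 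Second, ``quadratic'' terms in which the Clifford action reassembles $\wt X^i\wt Y^j$ into expressions of the form $(\cdots)_{\,n-r}\otimes\frac{1}{4}(t_a)_{ji}e_{i,r-s}e_{j,s}=(\cdots)_{\,n-r}\otimes t^{C\ell}_{a,r}$, cf.~(\ref{Cl.so}); using Lemma \ref{lemma.G.S} to pass between the two representatives of $c\ms{Y}^{X,Y}_n$, one matches these against $-(\ms{Y}^{X,Y})^a_{\,n-r}\otimes t^{C\ell}_{a,r}+\lambda_0(\Theta,\ms{Y}^{X,Y})_n\otimes1$ from (\ref{vertical.Cl}). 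The remaining $\Theta$-dependent pieces — arising both from the Clifford generators transforming under $\rho(\Theta)$ and from the $\lambda_0(\Theta,\ms{Y}^{X,Y})$ term — must be shown to organize, via the relation $(\tau_i\wt X^j)\tau^i\otimes\tau_j=\wt{\nabla X}$ of (\ref{DX.lift}), into exactly $-2\langle X,\nabla Y\rangle_n$. A convenient consistency check is the restriction to the lowest weight $\ass(\pi,S)_0=S(M)$, where $\langle X,\nabla Y\rangle_0$ annihilates $C^\infty(P)\subset\CDO_{\Theta,H}(P)_0$ and the identity collapses to $\frac{1}{2}[cX,cY]=cX\,cY+\langle X,Y\rangle$.

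The main obstacle is precisely this last bookkeeping: keeping track of the normal-ordering corrections and checking that all $\Theta$-dependent contributions collect into the Levi-Civita covariant derivative $\nabla Y$ with the exact coefficient $-2$. By contrast the metric contraction $+\langle X,Y\rangle_n$, the rearrangement of mode sums, and the reduction in the first step are all routine.
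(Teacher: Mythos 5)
Your strategy is the same as the paper's: reduce to decomposables (the paper phrases this as $c\ms{Y}=\frac{1}{4}\ms{Y}_{ij}\otimes e_{i,0}e_{j,0}$ corresponding to a $2$-vector field under $C\ell(M)\cong\Gamma(\wedge^*TM)$, so that the representative (\ref{vertical.Cl}) of $c\ms{Y}_n$ is a sum of operators (\ref{Cl2.field}) indexed by pairs $X,Y\in\T(M)$), and then compare mode expansions; your reduction step and your identification of the two kinds of contributions are both correct. The genuine gap is that you stop exactly where the content of the lemma lies: you write that the leftover pieces ``must be shown to organize \dots into exactly $-2\langle X,\nabla Y\rangle_n$'' and label this ``the main obstacle,'' but you never perform that step. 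Nothing in the proposal establishes the coefficient $-2$, nor that the Levi-Civita connection (rather than some other combination of $d$ and $\Theta$) appears, so as written this is an announced plan rather than a proof.

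For the record, the missing computation is short, and your attribution of where $\nabla Y$ comes from is slightly off. Expanding $\sum_{r\geq 0}cX_{n-r}cY_r-\sum_{r<0}cY_rcX_{n-r}$ with $cX_m=\wt X^i_{m-s}\otimes e_{i,s}$, the off-diagonal terms ($i\neq j$) reassemble into the $e_{i}e_{j}$-part of the representative of $c\ms{Y}_n$, the diagonal terms with nonzero total Clifford mode cancel by symmetry, and the diagonal contraction terms resum, via $e_{i,-s}e_{i,s}+e_{i,s}e_{i,-s}=-2$, to
\begin{align*}
-\sum_{i,u}(2u+1)\,\wt X^i_{n-u}\wt Y^i_u
=-\langle X,Y\rangle_n+2(\wt X^i d\wt Y^i)_n\,.
\end{align*}
Thus the term that becomes $2\langle X,\nabla Y\rangle_n$ originates in the \emph{contraction} part (your ``$d$-exact piece''), not in the $\Theta$-dependent part: by (\ref{DX.lift}) one has $\wt X^i d\wt Y^i=\langle X,\nabla Y\rangle-\wt X^i\wt Y^j\Theta_{ij}$, and the remaining $-2(\wt X^i\wt Y^j\Theta_{ij})_n$ is precisely the contribution of $\lambda_0(\Theta,\ms{Y})_n$ in (\ref{vertical.Cl}) after decomposing $\ms{Y}_{ij}$. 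Writing out these few lines closes the gap; without them the lemma is not proved. (Your lowest-weight consistency check is fine, since $\langle X,\nabla Y\rangle$ is a $1$-form and its zero mode does annihilate weight-zero elements.)
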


\begin{proof}
For the calculations below, keep in mind
(\ref{freeVA.comm})--(\ref{freeVA.NOP}).
Let us write $\ms{Y}=\ms{Y}^a t_a^P$ and
$\ms{Y}_{ij}=\ms{Y}^a(t_a)_{ij}$.
By Lemma \ref{lemma.G.S} and (\ref{Cl.so}), we can also
represent $c\ms{Y}_n$ on $\CDO_{\Theta,H}(P)\otimes S$ by
\begin{align*}
\textstyle
-\ms{Y}^a_{n-r}\otimes t^{C\ell}_{a,r}
+\lambda_0(\Theta,\ms{Y})_n\otimes 1
=\frac{1}{4}\,\big(\ms{Y}_{ij,n-r}\otimes 1\big)
  \big(1\otimes e_{i,r-s}e_{j,s}
  -2\Theta_{ij,r}\otimes 1\big).
\end{align*}
Since $c\ms{Y}=\frac{1}{4}\,\ms{Y}_{ij}\otimes e_{i,0}e_{j,0}$
corresponds to a $2$-vector field under the isomorphism
$C\ell(M)\cong\Gamma(\wedge^* TM)$, the above operator can
be written as a sum of operators of the form
\begin{align} \label{Cl2.field}
\textstyle
\qquad
\frac{1}{2}\,
\big((\wt X^i\wt Y^j-\wt Y^i\wt X^j)_{n-r}
  \otimes 1\big)
\big(1\otimes e_{i,r-s}e_{j,s}
  -2\Theta_{ij,r}\otimes 1\big),\quad
X,Y\in\T(M)
\end{align}
where the factor of $\frac{1}{2}$ is included only for
convenience.
It remains to show that (\ref{Cl2.field}) represents
(\ref{cX.cY.normal}).

Consider the first two terms in (\ref{cX.cY.normal}).
By definition (\ref{ass.S.gen.fields}) they are represented by
\begin{align*}
\sum_{r\geq 0}(\wt X^i_{n-r-s}\otimes e_{i,s})
  (\wt Y^j_{r-t}\otimes e_{j,t})
-\sum_{r<0}(\wt Y^j_{r-t}\otimes e_{j,t})
  (\wt X^i_{n-r-s}\otimes e_{i,s}).
\end{align*}
In view of (\ref{Cl}), let us split this sum into three parts.
The first part consists of terms with $i\neq j$:
\begin{align*}
\sum_r\sum_{i\neq j}\sum_{s,t}
  \wt X^i_{n-r-s}\wt Y^j_{r-t}\otimes e_{i,s}e_{j,t}
=\sum_{i\neq j}\sum_{s,t}
  (\wt X^i\wt Y^j)_{n-s-t}\otimes e_{i,s}e_{j,t}\,.
\end{align*}
The second part consists of terms with $i=j$ and
$s+t\neq 0$, which vanishes by symmetry:
\begin{align*}
\sum_r\sum_i\sum_{s+t\neq 0}
  \wt X^i_{n-r-s}\wt Y^i_{r-t}\otimes e_{i,s}e_{i,t}
=\sum_i\sum_{s+t\neq 0}
  (\wt X^i\wt Y^i)_{n-s-t}\otimes e_{i,s}e_{i,t}
=0\,.
\end{align*}
The last part consists of terms with $i=j$ and $s+t=0$,
which is computed as follows:
\begin{align*}
\sum_{r\geq 0}\sum_{i,s}
  \wt X^i_{n-r-s}&\wt Y^i_{r+s}\otimes e_{i,s}e_{i,-s}
-\sum_{r<0}\sum_{i,s}
  \wt X^i_{n-r-s}\wt Y^i_{r+s}\otimes e_{i,-s}e_{i,s} \\
&=-\sum_{i,u}(2u+1)\wt X^i_{n-u}\wt Y^i_u\otimes 1 \\
&=-(\wt X^i\wt Y^i)_n\otimes 1
  +2(\wt X^i d\wt Y^i)_n\otimes 1 \\
&=-\langle X,Y\rangle_n\otimes 1
  +2\langle X,\nabla Y\rangle_n\otimes 1
  -2\big(\wt X^i\wt Y^j\Theta_{ij}\big)_n\otimes 1
\end{align*}
where we have used (\ref{DX.lift}) in the last equality.
It follows from these calculations and a little reorganization
that (\ref{cX.cY.normal}) is indeed represented by
(\ref{Cl2.field}).
\end{proof}

Next we record the relations between the lowest-weight
component $\ass(\pi,S)_0=S(M)$ and the fields introduced in
\S\ref{sec.ass.S.fields}, as well as the relations between
the fields themselves.

\begin{prop} \label{prop.S.fields.ground}
For $f\in C^\infty(M)$, $X\in\T(M)$ and $s\in S(M)$, we have
\begin{align*}
f_0 s=fs,\quad\;\;
cX_0 s=cX\cdot s,\quad\;\;
\ell X_0 s=\nabla_X s,\quad\;\;
f_n s=cX_n s=\ell X_n s=0\;\t{ for }n>0
\end{align*}
where $\cdot$ denotes Clifford multiplication and
$\nabla$ the Levi-Civita connection.
\end{prop}

\begin{proof}
The first three equations follow immediately from (\ref{ass.S.0})
and (\ref{ass.S.gen.fields}).
The rest are true simply because $S(M)\subset\ass(\pi,S)$ is
the component of the lowest weight.
\end{proof}

\begin{prop} \label{prop.S.fields.relations}
Recall the maps $\bullet$, $\{\;\}$, $\{\;\}_\Omega$ in
Proposition \ref{prop.ass.C.VAoid}.
For $f,g\in C^\infty(M)$, $X,Y\in\T(M)$ and
$n,m\in\bb{Z}$, we have the normal-ordered
expansions
\begin{align*}
(fg)_n=f_{n-r}g_r,\quad
c(fX)_n=f_{n-r}\,cX_r,\quad
\ell(fX)_n=\sum_{r\geq 0} f_{n-r}\,\ell X_r
  +\sum_{r<0}\ell X_r f_{n-r}
  -(\ell X\bullet f)_n
\end{align*}
as well as the supercommutation relations
\begin{align*}
&[f_n,g_m]=0,\qquad
[cX_n,f_m]=0,\qquad
[\ell X_n,f_m]=(Xf)_{n+m} \\
&[\ell X_n,cY_m]=c(\nabla_X Y)_{n+m},\qquad
[cX_n,cY_m]=-2\langle X,Y\rangle_{n+m} \\
&[\ell X_n,\ell Y_m]=\ell[X,Y]_{n+m}
  -c\,\Omega(\wt X,\wt Y)^P_{n+m}
  +(\{\ell X,\ell Y\}_\Omega)_{n+m}
  +n\{\ell X,\ell Y\}_{n+m}
\end{align*}
where $\langle\;\;\rangle$ and $\nabla$ denote
the Riemannian metric and Levi-Civita connection on $TM$.
\end{prop}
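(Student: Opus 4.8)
The plan is to establish every relation as an operator identity on the Feigin cochain complex $\CDO(\Pi\mf{so}_{2d'})\otimes\CDO_{\Theta,H}(P)\otimes S$ and then push it down to $\ass(\pi,S)$. First I would record that all five families of modes $f_n$, $\alpha_n$, $\ell X_n$, $cX_n$, $c\ms{Y}_n$ are, by Lemma \ref{lemma.semiinf.ops}(a) and Lemma \ref{lemma.ass}, induced by operators on the cochain complex that commute with $Q_0$ and act as the identity on the $\CDO(\Pi\mf{so}_{2d'})$-factor: the modes $f_n,\alpha_n,\ell X_n,c\ms{Y}_n$ come from vertex operators of elements of $\CDO_{\Theta,H}(P)^{\widehat{\mf{so}}_{2d'}}$ (using \S\ref{sec.invCDO.can} and (\ref{ass.S.other.fields}), noting that $\lambda+\Killing=-\lambda_0$ here, so that $c\ms{Y}$ really is $\ms{Y}+\lambda_0(\Theta,\ms{Y})$), while $cX_n=\wt X^i_{n-r}\otimes e_{i,r}$ was checked in \S\ref{sec.ass.S.fields} to commute with $A^P_m\otimes 1+1\otimes A^{C\ell}_m$. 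Hence it is enough to prove each relation inside $\CDO_{\Theta,H}(P)^{\widehat{\mf{so}}_{2d'}}$ tensored with the Clifford operators on $S$.

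Second, the relations not involving $\ell X$ applied to a vector field are bookkeeping with results already in hand. The expansions $(fg)_n=f_{n-r}g_r$ and $c(fX)_n=f_{n-r}cX_r$, and the vanishing of $[f_n,g_m]$ and $[cX_n,f_m]$, follow from the fact that modes of functions on $P$ mutually commute in $\CDO_{\Theta,H}(P)$ (Corollary \ref{cor.fund.normal.comm}), from $\wt{fX}^i=f\wt X^i$, and from the Clifford factor being untouched; $[cX_n,cY_m]=-2\langle X,Y\rangle_{n+m}$ comes from the Clifford relations (\ref{Cl}) together with $\langle X,Y\rangle=\wt X^i\wt Y^i$; $[\ell X_n,f_m]=(Xf)_{n+m}$ uses that the OPE of $\ell X$ with a function comes only from its vector-field part $\wt X$ and has a single simple pole with residue $Xf$ (Theorem \ref{thm.CDOP}), the $1$-form part of $\ell X$ contributing nothing (clear from the local model \S\ref{algCDO}); and $\ell(fX)_n$ is the general normal-ordering identity (\ref{freeVA.NOP}) for a vertex algebra freely generated by a vertex algebroid, with $\ell X\bullet f$ the structure map computed in Proposition \ref{prop.ass.C.VAoid}. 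For $[\ell X_n,cY_m]$ I would write $[\ell X_n,cY_m]=[\ell X_n,\wt Y^j_{m-r}]\otimes e_{j,r}=(\wt X\wt Y^j)_{n+m-r}\otimes e_{j,r}$ and then use the geometric fact that the tautological framing $\tau_j$ is covariant constant along horizontal directions (equivalently $\Theta|_{\T_h(P)}=0$), so that $\wt{\nabla_X Y}^j=\wt X(\wt Y^j)$ by (\ref{DX.lift}); this identifies the right-hand side with $c(\nabla_X Y)_{n+m}$.

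Third, and this is the main obstacle, the commutator $[\ell X_n,\ell Y_m]$. By the general commutation formula (\ref{freeVA.comm}) for weight-one elements this equals $(\ell X_0\ell Y)_{n+m}$ together with the level term $n\{\ell X,\ell Y\}_{n+m}$, where $\{\ell X,\ell Y\}=\ell X_1\ell Y$; and $\ell X_0\ell Y=[\ell X,\ell Y]+\{\ell X,\ell Y\}_\Omega$ in the vertex algebroid of $\CDO_{\Theta,H}(P)$. The evaluation of this in $\CDO_{\Theta,H}(P)$ is essentially the computation already carried out in the proof of Proposition \ref{prop.ass.C.VAoid}, so I would re-run that argument with $\lambda^*$ kept symbolic — observing that here $\lambda=(3-4d')\lambda_0$ gives $\lambda^*=\lambda+\Killing+\lambda_\rho=\lambda_0$ rather than $\lambda_\rho$ — and, crucially, without discarding the vertical term $-\Omega(\wt X,\wt Y)^P$. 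The $\lambda$-dependent contributions then assemble into $-\lambda_0(\Omega(\wt X,\wt Y),\Theta)$, which combines with $-\Omega(\wt X,\wt Y)^P$ to give exactly $-c\,\Omega(\wt X,\wt Y)^P=-(\Omega(\wt X,\wt Y)^P+\lambda_0(\Theta,\Omega(\wt X,\wt Y)))$ in the notation of (\ref{ass.S.other.fields}); the horizontal remainder, reorganized via (\ref{horlift.nonint}) and (\ref{DX.lift}), is $\ell[X,Y]+\{\ell X,\ell Y\}_\Omega$ with $\{\ell X,\ell Y\}_\Omega$ the basic $1$-form of Proposition \ref{prop.ass.C.VAoid}. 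Finally, the closing identification $-\Omega(\wt X,\wt Y)^P=R_{X,Y}$ under $\T_v(P)^{\mf{so}_{2d'}}\cong\Gamma(\mathrm{End}\,TM)$ is immediate: for an equivariant function $\phi$ representing a section of $TM$ one has $R_{X,Y}\phi=[\wt X,\wt Y]\phi-\wt{[X,Y]}\phi$, which equals $-\Omega(\wt X,\wt Y)^P\phi$ by (\ref{horlift.nonint}). The only genuine labor is thus the re-audit of the long calculation behind Proposition \ref{prop.ass.C.VAoid} with $\lambda^*$ general and the vertical term retained; everything else follows directly from the cited results.
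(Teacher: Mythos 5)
Your proposal is correct and follows essentially the same route as the paper: verify each relation as an operator identity on $\CDO_{\Theta,H}(P)\otimes S$, dispatch the easy cases via (\ref{freeVA.comm})--(\ref{freeVA.NOP}), (\ref{Cl}) and (\ref{DX.lift}), and reduce $[\ell X_n,\ell Y_m]$ to the calculations in the proof of Proposition \ref{prop.ass.C.VAoid} re-read with $\lambda^*=\lambda_0$, $\lambda_\rho=2\lambda_0$ and the vertical term $-\Omega(\wt X,\wt Y)^P$ retained so that it combines with $-\lambda_0(\Omega(\wt X,\wt Y),\Theta)$ into $-c\,\Omega(\wt X,\wt Y)^P$ per (\ref{ass.S.other.fields}). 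This is exactly the paper's argument.
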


\begin{proof}
Let us verify the relations at the level of
$\CDO_{\Theta,H}(P)\otimes S$.
For the calculations below, keep in mind
(\ref{ass.S.gen.fields}) and
(\ref{freeVA.comm})--(\ref{freeVA.NOP}).
The first two normal-ordered expansions are
easy, e.g.
\begin{align*}
(\wt{fX})^i_{n-s}\otimes e_{i,s}
=f_{n-s-t}\wt X^i_t\otimes e_{i,s}
=(f_{n-r}\otimes 1)(\wt X^i_{r-s}\otimes e_{i,s}).
\end{align*}
The remaining normal-ordered expansion follows immediately from
the equation
\begin{align*}
\wt{fX}-\tau_i(\wt{fX})^j\Theta_{ij}
=(\wt X-\tau_i\wt X^j\,\Theta_{ij})_{-1}f
-\ell X\bullet f
\end{align*}
which is the result of the first calculation in the proof
of Proposition \ref{prop.ass.C.VAoid}.

For the supercommutators, the first three are easy.
The next two are computed as follows, using (\ref{DX.lift})
and (\ref{Cl}) respectively:
\begin{align*}
&\big[(\wt X-\tau_i\wt X^j\Theta_{ij})_n\otimes 1,\,
  \wt Y^k_{m-r}\otimes e_{k,r}\big] \\
&\qquad\qquad
=\big[\wt X_n,\wt Y^k_{m-r}]
  \otimes e_{k,r}
=(\wt X^i\tau_i\wt Y^k)_{n+m-r}\otimes e_{k,r}
=(\wt{\nabla_X Y})^k_{n+m-r}\otimes e_{k,r} \\
&\big[\wt X^i_{n-r}\otimes e_{i,r},\,
  \wt Y^j_{m-s}\otimes e_{j,s}\big] \\
&\qquad\qquad
=\wt X^i_{n-r}\wt Y^j_{m-s}\otimes[e_{i,r},e_{j,s}]
=-2\wt X^i_{n-r}\wt Y^i_{m+r}\otimes 1
=-2\langle X,Y\rangle_{n+m}\otimes 1
\end{align*}
The last (super)commutator follows immediately from the
equations
\begin{align*}
&\big(\wt X-\tau_i\wt X^j\,\Theta_{ij}\big)_1
\big(\wt Y-\tau_k\wt Y^\ell\,\Theta_{k\ell}\big)
=\{\ell X,\ell Y\} \\
&\big(\wt X-\tau_i\wt X^j\,\Theta_{ij}\big)_0
\big(\wt Y-\tau_k\wt Y^\ell\,\Theta_{k\ell}\big)\\
&\qquad\qquad
=\big(\wt{[X,Y]}-\tau_i\wt{[X,Y]}{}^j\Theta_{ij}\big)
-\Omega(\wt X,\wt Y)^P
-\lambda_0(\Omega(\wt X,\wt Y),\Theta)
+\{\ell X,\ell Y\}_\Omega
\end{align*}
together with (\ref{ass.S.other.fields});
these equations are the results of the second and third
calculations in the proof of Proposition \ref{prop.ass.C.VAoid},
but applied to our current setting where $\lambda^*=\lambda_0$ 
and $\lambda_\rho=2\lambda_0$ (see \S\ref{sec.ass.S}).
\end{proof}

The following construction is manufactured using
precisely the information about $\ass(\pi,S)$ we have 
gathered so far:
the subspace $S(M)$,
the fields in \S\ref{sec.ass.S.fields} and
their relations in Propositions \ref{prop.S.fields.ground}
and \ref{prop.S.fields.relations}.

\begin{subsec} \label{sec.ass.S.free}
{\bf Comparing with a generators-and-relations construction.}
Let $\mc{U}$ be a unital $\bb{Z}/2\bb{Z}$-graded associative
algebra with generators
\begin{align*}
\t{even}:\quad
f_n,\;\,
\ell X_n,\;\,
\alpha_n,\;\,
c\ms{Y}_n;\qquad\quad
\t{odd}:\quad
cX_n
\end{align*}
for $f\in C^\infty(M)$, $X\in\T(M)$,
$\alpha\in\Omega^1(M)$, $\ms{Y}\in\T_v(P)^{\mf{so}_{2d'}}$
and $n\in\bb{Z}$, such that
(i) $1_n=\delta_{n,0}$,
(ii) $f\mapsto f_n$, $X\mapsto\ell X_n$, $\cdots$ are linear and
(iii) they satisfy the supercommutation relations in Proposition
\ref{prop.S.fields.relations}.
The subalgebra $\mc{U}_+\subset\mc{U}$ generated by
$\{f_n,\,\ell X_n,\,cX_n\}_{n\geq 0}$ has an action on $S(M)$ as
described in Proposition \ref{prop.S.fields.ground}.
Let $\wt W^f=\mc{U}\otimes_{\mc{U}_+}S(M)$ and
$W^f=\wt W^f/\sim$ be the quotient obtained by imposing the
normal-ordered expansions in (\ref{ass.S.1formfield}),
Lemma \ref{lemma.verfield} and Proposition
\ref{prop.S.fields.relations}.
Define an operator $L^f_0$ on $W^f$ by
\begin{align} \label{freemod.wt}
L^f_0|_{S(M)}=\frac{d'}{8},\qquad
[L^f_0,f_n]=-nf_n,\qquad
[L^f_0,\ell X_n]=-n\,\ell X_n,\qquad
[L^f_0,cX_n]=-n\,cX_n
\end{align}
which are consistent with all the above relations;
its eigenvalues are called weights.

By construction, there is a unique linear map
\begin{align*}
\Psi:W^f\rightarrow\ass(\pi,S)
\end{align*}
that restricts to the identity on $S(M)$
\footnote{
Since the composition
$\wt W^f\twoheadrightarrow W^f\rightarrow\ass(\pi,S)$ restricts
to the identity on $1\otimes S(M)\subset\wt W^f$, the quotient
map must be injective there.
Hence we may indeed identify $S(M)$ as a subspace of $W^f$.
}
and respects all the above operators.
In fact, we will see that $\Psi$ is an isomorphism.
\end{subsec}

\begin{subsec} \label{sec.ass.S.Virasoro}
{\bf The Virasoro action.}
According to \S\ref{sec.ass.S}, the operators on $\ass(\pi,S)$ 
induced by (\ref{ass.S.Vir}) define a Virasoro action of
central charge $5d'$;
let us denote them by $L_n^M$ for $n\in\bb{Z}$.
By Lemma \ref{lemma.G.S}, each $L_n^M$ can also be represented on
$\CDO_{\Theta,H}(P)\otimes S$ by
\begin{align} \label{ass.S.Virasoro}
\textstyle
(\tau_{i,-1}\tau^i)_n\otimes 1
\,+\,1\otimes L^{C\ell}_n
\,-\,\Theta^a_{n-r}\otimes t^{C\ell}_{a,r}
\,-\,\frac{1}{2}\lambda_0(\Theta_{-1}\Theta)_n\otimes 1.
\end{align}
The key to understanding the entire structure of $\ass(\pi,S)$ is
the observation that $L_0^M$ is generated by the fields described
in \S\ref{sec.ass.S.fields} (in a certain way).
\end{subsec}

\begin{prop} \label{prop.ass.S.wt}
Let $U\subset M$ be an open subset and
$\sigma:U\rightarrow\pi^{-1}(U)\subset P$ a smooth
section of $\pi$.
Both of the weight operators $L^f_0$ on $W^f$ (see
\S\ref{sec.ass.S.free}) and $L^M_0$ on $\ass(\pi,S)$
have the local expression:
\begin{align}  
&\sum_{r\geq 0}\sigma^i_{-r}(\ell\sigma_i)_r
+\sum_{r<0}(\ell\sigma_i)_r\sigma^i_{-r}
+\frac{3}{4}\,\Tr(\Gamma^\sigma_{-r}\Gamma^\sigma_r)
+\sigma^i([\sigma_j,\sigma_k])_{-r}\,\sigma^k_{-s}
  (\Gamma^\sigma_{ji})_{r+s} \nonumber \\
&\qquad\quad
-\frac{1}{2}\sum_{r>0}r(c\sigma_i)_{-r}(c\sigma_i)_r
+\frac{1}{4}(c\sigma_i)_{-r}(c\sigma_j)_{-s}
  (\Gamma^\sigma_{ij})_{r+s}
+\frac{d'}{8}
\label{ass.S.wt.fields}
\end{align}
\footnote{
Recall from \S\ref{sec.review} that, like many other
constructions in this paper, both $W^f$ and $\ass(\pi,S)$
are the spaces of global sections of some underlying sheaves.
}
where $(\sigma_1,\ldots,\sigma_{2d'})$ is the
$C^\infty(U)$-basis of $\T(U)$ induced by $\sigma$;
$(\sigma^1,\ldots,\sigma^{2d'})$ the dual basis of
$\Omega^1(U)$;
and $\Gamma^\sigma=\sigma^*\Theta$.
\end{prop}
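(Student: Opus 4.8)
The plan is to follow the proof of Proposition~\ref{prop.ass.C.conformal} as closely as possible. By \S\ref{sec.ass.S.Virasoro}, the operator $L^M_0$ on $\ass(\pi,S)$ is induced by the $n=0$ instance of~(\ref{ass.S.Virasoro}) on $\CDO_{\Theta,H}(P)\otimes S$, which was obtained from $L^{\Pi\mf{so}}_0+L^P_0+L^{C\ell}_0$ by subtracting the $Q_0$-exact operator $[Q_0,\gamma_0\otimes 1]$ of Lemma~\ref{lemma.G.S}. I would regard this representative as the sum of a ``CDO sector'' $(\tau_{i,-1}\tau^i)_0-\tfrac12\lambda_0(\Theta_{-1}\Theta)_0$ and a ``Clifford sector'' $L^{C\ell}_0-\Theta^a_{-r}\otimes t^{C\ell}_{a,r}$, and rewrite each over an open set $U$ in terms of a section $\sigma:U\to\pi^{-1}(U)$ and its transition function $g$, using the relations~(\ref{frame.lift}),~(\ref{dg}) and~(\ref{DX.lift}).

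For the CDO sector I would invoke the mode-$0$ form of the local identity~(\ref{ass.C.conformal.local}) established inside the proof of Proposition~\ref{prop.ass.C.conformal}, which expresses $\tau_{i,-1}\tau^i-\tfrac12\lambda_\rho(\Theta_{-1}\Theta)$ in terms of $(\ell\sigma_i)_{-1}\sigma^i$, $\tfrac12\Tr(\Gamma^\sigma_{-1}\Gamma^\sigma)$ and $\sigma^i([\sigma_j,\sigma_k])\sigma^k_{-1}\Gamma^\sigma_{ji}$. Here the trace form on $\mf{so}_{2d'}$ is $\lambda_\rho=2\lambda_0$, so subtracting only $\tfrac12\lambda_0(\Theta_{-1}\Theta)$ leaves a residual $\tfrac12\lambda_0(\Theta_{-1}\Theta)_0$; by the same $g$-conjugation bookkeeping as in Proposition~\ref{prop.ass.C.conformal} (the cross terms telescoping or being traces of commutators) this residual equals $\tfrac14\Tr(\Gamma^\sigma_{-r}\Gamma^\sigma_r)$, so the CDO sector gives precisely the first line of~(\ref{ass.S.wt.fields}), with coefficient $\tfrac34$. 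For the Clifford sector I would first note that $L^{C\ell}_0$ from~(\ref{Cl.Virasoro}) equals $-\tfrac12\sum_{r>0}r\,e_{i,-r}e_{i,r}+\tfrac{d}{16}$, then pass the fermions $e_{i,\cdot}$ to the section frame: the Riemannian frame $\sigma$ being orthonormal, $g$ is orthogonal, $\sum_i\wt\sigma_i^{\,j}\wt\sigma_i^{\,k}=\delta_{jk}$, and $c\sigma_i$ is induced by $\wt\sigma_i^{\,j}\otimes e_{j,\cdot}$; the frame derivatives $d\wt\sigma_i^{\,j}$ split into a $\Gamma^\sigma$-part and a $\Theta$-part through~(\ref{DX.lift}), the $\Theta$-part cancelling against the cross term $-\Theta^a_{-r}\otimes t^{C\ell}_{a,r}$ (with $t^{C\ell}_{a,r}$ as in~(\ref{Cl.so}), using~(\ref{Cl})), and what remains is $-\tfrac12\sum_{r>0}r(c\sigma_i)_{-r}(c\sigma_i)_r+\tfrac14(c\sigma_i)_{-r}(c\sigma_j)_{-s}(\Gamma^\sigma_{ij})_{r+s}$ together with the constant $\tfrac{d}{16}=\tfrac{d'}{8}$. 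Assembling the two sectors yields~(\ref{ass.S.wt.fields}) as a representative of $L^M_0$, the local formulas gluing to a global operator because, as throughout the paper, everything is the space of global sections of an underlying sheaf.

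It remains to identify the same formula with $L^f_0$ on $W^f$. Each factor in~(\ref{ass.S.wt.fields}) is a Fourier mode of one of the generators of $W^f$ (the $1$-forms $\sigma^i$ and $\Gamma^\sigma_{ij}$ on $U$ being handled via the $1$-form fields and~(\ref{ass.S.1formfield})), so the formula defines a global operator on $W^f$. I would then check directly, using Propositions~\ref{prop.S.fields.ground} and~\ref{prop.S.fields.relations}, that this operator acts as $\tfrac{d'}{8}$ on $S(M)$ — every non-constant term annihilates $S(M)$, since positive modes kill $S(M)$ while $1$-form modes annihilate it in non-negative degree and commute both among themselves and with function modes — and that its supercommutator with $f_n$, $\ell X_n$, $cX_n$ is $-n$ times the corresponding field. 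Since $W^f$ is spanned by products of negative modes of the generators applied to $S(M)$, these properties characterise the weight operator uniquely, so the operator defined by~(\ref{ass.S.wt.fields}) equals $L^f_0$; the same formula therefore computes both $L^f_0$ and $L^M_0$, which is the assertion of the proposition.

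I expect the Clifford-sector computation to be the main obstacle: unlike in Proposition~\ref{prop.ass.C.conformal}, the frame change by $g$ must now be carried out inside the spinor tensor factor and simultaneously against the mixed term $-\Theta^a_{-r}\otimes t^{C\ell}_{a,r}$, and one has to keep careful account of the normal-ordering constant $\tfrac{d}{16}$ as well as of the several independent sources of the $\Tr(\Gamma^\sigma_{-r}\Gamma^\sigma_r)$ contribution so that its coefficient sums to $\tfrac34$. This is mechanical but lengthy, and — as with the parallel step in \S\ref{CDOP.CDOM} — I would present it in abbreviated form.
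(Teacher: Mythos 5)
Your strategy coincides with the paper's: split the representative of $L^M_0$ into the part $\tau_{i,-1}\tau^i-\frac{1}{2}\lambda_\rho(\Theta_{-1}\Theta)$ already handled by the local identity (\ref{ass.C.conformal.local}) from the proof of Proposition \ref{prop.ass.C.conformal}, carry out the analogous frame change for the Clifford/mixed part, and then identify the resulting expression with $L^f_0$ by checking that it satisfies (\ref{freemod.wt}) using only the relations of $W^f$. One intermediate claim in your sketch is false as stated, though the final formula is unaffected: the residual $\frac{1}{4}\Tr(\Theta_{-1}\Theta)_0$ is \emph{not} equal to $\frac{1}{4}\Tr(\Gamma^\sigma_{-r}\Gamma^\sigma_r)$. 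Since $\Gamma^\sigma=g^{-1}dg+g^{-1}\rho(\Theta)g$ by (\ref{dg}), the two differ by the cross terms $\frac{1}{4}\Tr(\theta_{-1}\theta)_0$ and $\frac{1}{2}\Tr\big((g^{-1}\rho(\Theta)g)_{-1}\theta\big)_0$ with $\theta=g^{-1}dg$, and these do not telescope or vanish as traces of commutators; they are precisely the scalar left-overs produced by normal-ordering $1\otimes L^{C\ell}_0$ in the $\sigma$-frame and by rewriting $-\Theta^a_{-r}\otimes t^{C\ell}_{a,r}$ --- i.e.\ the terms your Clifford-sector paragraph asserts ``cancel.'' The paper therefore groups the residual with the Clifford sector (the operator $\ell''$ there), and the three scalar pieces recombine into $\frac{1}{4}\Tr(\Gamma^\sigma_{-1}\Gamma^\sigma)$; your two misstatements are equal and opposite, and once the residual is bookkept on the Clifford side the computation goes through exactly as you describe.
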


\begin{proof}
By assumption, $\pi:P\rightarrow M$ is a lifting of the usual
frame bundle of $TM$ (see \S\ref{sec.ass.S}), so that any local
section of $\pi$ indeed induces a local framing of $TM$.
Let us first study $L^M_0$.
According to \S\ref{sec.ass.S.Virasoro}, $L_0^M$ is represented
by the operator $\ell'+\ell''$ on $\CDO_{\Theta,H}(P)\otimes S$,
where
\begin{align*}
\textstyle
\ell'
=\big(\tau_{i,-1}\tau^i
  -\frac{1}{2}\,\Tr(\Theta_{-1}\Theta)\big)_0\otimes 1,
\qquad
\ell''
=1\otimes L^{C\ell}_0
-\Theta^a_{-r}\otimes t^{C\ell}_{a,r}
+\frac{1}{4}\,\Tr(\Theta_{-1}\Theta)_0\otimes 1.
\end{align*}
Since the calculation in the proof of Proposition
\ref{prop.ass.C.conformal} is valid for any principal frame
algebra, $\ell'$ can be expressed as the zeroth mode of
(\ref{ass.C.conformal.local}).
Our main task is to express $\ell''$ entirely in terms of
$\sigma$ as well.

For the calculations below, adopt again the notations in the
proof of Proposition \ref{prop.ass.C.conformal}.
Let $\theta=g^{-1}dg$.
Also let us introduce a (slight abuse of) notation:
\begin{align} \label{frame.Cl}
(c\sigma_i)_r
=\wt\sigma_{i,r-s}^j\otimes e_{j,s}
=g_{ji,r-s}\otimes e_{j,s},\qquad
i=1,\ldots,2d',\;\;r\in\bb{Z}.
\end{align}
By (\ref{Cl}) these operators satisfy the anticommutation
relations
\begin{align} \label{frame.Cl.comm}
[(c\sigma_i)_r,(c\sigma_j)_s]
=-2\delta_{ij}\delta_{r+s,0}.
\end{align}
Now the first term of $\ell''$ can be written as follows
\begin{align*}
1\otimes L^{C\ell}_0
&=\frac{1}{4}\bigg(
  -\sum_{r\geq 0}r\,(c\sigma_j)_s(c\sigma_k)_t
  +\sum_{r<0}r\,(c\sigma_k)_t(c\sigma_j)_s\bigg)
  \big(g_{ij,-r-s}g_{ik,r-t}\otimes 1\big)
+\frac{d'}{8} \\
&=\frac{1}{4}\bigg(
  -\sum_{t\geq 0}r\,(c\sigma_j)_s(c\sigma_k)_t
  +\sum_{t<0}r\,(c\sigma_k)_t(c\sigma_j)_s
  -\sum_{r\geq 0>t}
    r\,[(c\sigma_j)_s,(c\sigma_k)_t]  \\
&\qquad\qquad\qquad
+\sum_{r<0\leq t}
  r\,[(c\sigma_j)_s,(c\sigma_k)_t]\bigg)
  \big(g_{ij,-r-s}g_{ik,r-t}\otimes 1\big)
+\frac{d'}{8}  \\
&=-\frac{1}{2}\sum_{t>0}t(c\sigma_j)_{-t}(c\sigma_j)_t
+\frac{1}{4}(c\sigma_j)_s(c\sigma_k)_t
  (\theta_{jk,-s-t}\otimes 1)
+\frac{1}{4}\Tr(\theta_{-1}\theta)_0\otimes 1
+\frac{d'}{8}
\end{align*}
The first step follows from (\ref{Cl.Virasoro}) and
(\ref{frame.Cl});
the second step is a rearrangement that is valid because every
sum in sight is finite when applied to arbitrary elements;
and the last step is the result of some algebra and
an application of (\ref{frame.Cl.comm}).
The second term of $\ell''$ can be written as follows
\begin{align*}
-\Theta^a_{-r}\otimes t^{C\ell}_{a,r}
&=\frac{1}{4}\Theta_{ij,-r}\otimes e_{i,r-s}e_{j,s}
=\frac{1}{4}\sum_{s\geq 0}
  \Theta_{ij,-r}\otimes e_{i,r-s}e_{j,s}
-\frac{1}{4}\sum_{s<0}
  \Theta_{ij,-r}\otimes e_{j,s}e_{i,r-s}  \\
&=\frac{1}{4}\bigg(
  \sum_{s\geq 0}(c\sigma_k)_t(c\sigma_\ell)_u
  -\sum_{s<0}(c\sigma_\ell)_u(c\sigma_k)_t\bigg)
  \big((g^{-1}\Theta)_{kj,-s-t}\,g_{j\ell,s-u}
    \otimes 1\big)  \\
&=\frac{1}{4}\,(c\sigma_k)_t(c\sigma_\ell)_u
  \big((g^{-1}\Theta\,g)_{k\ell,-t-u}\otimes 1\big)
+\frac{1}{2}\,\Tr\big((g^{-1}\Theta\,g)_{-1}\theta\big)_0
  \otimes 1
\end{align*}
The first step uses (\ref{Cl.so});
the third step follows from (\ref{frame.Cl}) and is valid thanks
to the normal ordering of $e_{i,r-s}e_{j,s}$;
and the last step is largely similar to the above treatment of
$1\otimes L^{C\ell}_0$.
These calculations, together with (\ref{dg}), yield the following
expression:
\begin{align*}
\ell''
=-\frac{1}{2}\sum_{t>0}
  t(c\sigma_j)_{-t}(c\sigma_j)_t
+\frac{1}{4}(c\sigma_j)_s(c\sigma_k)_t
  (\Gamma^\sigma_{jk,-s-t}\otimes 1)
+\frac{1}{4}\Tr(\Gamma^\sigma_{-1}\Gamma^\sigma)_0
  \otimes 1
+\frac{d'}{8}\,.
\end{align*}
If we combine the earlier comment on $\ell'$ with this
expression of $\ell''$ and recall
(\ref{ass.S.gen.fields})--(\ref{ass.S.other.fields}),
then we find that $\ell'+\ell''$ represents the operator
shown in (\ref{ass.S.wt.fields}).
This proves the claim for $L^M_0$.

Since the expression in (\ref{ass.S.wt.fields}) is generated
by the operators in
(\ref{ass.S.gen.fields})--(\ref{ass.S.other.fields}),
it determines an operator $L'_0$ on $W^f$.
To show $L'_0=L^f_0$, we need to verify that $L'_0$ satisfies
(\ref{freemod.wt}) using only the relations in
(\ref{ass.S.1formfield}), Lemma \ref{lemma.verfield}
and Propositions
\ref{prop.S.fields.ground}--\ref{prop.S.fields.relations}.
The calculations, which we omit, are straightforward.
\end{proof}

\begin{corollary} \label{cor.ass.S.free}
The linear map $\Psi:W^f\rightarrow\ass(\pi,S)$
(see \S\ref{sec.ass.S.free}) is an isomorphism.
\end{corollary}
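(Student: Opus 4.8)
The plan is to follow closely the argument used for Corollary~\ref{cor.ass.C.freeVA}. By construction $\Phi$ is weight-preserving and restricts to the identity on the lowest-weight component $\ass(\pi,S)_0=S(M)\subseteq W^f$, so it suffices to prove that $\Phi$ is surjective and that $\ker\Phi$ vanishes in every weight. The key input, playing the role that Proposition~\ref{prop.ass.C.conformal} and Lemma~\ref{lemma.conf.gen} played in the earlier section, is Proposition~\ref{prop.ass.S.wt}: the weight operators $L^f_0$ on $W^f$ and $L^M_0$ on $\ass(\pi,S)$ are both given by the normal-ordered local expression (\ref{ass.S.wt.fields}), which is a polynomial in the Fourier modes of the generating fields $f_n$, $\ell X_n$, $cX_n$ (together with the $\alpha_n$ and $c\ms{Y}_n$, which are in turn expressed through those modes via (\ref{ass.S.1formfield}) and Lemma~\ref{lemma.verfield}).

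For surjectivity: the image of $\Phi$ is a subspace of $\ass(\pi,S)$ that contains $S(M)$ and is stable under the modes $f_n,\ell X_n,cX_n$, hence also under $\alpha_n$ and $c\ms{Y}_n$; by Proposition~\ref{prop.ass.S.wt} it is therefore stable under $L^M_0$, and so under $L^M_0-d'/8$. On the weight-$(d'/8+k)$ component with $k\geq 1$, $L^M_0-d'/8$ acts as multiplication by the nonzero scalar $k$; on the other hand, reading off (\ref{ass.S.wt.fields}), every summand of $L^M_0-d'/8$ has its weight-lowering modes to the right, so it carries that component into the span of generating-field modes applied to components of strictly smaller weight. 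Inducting on $k$, starting from $S(M)\subseteq\operatorname{im}\Phi$, we conclude $\operatorname{im}\Phi=\ass(\pi,S)$. This is the module counterpart of the appeal to Lemma~\ref{lemma.conf.gen} in Corollary~\ref{cor.ass.C.freeVA}.

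For injectivity: $\ker\Phi\subseteq W^f$ is a weight-graded submodule over $\CDO_{\Theta,H}(P)^{\widehat{\mf{so}}_{2d'}}$ (equivalently, it is stable under all the generating modes), and it vanishes in the lowest weight $d'/8$ since $\Phi|_{S(M)}=\mathrm{id}$. If $0\neq w\in\ker\Phi$ had minimal weight $(d'/8)+k$ then $k\geq 1$, and applying to $w$ the identity $L^f_0-d'/8=k\cdot\mathrm{id}$ together with the expression (\ref{ass.S.wt.fields}) for $L^f_0$ would write $w$ as a sum of generating-field modes applied to elements of $\ker\Phi$ of weight $<(d'/8)+k$, all of which vanish by minimality — a contradiction. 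This is the module analogue of the argument via Lemma~\ref{lemma.conf.ideal}. I expect the one genuine difficulty to lie here: one must verify that in the normal-ordered expression (\ref{ass.S.wt.fields}) every weight-preserving contribution (the $0$-mode terms and the $\Gamma^\sigma$-/curvature terms) either factors through a strictly lower weight or combines precisely into the scalar $d'/8$, so that $L^f_0-d'/8$ really does factor through lower weights — exactly the statement one would isolate as a module version of Lemma~\ref{lemma.conf.ideal}. An alternative ending is a PBW-type count: the normal-ordering relations imposed in \S\ref{sec.ass.S.free} rewrite every element of $W^f$ as a $C^\infty(M)$-combination of ordered monomials in the negative modes of the $\ell\sigma_i$, $c\sigma_i$ and $f$ applied to $S(M)$, and a weight-by-weight comparison of graded dimensions with $\ass(\pi,S)$ forces the surjection $\Phi$ to be bijective; there the obstacle is instead pinning down the graded dimension of the semi-infinite cohomology $\ass(\pi,S)$.
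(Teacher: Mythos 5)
Your proposal is correct and follows essentially the same route as the paper: an induction on weight in which Proposition \ref{prop.ass.S.wt} expresses $L^M_0-d'/8$ (resp.\ $L^f_0-d'/8$) through the generating-field modes, which on the weight-$\big(\tfrac{d'}{8}+k\big)$ component acts as the invertible scalar $k$ while factoring through strictly lower weights, the zero-mode terms being handled exactly as you anticipate via (\ref{ass.S.1formfield}). The paper's proof is precisely your first argument (surjectivity and injectivity of $\Phi$ weight by weight), not the alternative PBW count you sketch at the end.
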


\begin{proof}
Let $f\in C^\infty(M)$, $\alpha\in\Omega^1(M)$
and $X\in\T(M)$.
By definition, $\Psi$ is an isomorphism on the lowest weight
$d'/8$.
Assume that $\Psi$ is an isomorphism on all weights up to
$(d'/8)+k-1$ for some $k>0$.
Let $u\in\ass(\pi,S)_k$.
For reason of weight as well as (\ref{ass.S.1formfield}),
elements of the form
\begin{align} \label{lowered}
f_n u,\;\ell X_n u,\;cX_n u\;\t{ for }n>0,\qquad
\alpha_n u\;\t{ for }n\geq 0
\end{align}
are in the image of $\Psi$;
then by Proposition \ref{prop.ass.S.wt}, so is
$ku=(L^M_0-d'/8)u$.
This proves the surjectivity of $\Psi$ on weight $(d'/8)+k$.
On the other hand, let $u\in W^f_k\cap\ker\Psi$.
Since the elements (\ref{lowered}) are also in the kernel of $\Psi$,
for reason of weight and (\ref{ass.S.1formfield}) again they must
be trivial;
then by Proposition \ref{prop.ass.S.wt}, so is $ku=(L^f_0-d'/8)u$.
This proves the injectivity of $\Psi$ on weight $(d'/8)+k$.
By induction, $\Psi$ is an isomorphism on all weights.
\end{proof}

The following summarizes our analysis of $\ass(\pi,S)$.

\begin{theorem} \label{thm.ass.S}
Suppose $M^{2d'}$ is a Riemannian manifold with
a spin structure $\pi:P\rightarrow M$ and
a $3$-form $H$ satisfying $dH=\frac{1}{2}\Tr(R\wedge R)$,
where $R$ is the Riemannian curvature.
Let $\CDO_{\Theta,H}(P)$ be the associated principal
frame $(\widehat{\mf{so}}_{2d'},\mathrm{Spin}_{2d'})$-algebra
(see Theorem \ref{thm.CDOP}), $S$ the spinor representation
of $\widehat{\mf{so}}_{2d'}$, and
\begin{align*}
\ass(\pi,S)
=H^{\frac{\infty}{2}+0}\big(
  \widehat{\mf{so}}_{2d'}\,,
  \,\CDO_{\Theta,H}(P)\otimes S\big).
\end{align*}
The lowest-weight component of $\ass(\pi,S)$ is $S(M)$,
the space of sections of the spinor bundle on $M$.
For $f\in C^\infty(M)$ and $X\in\T(M)$, there are associated
fields $\Phi_f(z)$, $\Phi_{\ell X}(z)$ and $\Phi_{cX}(z)$
on $\ass(\pi,S)$ of weights $0$, $1$ and $\frac{1}{2}$
respectively;
their Fourier modes are represented on
$\CDO_{\Theta,H}(P)\otimes S$ by
\begin{align*}
\qquad\qquad
f_n\otimes 1,\qquad
(\wt X-\tau_i\wt X^j\Theta_{ij})_n\otimes 1,\qquad
\wt X^i_{n-r}\otimes e_{i,r},\qquad
\t{for }n\in\bb{Z}.
\end{align*}
The actions of these fields on $S(M)$ generate the entire
space $\ass(\pi,S)$, subject only to the relations stated
in Propositions \ref{prop.S.fields.ground} and 
\ref{prop.S.fields.relations}.
Moreover, there is a Virasoro field on $\ass(\pi,S)$ of central
charge $5d'$, whose Fourier modes are represented on
$\CDO_{\Theta,H}(P)\otimes S$ by
\begin{align*}
\textstyle
\qquad
\qquad
(\tau_{i,-1}\tau^i)_n\otimes 1
+1\otimes L^{C\ell}_n
-\Theta^a_{n-r}\otimes t^{C\ell}_{a,r}
-\frac{1}{4}\Tr(\Theta_{-1}\Theta)_n\otimes 1,\qquad
\t{for }n\in\bb{Z}.
\qquad
\qedsymbol
\end{align*}
\end{theorem}

This description of $\ass(\pi,S)$ allows us to define
a natural filtration as follows.

\begin{subsec} \label{ass.S.PBW}
{\bf PBW filtration.}
Given an increasing sequence of negative integers 
$\n=\{n_1\leq\cdots\leq n_s<0\}$, possibly empty,
let us write
\begin{align*}
|\n|=|n_1|+\cdots+|n_s|\quad (0\t{ if }\n=\{\}),\qquad
\n(i)=\t{number of times }i\t{ appears in }\n
\end{align*}
and regard $\n$ as a partition of $-|\n|$.
For any nonnegative integer $w$, let $\ms{I}_w$ be the set
of triples $(\n;\m;\p)$ of such sequences with
$|\n|+|\m|+|\p|=w$ and $\m(i)\leq 2d'$ for all $i<0$.
Define a partial ordering on $\ms{I}_w$ by declaring
that $(\n;\m\;\p)\prec(\n';\m';\p')$ if one of the
following is true:
\vspace{-0.05in}
\begin{itemize}
\item[$\cdot$]
$|\n|<|\n'|$, or $|\n|=|\n'|$ and $|\m|<|\m'|$
\vspace{-0.08in}
\item[$\cdot$]
$\n'$ is a proper subpartition of $\n$, $\m=\m'$
and $\p=\p'$
\vspace{-0.08in}
\item[$\cdot$]
$\n=\n'$, $\m=\m'$ and $\p$ is a proper subpartition
of $\p'$
\vspace{-0.05in}
\end{itemize}
For example in $\ms{I}_3$, a particular ascending chain is
\begin{align*}
(;;-2,-1)
\prec(;;-3)
\prec(;-1,-1;-1)
\prec(-2;-1;)
\prec(-1,-1;-1;)
\end{align*}
while $(;;-1,-1,-1)$ and $(-1,-1,-1;;)$ are
the unique minimal and maximal elements.

Given a sequence $\n=\{n_1\leq\cdots\leq n_s<0\}$
as above and an $s$-tuple
$\bs{\alpha}=(\alpha_1,\ldots,\alpha_s)$ in
$\Omega^1(M)$, or an $s$-tuple
$\bs{X}=(X_1,\ldots,X_s)$ in $\T(M)$, let us introduce
the notations
\begin{align*}
\bs{\alpha}_\n=\alpha_{1,n_1}\cdots\alpha_{s,n_s},\quad
\bs{cX}_\n=(cX_1)_{n_1}\cdots(cX_s)_{n_s},\quad
\bs{\ell X}_\n=(\ell X_1)_{n_1}\cdots(\ell X_s)_{n_s}\quad
(1\t{ if }\n=\{\})
\end{align*}
which are operators on $\ass(\pi,S)$ (see
\S\ref{sec.ass.S.fields}).
It follows from Theorem \ref{thm.ass.S} that for $w\geq 0$
we have
\begin{align*}
\ass(\pi,S)_w
=\t{span}\Big\{
  \bs{\ell X}_\n\bs{cY}_\m\bs{\alpha}_\p\, s:
  (\n;\m\;\p)\in\ms{I}_w;\,
  \t{all suitable }\bs{\alpha},\bs{X},\bs{Y};\,
  s\in S(M)
\Big\}.
\end{align*}
Indeed, as $f_n=-\frac{1}{n}(df)_n$ for
$f\in C^\infty(M)$ and $n\neq 0$, Propositions
\ref{prop.S.fields.ground} and \ref{prop.S.fields.relations}
allow us to express every element of $\ass(\pi,S)$ in the
indicated form.
For $(\n;\m;\p)\in\ms{I}_w$ consider the subspaces
\begin{align*}
\mc{F}_{\preceq(\n;\m;\p)}
&=\t{span}\big\{
  \bs{\ell X}_{\n'}\bs{cY}_{\m'}\bs{\alpha}_{\p'} s:
  (\n';\m';\p')\preceq(\n;\m;\p)
\big\}\subset\ass(\pi,S)_w \\
\mc{F}_{\prec(\n;\m;\p)}
&=\t{span}\big\{
  \bs{\ell X}_{\n'}\bs{cY}_{\m'}\bs{\alpha}_{\p'} s:
  (\n';\m';\p')\prec(\n;\m;\p)
\big\}\subset\ass(\pi,S)_w
\end{align*}
For the next statement, let $O$ (and $O'$)
stand for one of the operators of the form
$\alpha_n$, $cX_n$ or $\ell X_n$ with $n<0$, and
$fO$ the corresponding operator $(f\alpha)_n$,
$c(fX)_n$ or $\ell(fX)_n$, where
$f\in C^\infty(M)$.
The subspaces $\mc{F}_{\preceq(\n;\m;\p)}$ and
$\mc{F}_{\prec(\n;\m;\p)}$ have the following
properties:
\begin{align*}
\begin{array}{crcrl}
\t{(i)}&\;\;
\cdots OO'\cdots\in\mc{F}_{\preceq(\n;\m;\p)}&
\;\;\Rightarrow&
\cdots[O,O']\cdots\in\mc{F}_{\prec(\n;\m;\p)}\ph{,}& \\
\t{(ii)}&
\cdots Os\in\mc{F}_{\preceq(\n;\m;\p)}&
\;\;\Rightarrow& \ph{\Big(}
\cdots\big((fO)s-O(fs)\big)\in\mc{F}_{\prec(\n;\m;\p)},&
\quad s\in S(M)
\end{array}
\end{align*}
Indeed (i) follows from the supercommutation
relations in Proposition \ref{prop.S.fields.relations}
and (ii) from the normal-ordered expansions there as
well as Proposition \ref{prop.S.fields.ground}.
Consequently, there is a natural isomorphism
\begin{align*}
&\mc{F}_{\preceq(\n;\m;\p)}/\mc{F}_{\prec(\n;\m;\p)} \\
&\qquad\qquad
\cong
\bigg(\bigotimes_{i<0}\t{Sym}^{\p(i)}\Omega^1(M)\bigg)\otimes
\bigg(\bigotimes_{i<0}\t{Sym}^{\n(i)}\T(M)\bigg)\otimes
\bigg(\bigotimes_{i<0}\wedge^{\m(i)}\T(M)\bigg)\otimes S(M)
\end{align*}
where all tensor, symmetric and exterior products
are over $C^\infty(M)$.
Let $q$ be a formal variable.
When all $(\n;\m\;\p)\in\ms{I}_w$ and all $w\geq 0$
are considered, we obtain
\begin{align*}
&\bigoplus_{w\geq 0}\bigg(
  q^w\bigoplus_{(\n;\m;\p)\in\ms{I}_w}
  \mc{F}_{\preceq(\n;\m;\p)}/\mc{F}_{\prec(\n;\m;\p)}
\bigg) \\
&\qquad\qquad\quad\cong
\bigg(\bigotimes_{k\geq 1}\t{Sym}_{q^k}\Omega^1(M)\bigg)
\otimes\bigg(\bigotimes_{k\geq 1}\t{Sym}_{q^k}\T(M)\bigg)
\otimes\bigg(\bigotimes_{k\geq 1}\wedge_{q^k}\T(M)\bigg)
\otimes S(M)
\end{align*}
where $\t{Sym}_t=\sum_{n=0}^\infty t^n\t{Sym}^n$
and $\wedge_t=\sum_{n=0}^{2d'}t^n\wedge^n$ as usual.
\end{subsec}

\newpage

\titleformat{\section}[block]
  {\sc\large\filcenter}
  {Appendix \S\thesection.}{.5em}{}

\appendix

\setcounter{equation}{0}
\section{Vertex Algebroids} 
\label{app.VAoid}

The notion of a vertex algebroid captures the part of
structure of a vertex algebra that involves only its two
lowest weights (see \S\ref{conventions}).
Even though its definition is rather complicated, it serves as
a convenient tool for handling the vertex algebras in this paper.
This appendix reviews, mostly without proof, the category
of vertex algebroids and an adjoint pair of functors between
vertex algebras and vertex algebroids.
For more details, the reader is referred to the original work
\cite{GMS2}.

\begin{defn} \label{ELA}
An {\bf extended Lie algebroid} $(A,\Omega,\T)$ consists of:
\vspace{-0.08in}
\begin{itemize}
\item[$\cdot$]
a commutative, associative $\bb{C}$-algebra with unit $(A,\mb{1})$
\vspace{-0.08in}
\item[$\cdot$]
an $A$-module $\Omega$, together with an $A$-derivation
$d:A\rightarrow\Omega$ such that $\Omega=A\cdot dA$
\vspace{-0.08in}
\item[$\cdot$]
another $A$-module $\T$, equipped with a Lie bracket $[\;\;]$
\vspace{-0.08in}
\item[$\cdot$]
an $A$-linear map of Lie algebras $\T\rightarrow\t{End}\,A$,
denoted by $X\mapsto X$
\vspace{-0.08in}
\item[$\cdot$]
a $\bb{C}$-linear map of Lie algebras 
$\T\rightarrow\t{End}\,\Omega$, denoted by $X\mapsto L_X$
\vspace{-0.08in}
\item[$\cdot$]
an $A$-bilinear pairing $\Omega\times\T\rightarrow A$, denoted 
by $(\alpha,X)\mapsto\alpha(X)$
\vspace{-0.08in}
\end{itemize}
Furthermore, it is required that:
\vspace{-0.08in}
\begin{itemize}
\item[$\cdot$]
the $\T$-actions on $A$ and $\Omega$ commute with $d$
\vspace{-0.08in}
\item[$\cdot$]
the $\T$-actions on $A,\Omega$ and $\T$ (via $[\;\;]$) satisfy
the Leibniz rule with respect to $A$-multiplication
\vspace{-0.08in}
\item[$\cdot$]
$df(X)=Xf$ for $f\in A$ and $X\in\T$
\end{itemize}
\end{defn}

\begin{defn} \label{ELA.mor}
A {\bf map of extended Lie algebroids} 
$\varphi:(A,\Omega,\T)\rightarrow(A',\Omega',\T')$
is simply a map of ordered triples that respects
the extended Lie algebroid structures.
Composition of maps is defined in the obvious way.
\end{defn}

\begin{defn} \label{VAoid}
A {\bf vertex algebroid}
$(A,\Omega,\T,\bullet,\{\;\},\{\;\}_\Omega)$ consists of
an extended Lie algebroid $(A,\Omega,\T)$ together with three
$\bb{C}$-bilinear maps
\begin{align*}
\bullet:\T\times A\rightarrow\Omega,\qquad
\{\;\}:\T\times\T\rightarrow A,\qquad
\{\;\}_\Omega:\T\times\T\rightarrow\Omega
\end{align*}
that satisfy the following identities:
\vspace{-0.08in}
\begin{itemize}
\item[$\cdot$]
$\{X,Y\}=\{Y,X\}$
\vspace{-0.08in}
\item[$\cdot$]
$d\{X,Y\}=\{X,Y\}_\Omega+\{Y,X\}_\Omega$
\vspace{-0.08in}
\item[$\cdot$]
$X\bullet(fg)-(gX)\bullet f-f(X\bullet g)=-(Xf)dg$
\vspace{-0.08in}
\item[$\cdot$]
$\{X,fY\}-f\{X,Y\}=-(Y\bullet f)(X)+[X,Y]f$
\vspace{-0.08in}
\item[$\cdot$]
$\{X,fY\}_\Omega-f\{X,Y\}_\Omega=
-L_X(Y\bullet f)+[X,Y]\bullet f+Y\bullet(Xf)$
\vspace{-0.08in}
\item[$\cdot$]
$X\{Y,Z\}-\{[X,Y],Z\}-\{Y,[X,Z]\}=
\{X,Y\}_\Omega(Z)+\{X,Z\}_\Omega(Y)$
\vspace{-0.08in}
\item[$\cdot$]
$L_X\{Y,Z\}_\Omega-L_Y\{X,Z\}_\Omega+L_Z\{X,Y\}_\Omega
+\{X,[Y,Z]\}_\Omega-\{Y,[X,Z]\}_\Omega-\{[X,Y],Z\}_\Omega$ \\
\ph{WW}$=d\,\Big(\{X,Y\}_\Omega(Z)\Big)$
\vspace{-0.08in}
\end{itemize}
for $f,g\in A$ and $X,Y,Z\in\T$.
\end{defn}

{\it Remark.}
This definition is equivalent to, but slightly different
from both the original one in \cite{GMS2} with the
notations $(\gamma,\langle\;\rangle,c)$, and the one in
\cite{myCDO} with the notations
$(\ast,\{\;\},\{\;\}_\Omega)$.
The various notations are related as follows:
\begin{align*}
&X\bullet f=-\gamma(f,X)+dXf=f\ast X+dXf& \\
&\ds\{X,Y\}=\langle X,Y\rangle,\qquad
\{X,Y\}_\Omega=-c(X,Y)+\frac{1}{2}\langle X,Y\rangle.&
\end{align*}
In this paper we adopt the above definition to simplify
the description of certain vertex algebras.

\begin{defn} \label{VAoid.mor} 
A {\bf map of vertex algebroids} 
\begin{align*}
(\varphi,\Delta):(A,\Omega,\T,\bullet,\{\;\},\{\;\}_\Omega)\rightarrow
(A',\Omega',\T',\bullet',\{\;\}',\{\;\}'_\Omega)
\end{align*}
consists of a map of extended Lie algebroids 
$\varphi:(A,\Omega,\T)\rightarrow(A',\Omega',\T')$ 
together with a $\bb{C}$-linear map 
$\Delta:\T\rightarrow\Omega'$ such that
\vspace{-0.08in}
\begin{itemize}
\item[$\cdot$]
$\varphi X\bullet'\varphi f-\varphi(X\bullet f)=
\Delta(fX)-(\varphi f)\Delta(X)$
\vspace{-0.08in}
\item[$\cdot$]
$\{\varphi X,\varphi Y\}'-\varphi\{X,Y\}=
-\Delta(X)(\varphi Y)-\Delta(Y)(\varphi X)$
\vspace{-0.08in}
\item[$\cdot$]
$\{\varphi X,\varphi Y\}'_\Omega-\varphi\{X,Y\}_\Omega=
-L_{\varphi X}\Delta(Y)+L_{\varphi Y}\Delta(X)
-d\big(\Delta(X)(\varphi Y)\big)+\Delta([X,Y])$
\vspace{-0.05in}
\end{itemize}
for $f\in A$ and $X,Y\in\T$.
Composition of maps is defined by
\begin{align*}
(\varphi',\Delta')\circ(\varphi,\Delta)=
(\varphi'\varphi,\,\varphi'\Delta+\Delta'\varphi|_\T).
\end{align*}
\end{defn}

\begin{subsec} \label{VA.VAoid}
{\bf From vertex algebras to vertex algebroids:~objects.}
Given a vertex algebra $(V,\bs{1},T,Y)$, consider the
following subquotient spaces
\begin{align*}
A:=V_0,\qquad
\Omega:=A_0(TA),\qquad
\T:=V_1/\Omega.
\end{align*}
Choose a splitting $s:\T\rightarrow V_1$ of the quotient
map to obtain an identification of vector spaces
\begin{align} \label{V1.iden}
\Omega\oplus\T\cong V_1,\qquad (\alpha,X)\mapsto\alpha+s(X).
\end{align}
The part of vertex algebra structure on $V$ involving
only the two lowest weights consists of an element
$\mathbf 1\in V_0$, a linear map $T:V_0\rightarrow V_1$,
and eight bilinear maps
\begin{align*}
V_i\times V_j\rightarrow V_k,\quad
(u,v)\mapsto u_{j-k}v,\qquad
\t{for }i,j,k=0,1
\end{align*}
satisfying a set of (Borcherds) identities.
All these data, when rephrased in terms of the
identification (\ref{V1.iden}), are equivalent to a vertex
algebroid $(A,\Omega,\T,\bullet,\{\;\},\{\;\}_\Omega)$.
The extended Lie algebroid $(A,\Omega,\T)$ consists of precisely those 
data that are independent of the choice of $s$, namely
\begin{align*}
\begin{array}{lll}
fg:=f_0 g &
f\alpha:=f_0\alpha &
fX:=f_0 s(X)\;\t{mod}\;\Omega \vss \\
Xf:=s(X)_0 f\ph{aa} &
L_X\alpha:=s(X)_0 \alpha\ph{aa} &
[X,Y]:=s(X)_0 s(Y)\;\t{mod}\;\Omega \vss \\
df:=Tf &
\alpha(X):=\alpha_1 s(X) &
\end{array}
\end{align*}
\footnote{
For example, the definition of $Xf$ is indeed independent
of $s$ because $\alpha_0 f=0$ for $f\in A$ and $\alpha\in\Omega$.
}
for $f,g\in A$, $\alpha\in\Omega$ and $X,Y\in\T$.
The rest of the vertex algebroid structure is given by
\begin{align}
X\bullet f\;\;\;&:=s(X)_{-1}f-s(fX) \nonumber \\
\{X,Y\}\;\;&:=s(X)_1 s(Y)  \label{VA.VAoid.456} \\
\{X,Y\}_\Omega&:=s(X)_0 s(Y)-s([X,Y]) \nonumber
\end{align}
for $f\in A$ and $X,Y\in\T$.
\end{subsec}

\begin{subsec}
{\bf From vertex algebras to vertex algebroids:~morphisms.}
Let $\Phi:V\rightarrow V'$ be a map of vertex algebras.
Then let $(A,\Omega,\T,\cdots)$ be the vertex algebroid
associated to $V$ and a splitting $s:\T\rightarrow V_1$;
and similarly $(A',\Omega',\T',\cdots)$ associated to $V'$
and $s':\T'\rightarrow V'_1$.
The part of data of $\Phi$ involving only the two lowest weights,
when rephrased in terms of identifications like
(\ref{V1.iden}), are equivalent to a map of vertex algebroids
$(\varphi,\Delta)$.
It consists of the obvious map of ordered triples 
$\varphi:(A,\Omega,\T)\rightarrow(A',\Omega',\T')$ induced by $\Phi$,
and a map $\Delta:\T\rightarrow\Omega'$ defined by
\begin{align*}
\Delta(X)=\Phi s(X)-s'(\varphi X).
\end{align*}
\end{subsec}

\begin{subsec} \label{VAoid.VA}
{\bf From vertex algebroids to vertex algebras:~objects.}
Let $(A,\Omega,\T,\bullet,\{\;\},\{\;\}_\Omega)$ be a vertex
algebroid.
In this discussion, $f,g$ (resp.~$\alpha,\beta$) (resp.~$X,Y$)
always denote general elements of $A$ (resp.~$\Omega$)
(resp.~$\T$).
Let $\mc{U}$ be a unital associative algebra with generators
$f_n,\,\alpha_n,\,X_n,\,n\in\bb{Z}$, and the relations
\begin{align} \label{freeVA.comm}
\begin{array}{l}
f\mapsto f_n,\;
\alpha\mapsto\alpha_n,\;
X\mapsto X_n\;\;
\t{are linear} \vs \\
\mathbf 1_n=\delta_{n,0},\qquad
(df)_n=-nf_n,\qquad
\lbrack f_n,g_m]
  =[f_n,\alpha_m]
  =[\alpha_n,\beta_m]=0 \vs \\ 
\lbrack X_n,f_m]=(Xf)_{n+m},\qquad
[X_n,\alpha_m]
  =(L_X\alpha)_{n+m}+n\alpha(X)_{n+m} \vss \\
\lbrack X_n,Y_m]
  =[X,Y]_{n+m}+(\{X,Y\}_\Omega)_{n+m}+n\{X,Y\}_{n+m}
\end{array} 
\end{align}
for $n,m\in\bb{Z}$.
The subalgebra $\mc{U}_+\subset\mc{U}$ generated by
$\{f_n\}_{n>0}$ and $\{\alpha_n,X_n\}_{n\geq 0}$
has a trivial action on $\bb{C}$.
Let $\wt{V}:=\mc{U}\otimes_{\mc{U}_+}\bb{C}$ be the
induced $\mc{U}$-module and $V:=\wt{V}/\sim$ the
quotient module obtained by imposing the following
relations for $v\in\wt{V}$:
\begin{align} \label{freeVA.NOP}
\begin{array}{lcl}
(fg)_n v & \sim & \sum_{k\in\bb{Z}} f_{n-k}g_k v \vss \\
(f\alpha)_n v & \sim & \sum_{k\in\bb{Z}} f_{n-k}\alpha_k v \vss \\
(fX)_n v & \sim & 
  \sum_{k\geq 0}f_{n-k}X_k v+\sum_{k<0}X_k f_{n-k}v-(X\bullet f)_n v 
\end{array}
\end{align}
Notice that the summations are always finite.
It follows from the axioms of a vertex algebroid that 
(\ref{freeVA.comm})--(\ref{freeVA.NOP}) are consistent.
\footnote{
For example, $[X_n,(fY)_m]$ can be evaluated
by either taking the commutator first or
expanding $(fY)_m$ first.
The resulting identity is already implied by
the vertex algebroid axioms and does not lead
to a new relation.
}
Define a vertex algebra structure on $V$ as follows.
The vacuum $\mathbf 1\in V$ is the coset of
$1\otimes 1\in\wt{V}$.
The infinitesimal translation $T$ and weight operator
$L_0$ are determined by
\begin{align*}
\begin{array}{llll}
T\mathbf 1=0,\ph{aaa} &
[T,f_n]=(1-n)f_{n-1},\ph{aa} &
[T,\alpha_n]=-n\alpha_{n-1},\ph{aa} &
[T,X_n]=-nX_{n-1} \vss \\
L_0\mathbf 1=0, &
[L_0,f_n]=-nf_n, &
[L_0,\alpha_n]=-n\alpha_n, &
[L_0,X_n]=-nX_n
\end{array}
\end{align*}
which are consistent with (\ref{freeVA.comm})--(\ref{freeVA.NOP});
notice that actions of $f_n,\alpha_n,X_n$ change weights by
$-n$.
Identify $A,\Omega,\T$ as subspaces of $V$ via
$f=f_0\mb{1}$, $\alpha=\alpha_{-1}\mb{1}$,
$X=X_{-1}\mb{1}$, and associate to them the
fields
\begin{align*}
\begin{array}{c}
\sum_n f_n z^{-n}\,,\qquad
\sum_n \alpha_n z^{-n-1}\,,\qquad
\sum_n X_n z^{-n-1}
\end{array}
\end{align*}
which are mutually local by (\ref{freeVA.comm}).
Now apply the Reconstruction Theorem \cite{FB-Z}.
\end{subsec}

{\it Remark.}
If $V'$ is another vertex algebra whose associated vertex
algebroid is $(A,\Omega,\T,\bullet,\{\;\},\{\;\}_\Omega)$,
then by construction there is a canonical map of vertex
algebras $V\rightarrow V'$.
If it is surjective (resp.~bijective), then $V'$ is
said to be generated (resp.~freely generated) by
the vertex algebroid.

\begin{subsec} \label{VAoid.VA.mor} 
{\bf From vertex algebroids to vertex algebras:~morphisms.}
A map of vertex algebroids
\begin{align*}
(\varphi,\Delta):(A,\Omega,\T,\cdots)\rightarrow
  (A',\Omega',\T',\cdots)
\end{align*}
induces a map $\Phi:V\rightarrow V'$ between
the freely generated vertex algebras by
\begin{align*}
\begin{array}{lll}
\Phi f=\varphi f, &
\Phi\alpha=\varphi\alpha, & 
\Phi X=\varphi X+\Delta(X) \vss \\
\Phi\circ f_n=(\Phi f)_n\circ\Phi,\qquad &
\Phi\circ\alpha_n=(\Phi\alpha)_n\circ\Phi,\qquad &
\Phi\circ X_n=(\Phi X)_n\circ\Phi
\end{array}
\end{align*}
for $f\in A,\,\alpha\in\Omega,\,X\in\T,n\in\bb{Z}$.
Indeed, these equations are consistent with 
(\ref{freeVA.comm})--(\ref{freeVA.NOP}).
\end{subsec}

\begin{lemma} \label{lemma.newVAoid}
Given a vertex algebroid
$(A,\Omega,\T,\bullet,\{\;\},\{\;\}_\Omega)$,
an isomorphism of extended Lie algebroids 
$\varphi:(A,\Omega,\T)\rightarrow(A',\Omega',\T')$ and
a $\bb{C}$-linear map $\Delta:\T\rightarrow\Omega'$,
if we define
\begin{align*}
\bullet':A'\times\T'\rightarrow\Omega',\qquad
\{\;\}':\T'\times\T'\rightarrow A',\qquad
\{\;\}'_\Omega:\T'\times\T'\rightarrow\Omega'
\end{align*}
by the equations in Definition \ref{VAoid.mor}, then 
$(A',\Omega',\T',\bullet',\{\;\}',\{\;\}'_\Omega)$
is a vertex algebroid and $(\varphi,\Delta)$ is
by construction an isomorphism between the two 
vertex algebroids. $\qedsymbol$
\end{lemma}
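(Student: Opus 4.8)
\textbf{Proof proposal for Lemma \ref{lemma.newVAoid}.}
The plan is to transport the entire vertex-algebroid structure along the pair $(\varphi,\Delta)$ rather than to verify the axioms of Definition \ref{VAoid} by brute force. First I would note that, since $\varphi$ is an isomorphism of extended Lie algebroids, the three equations in Definition \ref{VAoid.mor} have a unique solution for $\bullet'$, $\{\;\}'$ and $\{\;\}'_\Omega$: given $f'\in A'$ and $X',Y'\in\T'$, set $f=\varphi^{-1}f'$, $X=\varphi^{-1}X'$, $Y=\varphi^{-1}Y'$, and define
\begin{align*}
X'\bullet' f'&:=\varphi(X\bullet f)+\Delta(fX)-f'\Delta(X),\\
\{X',Y'\}'&:=\varphi\{X,Y\}-\Delta(X)(Y')-\Delta(Y)(X'),\\
\{X',Y'\}'_\Omega&:=\varphi\{X,Y\}_\Omega-L_{X'}\Delta(Y)+L_{Y'}\Delta(X)
  -d\big(\Delta(X)(Y')\big)+\Delta([X,Y]).
\end{align*}
These are manifestly $\bb{C}$-bilinear, and by construction $(\varphi,\Delta)$ satisfies the three defining identities of a map of vertex algebroids --- once we know the target is in fact a vertex algebroid. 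So the only real content is to check that $(A',\Omega',\T',\bullet',\{\;\}',\{\;\}'_\Omega)$ obeys the seven identities in Definition \ref{VAoid}.

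The cleanest route to that is the following observation: the assignment $(A,\Omega,\T,\bullet,\{\;\},\{\;\}_\Omega)\mapsto$ (the freely generated vertex algebra $V$ of \S\ref{VAoid.VA}) is inverse, up to the natural identifications of \S\ref{VA.VAoid}, to the functor ``vertex algebra $\mapsto$ associated vertex algebroid.'' Concretely, I would form the freely generated vertex algebra $V$ associated to the \emph{given} vertex algebroid on $(A,\Omega,\T)$, and then define a new $\mc{U}$-module structure on the \emph{same} underlying space by relabelling the generators: let $f'_n$, $\alpha'_n$ act as $f_n$, $\alpha_n$ do (via $\varphi^{-1}$), and let $X'_n$ act as $X_n+(\Delta(X))_{n}$, i.e. by the formula in \S\ref{VAoid.VA.mor}. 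The point is that the relations (\ref{freeVA.comm})--(\ref{freeVA.NOP}) written with the primed generators and with $\bullet',\{\;\}',\{\;\}'_\Omega$ as defined above are \emph{exactly} the images of (\ref{freeVA.comm})--(\ref{freeVA.NOP}) under this relabelling --- this is a direct bookkeeping check using the three equations of Definition \ref{VAoid.mor} and the Leibniz/compatibility axioms in Definition \ref{ELA}. Hence $V$, equipped with the primed generating fields, is again a vertex algebra, and reading off its associated vertex algebroid via \S\ref{VA.VAoid} returns precisely $(A',\Omega',\T',\bullet',\{\;\}',\{\;\}'_\Omega)$. Since the associated vertex algebroid of any vertex algebra is a vertex algebroid (the content of \S\ref{VA.VAoid}, which rests on the Borcherds identities), this establishes that the primed data satisfy Definition \ref{VAoid}. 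The final sentence of the lemma --- that $(\varphi,\Delta)$ is an isomorphism of vertex algebroids --- is then automatic: the three identities of Definition \ref{VAoid.mor} hold by the very definition of the primed operations, and the inverse map is $(\varphi^{-1},-\varphi^{-1}\Delta\varphi^{-1}|_{\T'})$, whose composite with $(\varphi,\Delta)$ is the identity by the composition rule in Definition \ref{VAoid.mor}.

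The main obstacle --- and the only place genuine care is needed --- is the compatibility of the relabelling with the normal-ordering relations (\ref{freeVA.NOP}), in particular the third one, where $X\bullet f$ appears together with a reordering of $f_{n-k}X_k$ versus $X_k f_{n-k}$. One must check that substituting $X'_k\mapsto X_k+(\Delta X)_k$ and $(X'\bullet'f')\mapsto\varphi(X\bullet f)+\Delta(fX)-f'\Delta(X)$ turns the primed relation into the unprimed one modulo the relations already in force; the extra terms $(\Delta X)_k$ are central-like (they commute with all $f_m$ and $\alpha_m$, since $\Delta X\in\Omega'$) and the discrepancy collapses using $(f\alpha)_n=\sum f_{n-k}\alpha_k$ and the $A$-linearity of $\Delta$ hidden in $\Delta(fX)$. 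This is the same style of verification that underlies \S\ref{VAoid.VA.mor}, so I would either cite that discussion or carry out the one-line check explicitly; in any case it is routine rather than conceptual. Everything else is formal category-theoretic transport, so I would keep the exposition short and lean on \S\ref{VA.VAoid}--\S\ref{VAoid.VA.mor} and Definitions \ref{ELA}--\ref{VAoid.mor}.
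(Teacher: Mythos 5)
Your argument is correct in outline, and it takes a genuinely different route from the paper, which states this lemma without proof --- the implicit justification being a direct (if tedious) verification of the seven identities of Definition \ref{VAoid}, as in \cite{GMS2}. What you do instead is realize the primed structure as the vertex algebroid associated, in the sense of \S\ref{VA.VAoid}, to the freely generated vertex algebra $V$ of the original vertex algebroid with respect to a shifted splitting; since \S\ref{VA.VAoid} asserts that the two-lowest-weight data of \emph{any} vertex algebra and \emph{any} splitting form a vertex algebroid, and \S\ref{freeVA.PBW} guarantees that $V$ realizes $A$ and $\Omega\oplus\T$ faithfully in weights $0$ and $1$, this does prove the lemma. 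Two caveats. First, a sign: with the paper's conventions the relabelling $X_n\mapsto X_n+(\Delta X)_n$, i.e.\ the splitting $s'=s+\Delta$, produces the structure maps of Definition \ref{VAoid.mor} with $-\Delta$ in place of $\Delta$ (for instance $s'(X)_1s'(Y)=\{X,Y\}+\Delta(X)(Y)+\Delta(Y)(X)$, whereas the second equation of Definition \ref{VAoid.mor} carries minus signs); you should shift by $-\Delta$, or simply observe that $\Delta$ is arbitrary so the discrepancy is harmless. Second, what each approach buys: your transport argument is conceptually cleaner and explains \emph{why} the lemma is true (a change of splitting cannot leave the class of vertex algebroids), but it does not reduce the total computational content unless one already accepts the unproved assertion of \S\ref{VA.VAoid} --- that assertion is itself a Borcherds-identity computation of essentially the same size as checking the seven axioms directly. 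As a deduction from the appendix's stated results your proof is valid; as a self-contained argument it relocates rather than eliminates the verification.
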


\begin{example} \label{VAoid.Lie}
{\bf The vertex algebroids associated to a Lie algebra.}
Consider a Lie algebra $\mf{g}$ over $\bb{C}$ and
a vertex algebroid of the form
$(\bb{C},0,\mf{g},0,\lambda,0)$ with $\mf{g}$ acting
trivially on $\bb{C}$.
The second, fourth and last components are trivial
by necessity.
By Definition \ref{VAoid}, the conditions
on $\lambda:\mf{g}\times\mf{g}\rightarrow\bb{C}$ are
\begin{align*}
\lambda(X,Y)=\lambda(Y,X),\qquad
\lambda([X,Y],Z)+\lambda(Y,[X,Z])=0
\end{align*}
i.e.~it is an invariant symmetric bilinear form on $\mf{g}$.
Let
\begin{align*}
V_\lambda(\mf{g})
  =\t{vertex algebra freely generated by }
  (\bb{C},0,\mf{g},0,\lambda,0).
\end{align*}
In the case $\mf{g}$ is finite-dimensional, simple
and $\lambda$ equals $k$ times the normalized Killing
form, this is the same as the affine vertex algebra
$V_k(\mf{g})$.
\cite{Kac,FB-Z}
\end{example}

\begin{subsec} \label{freeVA.PBW}
{\bf PBW filtration of a freely generated vertex algebra.}
Given an increasing sequence of negative integers 
$\n=\{n_1\leq\cdots\leq n_s<0\}$, possibly empty,
we write
\begin{align*}
|\n|=|n_1|+\cdots+|n_s|\quad (0\t{ if }\n=\{\}),\qquad
\n(i)=\t{number of times }i\t{ appears in }\n
\end{align*}
and regard $\n$ as a partition of $-|\n|$.
For $w\geq 0$, let $\ms I_w$ be the set of pairs
$(\n;\m)$ of such sequences that satisfy
$|\n|+|\m|=w$.
Define a partial ordering on $\ms I_w$ by declaring that
$(\n;\m)\prec(\n';\m')$ if
\vspace{-0.05in}
\begin{itemize}
\item[$\cdot$]
$|\n|<|\n'|$, or
\vspace{-0.08in}
\item[$\cdot$]
$\n'$ is a proper subpartition of $\n$ and $\m=\m'$, or
\vspace{-0.08in}
\item[$\cdot$]
$\n=\n'$ and $\m$ is a proper subpartition of $\m'$
\vspace{-0.05in}
\end{itemize}
For example, in $\ms I_3$ a particular chain is given by
$(;-2,-1)\prec(;-3)\prec(-2;-1)\prec(-1,-1;-1)$,
while $(;-1,-1,-1)$ and $(-1,-1,-1;)$ are
the unique minimal and maximal elements.

Consider the vertex algebra $V$ constructed in
\S\ref{VAoid.VA}.
Given a sequence $\n=\{n_1\leq\cdots\leq n_s<0\}$
as above and an $s$-tuple
$\bs{\alpha}=(\alpha_1,\ldots,\alpha_s)$ in
$\Omega$, or an $s$-tuple
$\bs{X}=(X_1,\ldots,X_s)$ in $\T$, let
\begin{align*}
\bs{\alpha}_\n=\alpha_{1,n_1}\cdots\alpha_{s,n_s},\qquad
\bs{X}_\n=X_{1,n_1}\cdots X_{s,n_s}\qquad
(1\t{ if }\n=\{\})
\end{align*}
as operators on $V$.
It follows from the relations in (\ref{freeVA.comm}) that
\begin{align*}
V_w=\t{span}\Big\{
  \bs{X}_\n\bs{\alpha}_\m f:
  (\n;\m)\in\ms I_w;\,
  \t{all suitable }\bs{\alpha},\bs{X};\,
  f\in A
\Big\}.
\end{align*}
For each $(\n,\m)\in\ms I_w$ define the subspaces
\begin{align*}
\mc{F}_{\preceq(\n;\m)}
&=\t{span}\big\{
  \bs{X}_{\n'}\bs{\alpha}_{\m'}f:
  (\n';\m')\preceq(\n;\m)
\big\}\subset V_w \\
\mc{F}_{\prec(\n;\m)}
&=\t{span}\big\{
  \bs{X}_{\n'}\bs{\alpha}_{\m'}f:
  (\n';\m')\prec(\n;\m)
\big\}\subset V_w
\end{align*}
For the next statement, let $O$ (and $O'$)
stand for an operator of the form
$\alpha_n$ or $X_n$ with $n<0$, and
$fO$ the corresponding operator $(f\alpha)_n$
or $(fX)_n$, where $f\in A$.
Observe that the subspaces just defined have the
following properties:
\begin{align*}
\begin{array}{crcrl}
\t{(i)}&\;\;
\cdots OO'\cdots\in\mc{F}_{\preceq(\n;\m)}&
\;\;\Rightarrow&
\cdots[O,O']\cdots\in\mc{F}_{\prec(\n;\m)}\ph{,}& \\
\t{(ii)}&
\cdots Og\in\mc{F}_{\preceq(\n;\m)}&
\;\;\Rightarrow& \ph{\Big(}
\cdots\big((fO)g-O(fg)\big)\in\mc{F}_{\prec(\n;\m)},&
\quad f,g\in A
\end{array}
\end{align*}
Indeed, (i) follows from (\ref{freeVA.comm}) and
(ii) from (\ref{freeVA.NOP}).
These properties imply that
\begin{align*}
\mc{F}_{\preceq(\n;\m)}/\mc{F}_{\prec(\n;\m)}
\cong
\left(\bigotimes_{i<0}\t{Sym}^{\n(i)}\T\right)\otimes
\left(\bigotimes_{i<0}\t{Sym}^{\m(i)}\Omega\right)
\end{align*}
where the tensor and symmetric products are over $A$.
Let $q$ be a formal variable.
When all $(\n;\m)\in\ms I_w$ and all $w\geq 0$
are considered, we obtain an isomorphism
\begin{align*}
\bigoplus_{w\geq 0}\left(
  q^w\bigoplus_{(\n;\m)\in\ms I_w}
  \mc{F}_{\preceq(\n;\m)}/\mc{F}_{\prec(\n;\m)}
\right)\cong
\left(\bigotimes_{k\geq 1}\t{Sym}_{q^k}\T\right)\otimes
\left(\bigotimes_{k\geq 1}\t{Sym}_{q^k}\Omega\right)
\end{align*}
where $\t{Sym}_t=\sum_{n=0}^\infty t^n\t{Sym}^n$
as usual.
\footnote{
If we extend the partial ordering on $\ms I_w$ to
a total ordering, the latter will induce in
an obvious way a filtration on $V_w$ whose
associated graded space is the coefficient of
$q^w$.
}
The subspaces $\mc{F}_{\preceq(\n;\m)}$,
$\mc{F}_{\prec(\n;\m)}$ and the above isomorphisms
are natural, i.e.~respected by maps described in
\S\ref{VAoid.VA.mor}.
\end{subsec}

{\it Remark.}
The definition of the subspaces $\mc{F}_{\preceq(\n;\m)}$ and
$\mc{F}_{\prec(\n;\m)}$ make sense for any vertex algebra.

\begin{lemma} \label{lemma.conf.gen}
If a vertex algebra $V$ has a conformal vector $\nu$ contained
in $\mc{F}_{\preceq(-1;-1)}\subset V_2$, then it is generated
by its associated vertex algebroid.
\end{lemma}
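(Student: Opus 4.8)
The plan is to show that the canonical map $\Phi: V^{\t{free}} \rightarrow V$ from the freely generated vertex algebra onto $V$ (see the Remark after \S\ref{VAoid.VA}) is surjective, which is exactly the assertion that $V$ is generated by its associated vertex algebroid. The strategy is an induction on weight: since $\Phi$ is an isomorphism in weights $0$ and $1$ by construction, it suffices to prove that if $\Phi$ is surjective in all weights $\leq w-1$ (for $w \geq 2$), then it is surjective in weight $w$. The engine for the induction step will be the conformal element $\nu$ and its Virasoro operator $L_0$: since $L_0$ acts on $V_w$ as multiplication by $w \neq 0$, every element $v \in V_w$ can be written as $v = \tfrac{1}{w} L_0 v = \tfrac{1}{w}\,\nu_1 v$, so it is enough to show that $\nu_1 v$ lies in the image of $\Phi$ whenever $v$ does.

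First I would unwind what $\nu \in \mc{F}_{\preceq(-1;-1)} \subset V_2$ means. By the description of the PBW filtration in \S\ref{freeVA.PBW}, $\mc{F}_{\preceq(-1;-1)}$ is spanned by elements of the form $X_{-1}\alpha_{-1}f$, $X_{-1}Y_{-1}f$ (here I should be careful: $(-1;-1)$ is the pair $\n = \{-1\}$, $\m = \{-1\}$, so the spanning elements are $X_{-1}\alpha_{-1}f$ with $X \in \T$, $\alpha \in \Omega$, $f \in A$), together with lower strata $\mc{F}_{\prec(-1;-1)}$, which by the ordering consists of $\bs{\alpha}_\m f$ with $|\m| \leq 2$ and the single-generator strata. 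Concretely, $\nu$ is a (finite) sum of terms $X^{(k)}_{-1}\alpha^{(k)}_{-1}f^{(k)}$ plus terms built from products of at most two $\alpha_{-1}$'s (and their descendants $\alpha_{-2}$) acting on elements of $A$. The point of this is that all the building blocks $f$, $\alpha_{-1}\mb{1}$, $X_{-1}\mb{1}$ of these expressions lie in $V_0 \oplus V_1$, hence in the image of $\Phi$, and the Fourier modes $f_n, \alpha_n, X_n$ that appear are modes of elements of the vertex algebroid.

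The induction step then runs as follows. Given $v \in \Phi(V^{\t{free}}_w)$, write $v = \Phi(\tilde v)$ and compute $\nu_1 v$ using the explicit form of $\nu$. Each term of $\nu_1$ is a finite sum of compositions of modes $f_m$, $\alpha_m$, $X_m$ (coming from the modes of $\nu$ via the formula for $u_1$ when $u$ is a normally-ordered product — this is where I would invoke the standard mode-expansion identities, i.e.\ the commutator and normal-ordered-product formulas (\ref{freeVA.comm})--(\ref{freeVA.NOP}) and their consequences for $(u_{-1}w)_n$). Crucially, because $\nu$ has weight $2$ and we are looking at $\nu_1$, the resulting modes either have nonnegative index — hence by the positive-energy/weight-grading they lower weight or preserve it while being modes of generators — or they are modes of generators with negative index applied to elements of strictly lower weight. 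In either case, since $\Phi$ is a map of vertex algebras, $(\Phi u)_n \Phi(w) = \Phi(u_n w)$, so $\nu_1 v = \Phi(\text{something in } V^{\t{free}})$. Therefore $v = \tfrac{1}{w}\nu_1 v \in \Phi(V^{\t{free}}_w)$, completing the induction.

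The main obstacle, and the step I would spend the most care on, is the bookkeeping in the previous paragraph: making precise that when $\nu_1$ is expanded into modes of the generating elements of the vertex algebroid, every mode that occurs is either a mode of a generator (with arbitrary index) or involves only elements of lower weight, so that the whole expression manifestly lies in the subalgebra generated by the vertex algebroid. This rests on the fact that $\nu \in \mc{F}_{\preceq(-1;-1)}$ is built from generators with mode indices $-1$ and $-2$ only, so $\nu_1$ is built from generator-modes with a bounded range of indices; I would make this rigorous by noting that if $u = u^{(1)}_{-n_1}\cdots u^{(s)}_{-n_s}\mb{1}$ with each $u^{(i)}$ a generator and $n_i \in \{1,2\}$, then $u_k$ for any $k$ is, by the reconstruction/OPE formulas, a finite linear combination of normally-ordered products of the modes $u^{(i)}_{m}$; and such products, applied to an element of the subalgebra generated by the vertex algebroid, stay in that subalgebra. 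A secondary (routine) point is that in weight $2$ the lower stratum $\mc{F}_{\prec(-1;-1)}$ contributes terms like $\alpha_{-2}\mb{1}$ and $\alpha_{-1}\beta_{-1}f$; these are handled identically since $\alpha, \beta \in \Omega$ are part of the vertex algebroid. Once this is in place the conclusion is immediate. I would also remark that the same argument shows $\ker\Phi$ is preserved by $\nu_1$, which is the content of the companion Lemma \ref{lemma.conf.ideal} used later, though that is not needed here.
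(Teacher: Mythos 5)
Your overall strategy coincides with the paper's: induct on weight, and use the weight-preserving mode of $\nu$ (which equals $L_0$; in this paper's conventions that mode is $\nu_0$, not $\nu_1$) to express $wv$ for $v\in V_w$ in terms of generator modes acting on elements already known to lie in the subalgebra generated by the vertex algebroid. You also correctly flag the mode bookkeeping as the delicate point. However, the resolution you offer has a genuine gap at the index-zero modes. Expanding $\nu_0$ via the normally ordered products, the diagonal terms --- $X_0\alpha_0$ from $(X_{-1}\alpha)_0$, $\alpha_0\beta_0$ from $(\alpha_{-1}\beta)_0$, and $(\alpha_{-2}\mb 1)_0=-\alpha_0$ --- apply $\alpha_0$ directly to the weight-$w$ element $v$ under consideration. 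Since $\alpha_0$ preserves weight, $\alpha_0 v\in V_w$ is not covered by the induction hypothesis, and your justification that products of generator modes ``applied to an element of the subalgebra generated by the vertex algebroid, stay in that subalgebra'' is circular at exactly this point: $v$ is the element not yet known to lie in that subalgebra. (That $\alpha_0$ is a genuinely nontrivial weight-preserving operator is the content of Lemma \ref{lemma.1form.0mode}.)

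The missing ingredient is the axiom $\Omega=A\cdot dA$ combined with the identity (\ref{fdg.0}): writing $\alpha=f\,dg$, one has $(fdg)_0=\sum_s f_{-s}(dg)_s=-\sum_{s>0}sf_{-s}g_s+\sum_{s>0}sg_{-s}f_s$, where the $s=0$ term drops out because $(dg)_s=-sg_s$. Thus every summand acts on $v$ first by a strictly positive generator mode, landing in $V_{w-s}$ with $s>0$, which lies in the generated subalgebra by the induction hypothesis, and then by a negative mode of a generator, which preserves that subalgebra; hence $\alpha_0v$ lies in the generated subalgebra for every $\alpha\in\Omega$. With this established (and with the functions absorbed into $\Omega$ and $\T$ via the $A$-module structure, so that $\nu$ is a sum of terms $\alpha_{-1}\beta$, $\alpha_{-2}\mb 1$, $X_{-1}\alpha$ and no stray $f_0$ arises), the remaining bookkeeping in your induction step goes through as you describe: every monomial ends in a mode that strictly lowers weight or in a zero mode now known to land in the subalgebra. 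Without this observation the argument fails precisely at the diagonal terms.
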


\begin{proof}
Consider the vertex algebroid $(A,\Omega,\T,\cdots)$ associated
to $V$ and the vertex subalgebra $V'\subset V$ it generates.
Let $f,g\in A$; $\alpha,\beta\in\Omega$; and $X\in\T$.
By definition, $V'_0=V_0$ and $V'_1=V_1$.
Suppose $V'_i=V_i$ for $i\leq k-1$, for some positive $k$, and let
$u\in V_k$.
It suffices to prove that $u\in V'$.

Clearly $\alpha_r u,\,X_r u\in V_{k-r}\subset V'$ for $r>0$.
Also, it follows from
\begin{align} \label{fdg.0}
(fdg)_0
=\sum_{s\in\bb{Z}}f_{-s}(dg)_s
=-\sum_{s>0}sf_{-s}g_s+\sum_{s>0}sg_{-s}f_s
\end{align}
that we have $(fdg)_0 u\in V'$.
Hence in fact
\begin{align*}
\alpha_r u\in V'\;\;\t{for}\;\;r\geq 0,\qquad
X_r u\in V'\;\;\t{for}\;\;r>0.
\end{align*}
This easily implies that $(\alpha_{-1}\beta)_0 u$,
$(\alpha_{-2}\mb{1})_0 u$ and $(X_{-1}\alpha)_0 u$
must all belong to $V'$.
By assumption, $\nu$ is a sum of elements of the form
$\alpha_{-1}\beta$, $\alpha_{-2}\mb{1}$ and $X_{-1}\alpha$,
so that $ku=L_0 u=\nu_0 u\in V'$.
Since $k>0$, we have $u\in V'$ as desired.
\end{proof}

\begin{lemma} \label{lemma.conf.ideal}
If a vertex algebra $V$ has a conformal vector $\nu$
contained in $\mc{F}_{\preceq(-1;-1)}\subset V_2$, then it
has no nontrivial ideal consisting only of positive weights.
\end{lemma}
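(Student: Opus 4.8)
The plan is to mimic the proof of Lemma~\ref{lemma.conf.gen}, with ``belongs to $V'$'' replaced throughout by ``vanishes'' and the induction on weight replaced by a minimality argument. So suppose, for contradiction, that $V$ has a nonzero ideal $I\subseteq\bigoplus_{k\geq 1}V_k$. Since $\nu_0=L_0$ preserves $I$, the ideal is graded, $I=\bigoplus_k(I\cap V_k)$; let $k\geq 1$ be minimal with $I_k:=I\cap V_k\neq 0$ and fix $0\neq u\in I_k$. Because $\nu_0 u=L_0 u=ku$ with $k\geq 1$, it suffices to prove $\nu_0 u=0$.

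First I would record two facts about the minimal piece $I_k$, valid for \emph{every} $w\in I_k$. (i)~If $a\in V$ is homogeneous and $r>0$, then $a_r w=0$: indeed $a_r w$ lies in $I$ and has weight $<k$, hence lies in $I_{k-r}$, which is $0$ whether $k-r\leq 0$ or $1\leq k-r<k$. In particular $f_r w=\alpha_r w=X_r w=0$ for all $f\in A=V_0$, $\alpha\in\Omega$, $X\in\T$ and $r>0$. (ii)~Combining (i) with the expansion $(fdg)_0=-\sum_{s>0}sf_{-s}g_s+\sum_{s>0}sg_{-s}f_s$ from (\ref{fdg.0}), one gets $(fdg)_0 w=0$; since the forms $fdg$ span $\Omega$, this yields $\alpha_0 w=0$ for every $\alpha\in\Omega$ and $w\in I_k$. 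Thus the operators $\alpha_r$ ($\alpha\in\Omega$, $r\geq 0$) together with $X_r$ and $f_r$ ($r>0$) all annihilate $I_k$. Note this is slightly stronger than the analogous step in Lemma~\ref{lemma.conf.gen}: $\alpha_0$ kills all of $I_k$, not merely the chosen $u$.

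Next I would use the hypothesis $\nu\in\mc{F}_{\preceq(-1;-1)}\subset V_2$. By the PBW description in \S\ref{freeVA.PBW} (and by Lemma~\ref{lemma.conf.gen}, $V$ is generated by its associated vertex algebroid, so $\mc{F}_{\preceq(-1;-1)}$ makes sense), $\nu$ is a finite sum of elements of the three forms $X_{-1}\alpha$, $\alpha_{-1}\beta$ and $T\alpha$ with $X\in\T$, $\alpha,\beta\in\Omega$, possibly carrying a weight-zero coefficient. Applying $\nu_0$ to $u$ and expanding each summand by the usual normal-ordered-product mode formulas, $(\alpha_{-1}\beta)_0=\sum_{n\leq -1}\alpha_n\beta_{-1-n}+\sum_{n\geq 0}\beta_{-1-n}\alpha_n$ kills $u$ because $\beta_{-1-n}u=0$ for $n\leq -1$ and $\alpha_n u=0$ for $n\geq 0$; $(X_{-1}\alpha)_0=\sum_{n\leq -1}X_n\alpha_{-n}+\sum_{n\geq 0}\alpha_{-n}X_n$ kills $u$ because $\alpha_{-n}u=0$ for $n\leq -1$, $X_n u=0$ for $n\geq 1$, and the remaining $n=0$ term is $\alpha_0(X_0 u)=0$ by fact~(ii), since $X_0 u\in I_k$; and $(T\alpha)_0=0$ outright (from $(T\alpha)_m=-m\alpha_{m-1}$). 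I would also check the entirely parallel (routine) point that the summands with nontrivial weight-zero coefficients, e.g.\ $\alpha_{-2}f=f_0(T\alpha)$, contribute $0$ by the same two facts. Hence $\nu_0 u=0$, so $ku=0$, so $u=0$, contradicting $u\neq 0$; therefore $I=0$.

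The only genuine subtlety — the point I expect to be the main obstacle — is precisely the term $\alpha_0 X_0 u$: one cannot conclude $X_0 u=0$, only $X_0 u\in I_k$, so one really does need the strengthened fact that $\alpha_0$ annihilates the whole of $I_k$. This is exactly the analogue of the step in Lemma~\ref{lemma.conf.gen} where $\alpha_0$ is applied to $X_0 u$ using that $\alpha_0 V_k\subseteq V'$ for \emph{all} of $V_k$; everything else is bookkeeping in the paper's mode conventions and the small verification of which PBW monomials lie in $\mc{F}_{\preceq(-1;-1)}\cap V_2$.
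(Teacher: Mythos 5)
Your argument is correct and is essentially the paper's own proof: restrict to the lowest nonzero weight of the ideal, show that all positive modes and all $\alpha_0$ ($\alpha\in\Omega$) annihilate it, and expand $\nu_0$ using the PBW form of $\nu$ to get $ku=0$; your explicit treatment of the term $\alpha_0 X_0 u$ (needing $\alpha_0$ to kill \emph{all} of $I_k$, not just $u$) is exactly the point the paper's proof passes over with ``this easily implies.'' The one slip is the formula $(T\alpha)_m=-m\alpha_{m-1}$: in the paper's weight-shifted mode convention one has $(T\alpha)_m=-(m+1)\alpha_m$, so $(T\alpha)_0=-\alpha_0$ rather than $0$ --- harmless here, since $\alpha_0 u=0$ by your fact (ii).
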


\begin{proof}
Use the same notations as in the proof of Lemma
\ref{lemma.conf.gen}.
Let $I\subset V$ an ideal with $I_0=0$.
Suppose $I_i=0$ for $i\leq k-1$, for some positive $k$, and let
$u\in I_k$.
It suffices to prove that $u=0$.

Clearly $\alpha_r u,\,X_r u\in I_{k-r}=0$ for $r>0$.
Also, it follows from (\ref{fdg.0}) that $(fdg)_0 u=0$.
Hence in fact
\begin{align*}
\alpha_r u=0\;\;\t{for}\;\;r\geq 0,\qquad
X_r u=0\;\;\t{for}\;\;r>0.
\end{align*}
This easily implies that
$(\alpha_{-1}\beta)_0 u=(\alpha_{-2}\mb{1})_0 u
=(X_{-1}\alpha)_0 u=0$.
By assumption, $\nu$ is a sum of elements of the form
$\alpha_{-1}\beta$, $\alpha_{-2}\mb{1}$ and
$X_{-1}\alpha$, so that $ku=L_0 u=\nu_0 u=0$.
Since $k>0$, we have $u=0$ as desired.
\end{proof}

\newpage

{\footnotesize

\vspace{0.1in}
\rule{2.5in}{.1pt}
\vspace{0.1in}

\noindent
{\sc School of Mathematics \& Statistics, University of Sheffield,
  Hicks Building, Sheffield S3 7RH, U.K.} \\
{\it Email address:}
{\sf p.cheung@sheffield.ac.uk}

}

\end{document}